\documentclass[12pt,leqno]{article}
\usepackage[latin1]{inputenc}
\usepackage{amsxtra}
\usepackage{amsmath}
\usepackage{amssymb}
\usepackage{amsfonts}
\usepackage[all]{xy}
\usepackage{mathrsfs}
\usepackage{amsthm}
\usepackage{enumitem}
\usepackage{mathabx}
\usepackage{bbm}

\usepackage[final]{hyperref}

\makeatletter
\hypersetup{
    unicode=true,           
    pdftoolbar=true,        
    pdfmenubar=true,        
    pdffitwindow=false,     
    pdfstartview={FitH},    
    pdftitle={\@title},     
    pdfauthor={\@author},   
    pdfsubject={},          
    pdfcreator={},          
    pdfproducer={},         
    pdfkeywords={},         
    pdfnewwindow=true,      
    colorlinks,             
    linkcolor=black,        
    citecolor=black,        
    filecolor=black,        
    urlcolor=black          
}
\makeatother

\newtheorem{thm}[equation]{Theorem}
\newtheorem{cor}[equation]{Corollary}
\newtheorem{lem}[equation]{Lemma}
\newtheorem{prop}[equation]{Proposition}

\newtheoremstyle{example}{\topsep}{\topsep}%
     {}
     {}
     {\bfseries}
     {.}
     {2pt}
     {\thmname{#1}\thmnumber{ #2}\thmnote{ #3}}

   \theoremstyle{example}
   
   \newtheorem{Defi}[equation]{Definition}
   \newtheorem{rem}[equation]{Remark}
   \newtheorem{rems}[equation]{Remarks}
   
   \newtheorem{exas}[equation]{Examples}
   \newtheorem{ex}[equation]{Example}

  \numberwithin{equation}{subsection}

\setcounter{tocdepth}{3}

\def\eps{{\varepsilon}}

\def\AAA{\mathbb{A}}

\def\NN{\mathbb{N}}
\def\CC{\mathbb{C}}

\def\GG{\mathbb{G}}
 \def\HH{\mathbb{H}}
 \def\LL{{\mathbb{L}}}
\def\PP{\mathbb{P}}
\def\RR{\mathbb{R}}
\def\ZZ{\mathbb{Z}}
\def\QQ{\mathbb{Q}}
\def\TT{{\mathbb{T}}}

\def\gen{\mathfrak{g}}

\def\len{\mathfrak{l}}

\def\Aen{\mathfrak{A}}

\def\Ac{\mathcal{A}}
\def\Bc{\mathcal{B}}
\def\Cc{\mathcal{C}}

\def\Dc{\mathcal{D}}

\def\Fc{\mathcal{F}}

\def\Ic{{\mathcal{I}}}
\def\Lc{\mathcal{L}}
\def\Mc{\mathcal{M}}

\def\Hc{\mathcal{H}}
\def\Oc{\mathcal{O}}
\def\Pc{\mathcal{P}}

\def\Sc{\mathcal{S}}
\def\Tc{\mathcal{T}}

\def\Cb{\mathbf{C}}
\def\Db{\mathbf{D}}

\def\Db{\mathbf{D}}

\def\Ad{{\on{Ad}}}
 \def\Aut{\operatorname{Aut}\nolimits}
 
 \newcommand{\Bun}{\operatorname{\mathbf{Bun}}}
 \newcommand{\Bunrig}{\operatorname{ \mathbf{Bun}}^{\mathrm{rig}}}

 \def\CE{{\on{CE}}}
 \def\cdga{{\Cc dga _{\k}^{\leq 0}}}
 \def\Cdga{{\on{Cdga}_\k}}
 
 \def\Coim{{\on{Coim}}}
 \def\Coker{\operatorname{Coker}\nolimits}
 \def\com{{\on{com}}}
 \def\Cone{{\on{Cone}}}

\def\dAff{{\on{dAff_\k}}}
 \def\dbar{{\ol{\partial}}}
 \def\dec{\on{dec}}
  \def\del{\partial}
  \def\Det{{\on{Det}}}
  \def\dgAlg{{\on{dgAlg}_\k}}
  \def\dgart{{\on{dgArt}_\k}}
 \def\dgLie{{\on{dgLie}_\k}}
 \def\dgVect{{\on{dgVect}_\k}}
 \def\dgCat{{\on{dgCat}_\k}}
 \newcommand{\dgMod}{\on{dgMod}}

 \def\End{\operatorname{End}\nolimits}
 \def\EM{{\on{EM}}}
  \def\Ext{\operatorname{Ext}\nolimits}
  
  \def\Fun{{\on{Fun}}}
  \def\FMP{{\mathbf{FMP}}}

 \def\gl{{{\gen\len}}}

 \def\heart{{{\heartsuit}}}
 \def\Hoch{{\on{Hoch}}}
 \def\hoind{``\hocolim"}
 \def\hopro{``\holim"} 
  \def\hocolim{{  \underrightarrow {\on{holim}} }}
 \def\holim{{  \underleftarrow {\on{holim}} }}
  \def\Hom{\operatorname{Hom}\nolimits}
 \newcommand{\HC}{\mathbf{HC}}
\newcommand{\HHH}{\mathbf{HH}}
 
 \def\Id{\operatorname{Id}\nolimits}
 \def\ILC{{\on{ILC}}}
 \def\ilim{{\text{``}\varinjlim\text{''} \hskip -.2cm  }\,\,\,}
 \def\Im{\operatorname{Im}\nolimits}
 \def\Ind{\on{Ind}}

 \def\k{\mathbf k}
 \def\K{{\mathbf{K}}}
 \def\Ker{\operatorname{Ker}\nolimits}
 
 \def\LC{{\on{LC}}}
 \def\Lie{{\operatorname{Lie}\nolimits}}
 
 \def\Map{\operatorname{Map}}
 \def\Mat{\operatorname{Mat}\nolimits}
 
\def\mult{\on{mult}}

\def\Ob{\operatorname{Ob}\nolimits}
\def\on{\operatorname}
\def\ol{\overline}
\def\op{{\on{op}}}

\def\pA {\mathring {\AAA}}
\def\Pic{{\on{Pic}}}

\def\Perf{\on{Perf}}
\def\bPerf{{\mathbf{Perf}}}
\def\plim{{\text{``}\varprojlim\text{''} \hskip -.2cm  }}

 \def\Pro{\on{Pro}}
 \def\PVect{{\on{PVect}}}

\newcommand{\Rep}{\operatorname{Rep}}
\def\Res{\operatorname{Res}\nolimits}
\newcommand{\RAut}{\operatorname{\mathbb{R}\mathbf{Aut}}}
\newcommand{\RBun}{\operatorname{\mathbb{R}\mathbf{Bun}}}

\newcommand{\spRBun}{\operatorname{\mathbb{R}\mathrm{Bun}}}
\newcommand{\RBunrig}{\operatorname{\mathbb{R}\mathbf{Bun}}^{\mathrm{rig}}}
\newcommand{\RMap}{\operatorname{\mathbb{R}\mathbf{Map}}}
\def\RHom{{\on{RHom}}}

 \def\Spec {\on{Spec}}
 \def\sSet{{s\Sc et}}
 \def\St{{\Sc t}}

\def\Ta{{\on{Ta}}}
\def\Tate{{\on{Tate}}}
\def\bTate {{\mathbf{Tate}}}

\def\Tot{{\on{Tot}}}
\def\tr{\operatorname{tr}\nolimits}
\def\TV{{\on{TopVect}}}

\def\ul{\underline}

\def\Vect{\on{Vect}}

\def\x{{\mathbf {x}}}

\def\wc{\widecheck}
\def\wh{ \widehat}
\def\wt{\widetilde}


\def\lra{\longrightarrow}
\def\lla{\longleftarrow}
\def\(({(\hskip -1mm (}
\def\)){)\hskip -1mm )}
 \def\be{\begin{equation}}
\def\ee{\end{equation}}
\def\ed{\end{document}}

\title{Higher Kac-Moody algebras and moduli spaces of $G$-bundles}
\author{Giovanni Faonte, Benjamin Hennion, Mikhail Kapranov}
 

\begin{document}

\maketitle

\begin{abstract}
We provide a generalization to the higher dimensional case of the construction of the current algebra $\gen((z))$, its Kac-Moody extension $\wt \gen$ and of the classical results relating them to the theory of $G$-bundles over a curve. For a reductive algebraic group $G$ with Lie algebra $\gen$, we define a dg-Lie algebra $\gen_n$ of $n$-dimensional currents in $\gen$. For any symmetric $G$-invariant polynomial $P$ on $\gen$ of degree $n+1$, we get a higher Kac-Moody algebra $\wt \gen_{n,P}$ as a central extension of $\gen_n$ by the base field $\k$. Further, for a smooth, projective variety $X$ of dimension $n\ge 2$, we show that $\gen_n$ acts infinitesimally on the derived moduli space $\RBunrig_G(X,x)$ of $G$-bundles over $X$ trivialized at the neighborhood of a point $x \in X$. Finally, for a representation $\phi: G \to GL_r$ we construct an associated determinantal line bundle on $\RBunrig_G(X,x)$ and prove that the action of $\gen_n$ extends to an action of $\wt \gen_{n,P_\phi}$ on such bundle for $P_\phi$ the $(n+1)^{\mathrm{th}}$ Chern character of $\phi$.
\end{abstract}
\pagebreak
\tableofcontents
\pagebreak
\addtocounter{section}{-1}
 

\section{Introduction}
\paragraph{(0.1)} Let $G$ be a reductive algebraic
group over $\CC$ with Lie algebra $\gen$. The formal current algebra $\gen((z))=\gen\otimes_\CC \CC((z))$
and its central extension $\wt \gen$ (the Kac-Moody algebra) play a fundamental role in 
many fields.  It can be considered as the algebraic completion
of the loop algebra $\Map(S^1,\gen)$, see \cite{pressley-segal}. 

\vskip .2cm

In particular, $\gen((z))$ is fundamental in the study of
$\Bun_G(X)$, the moduli stack of principal $G$-bundles on a smooth projective curve $X$ over $\CC$. 
More precisely, let $x\in X$ be a point. We then have the scheme  (of infinite type)
$\Bunrig_G(X,x)$ parametrizing bundles $P$ together with a trivialization on $\wh x$, the formal neighborhood of $x$. 
The ring of functions on $\wh x$ is $\wh \Oc_{X,x}\simeq \CC[[z]]$, the completed local ring and its field
of fractions $K_x \simeq \CC((z))$ corresponds to the punctured formal neighborhood $\wh x^\circ$. 

\vskip .2cm

 The key result \cite{tsuchiya} is that the 
  Lie algebra $\gen_x = \gen\otimes K_x$ acts on the scheme $\Bunrig_G(X,x)$ by vector fields.
Moreover, any representation $\phi$ of $G$ gives rise to the determinantal line bundle $\det^\phi$ on 
  $\Bunrig_G(X,x)$; the action of $\gen_x$ extends to the action, on $\det^\phi$,  of the central extension $\wt\gen_x$ with
  central charge given by a local version of the Riemann-Roch theorem for curves.

  \paragraph{(0.2)}
  Our goal in this paper is to generalize these results from curves to $n$-dimensional varieties $X$ over $\CC$, $n\geq 1$
  (one can replace $\CC$ by any field of characteristic $0$). The first question in this direction is what should
  play the role of $\gen((z))$. In the analytic (as opposed to the formal series) theory,
  natural generalizations of $\Map(S^1,\gen)$ are provided by the current Lie algebras $\Map(\Sigma, \gen) = \gen\otimes_\CC
  C^\infty(\Sigma)$ where $\Sigma$ is a compact $C^\infty$-manifold of dimension $>1$. 
  Our approach
  can be seen as extending this idea to the derived category. 
  
  \vskip .2cm
  
  More precisely, 
  the role of
  $\gen((z))$ will be played by the dg-Lie algebra $\gen_n^\bullet = \gen\otimes_\CC \Aen_n^\bullet$,
  where $\Aen_n^\bullet = R\Gamma(D_n^\circ, \Oc)$ is the commutative dg-algebra of derived
  global sections of the sheaf $\Oc$ on the $n$-dimensional {\em punctured formal disk}
  $D_n^\circ = \Spec \bigl(  \CC[[z_1, \dots, z_n]]\bigr) -\{0\}$.  More invariantly, we have the punctured formal disk
  $\wh x^\circ\simeq D_n^\circ$ associated to a point $x\in X$
  and the corresponding current algebra $\gen^\bullet_x\simeq \gen^\bullet_n$.  For $n>1$, passing from the non-punctured
  formal disk $\wh x$, to $\wh x^\circ$ does not increase the ring of functions (Hartogs' theorem) but  one gets new
  elements in the higher cohomology of the sheaf $\Oc$, so $\Aen_n^\bullet$ can be regarded as a
  ``higher" generalization of the Laurent series field $\CC((z))$, to which it reduces for $n=1$. 
  
  \vskip .2cm

  Principal bundles on $X$ form an Artin stack
  $\Bun_G(X)$ and we can still form a
  scheme $\Bunrig_G(X,x)$ as above. However these objects are, 
    for $n>1$, highly singular because deformation theory 
  can be obstructed. The correct object to consider is the {\em derived moduli stack} $\RBun_G(X)$
  obtained, informally, by taking the non-abelian derived functor of $\Bun_G$, i.e., extending the moduli functor to
  test rings which are commutative dg-algebras \cite {toen-vezzosi}. When $X$ is a curve, $\RBun_G(X)\simeq\Bun_G(X)$,
  but for $n>1$ there is a difference. Most importantly, the tangent complex of $\RBun_G(X)$ is perfect
  (a smoothness property). We can similarly construct the 
    {\em derived scheme} $\RBunrig_G(X,x)$,
 (an object which locally looks like the spectrum of a commutative dg-algebra)  which should also be intuitively
 considered as  being smooth. 
  
 \vskip .2cm
 
 We show, first of all,  (Theorem  \ref{prop-groupaction})
  that
 $\gen_n^\bullet$ acts on $\RBunrig(X,x)$ by vector fields, in the derived sense. At the level of  cohomology, the action gives, in particular,  a map
 \[
 H^{n-1}_\dbar(\gen_n^\bullet) \lra \HH^{n-1}\bigl( \RBunrig_G(X,x), \TT\bigr)
 \]
 (here $\TT$ is the tangent complex and $\dbar$ is the differential of $\gen_n^\bullet$). When $n=1$, it is the action by vector fields in the usual sense.
 In the first new case $n=2$, after restricting to the non-obstructed smooth part of the moduli space, on which $\TT$
 is the usual tangent sheaf, the target of this map becomes the {\em space of deformations} of the (part of the)
 moduli space. Deforming the moduli space can be understood as changing the cocycle condition defining
 $G$-bundles
 (Remark \ref{rem:def-moduli}). 
 
 \vskip .2cm
 
 Further, each invariant polynomial  $P$ on $\gen$ of degree $(n+1)$
 gives rise to a central extension $\wt \gen^\bullet_{n, P}$ (the {\em higher Kac-Moody algebra}).
 Note that unlike the case $n=1$,  we now have many non-proportional classes, even for $\gen$ simple.
 Intuitively, they correspond to degree $n+1$ characteristic classes for principal $G$-bundles. 
 As before, let $\phi$ be a representation of $G$. 
 We prove (Theorem \ref{thm:action-det}) that the determinantal line bundle $\det^\phi$ on $\RBunrig(X,x)$
 is acted upon by $\wt \gen^\bullet_{n, P_\phi}$ where $P_\phi(x) = \tr(\phi(x)^{n+1})/(n+1)!$
 is the ``$(n+1)$-th component of the  Chern character" of $\phi$.
 
 These results suggest that representations of the dg-Lie algebra $\gen_n^\bullet$ should produce
 geometric data on the derived moduli spaces of $G$-bundles on $n$-dimensional manifolds.

   \paragraph{(0.3)} The stack $\Bun_G(X)$ can be seen as a version of the non-abelian first cohomology $H^1(X, G(\Oc_X))$.
   When $X$ is a curve, 
     the above classical 
    results can be seen as forming a part of the ``adelic approach'' to the geometry
   of curves. This approach consists in  using the idealized ``\v Cech covering'' of $X$ formed by $\wh x$ and $X^\circ=X-\{x\}$,
   with ``intersection'' $\wh x^\circ$,  to calculate the $H^1$. If $X$ is a curve and $G$ is semi-simple,
    then $G$-bundles on $X^\circ$
   (and certainly on $\wh x$)
   are trivial, and we can write $\Bun_G(X) = G(\wh x)\backslash G(\wh x^\circ)/G(X^\circ)$ (stack-theoretic
   quotient on the left). We then  similarly represent $\Bunrig_G(X,x)$ as the coset space $ G(\wh x^\circ)/G(X^\circ)$,
   with  $G(\wh x^\circ) = G(K_x)$ being a group ind-scheme with Lie algebra $\gen_x$. 
   
   \vskip .2cm
   
   A generalization of the adelic formalism to varieties $X$ of dimension $n>1$ was proposed by 
   Parshin and Beilinson \cite{beilinson-adeles} \cite{huber} \cite{osipov}. 
   In this approach the completed local fields (analogs of $K_x$ for curves) are parametrized not by points, but by 
    flags $\{x\}=X_0\subset X_1\subset \cdots\subset X_{n-1}\subset X$ of irreducible subvarieties in   $X$. If all the $X_i$'s are smooth, then the completion is isomorphic to $\k((z_1))\cdots ((z_n))$, the iterated Laurent series field. 
   As before, it can be seen as a version of the \v Cech formalism for an idealized open covering formed 
  by certain
   formal neighborhoods. 
   However, the manipulations with iterated Laurent series fields are quite complicated:  in order to
   capture all the ``adic topologies'', 
    they should be considered
   as $n$-fold iterated ind-pro-objects ($n$-Tate spaces)  \cite{bgw-tate} and every step involves many levels  of technical work.

   \vskip .2cm
   
   In a sense, our approach can be seen as  a  ``simplified version" of the flag ad\`ele formalism, in which we keep
   track only of points $x\in X$ (just like for curves) and package all the combinatorial 
   data involving subvarieties of dimensions
   $\neq 0,n$, into  a ``black box" using the cohomological formalism. This allows us
    to avoid working with iterated ind-pro-objects and deal instead with classical  Tate spaces  (just like for curves)
    at the small  price of having to pass to the derived category of such spaces, i.e., to study {\em Tate  dg-spaces}
    (or {\em Tate complexes}),
    see \S \ref{subsec:tate-back} for details.   For example,  $\Aen_n^\bullet$ is a Tate complex for each $n$. 
    Our treatment is an adaptation and development of the approaches of \cite{drinfeld} 
    \cite{hennion-tate}.
        
    \paragraph{(0.4)} To relate our approach to the idea of $\Map(\Sigma,\gen)$, we can use a particular model
    $A_n^\bullet$ of the 
      ``abstract" commutative dg-algebra
     $\Aen_n^\bullet = R\Gamma(D_n^\circ,\Oc)$. This model 
     is formed by relative differential forms on the Jouanolou torsor,
    see \S  \ref{subsec:exp-mod}B.  Such torsors have been used in \cite{beilinson-drinfeld} as a general tool. 
    In our case, $A_n^\bullet$ provides a very precise algebraic analog of $\Omega^{0,\bullet}(\CC^n-\{0\})$, the
  $\dbar$-algebra of Dolbeault forms on $\CC^n-\{0\}$. In particular, such features of classical complex analysis
  as the Martinelli-Bochner form  or its ``multipole" derivatives, have direct incarnations in $A_n^\bullet$, see
    Proposition \ref{prop:MB-H}.  Our algebraic approach allows us to include these features in the formal setting
    of Tate (dg-)spaces. It also lends itself to a representation-theoretic analysis providing the analog of representing
    elements of $\k((z))$ as infinite sums of monomials (Theorem \ref{thm:irr}). 
    
    \vskip .2cm
    
    Restricting from $\CC^n-\{0\}$ to the unit sphere $S^{2n-1}$, we can see $A_n^\bullet$ as an algebraic analog of
    $\Omega^{0,\bullet}_b(S^{2n-1})$, 
    the tangential Cauchy-Riemann complex  (the $\dbar_b$-complex \cite{beals-greiner}\cite{dragomir}) of the sphere. The degree $0$ part
    of  $\Omega^{0,\bullet}_b(S^{2n-1})$ is $C^\infty(S^{2n-1})$, the algebra of smooth complex functions on $S^{2n-1}$. 
    This means that the degree $0$ part of $\gen_n^\bullet$ can be seen as an algebraic version of the
   current  Lie algebra $\Map(S^{2n-1},\gen)$, and the entire $\gen_n^\bullet$ as a natural derived thickening
   of this current algebra. 
   
   \vskip .2cm
   
   Usually,  considering  
  $\Map(\Sigma,\gen)$ with $\dim(\Sigma)>1$,  produces Lie algebras which, instead of interesting
  central extensions  (classes in $H^2$) have interesting  higher cohomology classes. These classes are
    typically   given by versions of the formula
   \begin{equation}\label{eq:integral}
   \gamma(f_0,\dots, f_n) \,\,=\,\,\int \tr(f_0 \, df_1 \cdots df_n). \tag{0.5}
   \end{equation}
   In our case (Theorem \ref{thm:gamma-P}),
    we still use a version of this formula  (with   integration over $S^{2n-1}$, done algebraically)
    but the classes we get have total degree $2$ and so  give central extensions,  regardless of $n$. 
    This happens because we take into account the grading on the dg-algebra, 
  In this sense our derived approach embeds $\Map(S^{2n-1},\gen)$ into an object whose properties
  are closer to those of $\Map(S^1,\gen)$. 
  
  All this suggests that our higher Kac-Moody algebras should have an interesting representation theory. 
  
  \paragraph {(0.6)} As in the 1-dimensional case, a key intermediate step in identifying the central extension
  acting on the determinant bundle, is given by
   a local analog of the Riemann-Roch
    theorem (Corollary \ref{cor:localRR}).
     It has the form of comparison of two central extensions of $\gen_n^\bullet$ for $\gen=\gl(r)$: one given
    by a version of \eqref{eq:integral}, the other induced from the ``Tate class" of the 
     the endomorphism dg-algebra
    of the Tate complex $(\Aen_n^\bullet)^{\oplus r}$.  For $r=1$ this statement can be seen as an analog, in our
    simplified adelic formalism, of the main result of Beilinson \cite {beilinson-adeles}. 
    Since we deal with the current algebras only, we detect
    only the Chern character; the Todd genus will naturally appear, as in 
    \cite{feigin-tsygan-RR},  after we include the dg-algebra  $R\Gamma(D_n^\circ, \TT)$,
     see (0.8) below.

  \paragraph {(0.7)} We use three main technical tools. The first one is the general theory of 
   derived stacks \cite{toen-vezzosi}. It is necessary for us to work freely with quite general derived stacks and even
   prestacks in order to study, for instance, 
   the group object corresponding to $\gen_n^\bullet$. This is an (infinite dimensional)
  derived group $G(D_n^\circ)$, see Proposition \ref{prop:Lie-alg-G-hat}.    In particular, for dealing with
  various infinitesimal constructions (even such seemingly simple ones as ``passing from a group to its Lie algebra")
  we  need to use Lurie's formalism of formal moduli problems \cite{lurie-dagx}. 
  
  \vskip .2cm
  
   The second technical tool is cyclic homology of dg-categories,  a concept of great flexibility and invariance.
   It includes, in particular cyclic (and de Rham) homology for schemes and at the same time, is related to the Lie algebra
   homology of endomorphism dg-algebras of objects in  a dg-category. 
   
   \vskip .2cm
   
   Another  important tool is   the $GL_n$-invariance of our cohomology classes. In exploiting this invariance, 
   the  Jouanolou model for $\Aen_n^\bullet$ is more convenient in that it allows an explicit $GL_n$-action which can be
   analyzed in detail at the level of complexes.  This level of detail is not available for more abstract models,
   e.g., for the flag-adelic one.

    \paragraph{(0.8)} This paper is  related to the  idea, mentioned already in \cite{beilinson-drinfeld}
    and developed in \cite{francis-gaitsgory}, 
    of generalizing the theory of chiral and holomorphic
     factorization algebras to higher dimensions.     In this approach
    we are dealing with finite collections $(x_i)$ of points moving in  an $n$-dimensional variety $X$, with
    ``singularities" developing when $x_i=x_j$ for some $i\neq j$. These singularities are of cohomological nature,
    representing classes in  $H^{n-1}(X^N-\{\on{diag}\}, \Oc)$. 
    
    \vskip .2cm 
    
    Note that the standard quantum field theory approach deals with collections of points in the Minkowski space
    with singularities developing when some $x_i-x_j$ lies on the light cone, see, e.g.,   \cite{tamarkin} for
    a discussion from the factorization algebra point of view. 
     However, our cohomological formulas in the Jouanolou model
     are, up to ``details" involving the cohomological grading,  algebraically similar
    to this. For example, the role of the standard ``propagator" $1/\|z\|^2$ is played by the Martinelli-Bochner form
    $\Omega(z,z^*)$. 
    
    \vskip .2cm
    
    The next natural step in this direction would be  to study the  dg-Lie algebra $R\Gamma(D_n^\circ, \TT)$,
    the analog of the Witt algebra of formal vector fields on the circle (as well as its central extensions).
     It should act on the derived
    moduli stack of $n$-dimensional rigidified complex manifolds $(X,x, (z_1, \dots, z_n))$ where $x\in X$ is a marked point and
    $(z_1,\dots, z_n)$ is a formal coordinate system near $x$. There is a natural combined version involving the derived Atiyah algebra
    ($R\Gamma$ of the semidirect product of matrix functions and vector fields). 
    We plan to address these issues in a future paper. 
    The additional technical difficulty here is the need
    to work with (quite general) derived stacks in the analytic context, as not all deformations of an algebraic variety
    are algebraic. 
    
    \paragraph{(0.9)} Here is a logical summary of the paper. 
    
    \vskip .2cm
    
    In Chapter \ref{sec:der-fun-series} we present our approach to derived analogs of Laurent polynomials,  formal 
    Laurent series and adeles.  The abstract derived adelic formalism is developed in \S \ref{subsec:der-ad-for}. 
    In \S \ref{subsec:exp-mod} we describe our explicit model $A_{[n]}^\bullet$ for the derived analog of
    the ring $\k[z^{\pm 1}]$ of Laurent polynomials (with $A_n^\bullet$, the analog of the field of 
    formal Laurent series, obtained as a completion). The richness of the classical theory of Kac-Moody,
    Virasoro and vertex algebras stems largely from the possibility to make purely algebraic computations
    with (operator valued) Laurent series, their residues etc., and we want our formalism to have similar
    capabilities. While, on one hand, our $A_{[n]}^\bullet$ is an algebraic analog of the Dolbeault complex
    (the precise connection is explained  in \S \ref{subsec:comp-dolb}), on the other hand, its elements
    allow explicit algebraic presentation similar to  writing $f(z) = \sum a_q z^q$ in the one-variable case. 
    This is done in \S \ref{subsec:repan}. The role of the integer $q$ (parametrizing, conceptually, irreducible
    representations of $GL_1$) is now played by certain irreducible representations of $GL_n$. 
    We determine, in Theorem \ref{thm:irr}, which representations do appear and show that they
    appear exactly once. So the analogs of individual Laurent monomials are now intrinsically defined
    ``monomial subspaces" in $A_{[n]}^\bullet$, invariant and irreducible under $GL_n$. 
    Another standard tool of the classical computations is the algebraic residue of a Laurent polynomial/series
    $\Res( f(z) dz) =   a_{-1}$. In \S \ref{subsec:resdual} we develop a similar algebraic residue formalism for
    our model. As mentioned above, the standard differential $dz/z$ is replaced by the Martinelli-Bochner
    form $\Omega$, which has an algebraic meaning in our model. 
    
    \vskip .2cm
    
    In Chapter  \ref  {sec:resclass} we study the higher-dimensional analog of the residue pairing
    $\Res(fdg)$ of the classical theory. This is a certain degree $1$ cyclic homology class $\rho$
    of $A_{[n]}^\bullet$. We give two definitions of $\rho$: an abstract one using Hodge decomposition
    of the cyclic homology of $\AAA^n-\{0\}$ in \S \ref {subsec:res-class} B, and a more explicit algebraic
    one, analogous to (0.5) and thus to the framework of  \S \ref{subsec:resdual}, in  \S \ref {subsec:res-class} A. 
    We show that the two definition agree up to a non-zero scalar, by using $GL_n$-invariance
    properties (Theorem \ref{thm:res-invariant}). The preliminary sections \ref{subsec:cyc-hom-alg-cat} and 
    \ref{subsec:cyc-hom-sch} recall, with some slight developments, 
     necessary techniques related to cyclic homology of dg-categories and
    schemes. In particular, we present a result (Proposition \ref{prop:derivedcat-qaffine}) describing the coherent
    derived category of a quasi-affine scheme in terms of the dg-algebra of derived functions, which
    may have other applications. 
    
    \vskip .2cm
    
    In Chapter \ref {sec:hi-curr} we introduce our main objects of study: the higher current algebras
    and their central extensions. While Chapter  \ref  {sec:resclass} was studying the higher analogs 
    of the pairing $\Res(fdg)$ for {\em scalar} functions $f,g$, the central extensions are governed by
    the analogs of the pairing $\Res(\tr(fdg))$ for {\em matrix} functions. As in the scalar case,
    we start by recalling, in \S \ref {subsec:cyc-Lie-hom}, the abstact formalism of Lie algebra and cyclic
    homology, and in \S \ref{subsec:hi-cur} we construct the cocycles explicitly, using the algebraic
    residue formalism of  \S \ref{subsec:resdual}. We prove (Theorem \ref{thm:gamma-P})
    that the space of invariant polynomials of degree $n+1$ on a reductive Lie algebra
    $\gen$ embeds injectively into the second cohomology of the current algebra. 
    
    \vskip .2cm
    
    Chapter \ref {sec:tate-RR} is devoted to identifying some particular central extensions of
    higher current algebras: those coming from  the ``trace anomaly" (failure of the identity $\tr[A,B]=0$
    for infinite-dimensional operators). The abstract properties of this  anomaly are encapsulated
    in the theory of Tate vector spaces and Tate complexes which we recall in \S \ref{subsec:tate-back}
    and perform some additional study of the homological properties of the category of Tate spaces.  In \S \ref{subsec:tate-ob-dgcat} we embed this into a more general formalism of Tate objects in
    dg-categories, where the trace anomaly is obtained as a delooping statement. The algebra $A_n^\bullet$,
    the higher analog of $\k((z))$, is naturally a Tate complex and we get the corresponding Tate extension
    of $A_n^\bullet\otimes\gl_r$ from the action on $A_n^\bullet\otimes\k^r$. Determining this extension
    can be seen as a local analog of the Riemann-Roch theorem for rank $r$ vector bundles.
    It is done in \S \ref{subsec:LRR}. More precisely, we prove in Theorem \ref{thm:compar-1} the the extension
    is given by a cocycle proportional to the matrix residue invariant $\Res(\tr(f_0df_1\cdots df_n))$
    with $\Res$ is understood in the sense of \S  \ref{subsec:resdual}. 
    
    \vskip .2cm
    
    In Chapter \ref{sec:act-der} we relate higher affine algebras with derived moduli spaces of $G$-bundles
    ($G$ a reductive group) on $n$-dimensional varieties $X$. In \S \ref{subsec:back-der} and \S \ref{subsec:kod-sp}
    we provide a brief backround on derived geometry in general and on the Kodaira-Spencer homomorphism
    in particular. In \S \ref{subsec:der-cur-mod-G} we prove (Theorem \ref{prop-groupaction}) that the higher
    affine algebra associated to $\gen=\Lie(G)$ acts on  the derived scheme $\RBunrig_G(X, x)$ 
    discussed above.  Further, if we want to extend to action
    to natural determinantal line bundles over  $\RBunrig_G(X, x)$, the obstruction to that  turns out to be
    the trace anomaly studied in Chapter \ref {sec:tate-RR}. This is shown in \S \ref{subsec:cent-ext-Tate}.
    Finally, in \S \ref{subsec:act-det-tor} we explicitly identify the cocycle that governs the anomaly.
    More precisely,  Theorem \ref{thm:action-det} together with the local Riemann-Roch theorem
    \ref{thm:compar-1} shows that
     that it corresponds to the cocycle proportional to 
    $\Res(\tr(\phi(f_0) d\phi(f_1)\cdots d\phi(f_n))$ where $\phi: G\to GL_r$ is the representation of $G$ used
    to define the determinantal bundle and $\Res$ is as in \S \ref{subsec:resdual}. 
    
    \vskip .2cm
    
    The brief Appendix collects background material on dg-algebras, dg-categories and model categories,
    used throughout the paper.

    \paragraph{(0.10)}  
    We would like to thank A. Beilinson, L. Hesselholt, P. Schapira and B. Vallette for useful correspondence. 
    G.F. is thankful to J.D. Stasheff, B.H. to P. Safronov and M.K. to A. Polishchuk
     for interesting discussions. We are also grateful to the referees for numerous remarks which helped us
     improve the paper. 
    
   The work of M.K. was supported by the World Premier International Research Center Initiative (WPI Initiative), MEXT, Japan.
   It was also partially supported by the EPSRC Programme Grant  EP/M024830 ``Symmetries and Correspondences".
   
   B.H. would like thank the Max-Planck institut f\"ur Mathematik in Bonn for hosting and supporting him during the redaction of this paper.
     
\section{Derived analogs of functions and series}\label{sec:der-fun-series}

\subsection{Derived adelic formalism}\label{subsec:der-ad-for}

\paragraph{A. Local part.} 
We fix a base field $\k$ of characteristic $0$. 
For $n\geq 1$ we have the $n$-dimensional formal disk $D_n=\Spec \k[[z_1,\dots, z_n]]$ and
the punctured formal disk $D_n^\circ= D_n-\{0\}$. We consider them as the completion of the affine space 
$\AAA^n=\Spec \k[z_1, \dots, z_n]$ and of  the punctured affine space $\pA^n=\AAA^n-\{0\}$. 

\vskip .2cm

Fundamental for us will be the commutative dg-algebras
\[
(\Aen_n^\bullet,\dbar)  = R\Gamma(D_n^\circ, \Oc), \quad (\Aen_{[n]}^\bullet,\dbar) = R\Gamma(\pA^n, \Oc). 
\]
defined uniquely up to quasi-isomorphism.
The cohomology of these dg-algebras 
  is well known and can be obtained using the covering of  $D^\circ_n$ by $n$ affine
open subsets $\{z_i\neq 0\}$.

\begin{prop}\label{prop:H^iO}
 For   $n=1$ the scheme $D^\circ_1$ is affine
 with ring of functions $\k((z))$ and $\pA^1$ is affine with ring of functions
 $\k[z, z^{-1}]$. For $n>1$ we have
  \[
 H^i(D^\circ_n, \Oc) \,\,=\,\,\begin{cases}
 \k[[z_1, \dots, z_n]], &  i=0;
 \\
 z_1^{-1}\cdots z_n^{-1}\k[z_1^{-1}, \dots, z_n^{-1}], & i=n-1;
 \\
 0, & \text{otherwise.}
 \end{cases}
 \]
 Here the notation $z_1^{-1}\cdots z_n^{-1}\k[z_1^{-1}, \dots, z_n^{-1}]$ can be seen as encoding the 
 action of the $n$-dimensional torus $\GG_m^n$
 on $H^{n-1}(D^\circ_n, \Oc)$. 
 
 The cohomology $H^i(\pA^n,\Oc)$ differs from the above by replacing $\k[[z_1, \dots, z_n]]$
 by $\k[z_1, \dots , z_n]$. 
  \end{prop}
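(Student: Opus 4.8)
The plan is to compute everything via \v Cech cohomology for the standard affine cover and to recognize the answer as the local cohomology of a regular local ring. The case $n=1$ is immediate: $D_1^\circ = \Spec \k((z))$ and $\pA^1 = \Spec \k[z,z^{-1}]$ are affine, so $\Oc$ has no higher cohomology and $H^0$ is the displayed ring. For $n>1$, set $R = \k[[z_1,\dots,z_n]]$ (resp.\ $R = \k[z_1,\dots,z_n]$) and cover $D_n^\circ$ (resp.\ $\pA^n$) by the affine opens $U_i = \{z_i \neq 0\} = \Spec R_{z_i}$. All finite intersections $U_{i_0}\cap\cdots\cap U_{i_p} = \Spec R_{z_{i_0}\cdots z_{i_p}}$ are again affine, and the ambient scheme is Noetherian and separated, so by the comparison theorem (Leray together with the vanishing of higher cohomology of quasi-coherent sheaves on affines) the sheaf cohomology $H^p(D_n^\circ,\Oc)$ is computed by the \v Cech complex $\check C^p = \bigoplus_{|S| = p+1} R_{z_S}$, where $S \subseteq \{1,\dots,n\}$ and $z_S = \prod_{i\in S} z_i$, concentrated in degrees $0 \le p \le n-1$.

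Next I would identify this \v Cech complex with the stable Koszul (\v Cech) complex computing local cohomology: augmenting by $R$ in degree $-1$ produces the complex whose cohomology is $H^\bullet_{\frakm}(R)$ for $\frakm = (z_1,\dots,z_n)$. Since $R$ is regular (hence Cohen--Macaulay) of dimension $n$ and $\frakm$ has height $n$, local cohomology is concentrated in the top degree: $H^j_{\frakm}(R) = 0$ for $j\neq n$. Reading off the augmentation, this gives at once the vanishing of the intermediate groups $H^p(D_n^\circ,\Oc) = H^{p+1}_{\frakm}(R) = 0$ for $1 \le p \le n-2$, together with $H^{n-1}(D_n^\circ,\Oc) \cong H^n_{\frakm}(R)$; and, using $n\ge 2$ so that $H^0_{\frakm} = H^1_{\frakm} = 0$, it identifies $H^0(D_n^\circ,\Oc)$ with $R$.

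It then remains to make the two surviving groups explicit. For $H^0$, the cocycle condition says an element lies in $\bigcap_i R_{z_i}$, and this intersection is $R$ by normality of $R$ (an algebraic Hartogs statement: a function regular in codimension $1$ on the normal scheme $D_n^\circ$ is regular), giving $\k[[z_1,\dots,z_n]]$ (resp.\ $\k[z_1,\dots,z_n]$). For the top group I would use the monomial description $H^n_{\frakm}(R) = \Coker\bigl(\bigoplus_i R_{z_1\cdots \widehat{z_i}\cdots z_n} \to R_{z_1\cdots z_n}\bigr)$: writing elements of $R_{z_1\cdots z_n}$ as series $\sum_a c_a z^a$ with exponent vectors bounded below, the image consists of those series supported where some $a_i \ge 0$, so the quotient is spanned by the monomials with every $a_i \le -1$. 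Because such exponents are also bounded below, only finitely many occur in each class, so these are genuine polynomials in the inverse variables, i.e.\ $z_1^{-1}\cdots z_n^{-1}\k[z_1^{-1},\dots,z_n^{-1}]$ --- the same answer for the power-series and polynomial rings, with the stated $\GG_m^n$-action given by the monomial grading.

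The main obstacle is the power-series bookkeeping in this last step: one must check that the cokernel really reduces to \emph{finite} inverse polynomials rather than infinite inverse series, which is exactly what the ``bounded below and all exponents negative'' support condition guarantees. A secondary point requiring care is the legitimacy of the \v Cech-to-derived-functor comparison for the non-finite-type (but Noetherian) scheme $D_n^\circ$; this is handled by the explicit affineness of all the intersections $U_{i_0\cdots i_p}$.
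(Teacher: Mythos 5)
Your proof is correct and follows essentially the same route as the paper, which simply asserts that the cohomology is ``well known and can be obtained using the covering of $D_n^\circ$ by $n$ affine open subsets $\{z_i\neq 0\}$'' --- exactly the \v Cech cover you use. Your detour through the stable Koszul complex and Cohen--Macaulay vanishing of $H^j_{\frakm}(R)$ for $j<n$, plus the monomial bookkeeping for the cokernel, is just a standard and complete way of filling in the details the paper leaves implicit.
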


  Thus, although for $n>1$, the scheme  $D^\circ_n$. resp. $\pA^n$, is not affine  
  and its global functions are the same as for  $D_n$, resp. $\AAA^n$, the missing ``polar parts"
  are recovered in the higher cohomology of the sheaf $\Oc$. The dg-algebras
  $\Aen_{[n]}$ and $\Aen_n$ are, therefore, correct $n$-dimensional generalizations of the 
  the rings of Laurent polynomials and Laurent series in one variable. 
  
  We will also use the doubly graded dg-algebras
 
 \[
(\Aen_n^{\bullet \bullet},\del, \dbar)  =  \bigoplus_p R\Gamma(D_n^\circ, \Omega^p), \quad
( \Aen_{[n]}^{\bullet\bullet},\del,\dbar)  =  \bigoplus_p R\Gamma(\pA^n,  \Omega^p),
\]
with $\del$ being induced by the de Rham differential on forms (and increasing the first grading)
and $\dbar$ being the differential on $R\Gamma$ (and increasing the second grading). 


\paragraph{B. Global part.} Let $X$ be a smooth $n$-dimensional   variety over $\k$
 and $x\in X$ a $\k$-point.
We then have the completed local ring $\wh\Oc_{X,x}$ which is isomorphic (non-canonically)
to $\k[[z_1, \dots, z_n]]$. We denote $\wh x = \Spec\wh\Oc_{X,x}$ the formal disk near $x$
and by $\wh x^\circ = \wh x-\{x\}$ the punctured formal disk near $x$. We then form
the commutative dg-algebras
\[
\Aen^\bullet_x = R\Gamma(\wh x^\circ, \Oc), \quad \Aen^{\bullet\bullet}_x = 
\bigoplus_p R\Gamma(\wh x^\circ, \Omega^p). 
\]
In particular, the Grothendieck duality defines a canonical linear functional
\be\label{eq:GR-res}
\Res_x: R\Gamma(\wh x^\circ, \Omega^n) \buildrel\delta\over\lra R\Gamma_{\{x\}}
(\wh x, \Omega^n)[1] \lra \k[1-n]. 
\ee
Let now  $\x = \{ x_1, \dots, x_m\} \subset X$ be a finite set of  disjoint $\k$-points.  We denote $X^\circ=X-\x$ the complement 
of $\x$, and write $\wh\x = \bigsqcup \wh x_i$
and $\wh\x^\circ = \bigsqcup \wh x_i^\circ$. 
In particular, we have the commutative dg-algebra
$R\Gamma(X^\circ, \Oc)$. Elements of this dg-algebra can be seen as $n$-dimensional
analogs of rational functions on a curve with poles in $x_1,\dots, x_m$.
Similarly, if $E$ is a vector bundle on $X$, we have the dg-modules
$\Aen_\x^\bullet(E) = R\Gamma(\wh \x^\circ, E)$
over $\Aen^\bullet_\x = R\Gamma(\wh\x^\circ, \Oc)$ and $R\Gamma(X^\circ,E)$
over  $ R\Gamma(X^\circ, \Oc)$. 
We have the canonical morphism
of complexes (commutative dg-algebras for $E=\Oc$) given by the restriction:
\be\label{eq:rest-delta}
\delta:   R\Gamma(X^\circ, E)
\oplus \Gamma(\wh \x, E)
\lra \Aen_\x(E)
\,= \,  \bigoplus_{i=0}^m \Aen^\bullet_{x_i}(E). 
\ee
This morphism can be seen as the dg-version of the adelic complex of Beilinson \cite{beilinson-adeles}
\cite{huber} ,
in which the dependence on schemes of dimensions $1, \dots, n-1$ has been ``integrated away"
and hidden in the cohomological formalism. The proof of the next proposition can be seen as an explicit
comparison. 
 
\begin{prop}\label{prop:adelic-cx}
The homotopy fiber of $\delta$ is identified with $R\Gamma(X, E)$.  
\end{prop}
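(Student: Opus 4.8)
The statement is a Mayer--Vietoris (derived excision) assertion for the idealized covering of $X$ by the open $X^\circ$ and the formal neighborhood $\wh\x$, with overlap $\wh\x^\circ$. The plan is to rewrite it as the claim that the commutative square of restriction maps
\[
\begin{array}{ccc}
R\Gamma(X,E) & \lra & \Gamma(\wh\x, E) \\
\downarrow & & \downarrow \\
R\Gamma(X^\circ, E) & \lra & \Aen_\x(E)
\end{array}
\]
is homotopy cartesian. Since $\delta$ is by construction the difference of the two restriction maps into $\Aen_\x(E)=R\Gamma(\wh\x^\circ,E)$, the square being cartesian says exactly that $R\Gamma(X,E)$ is the fiber of $\delta$, which (up to the standard shift conventions) is the stated identification of $\Cone(\delta)$ with $R\Gamma(X,E)$. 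A square is homotopy cartesian precisely when the induced map on its vertical homotopy fibers is an equivalence, so I would reduce everything to comparing those two fibers.

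First I would compute the two vertical fibers via open--closed localization. For the left column, the localization triangle attached to the open immersion $X^\circ\hookrightarrow X$ with closed complement $\x$ identifies $\on{fib}\bigl(R\Gamma(X,E)\to R\Gamma(X^\circ,E)\bigr)$ with the local cohomology $R\Gamma_\x(X,E)$. For the right column, $\wh\x$ is a disjoint union of spectra of complete local rings, hence affine, so $R\Gamma(\wh\x,E)=\Gamma(\wh\x,E)$; the analogous triangle for $\wh\x^\circ\hookrightarrow\wh\x$ identifies $\on{fib}\bigl(\Gamma(\wh\x,E)\to\Aen_\x(E)\bigr)$ with $R\Gamma_\x(\wh\x,E)$. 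The compatibility of restriction maps then shows the square is cartesian if and only if the completion map induces a quasi-isomorphism $R\Gamma_\x(X,E)\lra R\Gamma_\x(\wh\x,E)$.

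The hard part, and the only real content, is this last comparison: that local cohomology supported at the points of $\x$ is insensitive to completion. Since $\x$ is a finite set of closed $\k$-points, both sides split as $\bigoplus_i$ of contributions supported at the individual $x_i$, so it suffices to treat one smooth point $x$ with local parameters $z_1,\dots,z_n$. I would compute both $R\Gamma_{\{x\}}(X,E)$ and $R\Gamma_{\{x\}}(\wh x,E)$ by the stable Koszul (telescope) complex on $z_1,\dots,z_n$ tensored with $E$. Because $E$ is locally free, the completion $\Oc_{X,x}\to\wh\Oc_{X,x}$ is flat, and the $z_j$ remain a regular system of parameters generating the maximal ideal after completion, this complex and hence its cohomology — concentrated in degree $n$ — is unchanged by completion. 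I would prefer to carry this out through the explicit Čech/Koszul computation already underlying Proposition~\ref{prop:H^iO}, rather than citing a black-box excision theorem, since that simultaneously furnishes the ``explicit comparison'' with the Beilinson adelic complex mentioned above. Granting this step, the two vertical fibers agree, the square is homotopy cartesian, and the identification of $\Cone(\delta)$ with $R\Gamma(X,E)$ follows; the remaining manipulations are purely formal juggling of localization triangles.
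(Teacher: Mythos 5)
Your proof is correct, but it takes a genuinely different route from the paper's. The paper does not use localization triangles at all: it invokes the theorem of Beilinson and Huber (\cite{beilinson-adeles}, \cite{huber}) that the flag-indexed adelic complex $C^\bullet(E)$ computes $R\Gamma(X,E)$, and then simply sorts the flags $Y_0\subset\cdots\subset Y_p$ by their minimal member: flags with $Y_0\notin\x$ assemble into the adelic complex of $X^\circ$, the one-term flags $Y_0=y\in\x$ give $\bigoplus_y\Gamma(\wh y,E)$, and the longer flags with $Y_0\in\x$ give (after the evident shift) the adelic complex of $\wh\x^\circ$; this exhibits $C^\bullet(E)$ term-by-term as the totalization of $\delta$ (the fiber, up to the same shift convention you flag). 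What that buys is precisely what the sentence preceding the proposition promises: an explicit comparison between the morphism $\delta$ and the Parshin--Beilinson adelic formalism, at the cost of taking the adelic resolution theorem as input. Your argument instead reduces everything, via the two open--closed localization triangles, to the excision statement $R\Gamma_\x(X,E)\simeq R\Gamma_\x(\wh\x,E)$, which is standard local cohomology (stable Koszul complexes, flat base change along $\Oc_{X,x}\to\wh\Oc_{X,x}$, and the fact that local cohomology modules are $\men$-torsion, hence insensitive to that base change); this is self-contained modulo classical commutative algebra, and it would even work for arbitrary coherent $E$, but it makes no contact with adeles. One wording correction: the stable Koszul complex is not literally ``unchanged by completion'' --- its terms are localizations of $\Oc_{X,x}$ rather than of $\wh\Oc_{X,x}$ --- what is true is that the natural map between the two complexes is a quasi-isomorphism, because both have cohomology concentrated in degree $n$ and that cohomology is $\men$-torsion, so it is unaffected by the flat base change; with that restated, your proof is complete.
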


\noindent{\sl Proof:} 
 For any Noetherian scheme $Y$ of dimension $n$ over $\k$ (proper or not) and any coherent sheaf
$\Fc$ on $Y$, the construction of  \cite{beilinson-adeles}\cite{huber} provides an explicit model
$C^\bullet(\Fc)$ for $R\Gamma(Y,\Fc)$.  By definition,
\[
C^p(\Fc) \,\, = \bigoplus_{0\leq i_0<\cdots < i_p\leq n} C_{i_0,\dots, i_p}(\Fc),
\]
where
\[
C_{i_0,\dots, i_p}(\Fc)\,\,= \operatorname*{\prod{}^{\displaystyle '}}\limits_{{Y_0\subset\cdots\subset Y_p\subset Y}
\atop
{\dim(Y_\nu)=i_\nu}} C_{Y_0,\dots, Y_p}(\Fc)
\]
is the appropriate restricted product, over all flags $Y_0\subset\cdots\subset Y_p\subset Y$ of irreducible
subschemes, of the completions $C_{Y_0,\dots, Y_p}(\Fc)$ as described in
 \cite{beilinson-adeles}\cite{huber}. In particular, for $p=0$ and $i_0=0$,
the summand $C_0(\Fc)$ is the usual product of $C_y(\Fc) = \Gamma(\wh y, \Fc)$ over all $0$-dimensional points $y\in Y$. 
We now  take $Y=X$,  $\Fc=E$ and represent 
\[
C^\bullet(\Fc) \,\,=\,\,\on{hofib}\bigl\{ C_1^\bullet\oplus C_2^\bullet \buildrel d\over \lra C_3^\bullet\bigr\},
\]
identifying the three summands with the corresponding summands in \eqref {eq:rest-delta} and $d$ with $\delta$.
Explicitly, we take
$C_2$ to be the direct sum of $C_y(\Fc)$ over $y\in \x$, we take $C_1^\bullet$ to be the direct sum of restricted
products of $C_{Y_0,\dots, Y_p}(\Fc)$ over all flags $Y_0\subset\cdots\subset Y_p$ such that $Y_0$
is any subvariety (of any dimension) other than some $y\in\x$. Then $C_2^\bullet$ is the adelic complex for the restriction of $E$
$Y=X-\x$. Similarly, we take $C_3$ to be the direct sum of restricted products of $C_{Y_0,\dots, Y_p}(\Fc)$ over all flags $Y_0\subset\cdots\subset Y_p$ such that $Y_0$
equals some $y\in\x$. Then $C_3^\bullet$ is the adelic complex for the restriction of $E$ to $\wh\x$. This proves
the statement. \qed

 \vskip .2cm

 In particular, for $E=\Omega_X^n$, we have morphisms 
 \[
 \Res_{x_i, X}: R\Gamma(X^\circ, \Omega^n)\to
 \k[1-n],\quad i=1,\dots, m, 
 \]
  which satisfy the {\em residue theorem}:

 \begin{prop}
$\sum_{i=1}^m \Res_{x_i, X} = 0$. \qed
 \end{prop}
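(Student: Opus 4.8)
The plan is to deduce the reciprocity $\sum_i\Res_{x_i,X}=0$ from global Serre--Grothendieck duality on the proper variety $X$, organised around the local-cohomology triangle of the pair $(X,X^\circ)$. The vanishing will come out formally as the statement that two consecutive arrows of that triangle compose to zero, and the only substantive geometric input will be the compatibility of the global duality trace with the local residues of \eqref{eq:GR-res}.

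First I would set up the global counterpart of \eqref{eq:GR-res}. Since each $x_i$ has codimension $n$ in the smooth $n$-fold $X$ and $\Omega^n$ is the dualizing sheaf (so that $\Omega^n$ is Cohen--Macaulay along $\x$), the complexes $R\Gamma_{\{x_i\}}(X,\Omega^n)$ are concentrated in degree $n$, and there is a distinguished triangle
\[
\bigoplus_i R\Gamma_{\{x_i\}}(X,\Omega^n)\xrightarrow{\ a\ }R\Gamma(X,\Omega^n)\xrightarrow{\ b\ }R\Gamma(X^\circ,\Omega^n)\xrightarrow{\ \partial\ }\bigoplus_i R\Gamma_{\{x_i\}}(X,\Omega^n)[1].
\]
By excision, $R\Gamma_{\{x_i\}}(X,\Omega^n)\simeq R\Gamma_{\{x_i\}}(\wh x_i,\Omega^n)$, compatibly with the connecting maps: restriction to $\wh x_i^\circ$ intertwines the global boundary $\partial$ with the local boundary $\delta$ of \eqref{eq:GR-res}. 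Since $\Res_{x_i}=\tr_{x_i}[1]\circ\delta$, where $\tr_{x_i}\colon R\Gamma_{\{x_i\}}(\wh x_i,\Omega^n)\to\k[-n]$ is the local residue pairing, and $\Res_{x_i,X}$ is $\Res_{x_i}$ precomposed with this restriction, the excision compatibility gives
\[
\sum_i\Res_{x_i,X}\ =\ \Big(\sum_i\tr_{x_i}\Big)[1]\circ\partial\ =\ \tr_{\x}[1]\circ\partial,\qquad \tr_{\x}:=\sum_i\tr_{x_i}.
\]

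The heart of the argument is the identity $\tr_{\x}=\tr_X\circ a$, where $\tr_X\colon R\Gamma(X,\Omega^n)\to\k[-n]$ is the global trace of Grothendieck--Serre duality, which exists precisely because $X$ is smooth, projective and connected (it induces $H^n(X,\Omega^n)\xrightarrow{\sim}\k$). Granting this, the theorem is immediate, since $a[1]\circ\partial=0$ as two consecutive arrows of the rotated triangle:
\[
\sum_i\Res_{x_i,X}\ =\ \tr_{\x}[1]\circ\partial\ =\ \tr_X[1]\circ a[1]\circ\partial\ =\ 0.
\]

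The compatibility $\tr_{\x}=\tr_X\circ a$ is the main obstacle, and the only place where properness of $X$ is used: it asserts that the fundamental-class functional on $H^n(X,\Omega^n)$ is computed, on classes supported at the $x_i$, by the Grothendieck residue symbols, i.e.\ that $\tr_X$ restricts on each $R\Gamma_{\{x_i\}}(X,\Omega^n)$ to the local trace $\tr_{x_i}$. This is a standard but genuinely nontrivial theorem of Grothendieck duality (the local--global compatibility of traces, equivalently the reciprocity underlying the adelic formalism of \cite{beilinson-adeles}); in the present setting it can alternatively be obtained by identifying $\tr_X$ with the top adelic residue through Proposition \ref{prop:adelic-cx}, whereupon it is visibly the sum of the local contributions. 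Everything else in the argument is formal manipulation of distinguished triangles.
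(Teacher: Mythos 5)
Your proof is correct and takes essentially the same approach as the paper: both factor $\sum_i\Res_{x_i,X}$ through the local-cohomology boundary map and the local--global compatibility of Grothendieck--Serre traces (which both treat as the standard, citable input), and then conclude because two consecutive arrows compose to zero. The only difference is presentational: you argue at the derived level with the distinguished triangle $\bigoplus_i R\Gamma_{\{x_i\}}(X,\Omega^n)\to R\Gamma(X,\Omega^n)\to R\Gamma(X^\circ,\Omega^n)$, whereas the paper first reduces by degree considerations to the $(n-1)$-st cohomology and invokes the corresponding long exact sequence.
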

 
 \noindent{\sl Proof:} This is a standard feature of Grothendieck duality, cf. \cite{conrad}.
 By degree considerations  it suffices to look at the behavior on the $(n-1)$-st cohomology only.  That is, we consider, 
  for each $i$, the map $H^{n-1}(\Res_i): H^{n-1}(\wh x_i^\circ, \Omega^n)\to\k$ induced by 
  $\Res_{x_i}$   on the $(n-1)$st cohomology,
   and  prove that the compositions of these maps with the $H^{n-1}(X^\circ,\Omega^n)\to H^{n-1}(\wh x_i^\circ,
 \Omega^n)$ sum to zero. 
 
  Indeed, $H^{n-1}(\Res_i)$ can be represented as the composition
 \[
 H^{n-1}(\wh x_i^\circ, \Omega^n) \simeq  H^n_{\{x_i\}}(\wh x_i, \Omega^n)  =  H^n_{\{x_i\}}(X, \Omega^n)
 \buildrel\iota_{x_i} \over \lra H^n(X, \Omega^n)  \buildrel\tr\over\lra
 \k,
 \]
 where $\tr$ is the global Serre duality isomorphism. Now the statement follows from the fact that in the long
 exact sequence relating cohomology with and without support in $\x$,  
  \[
\cdots \to  H^{n-1}(X^\circ, \Omega^n) \buildrel \delta \over\lra H^n_{{\x}}(X, \Omega^n)
\buildrel \iota_{\x} = \sum \iota_{x_i} \over  \lra H^n(X, \Omega^n)\to \cdots 
 \]
 the composition of  any two consecutive
 arrows is zero. \qed


\subsection{Explicit models}\label{subsec:exp-mod}

We start with the ``polynomial" dg-algebra $\Aen_{[n]}^\bullet$. By considering the fibration
$\pA^n\to \PP^{n-1}$, we can write
\be\label{eq:aen-[n]-P}
\Aen_{[n]}^\bullet \,\,\sim\,\,  \bigoplus _{i\in\ZZ} R\Gamma(\PP^{n-1}, \Oc(i)). 
\ee
From here, passing to the completion is easy:  it is similar to passing from Laurent polynomials in one variable to
Laurent series. More precisely, for a graded vector space $\bigoplus_{i\in\ZZ} V^i$
we denote
\[
\sum_{i\gg -\infty} V^i \,\, =\,\, \varinjlim_a\,\, \varprojlim_{b} \,\, \bigoplus_{i=-a}^b V^i 
\]
the vector space formed by Laurent series $\sum_{i\gg -\infty} v_i$  with $v_i\in V^i$. Then
\be\label{eq:aen-n-P}
\Aen_n^\bullet \,\,\sim \,\, \sum_{i\gg -\infty} R\Gamma(\PP^{n-1}, \Oc(i))
\ee
Applying various way of calculating the cohomology of $\PP^{n-1}$, we get various explicit
models for $\Aen_n$ and related dg-algebras and modules.

\paragraph{A. The \v Cech model.} Covering $\PP^{n-1}$ with open sets $\{z_i\neq 0\}$,
or, what is the same, covering $D_n^\circ$  with similar open sets right away, we get a model
for $\Aen_n^\bullet$ as the \v Cech complex of this covering. The Alexander-Whitney
multiplication makes this complex into an {\em associative but not commutative} dg-algebra. 

We can use Thom-Sullivan forms to replace this by a commutative dg-algebra model for
$\Aen_n^\bullet$. 


\paragraph{B. The Jouanolou model.} 
Introduce another set of variables $z_1^*, \dots, z_n^*$ which we think of as dual to the 
$z_\nu$, i.e., as the coordinates in the dual affine space $\wc\AAA^n$.
 We write
 \[
 zz^* \,\,=\,\,\sum z_\nu z^*_\nu, \quad z\in \AAA^n, \, z^* \in\wc\AAA^n.
 \]
 We
   form the corresponding ``dual" projective space  
 $\wc{\PP}^{n-1} = \on{Proj} \k[z_1^*, \dots, z_n^*]$ and consider the incidence quadric
 \[
 Q \,\,=\,\,\bigl\{ (z, z^*) \in \AAA^n\times \wc\PP^{n-1}\bigl|\, \, zz^*=0\bigr\}\,\,\subset \,\, \AAA^n\times \wc\PP^{n-1}. 
 \]
 We denote the complement $(\AAA^n\times\wc\PP^{n-1})-Q$ by $J$ and note that the projection to the
  first factor gives a  morphism
\[
\pi: J\lra \pA^n
\]
whose fibers are affine spaces of dimension $n-1$. We refer to $J$ as the {\em Jouanolou torsor}
for $\pA^n$. 
 For further reference let us point out that
\be\label{J=G/G}
J \,\,\simeq \,\,\bigl\{ (z_1, \dots, z_n, z^*_1, \dots, z^*_n) \in \AAA^n\times (\wc\AAA^n-\{0\}) \,| \,\, zz^*=1\bigr\},
\ee
the isomorphism given by the projection $\wc \AAA^n -\{0\} \to \wc\PP^{n-1}$ on the second factor. 

 For any quasi-coherent sheaf $E$
on $\pA^n$ we then have the  global  relative de Rham complex 
\[
A_{[n]}^\bullet (E) \,\,=\,\, \Gamma \bigr(J, \Omega^\bullet_{J/\pA^n}
\otimes  \pi^* E \bigr).
\]
The differential in  $A_{[n]}^\bullet (E)$ (given by the relative de Rham differential)
will be   denoted $\dbar$. 

\vskip .2cm

Let also
\[
\wh J \,\,=\,\, J\times_{\AAA^n} D_n \,\,=\,\, J\times_{\pA^n} D_n^\circ \buildrel\wh\pi
\over\lra D_n^\circ
\]
be the restriction of $J$ to the punctured formal disk. As before, $\wh J$ is an affine scheme
and an $\AAA^{n-1}$-torsor over $D_n^\circ$. For any quasi-coherent sheaf $E$ on $D_n^\circ$
we denote
 \[
A_{n}^\bullet (E) \,\,=\,\, \Gamma \bigr(\wh J, \Omega^\bullet_{\wh J/D_n^\circ}
\otimes  \wh\pi^* E \bigr).
\]

\begin{prop}\label{prop:RGammaJ}
(a)  $A_{[n]}^\bullet (E)$ is a model for $R\Gamma(\pA^n, E)$, and $A_n^\bullet(E)$
is a model for $R\Gamma(D_n^\circ, E)$. 

\vskip .2cm

(b) The functor $E\mapsto A_{[n]}^\bullet (E)$  (resp. $E\mapsto A_n^\bullet(E)$)
is a lax  symmetric monoidal functor from
the category of quasi-coherent sheaves on $\pA^n$ (resp. on $D_n^\circ$)
 to the category of complexes of $\k$-vector
spaces. 
In particular, if $E$ is a quasi-coherent  commutative 
$\Oc_{\pA^n}$-algebra (resp. $\Oc_{D_n^\circ}$-algebra), then  $A_{[n]}^\bullet (E)$
(resp. $A_n^\bullet(E)$)
is a commutative dg-algebra.
\end{prop}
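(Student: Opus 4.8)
The plan is to prove parts (a) and (b) separately: part (a) by combining Jouanolou's trick (affineness) with the relative algebraic Poincar\'e lemma, and part (b) by transporting the graded-commutative wedge product on relative de Rham forms through global sections. For part (a) I would first record the two structural inputs. Affineness of $J$ is manifest from the description \eqref{J=G/G}, which realizes $J$ as the affine hypersurface $\{zz^*=1\}\subset\AAA^n\times\wc\AAA^n$; since $\pA^n$ is separated, the projection $\pi\colon J\to\pA^n$ is then an affine morphism, and as a torsor under a vector bundle it is moreover Zariski-locally trivial with fibers $\AAA^{n-1}$. Now $\pi$ affine gives $R\pi_*=\pi_*$ on quasi-coherent sheaves, and $J$ affine means the hypercohomology $R\Gamma\bigl(J,\Omega^\bullet_{J/\pA^n}\otimes\pi^*E\bigr)$ is computed termwise, i.e.\ it is represented by the honest complex $A_{[n]}^\bullet(E)$. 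Via the Leray equivalence $R\Gamma(J,-)\simeq R\Gamma(\pA^n,R\pi_*(-))$, identifying $A_{[n]}^\bullet(E)$ with $R\Gamma(\pA^n,E)$ therefore reduces to the sheaf-level quasi-isomorphism
\[
\pi_*\bigl(\Omega^\bullet_{J/\pA^n}\otimes\pi^*E\bigr)\,\simeq\,E
\]
in the derived category of quasi-coherent sheaves on $\pA^n$.

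This last statement is the relative Poincar\'e lemma with coefficients, which I would check Zariski-locally. Over an open $U\subset\pA^n$ trivializing the torsor we have $J|_U\cong U\times\AAA^{n-1}$, and since the relative de Rham differential is $\pi^{-1}\Oc_{\pA^n}$-linear the coefficient sheaf factors out: the complex becomes $E|_U\otimes_\k\Omega^\bullet_{\AAA^{n-1}/\k}$ with differential $1\otimes d$. In characteristic $0$ the algebraic Poincar\'e lemma gives $\Omega^\bullet_{\AAA^{n-1}/\k}\simeq\k$ concentrated in degree $0$, so the complex is quasi-isomorphic to $E|_U$ via the unit $\Oc\hookrightarrow\pi_*\Oc_J$; these local quasi-isomorphisms glue. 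The verbatim argument over $D_n^\circ$, using that $\wh J$ is affine (it is affine over $D_n$) and that $\wh\pi$ is an $\AAA^{n-1}$-torsor, proves the statement for $A_n^\bullet$.

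For part (b) I would build the lax symmetric monoidal structure on sheaves first and then apply global sections. The pullback $\pi^*$ is strong monoidal, $\pi^*(E\otimes F)\cong\pi^*E\otimes\pi^*F$, and the wedge product supplies $\Omega^p_{J/\pA^n}\otimes\Omega^q_{J/\pA^n}\to\Omega^{p+q}_{J/\pA^n}$; together these give a pairing
\[
\bigl(\Omega^\bullet_{J/\pA^n}\otimes\pi^*E\bigr)\otimes_{\Oc_J}\bigl(\Omega^\bullet_{J/\pA^n}\otimes\pi^*F\bigr)\,\lra\,\Omega^\bullet_{J/\pA^n}\otimes\pi^*(E\otimes F),
\]
to which I apply the lax monoidal structure $\Gamma(J,-)\otimes_\k\Gamma(J,-)\to\Gamma(J,-\otimes_{\Oc_J}-)$ of global sections, obtaining $A_{[n]}^\bullet(E)\otimes_\k A_{[n]}^\bullet(F)\to A_{[n]}^\bullet(E\otimes F)$. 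Associativity and unitality follow from the corresponding properties of the wedge product and of $\Gamma$, while the symmetry constraint, with the correct Koszul sign, comes from graded-commutativity of the wedge combined with the symmetry of $\otimes_{\Oc}$. Specializing to a commutative $\Oc$-algebra $E$ and postcomposing with $A_{[n]}^\bullet(\mathrm{mult})$ then endows $A_{[n]}^\bullet(E)$ with a graded-commutative associative product, i.e.\ a commutative dg-algebra structure; the case of $A_n^\bullet$ is identical.

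The main obstacle is the content of part (a): one must simultaneously exploit affineness (so that both the hypercohomology of the de Rham complex and the higher direct images collapse to naive objects) and the relative Poincar\'e lemma (to recover $E$ on the base). The delicate point is that the coefficient sheaf $E$ is neither coherent nor flat in general, so one cannot casually invoke the projection formula; it is the Zariski-local triviality of the torsor, which lets $E$ pass through the fiberwise de Rham computation untouched, that removes this difficulty. The characteristic-$0$ hypothesis is essential, since it is exactly what makes the algebraic Poincar\'e lemma for $\AAA^{n-1}$ hold.
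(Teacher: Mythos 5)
Your proposal is correct and follows essentially the same route as the paper: part (a) via affineness of $J$ (and $\wh J$), the Leray identification $R\Gamma(J,-)\simeq R\Gamma(\pA^n,R\pi_*(-))$, and the relative Poincar\'e lemma applied fiberwise using Zariski-local triviality of the torsor; part (b) via the multiplication of relative differential forms. You merely fill in details the paper leaves implicit (the $\pi^{-1}\Oc_{\pA^n}$-linearity of the relative differential letting $E$ factor out, and the explicit lax monoidal structure), which is sound.
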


\noindent{\sl Proof:} (a) This is a classical argument. We consider the  only
case of $A_{[n]}^\bullet(E)$. 
Because $J$ is affine, we have quasi-isomorphisms 
 \[
 A_{[n]}^\bullet (E)  \,\sim \, R\Gamma(J, \Omega^\bullet_{J/\PP^{n-1}} \otimes \pi^* E) \,\, \sim \,\, 
 R\Gamma(\pA^n , R\pi_*   (\Omega^\bullet_{J/\pA^n}\otimes \pi^* E) ).
 \]
Because $\pi$ is  a  Zariski locally trivial  fibration with fiber $\AAA^{n-1}$, 
the Poincar\'e lemma for  differential forms on $\AAA^{n-1}$ implies that the embeddings
 \[
 E \hookrightarrow \pi_*(\Omega^\bullet_{J/\pA^n}\otimes E) \hookrightarrow R\pi_*\
 (\Omega^\bullet_{J/\pA^n}\otimes E)
 \]
are quasi-isomorphisms of complexes of sheaves on $\pA^n$, whence the statement. 

\vskip .2cm

(b) Obvious by using the multiplication of differential forms. 

\qed

\vskip .2cm

By the above, the dg-algebras
\be\label{eq:defAn}
A_{[n]}^\bullet = A_{[n]}^\bullet(\Oc_{\pA^n}), \quad A_n^\bullet = A_n^\bullet(\Oc_{D_n^\circ})  
\ee
are  commutative dg-models for $\Aen_{[n]}^\bullet$ and $\Aen_n^\bullet$ respectively. 
Their grading is situated in degrees $[0, n-1]$. 
Let us reformulate their definition closer to \eqref{eq:aen-[n]-P} and
\eqref{eq:aen-n-P}. For this, let
\[
\ol J \,\,=\,\,\bigl\{ (z, z^*)\in \PP^{n-1}\times\wc\PP^{n-1} \,\bigl| \,\, zz^*\neq 0 \bigr\}
\buildrel \ol \pi\over\lra \PP^{n-1}
\]
be the classical Jouanolou torsor for $\PP^{n-1}$. For a quasi-coherent sheaf $F$ on $\PP^{n-1}$
we define
\[
R\Gamma^{(\ol J)}(\PP^{n-1}, F) \,\,=\,\, \Gamma \bigr(\ol J, \Omega^\bullet_{\ol J/\PP^{n-1}}
\otimes  \ol \pi^* F \bigr).
\]
As before, this is a model of $R\Gamma(\PP^{n-1}, F)$, depending on $F$ in a way
compatible with the symmetric monoidal structures. 

\begin{prop}
We have isomorphism of commutative dg-algebras
\[
A_{[n]}^\bullet \,\,=\,\, \bigoplus_i R\Gamma^{(\ol J)} (\PP^{n-1}, \Oc(i)), 
\quad A_n^\bullet \,\,=\,\, \sum_{i\gg\-\infty} R\Gamma^{(\ol J)} (\PP^{n-1}, \Oc(i)). 
\]
\end{prop}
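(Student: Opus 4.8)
The plan is to realize $\pi\colon J\to\pA^n$ as the pullback of the classical Jouanolou torsor $\ol\pi\colon\ol J\to\PP^{n-1}$ along the tautological $\GG_m$-bundle $q\colon\pA^n\to\PP^{n-1}$, $z\mapsto[z]$, and then to read off both identities by pushing forward relative forms and decomposing into $\GG_m$-weights --- the same mechanism producing \eqref{eq:aen-n-P} from \eqref{eq:aen-[n]-P}. First I would verify that $J\cong\ol J\times_{\PP^{n-1}}\pA^n$: by \eqref{J=G/G}, and since $zz^*\neq0$ forces $z\neq0$, one has $J=\{(z,z^*)\in\pA^n\times\wc\PP^{n-1}:zz^*\neq0\}$, while the fibre product is $\{([z],z^*,w):[w]=[z],\ zz^*\neq0\}$; as $zz^*\neq0$ is homogeneous in $z$ it descends, and the two are identified via $(z,z^*)\mapsto([z],z^*,z)$. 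Thus the induced map $\wt q\colon J\to\ol J$, $(z,z^*)\mapsto([z],z^*)$, is the base change of $q$, hence an affine $\GG_m$-bundle; flat base change along the smooth $\ol\pi$ gives $\wt q_*\Oc_J=\ol\pi^*q_*\Oc_{\pA^n}=\bigoplus_i\ol\pi^*\Oc(i)$, using the weight decomposition $q_*\Oc_{\pA^n}=\bigoplus_i\Oc(i)$ underlying \eqref{eq:aen-[n]-P}.

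Because the square is Cartesian with smooth vertical maps, relative differentials are stable under base change, $\Omega^\bullet_{J/\pA^n}=\wt q^*\Omega^\bullet_{\ol J/\PP^{n-1}}$, compatibly with the relative de Rham differential $\dbar$; being $\ol\pi^{-1}\Oc_{\PP^{n-1}}$-linear, $\dbar$ preserves the $\GG_m$-weight $i$. Since $\wt q$ is affine, $A_{[n]}^\bullet=\Gamma(J,\Omega^\bullet_{J/\pA^n})=\Gamma(\ol J,\wt q_*\Omega^\bullet_{J/\pA^n})$, and the projection formula (valid as $\Omega^\bullet_{\ol J/\PP^{n-1}}$ is locally free) together with the computation of $\wt q_*\Oc_J$ gives $\wt q_*\Omega^\bullet_{J/\pA^n}=\bigoplus_i\Omega^\bullet_{\ol J/\PP^{n-1}}\otimes\ol\pi^*\Oc(i)$. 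Applying $\Gamma(\ol J,-)$ and the definition of $R\Gamma^{(\ol J)}$ yields $A_{[n]}^\bullet=\bigoplus_iR\Gamma^{(\ol J)}(\PP^{n-1},\Oc(i))$. This is an isomorphism of commutative dg-algebras: both products are the wedge of relative forms, and the grading is multiplicative because the algebra structure on $\wt q_*\Oc_J$ is the graded multiplication $\Oc(i)\otimes\Oc(j)\to\Oc(i+j)$.

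For $A_n^\bullet$ I would pass to the punctured formal disk. With $\Mc^\bullet=\pi_*\Omega^\bullet_{J/\pA^n}$, Proposition \ref{prop:RGammaJ} and the affineness of $\pi,\wh\pi$ give $A_{[n]}^\bullet=\Gamma(\pA^n,\Mc^\bullet)$ and, by flat base change along the flat map $D_n^\circ\to\pA^n$ induced by $\k[z]\to\k[[z]]$, also $A_n^\bullet=\Gamma(D_n^\circ,\Mc^\bullet|_{D_n^\circ})$. It then remains to identify the sections of this $\GG_m$-equivariant complex over $D_n^\circ$ with the weight-completion of its sections over $\pA^n$, namely $\Gamma(D_n^\circ,\Mc^\bullet|_{D_n^\circ})=\sum_{i\gg-\infty}\Gamma(\PP^{n-1},\Nc_i^\bullet)$ with $\Nc_i^\bullet=\ol\pi_*(\Omega^\bullet_{\ol J/\PP^{n-1}}\otimes\ol\pi^*\Oc(i))$, which by the previous paragraph equals $R\Gamma^{(\ol J)}(\PP^{n-1},\Oc(i))$; this is precisely the completion turning $\bigoplus_i$ into $\sum_{i\gg-\infty}$ (and $\k[z]$ into $\k[[z]]$) behind \eqref{eq:aen-n-P}, giving $A_n^\bullet=\sum_{i\gg-\infty}R\Gamma^{(\ol J)}(\PP^{n-1},\Oc(i))$. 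I expect this last identification to be the main obstacle: one must check that taking sections over $D_n^\circ$ commutes with the $\GG_m$-weight decomposition and realizes the limit $\varinjlim_a\varprojlim_b\bigoplus_{i=-a}^b$ defining $\sum_{i\gg-\infty}$, i.e. that completion along the radial $\GG_m$-direction is compatible with pushforward along the torsor $\pi$ and with $\dbar$.
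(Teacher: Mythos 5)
Your handling of the polynomial identity is correct and complete: the Cartesian square $J\cong \ol J\times_{\PP^{n-1}}\pA^n$, base change of relative forms, affineness of $\wt q$, the projection formula, and the weight decomposition $q_*\Oc_{\pA^n}=\bigoplus_i\Oc(i)$ together give $A_{[n]}^\bullet=\bigoplus_i R\Gamma^{(\ol J)}(\PP^{n-1},\Oc(i))$ as commutative dg-algebras. (The paper states this proposition without proof, so yours supplies the intended mechanism; note only that the reason $\dbar$ preserves weights is its $\GG_m$-equivariance --- in coordinates it involves only the $z^*_\nu, dz^*_\nu$ and so cannot change the degree in $z$ --- rather than linearity over $\ol\pi^{-1}\Oc_{\PP^{n-1}}$, which by itself would not forbid mixing of weights.)

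The genuine gap is in the completed identity, and it sits exactly at the step you flag and defer. That step is not a routine compatibility: with $\sum_{i\gg-\infty}$ as defined before \eqref{eq:aen-n-P} (arbitrary formal sums $\sum_{i\geq -a}v_i$), the identification $\Gamma(D_n^\circ,\Mc^\bullet|_{D_n^\circ})=\sum_{i\gg-\infty}R\Gamma^{(\ol J)}(\PP^{n-1},\Oc(i))$ fails for $n\geq 2$. Indeed, by Proposition \ref{prop:A-n-forms} every element of $A_n^m$ has coefficients of the form $g/(zz^*)^N$ with $g\in\k[[z]][z^*]$ and a \emph{single} $N$, so all of its $\GG_m$-weight components lie in one step of the pole filtration of Corollary \ref{cor:filt}, uniformly in the weight; by contrast, for $n\geq 2$ each weight space $\Gamma(\ol J,\Omega^\bullet_{\ol J/\PP^{n-1}}\otimes\ol\pi^*\Oc(i))$ contains elements of arbitrarily high pole order. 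Concretely, $\omega_i=z_1^{2i}(z_n^*)^i/(zz^*)^i$ has weight $i$ and degree $0$, so $\sum_{i\geq 0}\omega_i$ lies in the right-hand side; but if it equalled some $g/(zz^*)^N\in A_n^0$, comparing weight-$i$ components for $i>N$ would force the quadric $zz^*$, which is irreducible in $\k[z,z^*]$ for $n\geq 2$, to divide the monomial $z_1^{2i}(z_n^*)^i$ --- impossible. Hence the natural map $A_n^\bullet\to\sum_{i\gg-\infty}R\Gamma^{(\ol J)}(\PP^{n-1},\Oc(i))$ is a proper inclusion, and no argument can upgrade it to the asserted equality under that reading of the notation. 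What your base-change setup does prove, with one more step, is the filtered version: each $F_r\Mc^\bullet$ is coherent on $\pA^n$, its sections over $D_n^\circ$ are the weight-by-weight $\k[z]\to\k[[z]]$ completion of its sections over $\pA^n$, and therefore $A_n^\bullet=\varinjlim_r\,\sum_{i\gg-\infty}\bigl(F_rA^\bullet_{[n]}\bigr)_i$, i.e. $A_n^\bullet$ is the subspace of the right-hand side formed by formal sums of \emph{uniformly bounded} pole order. Since the colimit in $r$ does not commute with the completion, your final identification must be replaced by this statement (equivalently, the proposition must be read with $\sum_{i\gg-\infty}$ carrying the uniform pole bound); leaving that point unexamined is precisely where the proof is incomplete, as it is the only non-formal content of the completed identity.
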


\noindent {\sl Proof:} For a line bundle $\Lc$ on an algebraic variety $Z$ we denote by $\Lc^\circ$
the total space of $\Lc$ minus the zero section, so a $\GG_m$-torsor over $Z$. Now, 
 we have the Cartesian square
\[
\xymatrix{
J \ar[d]_r
\ar[r]^\pi& \pA^n
\ar[d]^q
\\
\ol J\ar[r]_{\ol\pi} &\PP^{n-1}
}
\]
 where $q$ realizes $\pA^n$ as $\Oc(-1)^\circ$. Therefore $r$ realizes $J$ as $(\ol\pi^*\Oc(-1))^\circ$.
 Further, $\Omega^\bullet_{J/\pA^n}= r^*\Omega^\bullet_{\ol J/\PP^{n-1}}$. Therefore
 \[
 \begin{gathered}
 A_{[n]}^\bullet \,=\, \Gamma (J, \Omega^\bullet_{J/\pA^n}) \, =\, 
 \Gamma \bigl( (\ol\pi^*\Oc(-1))^\circ, r^* \Omega_{\ol J/\PP^{n-1}}\bigr)\,=
 \\
 =\, \bigoplus_{i\in\ZZ} \Gamma(\ol J, \pi^*\Oc(i)\otimes \Omega_{\ol J/\PP^{n-1}}\bigr) \,=\,
 \bigoplus_{i\in \ZZ} R\Gamma^{(\ol J)} (\PP^{n-1}, \Oc(i)). 
 \end{gathered} 
 \]
 The proof for $A_n^\bullet$ is similar with direct products instead of direct sums. \qed

\paragraph{C. The Jouanolou model, explicitly.} 
Let
\[
\k[z, z^*] = \k[z_1, \dots, z_n, z^*_1, \dots, z_n^*], \quad \k[[z]][z^*] =
\k[[z_1, \dots, z_n]][z_1^*, \dots, z_n^*]
\]
 be the algebras of regular functions on $\AAA^n \times \wc\AAA^n$ and $D_n\times\wc\AAA^n$
 respectively. Recall the notation $z z^* = \sum z_\nu z_\nu^*$.

 \begin{prop}\label{prop:A-n-forms}
 Let $m=0, \dots, n-1$. 
  The $m$-th graded component $A_{[n]}^m$ (resp. $A_n^\bullet$)
  is identified with the vector space formed by differential forms
  \[
  \omega \,\,=\,\,\sum_{1\leq i_1< \cdots < i_m\leq n} f_{i_1, \dots, i_m}(z,z^*) dz^*_{i_1} \cdots dz^*_{i_m}
  \]
  where each $f_{i_1, \dots, i_m}$ is an element of the localized algebra $\k[z,z^*][(zz^*)^{-1}]$ 
  (resp. $\k[[z]][z^*][(zz^*)^{-1}]$) such that:
  \begin{enumerate}
  \item[(1)] $\omega$  is  homogeneous
  in the $z_\nu^*, dz^*_\nu$ of  total degree $0$, that is, each $f_{i_1, \dots, i_m}$ is homogeneous of degree $(-m)$.
  
  \item[(2)] The contraction  $\iota_\xi(\omega)$ of $\omega$ with the Euler vector field $\xi= \sum z^*_\nu \partial/\partial z^*_\nu$ 
  vanishes. 
  \end{enumerate}
      The differential $\dbar$ is given by
   \[
   \dbar \,\,=\,\,\sum_{\nu=1}^n dz^*_\nu {\partial \over \partial z^*_\nu}.
   \]
   \end{prop}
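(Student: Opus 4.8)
The plan is to realize $J$ as the base of a principal $\GG_m$-bundle and to describe relative forms on $J$ as the basic (horizontal and invariant) forms on the total space, so that conditions (1) and (2) appear automatically. First I would use the presentation \eqref{J=G/G}, viewing $J$ as the quotient of
\[
\wt J \,\,=\,\, \bigl\{ (z,z^*) \in \AAA^n \times (\wc\AAA^n - \{0\}) \, \big| \,\, zz^* \neq 0 \bigr\}
\]
by the $\GG_m$-action scaling $z^*$; the slice $\{zz^*=1\}$ is a global section, so the torsor is trivial and $\wt J \simeq J \times \GG_m$. Since $zz^* \neq 0$ forces $z \neq 0$ and $z^* \neq 0$, the scheme $\wt J$ is the principal open subset $\{zz^* \neq 0\}$ of $\AAA^{2n} = \Spec\k[z,z^*]$, hence affine with coordinate ring the localization $\k[z,z^*][(zz^*)^{-1}]$ (and $\k[[z]][z^*][(zz^*)^{-1}]$ in the formal case, where $\wt J$ is the corresponding open subset of $D_n \times \wc\AAA^n$). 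The relative cotangent sheaf of $\wt J \to \pA^n$ is the restriction of that of $\AAA^n \times \wc\AAA^n \to \AAA^n$, hence free on $dz^*_1,\dots,dz^*_n$; so a relative $m$-form on $\wt J$ is exactly $\omega = \sum_{i_1<\cdots<i_m} f_{i_1\dots i_m}\, dz^*_{i_1}\cdots dz^*_{i_m}$ with coefficients in the stated localized ring.

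Next I would invoke the classical Cartan description of forms on the base of a principal $\GG_m$-bundle: pullback along $\wt J \to J$ identifies $\Gamma(J,\Omega^\bullet_{J/\pA^n})$ with the subcomplex of relative forms on $\wt J$ that are \emph{basic}, i.e. annihilated by both the Lie derivative $L_\xi$ and the contraction $\iota_\xi$ along the fundamental vector field of the action, which is here the Euler field $\xi = \sum z^*_\nu\,\partial/\partial z^*_\nu$. Because the slice $\{zz^*=1\}$ trivializes the torsor, this descent reduces to the elementary computation on a product $J \times \GG_m$ and needs no cohomological input. The two basic conditions are precisely (1) and (2): since $L_\xi z^*_\nu = z^*_\nu$ and $L_\xi dz^*_\nu = dz^*_\nu$ while $L_\xi z_\nu = 0$, the equation $L_\xi\omega=0$ says exactly that $\omega$ has total degree $0$ in the $z^*_\nu, dz^*_\nu$, i.e. each $f_{i_1\dots i_m}$ is homogeneous of degree $-m$; and $\iota_\xi\omega=0$ is condition (2) verbatim. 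This yields the asserted description of $A^m_{[n]}$ and, mutatis mutandis over the formal disk, of $A^m_n$.

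Finally, for the differential, the relative de Rham differential on $\wt J$ with the base coordinates $z$ held fixed is $\sum_\nu dz^*_\nu\,\partial/\partial z^*_\nu$; it preserves the basic conditions, since $d$ commutes with $L_\xi$ and $\iota_\xi d = L_\xi - d\,\iota_\xi$ vanishes on basic forms, and it descends to the relative de Rham differential $\dbar$ on $J$. This gives the stated formula for $\dbar$. I expect the only point requiring genuine care to be the descent step of the second paragraph, namely verifying that basic forms on the $\GG_m$-torsor are exactly the pullbacks of relative forms from $J$ with no loss in the graded, localized algebraic setting (in particular no logarithmic invariants and no contribution from $H^1(\GG_m,-)$); exhibiting the explicit global section $\{zz^*=1\}$ trivializes the torsor and reduces this to a direct check, removing that difficulty.
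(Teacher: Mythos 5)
Your proof is correct and takes essentially the same route as the paper: both identify $\Gamma(J,\Omega^m_{J/\pA^n})$ with the basic relative forms on the affine $\GG_m$-torsor $\{zz^*\neq 0\}\to J$, whose fundamental vector field is the Euler field $\xi$, so that conditions (1) and (2) are exactly the vanishing of the Lie derivative and of the contraction $\iota_\xi$. The only difference is cosmetic: where the paper simply cites the standard basic-forms criterion (\cite{GKZ}), you make the descent self-contained by trivializing the torsor via the slice $\{zz^*=1\}$, and you additionally spell out why the relative de Rham differential descends to the stated formula for $\dbar$.
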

  
\noindent {\sl Proof:}  We prove the statement about $A_{[n]}^m$; the statement about $A_n^m$
is proved in the same way. 

Consider the product $\AAA^n\times\wc\AAA^n$ 
  and the incidence quadric $\wt Q$ inside it given by the
  same equation $zz^*=0$ as $Q$. Let $U= (\AAA^n\times\wc\AAA^n)-\wt Q$. 
  All forms $\omega$ as in the proposition (not necessarily satisfying the  conditions (1) and (2))
   form the space $\Gamma(U, \Omega^m_{U/\pA^n})$. 
   
      Now, we have a projection
   $p: U\to  J$ of $\pA^n$-schemes with the multiplicative group $\GG_m$ acting simply transitively on the fibers
  (i.e., $U$ is a $\GG_m$-torsor over $J$). The infinitesimal generator of this action is the Euler vector field
  $\xi$. Therefore relative forms from $\Omega^m_{J/\pA^n}$ are
  identified with  sections $\omega$  of  $\Omega^m_{U/\pA^n}$ which satisfy
  \[
  \partial_\xi(\omega) =0, \,\,\, \iota_\xi(\omega) =0, 
  \]
  where $\del_\xi$ is the Lie derivative, see, e.g., \cite{GKZ}. 
  These conditions translate precisely into the conditions
  (1) and (2) of the proposition. \qed

    \vskip .2cm
  
  \begin{cor}\label{cor:filt}
  The dg-algebra $A_{[n]}^\bullet$ carries a natural   filtration  ``by the order of poles"
  \[
  F_r A^m_n \,\,:=\,\,\bigl\{\omega ~ \bigl| \,\,  (zz^*)^{r+m} f_{i_1, \dots, i_m} \,\in \, \k[z, z^*],   \,\,\forall i_1, \dots, i_m \bigl\}. 
  \]
  compatible with differential and product: 
  \[
  \dbar (F_r A^\bullet)\subset F_r A^\bullet, \,\, \,\, (F_r A^\bullet) \cdot (F_{r'} A^\bullet)
  \subset F_{r+r'} A^\bullet. 
  \]
  \end{cor}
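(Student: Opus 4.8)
The plan is to work entirely inside the explicit description of $A_{[n]}^\bullet$ furnished by Proposition \ref{prop:A-n-forms}, so that every element is a form whose coefficients lie in the localized ring $\k[z,z^*][(zz^*)^{-1}]$. Since Proposition \ref{prop:RGammaJ} already guarantees that $A_{[n]}^\bullet$ is a commutative dg-algebra, there is no need to re-verify that $\dbar\omega$ and products $\omega\cdot\omega'$ again satisfy the homogeneity and contraction conditions (1)--(2) of that proposition; these are automatic. The only thing to check is the behaviour of the pole-order bound, i.e. the exponent of $(zz^*)$ needed to clear denominators, under the two operations. Thus I expect the whole corollary to reduce to tracking the single integer $r+m$ attached to a form of degree $m$ lying in $F_r$.

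For multiplicativity I would argue purely by additivity. If $\omega\in F_rA^m$ and $\omega'\in F_{r'}A^{m'}$, then each coefficient of the product $\omega\cdot\omega'$ in degree $m+m'$ is a $\k$-linear combination of products $f_I f'_J$ with $|I|=m$, $|J|=m'$. By hypothesis $(zz^*)^{r+m}f_I$ and $(zz^*)^{r'+m'}f'_J$ are polynomials, hence so is $(zz^*)^{(r+r')+(m+m')}f_If'_J$. Since $(r+r')+(m+m')$ is exactly the clearing exponent attached to degree $m+m'$ at filtration level $r+r'$, this gives $\omega\cdot\omega'\in F_{r+r'}A^{m+m'}$. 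The content here is merely that the exponent $r+m$ is additive in both the pole index $r$ and the form degree $m$, matching the additivity of degrees under the wedge product.

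The substantive step is compatibility with $\dbar$, and here the role of the $+m$ shift in the exponent becomes essential. Take $\omega\in F_rA^m$ and a coefficient $f_I$, and write $f_I=h\,(zz^*)^{-(r+m)}$ with $h\in\k[z,z^*]$. Up to sign, the coefficients of $\dbar\omega$ in degree $m+1$ are the partial derivatives $\partial f_I/\partial z^*_\nu$, so I must check that $(zz^*)^{r+m+1}\,\partial f_I/\partial z^*_\nu\in\k[z,z^*]$. Applying the Leibniz rule and using $\partial(zz^*)/\partial z^*_\nu=z_\nu$ yields
\[
(zz^*)^{r+m+1}\,\frac{\partial f_I}{\partial z^*_\nu}
\;=\;(zz^*)\,\frac{\partial h}{\partial z^*_\nu}\;-\;(r+m)\,z_\nu\,h,
\]
which is manifestly a polynomial. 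Thus differentiation raises the pole order by at most one, but this is exactly cancelled by the degree increase $m\mapsto m+1$, so $\dbar$ preserves the filtration index $r$. The only conceptual obstacle is recognizing that the correct clearing exponent is $r+m$ rather than $r$: this is precisely the shift for which the single new power of $(zz^*)^{-1}$ produced by differentiating $(zz^*)^{-(r+m)}$ is absorbed into the same filtration step, and it is what makes $F_\bullet$ simultaneously a filtration by differential graded subalgebras.
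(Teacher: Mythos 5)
Your proof is correct, and it is exactly the verification the paper leaves implicit: the corollary is stated without proof as an immediate consequence of the explicit coordinate description in Proposition \ref{prop:A-n-forms}, which is precisely the model you work in. Your two computations --- additivity of the clearing exponent $(r+m)+(r'+m')=(r+r')+(m+m')$ for products, and the Leibniz-rule identity $(zz^*)^{r+m+1}\,\partial f_I/\partial z^*_\nu = (zz^*)\,\partial h/\partial z^*_\nu - (r+m)\,z_\nu h$ showing the pole growth under $\dbar$ is absorbed by the degree shift $m\mapsto m+1$ --- are the intended content.
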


  \subsection{Comparison with the Dolbeault and $\dbar_b$-complexes.}\label{subsec:comp-dolb}

\paragraph{A. Comparison with the Dolbeault complex.} In this section we assume $\k=\CC$. 
The notation $\dbar$ for the differential in $A^\bullet_{[n]}$ is chosen to suggest the analogy with the Dolbeault differential in complex analysis. 
In fact, we have the following.

\begin{prop}
Let $\Omega^{0,\bullet}(\CC^n-\{0\})$ be the smooth  Dolbeault complex of  the complex manifold $\CC^n-\{0\}$. 
We have a (unique,  injective)  morphism of commutative dg-algebras
 $\varepsilon: A_{[n]}^\bullet \to \Omega^{0,\bullet}(\CC^n-\{0\})$ 
which sends $z^*_\nu$ to  $\ol z_\nu$ and $dz^*_\nu$ to $d\ol z_\nu$, i.e., 
 \[
 f(z, z^*) dz^*_{i_1} \cdots dz^*_{i_m} \,\,\,\mapsto \,\,\, f(z, \ol {z})|_{\CC^n-\{0\}} d\ol {z}_{i_1} \cdots d\ol {z}_{i_m}. 
 \]
\end{prop}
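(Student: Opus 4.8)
The plan is to build $\varepsilon$ directly on the explicit model of Proposition~\ref{prop:A-n-forms}, as the substitution $z^*_\nu\mapsto\ol z_\nu$ on coefficients together with $dz^*_\nu\mapsto d\ol z_\nu$, and then to verify in turn: well-definedness through the localization, compatibility with the differentials, and finally uniqueness and injectivity. First I would note that the assignment $z_\nu\mapsto z_\nu$, $z^*_\nu\mapsto\ol z_\nu$ extends to a ring homomorphism $\k[z,z^*][(zz^*)^{-1}]\to C^\infty(\CC^n-\{0\})$, the only point being that the image of the denominator, $zz^*\mapsto\sum_\nu z_\nu\ol z_\nu=\|z\|^2$, is a nowhere-vanishing (hence invertible) smooth function on $\CC^n-\{0\}$. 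Tensoring with the exterior map $dz^*_\nu\mapsto d\ol z_\nu$ then sends a form $\omega=\sum_I f_I(z,z^*)\,dz^*_{i_1}\cdots dz^*_{i_m}\in A^m_{[n]}$ to the smooth $(0,m)$-form $\sum_I f_I(z,\ol z)\,d\ol z_{i_1}\cdots d\ol z_{i_m}$, so $\varepsilon$ preserves the cohomological grading and, being induced by an algebra map on coefficients wedged with an exterior-algebra map, is automatically a morphism of graded-commutative algebras. Geometrically this is just the pullback of forms along the smooth section $z\mapsto(z,\ol z)$ of $U\to\pA^n$, which lands in $U$ precisely because $\|z\|^2\ne0$.

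The substance of the statement is the compatibility with differentials. On $A^\bullet_{[n]}$ the differential is $\dbar=\sum_\nu dz^*_\nu\,\partial/\partial z^*_\nu$, while on $\Omega^{0,\bullet}(\CC^n-\{0\})$ it is the Dolbeault operator $\sum_\nu d\ol z_\nu\,\partial/\partial\ol z_\nu$. I would check $\varepsilon\circ\dbar=\dbar\circ\varepsilon$ coefficient by coefficient, the one identity needed being that for any $f\in\k[z,z^*][(zz^*)^{-1}]$ one has
\[
\frac{\partial}{\partial\ol z_\nu}\bigl(f(z,\ol z)\bigr)=\Bigl(\frac{\partial f}{\partial z^*_\nu}\Bigr)(z,\ol z),
\]
which holds by the chain rule because $f$ is holomorphic in the $z$-slots (these are annihilated by $\partial/\partial\ol z_\nu$) while the $\ol z_\nu$-dependence comes exactly from the $z^*_\nu$-slots. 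This identifies the algebraic $\dbar$ with the analytic one and makes $\varepsilon$ a morphism of commutative dg-algebras, justifying the notation.

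For uniqueness I would use that $A^\bullet_{[n]}$ is generated as a commutative dg-algebra by its degree-zero part $A^0_{[n]}$, which in turn is generated as a $\k$-algebra by $z_\nu$ and $u_\mu:=z^*_\mu/(zz^*)$ (subject to $\sum_\nu z_\nu u_\nu=1$); hence any dg-algebra morphism with the prescribed effect on coefficients is forced to agree with $\varepsilon$. Injectivity reduces, since the $d\ol z_I$ are $C^\infty$-linearly independent, to the injectivity of the coefficient substitution $f\mapsto f(z,\ol z)$ on $\k[z,z^*][(zz^*)^{-1}]$. Writing $f=g/(zz^*)^N$ with $g\in\k[z,z^*]$ and using that $zz^*$ is a nonzerodivisor in the domain $\k[z,z^*]$, this amounts to showing that a nonzero polynomial $g$ cannot satisfy $g(z,\ol z)\equiv0$ on $\CC^n$, i.e.\ that the monomials $z^\alpha\ol z^\beta$ are linearly independent as functions on $\CC^n$. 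I expect this last point to be the one genuinely non-formal step — it is where the algebraic model is compared with its smooth incarnation — although it follows from the standard fact that under $z_\nu=x_\nu+\sqrt{-1}\,y_\nu$ the passage from $g(z,z^*)$ to $g(z,\ol z)$ is a triangular, hence invertible, change of variables onto $\RR[x,y]$, where a polynomial vanishing as a function is zero.
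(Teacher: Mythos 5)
Your proposal is correct and matches the paper's proof, whose entire content is the single observation you make first: the substitution is well defined because $zz^*$ maps to $z\ol z=\|z\|^2$, which is nowhere zero on $\CC^n-\{0\}$ (the paper dismisses the rest as obvious). Your additional verifications — the chain-rule identity $\partial_{\ol z_\nu}f(z,\ol z)=(\partial f/\partial z^*_\nu)(z,\ol z)$ giving $\varepsilon\dbar=\dbar\varepsilon$, generation of $A^\bullet_{[n]}$ in degree $0$ by $z_\nu$ and $z^*_\mu/(zz^*)$ for uniqueness, and linear independence of the functions $z^\alpha\ol z^\beta$ for injectivity — are exactly the routine steps the paper leaves implicit, and they are all sound.
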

The proof is obvious once we notice that $zz^*$ is being sent to $z\ol{z} = |z|^2$ which does not vanish on $\CC^n-\{0\}$. \qed

\begin{rem}
 The morphism $\varepsilon$ is not a quasi-isomorphism: it 
 identifies $H^\bullet(A_n^\bullet)$ with the ``meromorphic part" of $H^\bullet (\Omega^{0,\bullet}(\CC^n-\{0\}))=
H^\bullet(\CC^n-\{0\}, \Oc_{\on{hol}})$. 
\end{rem}

 We also notice that $A^\bullet_{[n]}$ is concentrated in degrees $[0,n-1]$ while $\Omega^{0, \bullet}(\CC^n-\{0\})$
is situated in degrees $[0,n]$. To exhibit a better analytic fit for $A_{[n]}^\bullet$, we recall some constructions from
complex analysis. 

\paragraph{B. Reminder on the $\dbar_b$-complex.} Let $X$ be an $n$-dimensional complex manifold and $S\subset X$
a $C^\infty$ real hyper surface (of real dimension $2n-1$). The embedding $S\subset X$ induces on $S$ a differential geometric
structure known as the {\em CR-structure} \cite{beals-greiner}\cite{dragomir}. 

More precisely, let $x\in S$. The $2n-1$-dimensional real subspace $T_xS$ in the $n$-dimensional complex space $T_xX$
has the maximal complex subspace
\[
T_x^\com S \,\,=\,\, T_xS\, \cap \,  i(T_xS) \,\,\subset \,\, T_xX
\]
of complex dimension $(n-1)$. We get a complex vector bundle $T^\com_S$ on $S$ embedded into the real vector bundle $T_S$.
Its complexification splits, in the standard way, as
\[
(T^\com_S)\otimes \CC \,\,=\,\, T^{1,0}_S \,\oplus \, T^{0,1}_S \,\,\subset \,\,  T_S\otimes \CC.
\]
So $T^{1,0}_S$ and $T^{0,1}_S$ are complex vector sub-bundles in $T_S\otimes\CC$, of complex dimension $(n-1)$. 
Integrability of the complex structure on $X$ implies that these sub-bundles are integrable in the sense of Frobenius,
i.e., their sections are closed under the Lie bracket of the sections of $T_S\otimes \CC$. 

\vskip .2cm

Let $^b\Omega_S^{0,q}$ be the sheaf of $C^\infty$-sections of the complex vector bundle $\Lambda^q(T^{0,1}_S)^*$.
Integrability of $T^{0,1}_S$ gives, by the standard Cartan formulas, the exterior differentials ``along" $T^{0,1}_S$
\[
\dbar_b: {}^b\Omega_S^{0,q} \lra {}^b\Omega_S^{0,q+1}
\]
making $^b\Omega_S^{0,\bullet}$ into a sheaf of commutative dg-algebras known as the {\em tangential Cauchy-Riemann
complex} (or {\em $\dbar_b$-complex}) of $S$. It is concentrated in degrees $[0, n-1]$. The complex of global $C^\infty$-sections of   
 $^b\Omega_S^{0,\bullet}$ is traditionally denoted by $\Omega^{0,\bullet}_b(S)$ and is also called the $\dbar_b$-complex. 

\paragraph{C. Generalities on real forms.} Let $Y=\Spec R$ be a smooth irreducible affine variety over $\CC$ of dimension $m$.
A {\em real structure} on $Y$ is a $\CC$-antilinear involution $f\mapsto \ol f$ on $R$. Such a datum
defines an antiholomorphic involution $\sigma: Y(\CC)\to Y(\CC)$ on $\CC$-points. The fixed point locus of $\sigma$
is denoted $Y(\RR)$. If nonempty, $Y(\RR)$ has a structure of a $C^\infty$-manifold of dimension $m$. In this case
we have an embedding $\epsilon: R\to C^\infty(Y(\RR))$. Moreover, any  $f\in C^\infty(Y(\RR))$ can be approximated by
functions from $\epsilon(R)$ on compact subsets (Weierstrass' theorem). In particular, if $Y(\RR)$ is compact,
then $\epsilon(R)$ is dense in $C^\infty(Y(\RR))$ in any of the standard metrics of the functional analysis
(e.g., in the $L_2$-metric). 

\vskip .2cm 

If, further, $E$ is a  vector bundle on $Y$ (not necessarily equipped with a real structure), it gives a $C^\infty$-bundle
$E|_{Y(\RR)}$ on $Y(\RR)$ and an embedding $\Gamma(E)\to \Gamma_{C^\infty}(E|_{Y(\RR)})$ with approximation properties
similar to the above. A differential operator $D: E\to F$ between vector bundles on $Y$ gives rise to a $C^\infty$ differential
operator $D_{Y(\RR)}: E|_{Y(\RR)}\to F|_{Y(\RR)}$. 

\paragraph{D. The Jouanolou model and the $\dbar_b$-complex for $S^{2n-1}$.} 
We specialize part B to $X=\CC^n$ with the  standard coordinates $z_\nu$. We take $S$ to be
 the unit sphere $S^{2n-1}$ with equation $\|z\|^2=1$. We have therefore the $\dbar_b$-complex
 $(^b\Omega^{0,\bullet}_{S^{2n-1}}, \dbar_b)$. 
 
 \vskip .2cm
 
 At the same time, we introduce, on $\AAA^n\times \wc \AAA^n$ (with coordinates $z_\nu, z^*_\nu$),
 a real structure by putting $\ol z_\nu = z^*_\nu$ and $\ol{z^*_\nu}=z_\nu$.  The Jouanolou torsor $J$ is realized in
 $\AAA^n\times \wc \AAA^n$ as the hypersurface $zz^*=1$, so it inherits the real structure. 
 
 \begin{prop}
 (a) We have $J(\RR)=S^{2n-1}$. In other words, $A_{[n]}^0=\CC[J]$ is identified with the algebra of (real analytic)
 polynomial $\CC$-valued functions on $S^{2n-1}$. 
 
 \vskip .2cm
 
 (b) The $C^\infty$-vector bundle $\Omega^q_{J/\pA^n}|_{J(\RR)}$ on $J(\RR)=S^{2n-1}$ is identified with 
 $^b\Omega^{0,q}_{S^{2n-1}}$, and the differential induced by $d_{J/\pA^n}$, is identified with $\dbar_b$.
 In other words, $A_{[n]}^\bullet$ is identified with the dg-algebra of polynomial (in the same sense as in (a))
 sections of $^b\Omega^{0,\bullet}_{S^{2n-1}}$. 
 \end{prop}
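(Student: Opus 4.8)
The plan is to prove (a) by a direct computation of the fixed locus of the real structure, and (b) by identifying bundles fiber by fiber and then comparing the two differentials via the Wirtinger calculus.

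For part (a) I would work in the affine model \eqref{J=G/G}, where $J$ is the hypersurface $\{zz^*=1\}$ in $\AAA^n\times(\wc\AAA^n-\{0\})$ and $A_{[n]}^0=\CC[J]$ is its coordinate ring. The real structure $\ol z_\nu=z_\nu^*$ has as fixed points exactly those $(z,z^*)$ with $z^*=\ol z$; substituting into $\sum_\nu z_\nu z_\nu^*=1$ gives $\sum_\nu z_\nu\ol z_\nu=\|z\|^2=1$, the equation of $S^{2n-1}$. Hence projection to the $z$-coordinate identifies $J(\RR)$ with $S^{2n-1}$, and the identification of $A_{[n]}^0$ with the algebra of polynomial functions on $S^{2n-1}$ is the embedding $\epsilon\colon R\to C^\infty(Y(\RR))$ from the general discussion of real structures above, applied to $Y=J$.

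For the bundle statement in (b) I would first present $\Omega^1_{J/\pA^n}$ explicitly. Differentiating $zz^*=1$ relative to the base $\pA^n$ (holding $z$ fixed, so $dz_\nu=0$) yields the single relation $\sum_\nu z_\nu\,dz_\nu^*=0$, so $\Omega^1_{J/\pA^n}$ is the free $\CC[J]$-module on $dz_1^*,\dots,dz_n^*$ modulo this relation, of rank $n-1$, and $\Omega^q_{J/\pA^n}$ is its $q$-th exterior power. Restricting to $J(\RR)=S^{2n-1}$ and sending $dz_\nu^*\mapsto d\ol z_\nu$ turns the relation into $\sum_\nu z_\nu\,d\ol z_\nu=0$. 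On the CR side, for the defining function $\rho=\|z\|^2-1$ one computes $T^{0,1}_{S^{2n-1}}=\{\sum_\nu a_\nu\,\partial/\partial\ol z_\nu:\sum_\nu a_\nu z_\nu=0\}$, since $\dbar\rho=\sum_\nu z_\nu\,d\ol z_\nu$; dualizing shows that ${}^b\Omega^{0,1}_{S^{2n-1}}=(T^{0,1}_{S^{2n-1}})^*$ is the span of the restrictions $d\ol z_\nu|_{S^{2n-1}}$ modulo the same relation $\sum_\nu z_\nu\,d\ol z_\nu=0$. Thus $dz_\nu^*\mapsto d\ol z_\nu$ gives an isomorphism $\Omega^1_{J/\pA^n}|_{J(\RR)}\cong{}^b\Omega^{0,1}_{S^{2n-1}}$, and taking $q$-th exterior powers extends it to all $q$.

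It remains to match the differentials, and this is where I expect the real work to be. By Proposition \ref{prop:A-n-forms} the relative de Rham differential is $\dbar=\sum_\nu dz_\nu^*\,\partial/\partial z_\nu^*$. For $f\in\CC[J]$, writing $\tilde f(z)=f(z,\ol z)$ for its restriction to $J(\RR)$, the Wirtinger chain rule gives $\partial\tilde f/\partial\ol z_\nu=(\partial f/\partial z_\nu^*)(z,\ol z)$, so the ambient $\dbar\tilde f=\sum_\nu(\partial f/\partial z_\nu^*)(z,\ol z)\,d\ol z_\nu$ is exactly the image of $\dbar f$ under $dz_\nu^*\mapsto d\ol z_\nu$; projecting modulo $\sum_\nu z_\nu\,d\ol z_\nu=0$ is precisely the projection onto ${}^b\Omega^{0,1}$ that defines $\dbar_b$ on functions. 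Since both $\dbar$ and $\dbar_b$ are degree $+1$ derivations obeying the Leibniz rule, agreement on the degree-$0$ generators and on the generating one-forms propagates to all of $A_{[n]}^\bullet$. The main obstacle is to make this comparison rigorous at the level of bundles rather than of generators: one must verify that the relative-form description of $\dbar_b$ (ambient $\dbar$ followed by restriction to $T^{0,1}_{S^{2n-1}}$) is independent of the ambient extension chosen, which is guaranteed by the Frobenius integrability of $T^{0,1}_{S^{2n-1}}$ recalled above and corresponds on the algebraic side to $\dbar^2=0$, and that the totally real embedding $J(\RR)\hookrightarrow J$ identifies the $C^\infty$ complexified relative cotangent bundle with the CR bundle compatibly with both descriptions of the differential.
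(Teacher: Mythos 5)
Your proposal is correct and is essentially the paper's proof in dual form: your two relations $\sum_\nu z_\nu\,dz_\nu^*=0$ and $\sum_\nu z_\nu\,d\ol z_\nu=0$ are exactly the cotangent expression of the paper's one-line observation that the sub-bundle $T_{J/\pA^n}|_{S^{2n-1}}\subset T_J|_{S^{2n-1}}=T_{S^{2n-1}}\otimes\CC$ coincides with $T^{0,1}_{S^{2n-1}}$. The only difference is presentational: the paper leaves the comparison of differentials implicit (both are given by Cartan formulas along the identified integrable sub-bundles), whereas you verify it explicitly on generators via the Wirtinger chain rule and the Leibniz rule.
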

 
 \noindent{\sl Proof:} Part (a) is obvious, as $zz^*=1$ translates to $z\ol z=1$. To prove (b), we notice
 that the sub-bundle
 \[
 T_{J/\pA^n}|_{S^{2n-1}} \,\,\subset \,\, T_J|_{S^{2n-1}} = T_{S^{2n-1}}\otimes \CC
 \]
 is equal to $T^{0,1}_{S^{2n-1}}$. \qed
 
 \begin{cor}\label{cor:L2-dense}
 $A_{[n]}^q$ is dense in the $L_2$-completion of $\Omega^{0,q}_b(S^{2n-1})$. In particular, $A_{[n]}^0$
 is dense in the $L_2(S^{2n-1})$. \qed
 \end{cor}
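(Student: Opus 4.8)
The plan is to deduce both assertions from the Stone--Weierstrass theorem, feeding in the identifications of the preceding Proposition together with the approximation framework of part C. By that Proposition, $A_{[n]}^0=\CC[J]$ is the algebra of polynomial $\CC$-valued functions on $J(\RR)=S^{2n-1}$, while for general $q$ the space $A_{[n]}^q$ is identified with $\Gamma(J,\Omega^q_{J/\pA^n})$, realized on $S^{2n-1}$ as the polynomial sections of the bundle $\Omega^q_{J/\pA^n}|_{S^{2n-1}}\simeq {}^b\Omega^{0,q}_{S^{2n-1}}$. Since $S^{2n-1}$ is compact, the $C^\infty$-sections are dense in the $L_2$-completion of $\Omega^{0,q}_b(S^{2n-1})$, so in each degree it is enough to show that the polynomial sections $A_{[n]}^q$ are $L_2$-dense among the $C^\infty$-sections.

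First I would treat $q=0$. The algebra $A_{[n]}^0=\CC[J]$, viewed inside $C(S^{2n-1})$, contains the constants, separates points (already the coordinates $z_\nu$ do so), and is stable under complex conjugation, since the real structure interchanges $z_\nu$ and $z_\nu^*=\ol z_\nu$. The complex Stone--Weierstrass theorem then makes it uniformly dense in $C(S^{2n-1})$, hence $L_2$-dense by compactness; this is precisely the last sentence of the corollary.

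For general $q$ I would reduce to the scalar case by a summand trick. As $J$ is smooth affine, the $\Oc_J$-module of sections of $E:=\Omega^q_{J/\pA^n}$ is finitely generated projective, so it sits as an algebraic direct summand $\iota\colon E\hookrightarrow\Oc_J^{\oplus N}$ with an algebraic retraction $p$ satisfying $p\iota=\Id$. Restricting to $S^{2n-1}$, the map $p$ becomes a $C^\infty$ bundle projector, hence a bounded operator on $L_2$-sections, and it carries $N$-tuples from $(A_{[n]}^0)^{\oplus N}=\Gamma(J,\Oc_J^{\oplus N})$ into $\Gamma(J,E)=A_{[n]}^q$. Given a $C^\infty$-section $\sigma$ of $E$, I would lift it to the $C^\infty$ $N$-tuple $\iota(\sigma)$, approximate each of its $N$ scalar components in $L_2$ by elements of $A_{[n]}^0$ using the case $q=0$, and apply $p$; since $p\iota=\Id$ and $p$ is $L_2$-bounded, the resulting elements of $A_{[n]}^q$ converge in $L_2$ to $\sigma$.

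The only genuinely delicate point is the passage from the scalar Weierstrass statement to sections of the generally nontrivial bundle $E$, which is exactly what the summand trick handles; this is the content of the ``approximation properties similar to the above'' asserted in part C. With it in place, both density statements follow immediately.
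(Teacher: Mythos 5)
Your argument is correct and is essentially the paper's own: the corollary there is stated as an immediate consequence of the preceding Proposition (identifying $A_{[n]}^q$ with the polynomial sections of ${}^b\Omega^{0,q}_{S^{2n-1}}$) combined with the Weierstrass-type approximation properties asserted in the paragraph on generalities on real forms, and your Stone--Weierstrass argument for $q=0$ together with the projective-summand trick for general $q$ simply supplies the details of those asserted properties. There is nothing to correct.
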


\subsection{Representation-theoretic analysis}\label{subsec:repan}

\paragraph{A. The $GL_n$-spectrum of $A_n^\bullet$.} 
The Jouanolou model   gives  commutative dg-algebras  with a natural
action of the algebraic group $GL_n$. We now study this action.

\vskip .2cm

Let $V$ be a   $\k$-vector space of  finite dimension $n$. Recall 
 \cite{fulton-harris} that irreducible representations of $GL(V)$ are labelled by their  highest weights
which are sequences of non-increasing integers $\alpha=(\alpha_1 \geq  \cdots \geq \alpha_n)$, possibly negative
(dominant weights). We will denote the underlying space of the irreducible representation
with highest weight $\alpha$ by $\Sigma^\alpha V$ and regard $\Sigma^\alpha$ as a functor (known as the {\em Schur functor}) from the category of $n$-dimensional
$\k$-vector spaces and their isomorphisms,  to $\Vect_\k$.  Here are some tie-ins with more familiar constructions.  

\begin{exas}
(a) For $d\geq 0$ we have $\Sigma^{d,0,\dots, 0} (V) = S^d(V)$ is the $d$th symmetric power of $V$. For $0\leq p \leq n$ let $1_p = (1, \dots, 1, 0, \dots, 0)$
(with $p$ occurrences of $1$). Then $\Sigma^{1_p}(V)=\Lambda^p(V)$ is the $p$th exterior power of $V$. 

\vskip .2cm

(b) We have canonical identifications  
\[
\Sigma^{\alpha_1, \dots, \alpha_n} (V)^* \,\, \simeq \,\, \Sigma^{\alpha_1, \dots, \alpha_n} (V^*) 
\,\, \simeq\,\,  \Sigma^{-\alpha_n, \dots, -\alpha_1}(V).
\]
In particular, $V^* = \Sigma^{0, \dots, 0, -1}(V)$. Further, 
\[
\Sigma^{1, 0, \dots, 0, -1}(V) \,\,\simeq \,\, \mathfrak{sl} (V)\,\,=\,\, \bigl\{A\in\End(V): \, \tr(A)=0\bigr\}. 
\]
 \end{exas}
 
 Recall  also the following fact from \cite{weyl}.

 \begin{prop}\label{prop:weyl} 
For any dominant  weight $\alpha$ for $GL_n$ the restriction of $\Sigma^\alpha(\k^n)$ to $GL_{n-1}$ has simple spectrum, explicitly given by:
\[
\Sigma^\alpha(\k^n)|_{GL_{n-1}} \,\,\simeq \,\,\bigoplus_{\alpha_1 \geq \beta_1 \geq\alpha_2\geq\beta_2\geq \cdots\geq \beta_{n-1}\geq \alpha_n}
\Sigma^{\beta_1, \dots, \beta_{n-1}}(\k^{n-1}).
\]
\end{prop}
 
  We now denote by $V=\k^n$ the space of linear combinations of the coordinate functions $z_i$ on $\AAA^n$, so $\AAA^n=\Spec \, S^\bullet(V)$,
 and let $GL_n=GL(V)$.  
 The Jouanolou torsor $\ol J\to \PP^{n-1}$ is acted upon by $GL_n$, and so the dg-algebra $A^\bullet_{[n]}$
 as well as its completion $A_n^\bullet$,  inherit the $GL_n$-action.
 The following fact can be seen as a higher-dimensional generalization of the
  representation of elements of $\k[z, z^{-1}]$ as linear
 combinations of Laurent monomials (irreducible representations of $GL_1$). 
 It will be used in Proposition \ref{prop:MB-H}.

  \begin{thm}\label{thm:irr}  (a) As a $GL_n$-module, each  $A_{[n]}^p$ has simple spectrum, that is, each  $\Sigma^\alpha(V)$  enters into the
 irreducible decomposition of 
 $A^p_{[n]}$ no more than once. 
 
 \vskip .2cm
 
 (b) More precisely, $\Sigma^\alpha(V)$ enters into $A_{[n]}^p$ if and only if
 \[
 \alpha_1 \geq 0 \geq \alpha_2 \geq 0 \geq \cdots \geq 0\geq \alpha_{n-p}\geq -1 \geq \alpha_{n-p+1}\geq -1 \geq \cdots \geq -1 \geq \alpha_n. 
 \]
 \end{thm}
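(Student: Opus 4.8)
The plan is to reduce the statement to a branching problem for $GL_n\downarrow GL_{n-1}$, so that both the multiplicity-freeness (a) and the explicit chain of inequalities (b) become instances of the Gelfand--Tsetlin rule. The starting point is the identification established above of $A_{[n]}^p$ with the space of polynomial sections of ${}^b\Omega^{0,p}_{S^{2n-1}}$ on the sphere $S^{2n-1}$. Writing $S^{2n-1}=U(n)/U(n-1)$, this bundle is the $U(n)$-homogeneous bundle induced from the isotropy representation $\tau_p=\Lambda^p(T^{0,1})^*$ of $U(n-1)$ on the fibre over the base point. Since $T^{0,1}$ is, up to a twist, the standard $(n-1)$-dimensional representation of $U(n-1)$, the fibre $\tau_p$ is a single fundamental $GL_{n-1}$-irreducible $\Sigma^\beta$; tracking the conventions (discussed below) gives $\beta=(0,\dots,0,-1,\dots,-1)$ with exactly $p$ entries equal to $-1$.

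First I would make precise the passage from $L^2$- (or smooth) sections to the algebraic space $A_{[n]}^p$. By Frobenius reciprocity (Peter--Weyl) the multiplicity of the $U(n)$-irreducible $\Sigma^\alpha$ in the full section space equals $\dim\Hom_{U(n-1)}\bigl(\tau_p,\,\Sigma^\alpha|_{U(n-1)}\bigr)$, and each isotypic component is finite-dimensional. Corollary \ref{cor:L2-dense} shows that $A_{[n]}^p$ is a dense $U(n)$-stable subspace; since it is a rational $GL_n$-module it is a direct sum of finite-dimensional pieces, so it must contain each nonzero isotypic component in its entirety. Hence $A_{[n]}^p$ and the full section space carry the same $GL_n$-decomposition, and the multiplicity of $\Sigma^\alpha$ in $A_{[n]}^p$ is exactly the multiplicity of $\Sigma^\beta$ in the restriction $\Sigma^\alpha|_{GL_{n-1}}$.

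Now I would invoke the Gelfand--Tsetlin branching rule: $\Sigma^\alpha|_{GL_{n-1}}=\bigoplus_\gamma \Sigma^\gamma$, multiplicity-free, the sum running over all $\gamma$ interlacing $\alpha$, i.e. $\alpha_1\geq\gamma_1\geq\alpha_2\geq\cdots\geq\gamma_{n-1}\geq\alpha_n$. Because $\tau_p=\Sigma^\beta$ is a single irreducible, the multiplicity in question is $0$ or $1$, which already proves (a); and it equals $1$ precisely when $\beta=(0,\dots,0,-1,\dots,-1)$ interlaces $\alpha$. Substituting this particular $\beta$ into the interlacing chain reproduces verbatim the inequalities of part (b), the transition from the $0$-block to the $(-1)$-block of $\beta$ landing between $\alpha_{n-p}$ and $\alpha_{n-p+1}$; this finishes the proof.

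The step I expect to be delicate is the exact determination of $\tau_p$ — in particular pinning the determinantal twist so that $\beta=(0,\dots,0,-1,\dots,-1)$ and not, say, $(1,\dots,1,0,\dots,0)$ — because this is sensitive to the paper's conventions ($V=\langle z_1,\dots,z_n\rangle$, $\AAA^n=\Spec S^\bullet(V)$, and the real structure $\ol z_\nu=z_\nu^*$). I would fix it by a direct check in degree $p=0$, where $A_{[n]}^0=\CC[S^{2n-1}]$ must recover the classical bigraded decomposition of harmonic polynomials on $\CC^n$, namely $\bigoplus_{\mu,\nu\geq 0}\Sigma^{(\mu,0,\dots,0,-\nu)}(V)$; this single normalization forces the correct $\beta$ for all $p$. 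As an independent and fully algebraic cross-check (an alternative proof avoiding the analytic input), I would use the explicit model of Proposition \ref{prop:A-n-forms}, where $A_{[n]}^p$ is the $z^*$-degree-$0$ part of $\k[z,z^*][(zz^*)^{-1}]\otimes\Lambda^p\langle dz^*\rangle$ annihilated by $\iota_\xi$. Setting $\eta=\sum_\nu z_\nu\,dz^*_\nu$, the operator $h=(zz^*)^{-1}\,\eta\wedge(-)$ satisfies $\iota_\xi h+h\iota_\xi=\mathrm{id}$ in each fixed $z^*$-degree, so the Koszul-type complex $(M^\bullet_0,\iota_\xi)$ is exact and $M^p_0\cong A_{[n]}^p\oplus A_{[n]}^{p-1}$ as $GL_n$-modules. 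Combining the multiplicity-free decomposition $S^a(V)\otimes S^b(V^*)=\bigoplus_k \Sigma^{(a-k,0,\dots,0,-(b-k))}$ (and its localization at $zz^*$) with the Pieri rule for $-\otimes\Lambda^p(V^*)$ gives the multiplicities $m_p(\alpha)$ in $M^p_0$; then $\sum_p a_p(\alpha)\,q^p=\bigl(\sum_p m_p(\alpha)\,q^p\bigr)/(1+q)$ is in every case a polynomial with coefficients in $\{0,1\}$ supported exactly as claimed, the only potential obstruction (a non-polynomial quotient, equivalently an unwanted multiplicity $2$) being ruled out precisely by the dominance inequality $\alpha_1\geq\alpha_n$.
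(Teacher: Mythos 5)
Your main line of argument has the same skeleton as the paper's proof --- realize $A_{[n]}^p$ as sections of a homogeneous vector bundle on a homogeneous space, apply Frobenius reciprocity, and conclude by the Gelfand--Tsetlin branching rule --- but you run it on the compact form $S^{2n-1}=U(n)/U(n-1)$ with Peter--Weyl and the $L^2$-density of Corollary \ref{cor:L2-dense}, whereas the paper stays entirely algebraic: Proposition \ref{prop:J=G/G} identifies $J\simeq GL_n/GL_{n-1}$ and the fibre of $\Omega^p_{J/\pA^n}$ with $\Lambda^p((\k^{n-1})^*)$, so that $A_{[n]}^p=\Ind_{GL_{n-1}}^{GL_n}\Lambda^p(\k^{n-1})^*$ and algebraic Frobenius reciprocity applies at once. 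Your density step is itself correct (each isotypic component of the $L^2$-section space is finite dimensional by Frobenius reciprocity, and a dense rational submodule must exhaust every such component), but the analytic detour costs you two things the paper gets for free: validity over an arbitrary field $\k$ of characteristic $0$ (you would need an explicit Lefschetz-principle reduction to $\k=\CC$), and, more importantly, a clean determination of the isotropy representation.

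That determination is where your proposal has a genuine gap. You rightly flag the twist in $\tau_p$ as the delicate point, but the fix you propose --- checking $p=0$ against the bigraded harmonic decomposition --- cannot do the job: $\tau_0$ is the trivial representation of $U(n-1)$ for \emph{either} candidate twist, so the $p=0$ decomposition is identical whether $\tau_p\cong\Lambda^p((\k^{n-1})^*)$ or $\tau_p\cong\Lambda^p(\k^{n-1})$, while these two choices give incompatible answers for every $p\geq 1$ (the second would force $\alpha_1\geq 1$, excluding for instance $\alpha=(0,\dots,0,-1,\dots,-1)$, which does occur). So the claim that "this single normalization forces the correct $\beta$ for all $p$" is unjustified, and it is precisely the step on which part (b) hinges. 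The repair is a direct computation of the stabilizer action, which is exactly what the paper does: the stabilizer of the base point $(z,z^*)=((1,0,\dots,0),(1,0,\dots,0))$ of $J$ is $GL_{n-1}$, and the relative tangent space there is the \emph{standard} representation $\k^{n-1}$, whence the fibre of $\Omega^p_{J/\pA^n}$ is $\Lambda^p((\k^{n-1})^*)=\Sigma^{(0,\dots,0,-1,\dots,-1)}$. (Note that in the paper's conventions $V$ is spanned by the coordinate functions, so points of the $z^*$-space transform in the standard representation; this is why the naive "conjugate $=$ dual" heuristic for $T^{0,1}_{S^{2n-1}}$ is exactly the kind of reasoning that can silently land on the wrong twist.)

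Your purely algebraic cross-check is a genuinely different and attractive route: the homotopy identity $\iota_\xi h+h\iota_\xi=\mathrm{id}$ with $h=(zz^*)^{-1}\eta\wedge(-)$ is correct and does yield $M^p_0\simeq A_{[n]}^p\oplus A_{[n]}^{p-1}$ as $GL_n$-modules, reducing the theorem to a computation of the multiplicities $m_p(\alpha)$ via the decomposition of $S^a(V)\otimes S^b(V^*)$ and the Pieri rule, followed by division of the generating polynomial by $1+q$. But as written, the conclusion --- that this quotient always has coefficients in $\{0,1\}$, supported exactly on the stated chains --- is asserted rather than proved, and that assertion is the entire content of the theorem. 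To turn this sketch into a proof you would need to carry out the Pieri bookkeeping explicitly; until then, neither of your two routes is complete on its own.
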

 
  \begin{exas}\label{ex:A-n-irr}
 (a) For $n=1$ the only possible $p$ is $p=0$. In this case the condition on $\alpha=(\alpha_1)\in\ZZ$
 is vacuous and the theorem says that $A_{[1]}^0 = \bigoplus_{\alpha\in\ZZ} \Sigma^\alpha(\k)= \k[z, z^{-1}]$. 
 
 \vskip .2cm
 (b) Let $n=2$. In this case the theorem says that we have an identification of complexes of $GL_2$-modules 
 \[
 A_{[2]}^\bullet \,\, = \,\,\biggl\{  \bigoplus_{\substack{ \alpha_1\geq 0 \\ \alpha_2 \leq 0} } \Sigma^{\alpha_1, \alpha_2}(V) \buildrel \dbar\over\lra
  \bigoplus_{\substack{ \alpha_1\geq -1 \\ \alpha_2 \leq -1} } \Sigma^{\alpha_1, \alpha_2}(V)\biggr\}. 
 \]
From this we see the identifications
\[
\begin{gathered}
\Ker(\dbar) \,\,=\,\, \bigoplus_{\alpha_1\geq 0} \Sigma^{\alpha_1, 0}(V) = \k[z_1, z_2], \\
\Coker (\dbar) \,\,=\,\, \bigoplus_{\alpha_2 \leq -1} \Sigma^{-1, \alpha_2}(V) = z_1^{-1} z_2^{-1} \k[z_1^{-1}, z_2^{-1}],
\end{gathered}
\]
the other irreducible representations, common to $A_{[2]}^0$ and $A_{[2]}^1$,   are cancelled by the action of $\dbar$. 

\vskip .2cm

(c) For $n=3$ the theorem identifies $A_{[3]}^\bullet$, as a complex of $GL_3$-modules, with
\[
\bigoplus_{\substack {\alpha_1\geq 0 \\ \alpha_3\leq 0}} \Sigma^{\alpha_1, 0, \alpha_3} 
\buildrel \dbar \over\lra \bigoplus_{\alpha_1\geq 0\geq \alpha_2\geq -1\geq \alpha_3} \Sigma^{\alpha_1, \alpha_2, \alpha_3}
\buildrel \dbar \over\lra   
\bigoplus_{\substack {  \alpha_1\geq -1 \\ \alpha_3\leq -1    }} \Sigma^{\alpha_1, -1, \alpha_3}. 
\]
One can see, for example, the reason why the complex is exact in the middle: if $\alpha_2=0$, then $\Sigma^{\alpha_1, \alpha_2, \alpha_3}$
lies in $\Im(\dbar)$, while if $\alpha_2=-1$, then $\Sigma^{\alpha_1, \alpha_2, \alpha_3}$ is mapped by $\dbar$ isomorphically to its image. 
\end{exas}

The proof of Theorem \ref {thm:irr} is based on the following observation.  We use the definition
  of $A_{[n]}$ in terms of the torsor $J$.

\begin{prop}\label{prop:J=G/G}
As a variety with $GL_n$-action, $J\simeq GL_n/GL_{n-1}$. Further, 
$\Omega^p_{J/\pA^n}$  is identified with the homogeneous vector bundle
on $GL_n/GL_{n-1}$ associated to the representation $\Lambda^p((\k^{n-1})^*)$ of $GL_{n-1}$. 
\end{prop}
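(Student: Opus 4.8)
The plan is to use the affine realization \eqref{J=G/G} of the Jouanolou torsor, $J \simeq \{(z,z^*)\in\AAA^n\times(\wc\AAA^n-\{0\}) : zz^*=1\}$, and to identify it with a homogeneous space by exhibiting a transitive $GL_n$-action and computing a point stabilizer. Writing $V=\k^n$ as in the text, so that $GL_n=GL(V)$, the group acts on a pair by $g\cdot(z,z^*)=(zg^{-1},gz^*)$; this manifestly preserves the pairing $zz^*$, hence the locus $\{zz^*=1\}$, and it is the restriction of the $GL_n$-action on $\AAA^n\times\wc\AAA^n$ used to define $A_n^\bullet$. First I would check transitivity: a pair $(z,z^*)$ with $zz^*=1$ is the same datum as a nonzero vector $z^*\in\wc\AAA^n$ together with a hyperplane $\ker(z)=\{w:zw=0\}$ complementary to the line $\k z^*$ (the functional $z$ is recovered from this decomposition by $z(z^*)=1$ and $z|_{\ker z}=0$). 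Since $GL_n$ acts transitively on pairs consisting of a nonzero vector and a complementary hyperplane, it acts transitively on $J$.

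Next I would compute the stabilizer of the base point $j_0=(z_0,z_0^*)$ with $z_0=e_1^{\mathsf T}$ (a row vector) and $z_0^*=e_1$ (a column vector), which lies in $J$ since $z_0z_0^*=1$. The condition $gz_0^*=z_0^*$ forces the first column of $g$ to be $e_1$, while $z_0g^{-1}=z_0$ forces the first row of $g$ to be $e_1^{\mathsf T}$; together these say exactly that $g=\begin{pmatrix}1 & 0\\ 0 & A\end{pmatrix}$ with $A\in GL_{n-1}$. Thus the stabilizer is the block-diagonal copy of $GL_{n-1}$, and $J\simeq GL_n/GL_{n-1}$ as a $GL_n$-variety.

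For the statement about relative forms, I would observe that $\pi:J\to\pA^n$, $(z,z^*)\mapsto z$, is $GL_n$-equivariant (for the contragredient action $z\mapsto zg^{-1}$ on $\pA^n=\AAA^n-\{0\}$), so $\Omega^p_{J/\pA^n}$ is a $GL_n$-equivariant vector bundle on $J\simeq GL_n/GL_{n-1}$. Under the standard equivalence between $GL_n$-equivariant bundles on $GL_n/GL_{n-1}$ and representations of the stabilizer $GL_{n-1}$ (take the fiber at $j_0$), it suffices to compute the fiber of $\Omega^p_{J/\pA^n}$ at $j_0$ as a $GL_{n-1}$-module. The fiber of $\pi$ over $z$ is the affine space $\{z^*:zz^*=1\}$ with direction $\ker(z)\subset\wc\AAA^n$, so the relative tangent space at $j_0$ is $\ker(z_0)=\mathrm{span}(e_2,\dots,e_n)\cong\k^{n-1}$; the stabilizer element $\mathrm{diag}(1,A)$ acts on it through $A$, i.e.\ by the standard representation of $GL_{n-1}$. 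Dualizing and taking exterior powers gives $\Omega^p_{J/\pA^n}|_{j_0}\cong\Lambda^p((\k^{n-1})^*)$ as a $GL_{n-1}$-module, which is the claim.

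The routine points to keep in mind — where one is most likely to slip — are the bookkeeping of the two dual linear actions (so that $\pi$ is genuinely equivariant and the pairing truly invariant) and the scalar normalization implicit in passing from the projective model $J\subset\AAA^n\times\wc\PP^n$ to the affine model $\{zz^*=1\}$. Neither is deep, but putting the contragredient on the correct factor is exactly what makes the stabilizer land on the lower block $GL_{n-1}$ rather than on a conjugate, and what pins the relative tangent space to the \emph{standard} (rather than dual) representation.
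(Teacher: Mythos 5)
Your proof is correct and follows essentially the same route as the paper's: realize $J$ via the affine model $\{zz^*=1\}$ as a homogeneous space, compute the stabilizer of the base point $(e_1^{\mathsf T},e_1)$ to be the block $GL_{n-1}$, and identify the fiber of $\Omega^p_{J/\pA^n}$ there by noting that the relative tangent space is the standard representation $\k^{n-1}$ of the stabilizer. The only difference is that you spell out the transitivity check and the equivariance bookkeeping, which the paper leaves implicit.
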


\noindent {\sl Proof:}  The identification \eqref{J=G/G}
  exhibits  $J$ as  a homogeneous space under $GL_n$. The stabilizer of the point $(z, z^*)$ where
$z= (1, 0, \dots, 0)$ and $z^* = (1, 0, \dots, 0)$, consists of matrices of the form
\[
\begin{pmatrix}
1 & 0\\
0 & A
\end{pmatrix}, \quad A\in GL_{n-1},
\]
whence the first  statement of the proposition. To see the second statement, look at the action of the stabilizer subgroup $GL_{n-1}$ on the
fiber of $\Omega^p_{J/\pA^n}$ over the chosen point $(z,z^*)$ above. By definition, this fiber is the $p$th exterior power of the relative
cotangent space at this point. It remains to notice that the relative {\em tangent} space is, as a $GL_{n-1}$-module, nothing but $\k^{n-1}$. \qed

\vskip .2cm

We now prove Theorem \ref {thm:irr}. By Proposition \ref{prop:J=G/G},
\[
A_{[n]}^p = \Gamma(J, \Omega^p_{J/\pA^n)}) \,\,\simeq \,\, \Ind_{GL_{n-1}}^{GL_n} \Lambda^p (\k^{n-1})^*
\]
  (induction in the sense of algebraic groups, i.e., 
   via regular sections of the homogeneous bundle). So we can apply Frobenius reciprocity and obtain:
\[
\mult \bigl( \Sigma^\alpha \k^n,  \Ind_{GL_{n-1}} ^{GL_n} \Lambda^p (\k^{n-1})^* \bigr) \,\,=\,\,\mult \bigl(
\Lambda^p(\k^{n-1})^* , \Sigma^\alpha \k^n |_{GL_{n-1}}
\bigr),
\]
where $\mult$ means the multiplicity of an irreducible representation. Now, 
$\Lambda^p(\k^{n-1})^* = \Sigma^{0, \dots, 0, -1, \dots, -1}(\k^{n-1})$
(with $p$ occurrences of $(-1)$). It remains to apply Proposition \ref{prop:weyl}. 
 Theorem \ref {thm:irr} is proved. 

\paragraph{B. The Martinelli-Bochner form and its multipoles.} 

 We use the analogy with the Dolbeault complex
 as a motivation for the following. 

\begin{ex}\label{ex:MB}
The {\em Martinelli-Bochner form}
\[
\Omega = \Omega(z,z^*)  \,\,=\,\, {  \sum_{\nu=1}^n
(-1)^ {\nu -1} z^*_\nu   dz^*_1 \wedge \cdots \wedge \wh{ dz^*_\nu}
\wedge \cdots \wedge dz^*_n 
\over 
(zz^*)^n }
\]
is an element of $A^{n-1}_{[n]}$. For $n=1$ it reduces to $1/z_1$. 
\end{ex}

The following will be used in Proposition \ref{prop:Hpq-An} and Theorem \ref{thm:res-invariant}. 

\begin{prop}\label{prop:MB-H}
 Let $n>1$. 

(a) The class of $\Omega$ in $H^{n-1}(A^\bullet_{[n]})= H^{n-1}(\pA^n, \Oc)$
is a generator of the 1-dimensional subspace $z_1^{-1} \cdots z_n^{-1}$ of weight $(-1, \dots, -1)$
under the coordinate torus, see Proposition \ref{prop:H^iO}. 

\vskip .2cm

(b) The top degree part $A_{[n]}^{n-1}$ contains precisely one 1-dimensional irreducible representation of $GL_n$,
which is $\Sigma^{-1, \dots, -1}(V) = \Lambda^n(V^*)$. This subspace is spanned by the Martinelli-Bochner form 
$\Omega(z,z^*)$. 

\vskip .2cm

(c)
Further, every element of $H^{n-1}(A^\bullet_{[n]})$
can be represented as the class of a ``multipole"
\[
P(\partial_{z_1}, \dots, \partial_{z_n}) \Omega(z, z^*)
\]
for a unique polynomial $P(y_1, \dots, y_n)$. 
\end{prop}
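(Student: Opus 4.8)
The plan is to treat the three parts in the order (b), (a), (c), letting the $GL_n$-equivariant decomposition of Theorem \ref{thm:irr} do most of the work and bringing in an explicit differentiation only at the very end. For part (b) I would apply Theorem \ref{thm:irr}(b) with $p=n-1$, so that $n-p=1$ and the admissible highest weights are exactly those with
\[
\alpha_1\geq -1,\qquad \alpha_2=\cdots=\alpha_{n-1}=-1,\qquad \alpha_n\leq -1 .
\]
A one-dimensional irreducible $GL_n$-module is a power of the determinant, $\Sigma^{k,\dots,k}(V)$, and imposing this constraint forces $k=-1$; hence the only one-dimensional constituent of $A^{n-1}_{[n]}$ is $\Sigma^{-1,\dots,-1}(V)=\Lambda^n(V^*)$, with multiplicity one. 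To see that $\Omega$ spans this line I would write its numerator as $\iota_\xi\bigl(dz^*_1\wedge\cdots\wedge dz^*_n\bigr)$ with $\xi$ the Euler field of Proposition \ref{prop:A-n-forms}; since $\xi$ and the pairing $zz^*$ are $GL_n$-invariant while $dz^*_1\wedge\cdots\wedge dz^*_n$ transforms by $\det^{-1}$, the form $\Omega$ is a $GL_n$-semiinvariant of weight $\det^{-1}$. By Example \ref{ex:MB} it is a nonzero element of $A^{n-1}_{[n]}$, so it spans the unique copy of $\Lambda^n(V^*)$, giving (b).

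For part (a) I would first show $[\Omega]\neq 0$. By (b) the isotypic component of type $\Lambda^n(V^*)$ occurs in $A^{n-1}_{[n]}$; applying Theorem \ref{thm:irr}(b) with $p=n-2$ (so $n-p=2$, which forces $\alpha_1\geq 0$) shows it does \emph{not} occur in $A^{n-2}_{[n]}$. As $\dbar$ is $GL_n$-equivariant, $\dbar(A^{n-2}_{[n]})$ contains no vector of type $\Lambda^n(V^*)$, and since $A^n_{[n]}=0$ nothing lies above; hence the line $\langle\Omega\rangle$ injects into $H^{n-1}(A^\bullet_{[n]})$ and $[\Omega]\neq 0$. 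Restricting the $\det^{-1}$-weight to the coordinate torus $\GG_m^n\subset GL_n$ gives weight $(-1,\dots,-1)$, and by Proposition \ref{prop:H^iO} the corresponding weight space of $H^{n-1}(\pA^n,\Oc)=z_1^{-1}\cdots z_n^{-1}\k[z_1^{-1},\dots,z_n^{-1}]$ is the line spanned by $z_1^{-1}\cdots z_n^{-1}$. Thus $[\Omega]$ is a nonzero element of this one-dimensional space, i.e. a generator.

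For part (c) I would observe that differentiation of coefficient functions $\partial_{z_\nu}$ is a degree-zero endomorphism of $A^\bullet_{[n]}$: it preserves the homogeneity and contraction conditions of Proposition \ref{prop:A-n-forms} and commutes with $\dbar=\sum_\mu dz^*_\mu\,\partial/\partial z^*_\mu$, so it descends to cohomology, and a direct check shows $P(\partial_{z_1},\dots,\partial_{z_n})\Omega\in A^{n-1}_{[n]}$ for every polynomial $P$. Under the identification $H^{n-1}(A^\bullet_{[n]})\cong H^{n-1}(\pA^n,\Oc)$ of Proposition \ref{prop:RGammaJ}, which is compatible with the $\k[z]$-module structure and with globally defined vector fields, $\partial_{z_\nu}$ acts on $z_1^{-1}\cdots z_n^{-1}\k[z_1^{-1},\dots,z_n^{-1}]$ by formal differentiation of Laurent monomials. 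In particular $\partial_{z_1}^{\beta_1}\cdots\partial_{z_n}^{\beta_n}$ sends $z_1^{-1}\cdots z_n^{-1}$ to a nonzero multiple of $z_1^{-1-\beta_1}\cdots z_n^{-1-\beta_n}$, so $H^{n-1}$ is free of rank one over $\k[\partial_{z_1},\dots,\partial_{z_n}]$ on that generator. Since $[\Omega]$ is a nonzero multiple of the generator by (a), the assignment $P\mapsto[P(\partial_z)\Omega]$ carries the monomial basis of $\k[y_1,\dots,y_n]$ bijectively onto nonzero multiples of the monomial basis of $H^{n-1}$, yielding both existence and uniqueness of $P$.

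The substantive point, and the one I expect to be the main obstacle, is the last identification: matching the coefficient-differentiation operators on the Jouanolou model with the formal-differentiation (equivalently $D$-module/local-cohomology) action on $H^{n-1}(\pA^n,\Oc)$, so that $H^{n-1}$ becomes a free rank-one $\k[\partial_z]$-module on $[\Omega]$. If one prefers to sidestep this, it can be replaced by a Schur's-lemma argument: the $GL_n$-equivariant map $P\mapsto[P(\partial_z)\Omega]$ from $S^\bullet(V^*)$ to $H^{n-1}$ has multiplicity-free source and target with identical graded pieces $\Sigma^{-1,\dots,-1,-1-d}(V)$, so it is an isomorphism as soon as it is nonzero in each degree $d$, for which it suffices to compute that $[\partial_{z_n}^{d}\Omega]\neq 0$.
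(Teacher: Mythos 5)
Your proposal is correct and takes essentially the same route as the paper's proof: part (b) from the simple-spectrum decomposition of Theorem \ref{thm:irr}, part (a) from (b) together with the absence of $\Sigma^{-1,\dots,-1}(V)$ in $A^{n-2}_{[n]}$ and the weight-$(-1,\dots,-1)$ identification of Proposition \ref{prop:H^iO}, and part (c) from the statement that $H^{n-1}(\pA^n,\Oc)$ is a free rank-one module over $\k[\partial_{z_1},\dots,\partial_{z_n}]$ with generator $[z_1^{-1}\cdots z_n^{-1}]$, which the paper likewise proves via the standard affine covering. The Jouanolou-versus-\v Cech compatibility of the $\partial_{z_\nu}$-actions that you flag as the main obstacle is left implicit in the paper too, so your write-up (with the $\iota_\xi$ computation for (b) and the Schur-lemma fallback for (c)) is, if anything, more explicit than the published argument.
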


\noindent{\sl Proof:}  (b) follows from Theorem \ref{thm:irr}. To deduce (a) from (b),
note that the class $[z_1^{-1}\cdots z_n^{-1}]\in H^{n-1}(\pA^n, \Oc)$ spans a 1-dimensional representation
of $GL_n$, and so does $\Omega(z,z^*)$ (direct calculation).  At the same time 
$\Sigma^{-1,\dots, -1}(V)$ is not present in $A_{[n]}^{n-2}$, so $[\Omega(z,z^*)]$ is a nonzero
scalar multiple of $[z_1^{-1}\cdots z_n^{-1}]$.

 To prove (c), 
we notice the following fact which  complements Proposition 
\ref{prop:H^iO} and is proved using the same standard affine covering $\{z_i\neq 0\}$.
 Note that the ring $\k[\del_{z_1}, \dots, \del_{z_n}]$ of differential operators with constant coefficients
acts naturally on $\Oc_{\pA^n}$ and therefore on $H^{n-1}(\pA^n, \Oc)$. 

\begin{prop} As a $\k[\del_{z_1}, \dots, \del_{z_n}]$-module, the space $H^{n-1}(\pA^n, \Oc)$ is free of rank 1,
with generator
  $\delta = [z_1^{-1}\cdots z_n^{-1}]\in H^{n-1}(\pA^n, \Oc)$. \qed
 \end{prop}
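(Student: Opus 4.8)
The plan is to read the module structure off the monomial basis of $H^{n-1}(\pA^n,\Oc)$ already supplied by Proposition~\ref{prop:H^iO}. In its polynomial version, that result says $H^{n-1}(\pA^n,\Oc)$ has $\k$-basis the monomials $z_1^{-a_1}\cdots z_n^{-a_n}$ with all $a_\nu\geq 1$, and $\delta=[z_1^{-1}\cdots z_n^{-1}]$ is the member with $a_\nu=1$ for every $\nu$. First I would make the action of $\del_{z_i}$ explicit on this basis. Using the standard covering $U_i=\{z_i\neq 0\}$, the top \v Cech cohomology is the quotient of $\Gamma\bigl(\bigcap_i U_i,\Oc\bigr)=\k[z_1^{\pm},\dots,z_n^{\pm}]$ by $\sum_i \Gamma\bigl(\bigcap_{j\neq i}U_j,\Oc\bigr)$; the $i$-th summand is spanned by the monomials with nonnegative $z_i$-exponent, so the sum is spanned by all monomials having at least one nonnegative exponent, and the classes of the all-negative monomials give exactly the stated basis. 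Since each $\del_{z_i}$ is a derivation of $\Oc_{\pA^n}$ compatible with localization, it commutes with the \v Cech differential, hence descends to $H^{n-1}$ and acts on a representative monomial by the naive rule $\del_{z_i}(z^a)=a_i\,z^{a-e_i}$ (which preserves the all-negative condition because $a_i\leq -1$).

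Next I would compute the orbit of $\delta$ under the polynomial ring. For a multi-index $b=(b_1,\dots,b_n)\in\NN^n$, iterating the formula yields
\[
\del_{z_1}^{b_1}\cdots\del_{z_n}^{b_n}(\delta)\,\,=\,\,\Bigl(\prod_{\nu=1}^n (-1)^{b_\nu}\, b_\nu!\Bigr)\, z_1^{-1-b_1}\cdots z_n^{-1-b_n}.
\]
Here the characteristic-zero hypothesis is decisive: the scalar $\prod_\nu (-1)^{b_\nu} b_\nu!$ is nonzero, so $\del^b\delta$ is a nonzero multiple of the basis monomial $z_1^{-1-b_1}\cdots z_n^{-1-b_n}$. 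As $b$ ranges over $\NN^n$, the exponent tuples $(-1-b_1,\dots,-1-b_n)$ range bijectively over all tuples of integers $\leq -1$, i.e.\ over the entire monomial basis of $H^{n-1}(\pA^n,\Oc)$.

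Finally I would conclude by comparing the two monomial bases. The evaluation map $\k[\del_{z_1},\dots,\del_{z_n}]\to H^{n-1}(\pA^n,\Oc)$, $P\mapsto P(\del)\,\delta$, sends the monomial basis $\{\del^b\}$ of the source onto the monomial basis $\{z^{-1-b}\}$ of the target up to nonzero scalars, hence is a linear isomorphism; injectivity expresses that $\delta$ satisfies no relations and surjectivity that $\delta$ generates. Therefore $H^{n-1}(\pA^n,\Oc)$ is free of rank~$1$ over $\k[\del_{z_1},\dots,\del_{z_n}]$ with generator $\delta$. There is no deep obstacle in this argument; the only points requiring care are the verification that $\del_{z_i}$ descends to cohomology and acts by the naive monomial rule on the \v Cech representatives (so that the displayed computation is legitimate), and the remark that characteristic zero is precisely what prevents the factorial coefficients from annihilating any basis vector.
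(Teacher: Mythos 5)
Your proof is correct and takes essentially the same approach as the paper: the paper disposes of this proposition by remarking that it "is proved using the same standard affine covering $\{z_i\neq 0\}$", i.e.\ precisely the \v Cech computation and monomial bookkeeping (all-negative exponents as a basis, $\del^b\delta$ hitting each basis monomial with the nonzero factor $\prod_\nu(-1)^{b_\nu}b_\nu!$) that you carry out. You have simply supplied the details the paper leaves implicit.
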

 
  Proposition \ref {prop:MB-H} is proved.

\begin{rem}\label{rems:jou=sphere}

The fact that $A_{[n]}^0$ has simple spectrum, allows us  to define a canonical $GL_n$-equivariant  projection $S: A_{[n]}^0\to \k[z_1,\dots, z_n]$
along all the irreducible representations which do not enter into $\k[z_1,\dots, z_n]$. This is the algebraic analog of the classical 
{\em Szeg\"o  projection} from complex Hilbert space $L_2(S^{2n-1})$ to the Hardy space formed by the boundary values of functions
holomorphic in the ball $\|z\|^2<1$. See, e.g.,   \cite{BdMG}.

\end{rem}

\subsection{Residues and duality}\label{subsec:resdual}
  
\paragraph{A. Jouanolou model for forms.} 
We denote
  \be
  A_{[n]}^{p, q} \,\,=\,\, A_{[n]}^q(\Omega^p_{\pA^n}), \quad  A_{[n]}^{\bullet\bullet}\,\,=\,\, 
  \bigoplus_{p=0}^n \bigoplus_{q=0}^{n-1} A_{[n]}^{p,q}. 
  \ee
  
  Elements of $A_{[n]}^{p,q}$ can be viewed as differential forms on $\AAA^n\times \wc\PP^n$
  with poles on the quadric $Q$ given by $zz^*=0$. Let $\wt Q=\{zz^*=0\}$
  be the hypersurface in $\AAA^n \times\wc\AAA^n$
   lifting $Q$. 
   By pulling back from $\wc\PP^{n-1}$  to $\wc\AAA^n-\{0\}$, we can view
  elements of $A_{[n]}^{p,q}$ as  differential forms on $(\AAA^n \times\wc\AAA^n)-\wt Q$ of the form
 \be\label{eq:formApq}
  \omega \,\,= \sum f_{j_1, \dots, j_q}^{i_1, \dots, i_p} (z, z^*) dz_{i_1} \wedge \cdots \wedge dz_{i_p}
  \wedge dz^*_{j_1}\wedge \cdots \wedge dz^*_{j_q}, 
  \ee
  which have total degree 0 in the $z^*_\nu, dz^*_\nu$ and  are annihilated by contraction with the vector field $\sum z_\nu^* \del/\del z_n^*$.   
  \vskip .2cm

 It follows that 
  the bigraded vector space $A_{[n]}^{\bullet\bullet}$ is a graded commutative algebra, with  respect to
   multiplication
  of forms, with grading  situated in degrees $[0,n]\times[0,n-1]$. 
   It is equipped with two anticommuting
  differentials: $\dbar = \sum dz^*_\nu \del/\del z^*_\nu$ of degree $(0,1)$ 
    and $\del = \sum dz_\nu \del/\del z_\nu$ of  degree $(1,0)$ which correspond to  exterior differentiation along the
    two factors in $\AAA^n\times\wc\PP^{n-1}$. One can say that $\dbar$ is 
 induced by the relative de Rham differential in $\Omega_{J/\pA^n}\otimes \pi^*\Omega^p$ and
    $\del$ corresponds to the de Rham differential 
  $d: \Omega^p_{\pA^n}\to\Omega^{p+1}_{\pA^n}$.  Part (a) of Proposition \ref{prop:RGammaJ}
  implies:

  \begin{prop}
  The bigraded dg-algebra $A_{[n]}^{\bullet\bullet}$ is a commutative dg-model for
  $\Aen_{[n]}^{\bullet \bullet}$. \qed
  \end{prop}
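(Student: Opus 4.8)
The plan is to upgrade Proposition \ref{prop:RGammaJ} from a single sheaf to the whole Hodge tower, viewing $A_{[n]}^{\bullet\bullet}$ as a bicomplex whose $\dbar$-columns are the Jouanolou models of the individual $R\Gamma(\pA^n,\Omega^p)$ and whose $\del$-direction recovers the derived de Rham (Hodge) differential. First I would record the cheap half: for each fixed $p$, the column $(A_{[n]}^{p,\bullet},\dbar)=A_{[n]}^\bullet(\Omega^p_{\pA^n})$ is a model of $R\Gamma(\pA^n,\Omega^p)$ by part (a) of Proposition \ref{prop:RGammaJ}, and by part (b) the wedge product makes $A_{[n]}^{\bullet\bullet}$ a commutative bigraded dg-algebra, so the product and the second grading already agree with those of $\Aen_{[n]}^{\bullet\bullet}=\bigoplus_p R\Gamma(\pA^n,\Omega^p)$. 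The one point that genuinely requires an argument is that the honest operator $\del=\sum_\nu dz_\nu\,\partial/\partial z_\nu$ on Jouanolou forms represents, under these quasi-isomorphisms, the de Rham differential $R\Gamma(\pA^n,\Omega^p)\to R\Gamma(\pA^n,\Omega^{p+1})$, which a priori lives only in the derived category.

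To see this I would resolve the whole de Rham complex at once on $\pA^n$. Since $\pi\colon J\to\pA^n$ is an affine morphism with fibres $\AAA^{n-1}$, I form the double complex of quasi-coherent sheaves
\[
\scrK^{p,q}\,=\,\Omega^p_{\pA^n}\otimes_{\Oc}\pi_*\Omega^q_{J/\pA^n},\qquad 0\le p\le n,\ 0\le q\le n-1,
\]
with vertical differential $\dbar$ the ($\Oc_{\pA^n}$-linear) relative de Rham differential and horizontal differential $\del$ the de Rham differential of the base. The relative Poincaré lemma used in the proof of Proposition \ref{prop:RGammaJ} says precisely that each column $\scrK^{p,\bullet}$ is a resolution of $\Omega^p_{\pA^n}$; moreover the evident inclusion $(\Omega^\bullet_{\pA^n},\del)\hookrightarrow(\scrK^{\bullet\bullet},\del,\dbar)$, placing the de Rham complex in the row $q=0$ via $\Oc_{\pA^n}\to\pi_*\Oc_J$, is a map of double complexes because $\pi^*$ commutes with the de Rham differential. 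Thus $\Omega^\bullet_{\pA^n}\hookrightarrow\scrK^{\bullet\bullet}$ is a columnwise quasi-isomorphism of bounded double complexes.

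Applying $R\Gamma(\pA^n,-)$ then gives the desired comparison. On one side, $R\Gamma(\pA^n,\Omega^\bullet_{\pA^n})$, with its de Rham differential $\del$ and its internal $R\Gamma$-differential $\dbar$, is by definition $\Aen_{[n]}^{\bullet\bullet}$. On the other side, the projection formula together with $R\pi_*=\pi_*$ (as $\pi$ is affine) and the affineness of $J$ collapse the cohomology,
\[
R\Gamma(\pA^n,\scrK^{p,q})\;=\;R\Gamma\bigl(J,\ \pi^*\Omega^p_{\pA^n}\otimes\Omega^q_{J/\pA^n}\bigr)\;=\;\Gamma\bigl(J,\ \pi^*\Omega^p_{\pA^n}\otimes\Omega^q_{J/\pA^n}\bigr)\;=\;A_{[n]}^{p,q},
\]
concentrated in cohomological degree $0$, so that the totalization of $R\Gamma(\pA^n,\scrK^{\bullet\bullet})$ is exactly $(A_{[n]}^{\bullet\bullet},\del,\dbar)$ with no leftover cohomological differential. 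Since a columnwise quasi-isomorphism of bounded double complexes induces a quasi-isomorphism on totalizations, $R\Gamma(\pA^n,-)$ of the inclusion yields a quasi-isomorphism $\Aen_{[n]}^{\bullet\bullet}\simeq A_{[n]}^{\bullet\bullet}$ respecting both gradings and both differentials, and its multiplicativity is inherited from the lax symmetric monoidal structure of Proposition \ref{prop:RGammaJ}(b); hence it is an isomorphism in the homotopy category of commutative bigraded dg-algebras. The main obstacle is exactly this middle step: verifying that the strictly defined $\del$ matches the derived Hodge differential, by building the double-complex resolution and checking that the affineness of $J$ and of $\pi$ is what absorbs the otherwise-present $R\Gamma(\pA^n,-)$ differential, so that only $\del$ and $\dbar$ survive.
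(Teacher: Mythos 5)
Your proof is correct and is in substance the paper's own argument: the paper deduces the proposition directly from Proposition \ref{prop:RGammaJ}(a) applied to $E=\Omega^p_{\pA^n}$, with the bigrading, product and the two differentials read off from the realization of $A_{[n]}^{p,q}$ as bidegree-$(p,q)$ forms on $(\AAA^n\times\wc\AAA^n)-\wt Q$. Your double complex $\scrK^{p,q}=\pi_*\bigl(\pi^*\Omega^p_{\pA^n}\otimes\Omega^q_{J/\pA^n}\bigr)$ merely spells out, via $\Gamma$-acyclicity of pushforwards from the affine $J$, the one compatibility the paper leaves implicit (that the strict $\del$ represents the derived de Rham differential); note only that defining $\del$ on $\scrK^{p,q}$ at the sheaf level itself requires the coordinate splitting coming from the ambient product, i.e.\ the same explicit description of Jouanolou forms that the paper uses.
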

  
    \begin{ex}
  Let $\k=\CC$. Then we have an embedding of commutative bigraded bidifferential algebras
  \[
  \eps: A_n^{\bullet\bullet} \hookrightarrow \Omega^{\bullet\bullet}(\CC^n-\{0\}),
  \]
  where on the right we have the algebra of $C^\infty$ Dolbeault forms on $\CC^n-\{0\}$ with its standard differentials $\del$
  and $\dbar$. The value of $\eps$ on a form $\omega$ given by \eqref{eq:formApq}, is
  \[
  \eps(\omega) \,\,= \sum  f_{j_1, \dots, j_q}^{i_1, \dots, i_p} (z, \ol z)|_{\CC^n-\{0\}}  dz_{i_1} \wedge \cdots \wedge dz_{i_p}
  \wedge d\ol z_{j_1}\wedge \cdots \wedge d\ol z_{j_q}
  \]
  \end{ex}
  
\paragraph{B. The residue map.} 
  Since $\Omega^n_{\pA^n}$ is identified, as a $GL_n$-equivariant coherent sheaf on $\pA^n$, with $\Oc_{\pA^n}\otimes_\k \Lambda^n(\k^n)$,
  we have a $GL_n$-invariant identification
  \[
  A^{n, \bullet}_{[n]} \,\,\simeq \,\, A_{[n]}^\bullet \otimes_\k \Lambda^n(\k^n). 
  \]
  We define the {\em residue map}
  \be\label{eq:def-res}
  \Res: A_{[n]}^{n, n-1} \lra \k
  \ee
  as the composition  
  \[
  A_{[n]}^{n, n-1} \,=  \, A_{[n]}^n \otimes_\k \Lambda^n(\k^n)\lra \Lambda^n(\k^n)^*\otimes \Lambda^n(\k^n) =\k,
  \]
  where $A_{[n]}^{n-1}\to\Lambda^n(\k^n)^*$ is the unique $GL_n$-invariant projection
 which takes the Martinelli-Bochner form $\Omega(z, z^*)$ to $dz_1^*\wedge \cdots\wedge dz_n^*$, see Proposition
  \ref{prop:MB-H}(b). 
 Thus, by definition,
  \be\label{eq:res-taut}
  \Res\bigl(\Omega(z,z^*) dz_1\wedge \cdots \wedge dz_n\bigr) \,\,=\,\, 1. 
  \ee
  Note that $A_{[n]}^{n, n-1}$ is the last graded component of $A_{[n]}^\bullet(\Omega^n_{\pA^n})$
  which is a dg-model for $R\Gamma(\pA^n, \Omega^n)$. 
  
   \begin{prop}
   For any $f(z)\in \k[z_1, \dots, z_n]$ we have 
  \[
  \Res\bigl(f(z) \Omega(z,z^*) dz_1\wedge \cdots \wedge dz_n\bigr)\,\,=\,\, f(0)
  \]
 (the algebraic Martinelli-Bochner formula).

  \end{prop}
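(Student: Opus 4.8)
The plan is to leverage the $GL_n$-equivariance that is built into the definition \eqref{eq:def-res} of $\Res$. By construction $\Res$ is the composite of the $GL_n$-equivariant projection $A_{[n]}^{n-1}\to\Lambda^n(\k^n)^*$ with the canonical pairing $\Lambda^n(\k^n)^*\otimes\Lambda^n(\k^n)=\k$; since $\k$ carries the trivial $GL_n$-action, $\Res$ is a $GL_n$-invariant functional on $A_{[n]}^{n,n-1}$. In particular it is invariant under the diagonal torus $T\subset GL_n$, hence vanishes on every $T$-weight vector of nonzero weight. I would combine this with the normalization \eqref{eq:res-taut} and a short weight computation.

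First I would reduce to the homogeneous case. Note that $f(z)\Omega$ lies in $A_{[n]}^{n-1}$: multiplication by the polynomial $f(z)$ preserves both the homogeneity condition (1) and the Euler-contraction condition (2) of Proposition \ref{prop:A-n-forms}. Writing $f=\sum_{d\ge 0}f_d$ with $f_d$ homogeneous of degree $d$, linearity of $\Res$ gives $\Res(f\,\Omega\,dz_1\wedge\cdots\wedge dz_n)=\sum_d\Res(f_d\,\Omega\,dz_1\wedge\cdots\wedge dz_n)$, while $f(0)=f_0$. The constant term contributes $f_0\,\Res(\Omega\,dz_1\wedge\cdots\wedge dz_n)=f_0=f(0)$ by \eqref{eq:res-taut}.

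It then remains to kill the terms with $d\ge 1$. By Proposition \ref{prop:MB-H}(b) the form $\Omega(z,z^*)$ spans $\Sigma^{-1,\dots,-1}(V)=\Lambda^n(V^*)$, so it has $T$-weight $(-1,\dots,-1)$, while $dz_1\wedge\cdots\wedge dz_n$ spans $\Lambda^n(V)$ and has weight $(1,\dots,1)$; thus $\Omega\,dz_1\wedge\cdots\wedge dz_n$ has weight $0$, consistently with its spanning the trivial subrepresentation. Multiplying by a monomial $z^a=z_1^{a_1}\cdots z_n^{a_n}$ with $|a|=d$ shifts the weight to $a=(a_1,\dots,a_n)$, which is nonzero as soon as $d\ge 1$. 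Hence $f_d\,\Omega\,dz_1\wedge\cdots\wedge dz_n$ is a sum of $T$-weight vectors of nonzero weight, and $T$-invariance of $\Res$ forces each such term to vanish. Assembling the pieces yields $\Res(f\,\Omega\,dz_1\wedge\cdots\wedge dz_n)=f(0)$.

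I expect no genuine obstacle here: the entire content is the $GL_n$-invariance of $\Res$ together with Theorem \ref{thm:irr} (via Proposition \ref{prop:MB-H}), which guarantees that the trivial $GL_n$-representation inside $A_{[n]}^{n,n-1}$ is one-dimensional and spanned by $\Omega\,dz_1\wedge\cdots\wedge dz_n$, so that this normalization is the only possible source of a nonzero residue. The one point deserving a line of care is the weight bookkeeping for $\Omega$, but this is immediate from its description as a generator of $\Lambda^n(V^*)$. As a sanity check one could instead pass via $\eps$ to the Dolbeault side and recover the statement from the classical Martinelli--Bochner integral formula over a sphere, using Corollary \ref{cor:L2-dense}, but the representation-theoretic argument above is cleaner and stays within the algebraic setting.
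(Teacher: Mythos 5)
Your proof is correct and takes essentially the same approach as the paper: both arguments rest on the $GL_n$-equivariance of $\Res$ together with the normalization \eqref{eq:res-taut}, the only cosmetic difference being that you verify the vanishing of the positive-degree contributions by a torus-weight computation, while the paper invokes the absence of $GL_n$-invariant functionals on $S^d(V)$ for $d>0$. No gaps.
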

  
  \noindent {\sl Proof:} Note that both sides of the proposed equality are $GL_n$-invariant functionals of 
  $f\in \k[z_1, \dots, z_n]
   =\bigoplus_{d\geq 0} S^d(V)$.  
  For   $d>0$ the space $S^d(V)$ does not admit any $GL_n$-equivariant
 functionals, so the LHS factors through the projection to $d=0$ which is nothing but the evaluation at $0$. 
 So our statement reduces to \eqref {eq:res-taut}. \qed. 
 
 \vskip .2cm
 
 The following proposition established compatibility of our algebraic residue formalism with the more
 analytic one based on integration of differential forms. It will not be used in the sequel. 
 
   \begin{prop}
  Let $\k=\CC$. Then for any $\omega\in A_{[n]}^{n, n-1}$ we have
  \[
\Res(\omega)  \,\,=\,\,{ (n-1)!\over (2\pi i)^n}   \oint_{S^{2n-1}} \eps(\omega) 
  \]
  where the integral is taken over any sphere $\|z\|=R$  in $\CC^{n}-\{0\}$. 
  \end{prop}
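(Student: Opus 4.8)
The plan is to regard both sides as linear functionals on the vector space $A_{[n]}^{n,n-1}$ and to prove they coincide by a symmetry-and-normalization argument. First I would check that the right-hand side does not depend on $R$. Since $A_{[n]}^{\bullet\bullet}$ vanishes in bidegree $(n,n)$, every $\omega\in A_{[n]}^{n,n-1}$ satisfies $\dbar\omega=0$; as $\eps$ is a morphism of bidifferential algebras, $\dbar\,\eps(\omega)=\eps(\dbar\omega)=0$, while $\del\,\eps(\omega)$ has type $(n+1,n-1)$ and hence vanishes. Thus $\eps(\omega)$ is a $d$-closed $(2n-1)$-form on $\CC^n-\{0\}$, and the spheres $\|z\|=R_1$, $\|z\|=R_2$ cobound the intervening annulus, so Stokes' theorem equates the two integrals. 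Consequently $I(\omega):=\frac{(n-1)!}{(2\pi i)^n}\oint_{S^{2n-1}}\eps(\omega)$ is a well-defined linear functional on $A_{[n]}^{n,n-1}$, and it remains to prove $I=\Res$.

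Next I would exploit the compact form $U(n)\subset GL_n$. By construction \eqref{eq:def-res}, $\Res$ is $GL_n$-invariant, hence $U(n)$-invariant. For $I$, one checks that the real structure $\ol z_\nu=z^*_\nu$ is preserved exactly by $U(n)$, so $\eps$ is $U(n)$-equivariant for the geometric action on Dolbeault forms; since $U(n)$ preserves $S^{2n-1}$ together with its orientation, $I$ is $U(n)$-invariant as well. The difference $D=\Res-I$ is therefore a $U(n)$-invariant functional. Any $\omega$ lies in a finite-dimensional $U(n)$-subrepresentation $W\subset A_{[n]}^{n,n-1}$, and invariance of $D$ combined with averaging over $U(n)$ gives $D(\omega)=D(\omega_0)$, where $\omega_0=\int_{U(n)}g\cdot\omega\,dg\in W^{U(n)}$. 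By complete reducibility $W^{U(n)}$ lies in the trivial $GL_n$-isotypic component of $A_{[n]}^{n,n-1}\simeq A_{[n]}^{n-1}\otimes_\k\Lambda^n(\k^n)$: distinct dominant weights restrict to non-isomorphic $U(n)$-irreducibles (unitarian trick), so the trivial $U(n)$-summand coincides with the trivial $GL_n$-summand, which by Proposition \ref{prop:MB-H}(b) is one-dimensional and spanned by $\Omega(z,z^*)\,dz_1\wedge\cdots\wedge dz_n$. Hence it suffices to verify $D=0$ on this single generator.

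Finally I would pin down the constant. By the normalization \eqref{eq:res-taut}, $\Res\bigl(\Omega\,dz_1\wedge\cdots\wedge dz_n\bigr)=1$. On the other hand $\eps(\Omega)\wedge dz_1\wedge\cdots\wedge dz_n$ is, up to the factor $\frac{(n-1)!}{(2\pi i)^n}$, precisely the classical Bochner--Martinelli kernel on $\CC^n-\{0\}$, whose integral over $S^{2n-1}$ equals $1$ (the Bochner--Martinelli reproducing formula applied to the constant function $1$). Thus $I\bigl(\Omega\,dz_1\wedge\cdots\wedge dz_n\bigr)=1$ as well, so $D$ vanishes on the generator and therefore $D\equiv 0$, i.e. $\Res=I$. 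The hard part is exactly this last normalization step: one must match the purely algebraic value $\Res=1$ with the specific analytic constant $\frac{(n-1)!}{(2\pi i)^n}$, which amounts to knowing (or computing directly, e.g. by $U(n)$-averaging or by reduction to a ball via Stokes) that the Bochner--Martinelli form integrates to $1$ over the sphere; once this is in hand, one-dimensionality of the invariants forces everything else.
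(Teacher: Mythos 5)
Your proof is correct and follows essentially the same route as the paper: both arguments use closedness of $\eps(\omega)$ to make the integral well defined, invoke invariance plus the simple-spectrum result (Theorem \ref{thm:irr} / Proposition \ref{prop:MB-H}(b)) to reduce to the one-dimensional span of $\Omega(z,z^*)\,dz_1\wedge\cdots\wedge dz_n$, and normalize via the classical Martinelli--Bochner integral. Your detour through $U(n)$-invariance, averaging, and the unitarian trick is simply a more careful justification of the paper's bald assertion that the integral functional is $GL_n(\CC)$-invariant (which indeed needs this, since $\eps$ is only $U(n)$-equivariant), so the two proofs coincide in substance.
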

  
  \noindent {\sl Proof:} Each $\eps(\omega)$, $\omega\in A_{[n]}^{n, n-1}$, is a closed $(n, n-1)$-form on $\CC^n-\{0\}$,
  so its integrals over all spheres as above are equal. We see that the RHS of the proposed equality is
  a $GL_n(\CC)$-invariant functional on $A_{[n]}^{n, n-1}$, and any such functional, by Theorem \ref {thm:irr},
  should factor through the projection to $\Lambda^n(\k^n)^*\otimes\Lambda^n(\k^n)$, i.e., through the
  residue map. This means the statement holds up to a universal constant depending only on $n$.
  To see that this constant is 1, we invoke the classical Martinelli-Bochner formula \cite{griffiths-harris},
   which gives
  \[
  \oint_{S^{2n-1}} \Omega(z, \ol z) dz_1 \cdots dz_n \,\,=\,\,{ (2\pi i)^n\over (n-1)!}. 
  \]
  \qed

  \begin{prop}\label{prop:resdual1}
  Each irreducible representation of $GL_n$ enters into $A_{[n]}^{p,q}$ with at most finite multiplicity.
   \end{prop}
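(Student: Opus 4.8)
The plan is to reduce the statement to Theorem \ref{thm:irr}, which already controls the $GL_n$-spectrum of the scalar-valued pieces $A_{[n]}^q = A_{[n]}^q(\Oc_{\pA^n})$. The crucial observation is that the cotangent sheaf of $\pA^n$ is $GL_n$-equivariantly trivial: since $\AAA^n = \Spec S^\bullet(V)$ with $V=\langle z_1, \dots, z_n\rangle$, the differentials $dz_\nu$ transform exactly like the $z_\nu$, so $\Omega^1_{\pA^n} \simeq \Oc_{\pA^n}\otimes_\k V$ and hence $\Omega^p_{\pA^n} \simeq \Oc_{\pA^n}\otimes_\k \Lambda^p V$ as $GL_n$-equivariant sheaves. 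This is the $p$-form version of the identification $\Omega^n_{\pA^n}\simeq \Oc_{\pA^n}\otimes_\k\Lambda^n(\k^n)$ already used in the discussion of the residue map.

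First I would feed this into the functor $E\mapsto A_{[n]}^q(E)$. Because this functor is $\k$-linear in $E$ --- for $E=\Oc_{\pA^n}\otimes_\k W$ with $W$ finite-dimensional one has $A_{[n]}^q(\Oc\otimes_\k W)=\Gamma\bigl(J,\Omega^q_{J/\pA^n}\otimes\pi^*E\bigr)=A_{[n]}^q\otimes_\k W$, a consequence of the lax monoidal structure of Proposition \ref{prop:RGammaJ}(b) --- I obtain a $GL_n$-equivariant identification
\[
A_{[n]}^{p,q}=A_{[n]}^q(\Omega^p_{\pA^n})\;\simeq\;A_{[n]}^q\otimes_\k\Lambda^p V.
\]
This is the key step, and essentially the only one with content: once it is in place everything reduces to representation theory of $GL_n$. (Note that for $p=0$ it recovers Theorem \ref{thm:irr}.)

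Next I would combine this with the two inputs on the right-hand side: the module $A_{[n]}^q$ has simple spectrum by Theorem \ref{thm:irr}, while $\Lambda^p V=\Sigma^{1_p}(V)$ is a single finite-dimensional irreducible. By the Pieri rule for $GL_n$ (valid for rational representations, allowing negative weights), $\Sigma^\alpha(V)\otimes\Lambda^p V$ is multiplicity-free, decomposing as $\bigoplus_S \Sigma^{\alpha+e_S}(V)$ over the $p$-element subsets $S\subseteq\{1,\dots,n\}$ for which $\alpha+\sum_{i\in S}e_i$ is dominant. Hence, for a fixed dominant weight $\beta$, the multiplicity of $\Sigma^\beta(V)$ in $A_{[n]}^{p,q}$ equals the number of $\alpha$ in the spectrum of $A_{[n]}^q$ with $\beta=\alpha+e_S$ for some such $S$. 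Any such $\alpha$ satisfies $\beta_i-\alpha_i\in\{0,1\}$ with exactly $p$ of the differences equal to $1$, so there are at most $\binom{n}{p}$ candidates; since each occurs in $A_{[n]}^q$ with multiplicity at most one, the total multiplicity is finite (indeed bounded by $\binom{n}{p}$).

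I expect no serious obstacle. The entire content lies in recognizing the trivialization $\Omega^p_{\pA^n}\simeq\Oc_{\pA^n}\otimes_\k\Lambda^p V$ and the resulting factorization $A_{[n]}^{p,q}\simeq A_{[n]}^q\otimes_\k\Lambda^p V$; after that, finiteness is a one-line consequence of Theorem \ref{thm:irr} together with the elementary fact that tensoring a module with finite (here $\leq 1$) multiplicities by a finite-dimensional representation preserves finite multiplicities, because the weights of $\Sigma^\alpha(V)\otimes\Lambda^p V$ differ from those of $\Sigma^\alpha(V)$ by the finitely many weights of $\Lambda^p V$. The only point to check with care is the $\k$-linearity of $A_{[n]}^q(-)$ on sheaves of the form $\Oc_{\pA^n}\otimes_\k W$, which is immediate from the definition via $\Gamma\bigl(J,\Omega^q_{J/\pA^n}\otimes\pi^*(-)\bigr)$.
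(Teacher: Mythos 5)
Your proof is correct and follows essentially the same route as the paper's: the $GL_n$-equivariant factorization $A_{[n]}^{p,q} \simeq A_{[n]}^q \otimes_\k \Lambda^p(V)$, the Pieri rule, and the simple-spectrum statement of Theorem \ref{thm:irr}. You merely spell out in more detail the equivariant trivialization $\Omega^p_{\pA^n}\simeq\Oc_{\pA^n}\otimes_\k\Lambda^p V$ (which the paper asserts implicitly) and make the bound $\binom{n}{p}$ explicit.
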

   
    \noindent{\sl Proof:}   As a $GL_n$-module, $A_{[n]}^{p,q} = \Lambda^p(V) \otimes A_{[n]}^q$.
 By the Pieri formula \cite{fulton-harris}  the irreducible components $\Sigma^\beta (V)$
 of $\Lambda^p(V)\otimes\Sigma^\alpha(V)$ all satisfy
 $\beta = \alpha + e_{i_1} + \cdots + e_{i_p}$ for some $1\leq i_1 < \cdots < i_p \leq n$. Here $e_i$
 is the $i$th basis vector. So the allowed $\beta$  situated in some fixed radius neighborhood of
 $\alpha$ in $\ZZ^n\subset \RR^n$. This means that tensoring a simple spectrum 
 representation with $\Lambda^p(V)$
 gives a representation with finite multiplicities. 
  So our statement follows from Theorem \ref{thm:irr}. \qed
  
  \vskip .2cm
  
 Let  $E \simeq  \bigoplus_\alpha C_\alpha\otimes\Sigma^\alpha(V)$ be a  representation  
  of $GL(V)$ with finite multiplicities (so $C_\alpha$ are finite-dimensional vector spaces). 
  We define the {\em restricted dual} of $E$ to be
 \[
 E^\bigstar \,\,:= \,\, \bigoplus_\alpha C_\alpha^* \otimes \Sigma^\alpha(V)^*. 
 \]
  
  \begin{prop}\label{prop:resdual2}
The ($GL_n$-invariant) residue pairing
  \[
  (\alpha, \beta) \mapsto \Res(\alpha\cdot\beta): A_{[n]}^{p,q} \otimes_\k A_{[n]}^{n-p, n-1-q} \lra \k
  \]
   gives an isomorphism $A_{[n]}^{n-p, n-1-q}\to (A_{[n]}^{p,q})^\bigstar$. 
  \end{prop}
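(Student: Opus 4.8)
The plan is to exploit the $GL_n$-equivariance of the pairing together with the finite-multiplicity statement of Proposition \ref{prop:resdual1} and the explicit multiplicity-free description of Theorem \ref{thm:irr}. Since $\Res$ is $GL_n$-invariant (by its very definition, normalized on the Martinelli--Bochner form) and multiplication of forms is equivariant, the residue pairing is $GL_n$-invariant, hence induces a $GL_n$-equivariant map $\Phi\colon A_{[n]}^{n-p,n-1-q}\to (A_{[n]}^{p,q})^\bigstar$; by Proposition \ref{prop:resdual1} both sides are $GL_n$-modules with finite multiplicities, so $\Phi$ is a morphism of such. Being equivariant, $\Phi$ is an isomorphism as soon as it is one on each isotypic component, and by Schur's lemma this reduces to non-degeneracy of the induced pairing between the $\Sigma^\gamma(V)$-isotypic part of $A_{[n]}^{p,q}$ and the $\Sigma^{\gamma^*}(V)$-isotypic part of $A_{[n]}^{n-p,n-1-q}$, where $\gamma^*=(-\gamma_n,\dots,-\gamma_1)$, for every dominant weight $\gamma$.

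The key simplification I would use is that the pairing \emph{factors}. Writing an element of $A_{[n]}^{p,q}$ as $\sum_{|I|=p}dz_I\wedge\phi_I$ with $\phi_I\in A_{[n]}^q$ (the homogeneity and contraction conditions of Proposition \ref{prop:A-n-forms} decouple across distinct $dz_I$, so each $\phi_I$ genuinely lies in $A_{[n]}^q$), and similarly for $A_{[n]}^{n-p,n-1-q}$, the product of forms separates the $dz$-part from the $dz^*$-part because the coefficient functions $f(z,z^*)$ are already absorbed into the factor $A_{[n]}^q$. Recalling from \eqref{eq:def-res} that $\Res$ is the composite of the Martinelli--Bochner projection $A_{[n]}^{n-1}\to\Lambda^n(V)^*$ with the canonical evaluation $\Lambda^n(V)\otimes\Lambda^n(V)^*\to\k$, one checks that, up to an overall sign, the residue pairing is the tensor product of the wedge pairing $\Lambda^p(V)\otimes\Lambda^{n-p}(V)\to\Lambda^n(V)$ with the pairing
\[
\langle\,\cdot\,,\,\cdot\,\rangle\colon A_{[n]}^q\otimes_\k A_{[n]}^{n-1-q}\lra\Lambda^n(V)^*
\]
obtained by multiplying in $A_{[n]}^\bullet$ and projecting to the Martinelli--Bochner line. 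The wedge pairing is perfect, so the whole statement reduces to showing that $\langle\,\cdot\,,\,\cdot\,\rangle$ is a perfect pairing of the \emph{multiplicity-free} $GL_n$-modules $A_{[n]}^q$ and $A_{[n]}^{n-1-q}$.

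For this reduced pairing I would first establish multiplicity matching. By Theorem \ref{thm:irr} the module $A_{[n]}^q$ is $\bigoplus_\alpha\Sigma^\alpha(V)$ over the $\alpha$ satisfying the interlacing condition for $q$, and an invariant pairing valued in $\Lambda^n(V)^*=\Sigma^{-1,\dots,-1}(V)$ can pair $\Sigma^\alpha(V)$ only with $\Sigma^{\alpha'}(V)$ for $\alpha'=(-\alpha_n-1,\dots,-\alpha_1-1)$. A direct manipulation of the interlacing inequalities (reverse and negate the coordinates) shows that the complementary constraints defining the two spectra transform into one another, so $\alpha\mapsto\alpha'$ is a bijection between the spectrum of $A_{[n]}^q$ and that of $A_{[n]}^{n-1-q}$: the two modules have matching, multiplicity-one constituents. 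Since each is multiplicity-free, Schur's lemma shows $\langle\,\cdot\,,\,\cdot\,\rangle$ is governed by a single scalar $c_\alpha$ on each matched pair $\Sigma^\alpha(V)\otimes\Sigma^{\alpha'}(V)\to\Lambda^n(V)^*$, and perfectness is equivalent to $c_\alpha\neq 0$ for all $\alpha$.

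This last non-vanishing is the main obstacle, the rest being formal. I would establish it by a concrete residue computation: take the highest-weight vector of $\Sigma^\alpha(V)\subset A_{[n]}^q$ and the weight vector of $\Sigma^{\alpha'}(V)\subset A_{[n]}^{n-1-q}$ whose product has weight $(-1,\dots,-1)$, and verify that the coefficient of the Martinelli--Bochner form $\Omega$ in this product is non-zero, isolating the leading term via the pole-order filtration of Corollary \ref{cor:filt} and evaluating it with the algebraic Martinelli--Bochner formula. Alternatively, over $\k=\CC$ one can argue analytically: by Corollary \ref{cor:L2-dense} and the sphere-integral expression $\Res(\omega)=\tfrac{(n-1)!}{(2\pi i)^n}\oint_{S^{2n-1}}\eps(\omega)$ proved above, the reduced pairing is computed by integration over $S^{2n-1}$, and pairing a form with the algebraic representative of its tangential Hodge conjugate returns its $L_2$-norm, which is positive; this forces every $c_\alpha\neq 0$ over $\CC$, and since the $c_\alpha$ are given by the same rational expressions over any field of characteristic $0$, the conclusion descends to general $\k$.
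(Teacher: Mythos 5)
Your proposal is correct and follows essentially the same route as the paper's own proof: the reduction via the perfect wedge pairing $\Lambda^p(V)\otimes\Lambda^{n-p}(V)\to\Lambda^n(V)$ to the pairing $A_{[n]}^q\otimes A_{[n]}^{n-1-q}\to\Lambda^n(V)^*$, the matching of the multiplicity-free spectra through Theorem \ref{thm:irr} and Schur's lemma, and your ``alternative'' non-degeneracy argument (reduction to $\k=\CC$ by rationality, then $L_2$-density from Corollary \ref{cor:L2-dense} together with non-degeneracy of the $L_2$-pairing on $\Omega^{0,\bullet}_b(S^{2n-1})$) are precisely the steps the paper takes. Your preferred first route for the non-vanishing of the scalars $c_\alpha$ (an explicit highest-weight residue computation) is left unexecuted and would require nontrivial work the paper avoids, but this does not matter since your analytic fallback already completes the proof.
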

  
 \noindent {\sl Proof:} We first prove a weaker statement: 
   $(A_{[n]}^{p,q})^\bigstar$ is isomorphic, as a $GL_n$-module, to $A_{[n]}^{n-p, n-1-q}$. 
 Because of the non-degenerate pairing 
 $\Lambda^p(V)\otimes\Lambda^{n-p}(V)\to\Lambda^n(V)$, the statement reduces to the isomorphism
 \[
 (A_{[n]}^q)^\bigstar \,\,\simeq \,\, A_{[n]}^{n-1-q} \otimes\Lambda^n(V)^*. 
 \]
 This isomorphism follows at once from inspecting  the irreducible components of $A_{[n]}^q$ and $A_{[n]}^{n-1-q}$
 given by Theorem \ref{thm:irr}. 
 They are in bijection $\Sigma^\alpha \leftrightarrow\Sigma^\beta$, so that 
 \be\label{eq:alpha-beta}
 \beta_i = -1-\alpha_{n-i}, \,\,\, i=1,\dots, n,  
 \ee
 which means that
 $\Sigma^\beta(V) \simeq \Lambda^n(V)^* \otimes (\Sigma^\alpha(V))^*$.
 \vskip .2cm
 
 We now prove that the residue pairing is actually an isomorphism as claimed.
 As before, we reduce to considering the pairing
 \[
 A_{[n]}^q\otimes A_{[n]}^{n-1-q} \lra  A_{[n]}^{n-1} \lra \Lambda^n(V)^*.
 \]
 Only the summands $\Sigma^\alpha(V)\subset A_{[n]}^q$ and $\Sigma^\beta(V)\subset A_{[n]}^{n-1-q}$ where
 $\alpha,\beta$ correspond to each other as in \eqref{eq:alpha-beta}, can pair in a non-trivial way. 
  It remains to show that they indeed pair non-trivially. If they pair trivially, then the subspace $\Sigma^\alpha(V)
 \subset A_{[n]}^q$ is orthogonal to the entire $A_{[n]}^{n-1-q}$. 
  The easiest way to see why this is impossible,  is
 to reduce (by the Lefschetz principle)  to $\k=\CC$. In this case we can  use the fact that 
 $A_{[n]}^{n-1-q}$ is $L_2$-dense in $\Omega_b^{0, n-1-q}(S^{2n-1})$, see  Corollary \ref{cor:L2-dense}, 
 and the 
non-degeneracy of the $L_2$-pairing on
 $\Omega^{0,\bullet}_b(S^{2n-1})$. 
 
 \qed
  
 \vskip .2cm

  \begin{prop}[Algebraic Stokes formula]\label{prop:res-d=0}
  The residue functional $\Res: A_{[n]}^{n, n-1}\to\k$ vanishes on
  $\del(A_{[n]}^{n-1,n-1}) + \dbar(A_{[n]}^{n, n-2})$. 
  \end{prop}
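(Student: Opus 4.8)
The plan is to reduce to $\k=\CC$ and then read the claimed vanishing off Stokes' theorem on the closed manifold $S^{2n-1}$, using the integral representation of $\Res$ obtained above together with the observation that, on a form of the relevant bidegree, one of the two differentials $\del,\dbar$ acts as zero for purely numerical reasons. First I would reduce to $\k=\CC$: the spaces $A_{[n]}^{p,q}$, their product, the differentials $\del,\dbar$ and the functional $\Res$ are all defined over $\QQ$ by the same polynomial formulas, and their formation commutes with field extension; since the asserted identities $\Res(\del\eta)=0$ and $\Res(\dbar\zeta)=0$ are $\QQ$-linear, it suffices to verify them after the faithfully flat base change $\QQ\hookrightarrow\CC$, exactly as in the proof of Proposition~\ref{prop:resdual2}.

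Over $\CC$ I would invoke the integral formula $\Res(\omega)=\frac{(n-1)!}{(2\pi i)^n}\oint_{S^{2n-1}}\eps(\omega)$ together with the bidifferential embedding $\eps\colon A_{[n]}^{\bullet\bullet}\hookrightarrow\Omega^{\bullet\bullet}(\CC^n-\{0\})$, which intertwines both $\del$ and $\dbar$ with their Dolbeault counterparts. For $\eta\in A_{[n]}^{n-1,n-1}$ the smooth form $\eps(\eta)$ has bidegree $(n-1,n-1)$, hence total degree $2n-2$. Because the antiholomorphic degree in $A_{[n]}^{\bullet\bullet}$ is capped at $n-1$, one has $A_{[n]}^{n-1,n}=0$, so $\dbar\eps(\eta)=\eps(\dbar\eta)=0$ as a smooth form; therefore the full de Rham differential satisfies $d\,\eps(\eta)=\del\eps(\eta)=\eps(\del\eta)$. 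Since $S^{2n-1}$ is compact without boundary and the coefficients of $\eps(\eta)$ are smooth on $\CC^n-\{0\}$ (as $\eps$ sends $zz^*$ to $|z|^2$, which is nonvanishing there), Stokes gives $\oint_{S^{2n-1}}\eps(\del\eta)=\oint_{S^{2n-1}}d\,\eps(\eta)=0$, i.e.\ $\Res(\del\eta)=0$. The $\dbar$-case is symmetric: for $\zeta\in A_{[n]}^{n,n-2}$ the holomorphic degree is capped at $n$, so $A_{[n]}^{n+1,n-2}=0$ forces $\del\eps(\zeta)=0$, whence $d\,\eps(\zeta)=\dbar\eps(\zeta)=\eps(\dbar\zeta)$ and Stokes again yields $\Res(\dbar\zeta)=0$.

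The only genuinely delicate point is the bidegree bookkeeping that makes the ``wrong'' differential vanish, so that $d$ restricts to a single differential on $\eps(\cdot)$ and Stokes applies cleanly; once this is arranged the computation is automatic. I would also record that $\Res\circ\dbar=0$ on $A_{[n]}^{n,n-2}$ is in fact forced by representation theory alone: by Theorem~\ref{thm:irr} the trivial $GL_n$-module does not occur in $A_{[n]}^{n,n-2}=\Lambda^n(V)\otimes A_{[n]}^{n-2}$ (it would require $\Sigma^{-1,\dots,-1}(V)\subset A_{[n]}^{n-2}$, which the branching condition excludes), so every $GL_n$-invariant functional vanishes there. By contrast the trivial module does occur in $A_{[n]}^{n-1,n-1}$, so $\Res\circ\del=0$ cannot be detected by invariance and genuinely requires the Stokes input. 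Conceptually the whole proposition is the statement that $\Res$ factors through the top algebraic de Rham cohomology $H^{2n-1}_{\mathrm{dR}}(\pA^n)\cong\k$; as this space is one-dimensional and the connected group $GL_n$ acts trivially on it, $\Res$ is forced to be proportional to the de Rham class map, which annihilates exact forms by construction. This last remark gives an alternative, purely algebraic proof bypassing the passage to $\CC$.
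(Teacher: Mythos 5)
Your proof is correct and follows the paper's own argument: reduce to $\k=\CC$, embed $A_{[n]}^{\bullet\bullet}$ into $\Omega^{\bullet\bullet}(\CC^n-\{0\})$ via $\eps$, observe that the bigrading forces $\dbar$ to vanish on $A_{[n]}^{n-1,n-1}$ and $\del$ to vanish on $A_{[n]}^{n,n-2}$, and apply the classical Stokes formula on $S^{2n-1}$ through the integral representation of $\Res$. Your closing representation-theoretic remark likewise matches the purely algebraic proof that the paper sketches and leaves to the reader, with the pleasant refinement of phrasing it as factorization of $\Res$ through $\HH^{2n-1}(\pA^n,\Omega^\bullet)\simeq\k$.
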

  
  \noindent {\sl Proof:}  
  As before, the easiest proof is to reduce to $\k=\CC$ and to embed
  $A_{[n]}^{\bullet\bullet}$ into $\Omega^{\bullet\bullet}(\CC^n-\{0\})$. Then we can  use the classical Stokes
  formula for $d=\del+\dbar$, noticing that  elements of  $A_{[n]}^{n-1,n-1}$ are annihilated by
  $\dbar$,  while elements of $A_{[n]}^{n, n-2}$ are annihilated by $\del$. 
  
  A purely algebraic proof,  by inspection of  the  possible relevant irreducible components,  is left to the reader.
 In this inspection we find that  $A_{[n]}^{n, n-2}$ does not contain the  trivial representation, while  $A_{[n]}^{n-1,n-1}$
  does contain it but the corresponding subspace is annihilated by $\del$ (it represents 
  $H^{n-1}(\PP^{n-1}, \Omega^{n-1})$). 
  \qed

\vfill\eject

\section{The residue class in cyclic cohomology}\label{sec:resclass}

\subsection{Cyclic homology of dg-algebras and categories}\label{subsec:cyc-hom-alg-cat}

In this subsection we compare various definitions of cyclic homology of associative dg-algebras
without any restriction on the grading. Care is needed, since existing treatments of some issues 
apply only to $\ZZ_{\leq 0}$-graded algebras  and rely on spectral sequences which may not converge
in the general case.

 For general background on cyclic homology (ungraded and $\ZZ_{\leq 0}$-graded cases), see \cite{loday}. 

\paragraph{A. General definitions.} Our
 basic approach is that of Keller \cite{keller-dgalg}. 
 That is, let  $(A, \dbar)$ be any associative dg-algebra over $\k$, possibly without unit.  
 The Hochschild and the cyclic complexes of $A$, denoted $C_\bullet^\Hoch (A)$ and $CC_\bullet(A)$, are defined 
 similarly  to  what is described in \cite{loday} 5.2.2 for the $\ZZ_{\leq 0}$-graded case. That is,  we form the total
  complex of the double or triple complex obtained when we take into account the grading and differential on $A$. 
  For example, 
 \[
 C_\bullet^\Hoch (A)\,\,=\, \, \Tot \, \bigl \{ \cdots  \buildrel b\over\lra A^{\otimes 3} \buildrel b\over\lra A\otimes A \buildrel  b\over\lra A\bigr\}. 
 \]
 The total complex here and  elsewhere is always understood in the sense of {\em direct sums}.
 The new phenomenon compared to the $\ZZ_{\leq 0}$-graded case is that direct sums  can be infinite.

 The definition of  $CC_\bullet(A)$ is similar. More precisely, let $V$ be a cochain complex of $\k$-vector spaces
 with a $\ZZ/(n+1)$-action, the generator of the action denoted by $t$.
 We denote
 \[
 V_C \,\,=\,\,\Tot \bigl\{ \cdots \buildrel 1-t\over\lra V \buildrel 1+\cdots + t^n\over\lra V\buildrel 1-t\over
 \lra V\bigr\},
 \] 
 (the horizontal grading situated in degrees $\leq 0$). Then
 \[
 CC_\bullet(A) \,\,=\,\,
  \Tot \, \bigl \{ \cdots  \buildrel b\over\lra A^{\otimes 3}_C \buildrel b\over\lra (A\otimes A)_C \buildrel  b\over\lra A_C\bigr\}.
 \]
 where the $\ZZ/(n+1)$-action on $A^{\otimes (n+1)}$ is given as in \cite{loday} (2.1.0), understood via the
 Koszul sign rule.

 The homology of $C_\bullet^\Hoch (A)$  and $CC_\bullet(A)$ will be   denoted $HH_\bullet(A)$ and $HC_\bullet(A)$.  
  Each of these  complexes has an exhaustive increasing filtration by the number of tensor factors. This
  gives a convergent spectral sequence
 \[
 E_2 \, =\, HC_\bullet (H^\bullet_{\dbar}(A)) \,\,\Rightarrow \,\, HC_\bullet(A),
 \]
 and similarly for $HH_\bullet(A)$. 
 It follows that the functors $HC_\bullet$ and $HH_\bullet$ take quasi-isomorphisms  of dg-algebras to isomorphisms
 and so descend to functors on
  the homotopy category of associative dg-algebras. 
  
  \begin{lem}\label{lem:CC-lambda}
   Let $V$ be a cochain complex of $\k$-vector spaces
 with a $\ZZ/(n+1)$-action. Then the morphism $V_C\to V_{\ZZ/(n+1)}$ from the last term to the cokernel of $(1-t)$,
 is a quasi-isomorphism.
  \end{lem}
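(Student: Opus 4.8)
The plan is to identify $V_C$ with the total complex of the standard double complex computing the group homology $H_\bullet(\ZZ/(n+1),V)$ and then to use that $n+1$ is invertible in $\k$, since $\operatorname{char}\k=0$. Write $C=\ZZ/(n+1)$ and $N=1+t+\cdots+t^n$. The horizontal complex $\{\cdots \xrightarrow{1-t} V \xrightarrow{N} V \xrightarrow{1-t} V\}$ is $V\otimes_{\k[C]}P_\bullet$, where $P_\bullet$ is the periodic free resolution of the trivial module $\k$ over $\k[C]$; the map of the lemma is induced by the augmentation $P_\bullet\to\k$, and on the last term it is the quotient $V\to\Coker(1-t)=V_{\ZZ/(n+1)}$. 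Thus it suffices to prove that this augmentation becomes a quasi-isomorphism after applying $V\otimes_{\k[C]}(-)$.

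Rather than run a spectral sequence --- delicate here because $V_C$ has nonzero columns in all horizontal degrees $\leq 0$, so the relevant filtration need not converge for unbounded $V$ --- I would produce honest contracting homotopies via an idempotent decomposition. Put $e=\frac{1}{n+1}N\in\k[C]$; one checks $e^2=e$, $te=e$, $N=(n+1)e$, precisely because $n+1$ is invertible. As $\dbar$ is $C$-equivariant it commutes with $e$, so $V=eV\oplus(1-e)V$ is a direct sum decomposition of complexes that is preserved by all the horizontal maps $1-t$ and $N$. Hence $V_C$ splits as the direct sum of the total complexes built on $eV$ and on $(1-e)V$, and I would analyse each.

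On the summand $eV$ one has $1-t=0$ and $N=(n+1)\operatorname{id}$, an isomorphism; consequently in its total complex all terms in horizontal degrees $\leq -1$ cancel in adjacent pairs, leaving only the horizontal-degree-$0$ copy $eV$. On the summand $(1-e)V$ one has $N=0$, while $1-t$ acts invertibly (it is a unit in $\k[C](1-e)\cong\k[t]/(1+t+\cdots+t^n)$, again because $n+1\neq 0$), and its inverse commutes with $\dbar$; this yields an explicit horizontal contracting homotopy which, (anti)commuting with $\dbar$, contracts the whole total complex of that summand. Finally the quotient map carries $eV$ isomorphically onto $\Coker(1-t)=V_{\ZZ/(n+1)}$ --- invariants mapping isomorphically to coinvariants when $n+1$ is invertible --- and this identification is exactly the restriction of the map in the statement, so that map is a quasi-isomorphism.

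The one real obstacle is the convergence issue flagged at the opening of this subsection: for a double complex with infinitely many columns and unbounded $V$, a spectral-sequence argument can fail. The point of the idempotent-plus-homotopy approach is to bypass it entirely, since contractibility witnessed by an actual chain homotopy survives arbitrary direct sums and needs no boundedness hypothesis on $V$. When $V$ is bounded, as in all the applications in this paper, the spectral sequence of the double complex converges and yields the same conclusion at once.
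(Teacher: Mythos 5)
Your proof is correct, and it takes a genuinely different route from the paper's. The paper reduces to an algebraically closed base field and then proves a structure result (Lemma \ref{lem:com-indec}): every object of $\dgVect^{\ZZ/(n+1)}$ is a direct sum of one-term and two-term indecomposable complexes on which $\ZZ/(n+1)$ acts through a single character; since these indecomposables are bounded, the classical Tate-resolution argument applies to each summand, and the conclusion passes to arbitrary direct sums. You instead work with the single canonical central idempotent $e=\tfrac{1}{n+1}(1+t+\cdots+t^n)\in\k[\ZZ/(n+1)]$, split $V=eV\oplus(1-e)V$ compatibly with $\dbar$, $1-t$ and $N$, and observe that on $eV$ one has $1-t=0$ and $N=(n+1)\Id$ invertible, while on $(1-e)V$ one has $N=0$ and $1-t$ invertible (its inverse lying again in the group algebra, hence commuting with $\dbar$); the two total complexes then contract by explicit homotopies onto $eV$ placed in horizontal degree $0$, and $eV\to\Coker(1-t)$ is an isomorphism. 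Both arguments rest on the same fact, invertibility of $n+1$ in $\k$, and both must evade the same trap --- the naive spectral sequence of the double complex need not converge for unbounded $V$ --- but they do so differently: the paper by reducing to bounded indecomposable summands, you by exhibiting honest contracting homotopies, which survive arbitrary direct sums with no boundedness hypothesis. What each buys: the paper's route yields a reusable decomposition lemma for complexes with $\ZZ/(n+1)$-action; yours is more elementary and canonical --- no field extension, no choice of decomposition --- and it gives the slightly sharper conclusion that $V_C\to V_{\ZZ/(n+1)}$ is a chain homotopy equivalence (projection onto the summand $eV$ with contractible complement), not merely a quasi-isomorphism.
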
 
  
  \noindent {\sl Proof:} This is well known if $V$ in ungraded 
  (Tate resolution). When $V$ is $\ZZ_{\leq 0}$-graded,
  it follows from a spectral sequence argument, as $V_C$ has an increasing exhaustive filtration with quotients
  $V^i_C$. To prove the general case, it suffices to consider the case when $\k$ is algebraically closed.
  Assuming this, we consider the abelian category $\dgVect^{\ZZ/(n+1)}$ formed by
  cochain complexes with $\ZZ/(n+1)$-action.
  
  \begin{lem}\label{lem:com-indec}
  Each object of $\dgVect^{\ZZ/(n+1)}$ is isomorphic to a direct sum of (possibly infinitely many copies)
  of the following indecomposable objects:
  \begin{itemize}
  \item[(1)] A 1-term complex $\k$ (situated in some degree) on which $\ZZ/(n+1)$ acts via some character.
  
  \item[(2)] A 2-term complex $\k \buildrel \Id\over\to \k$ (situated in a pair of adjacent degrees) on which
   $\ZZ/(n+1)$ acts via the same character.
  \end{itemize}
  \end{lem}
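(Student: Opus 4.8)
The plan is to exploit the representation theory of the finite abelian group $\ZZ/(n+1)$ to reduce the statement to the classification of a single cochain complex of vector spaces, and then to invoke the well-known splitting of such a complex over a field. Since $\k$ has characteristic $0$ and, as noted in the surrounding argument, may be assumed algebraically closed, the group algebra $\k[\ZZ/(n+1)]$ is semisimple; being commutative, it splits as the product $\prod_\chi \k$ over the characters $\chi$ of $\ZZ/(n+1)$ (valued in the $(n+1)$-st roots of unity in $\k^\times$). The corresponding orthogonal central idempotents $e_\chi$ are $\k$-linear combinations of the group action, hence commute with the differential of any object $V$ of $\dgVect^{\ZZ/(n+1)}$; they therefore yield a decomposition $V=\bigoplus_\chi e_\chi V$ as a direct sum of subcomplexes, where $\ZZ/(n+1)$ acts on $e_\chi V$ through the single character $\chi$. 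This reduces the lemma to classifying one isotypic component, i.e.\ an arbitrary cochain complex $(W^\bullet,d)$ of $\k$-vector spaces, the group acting everywhere by the scalar $\chi$.

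For such a $W^\bullet$ I would set $Z^i=\Ker(d\colon W^i\to W^{i+1})$ and $B^i=\Im(d\colon W^{i-1}\to W^i)$, and, working over a field, choose in each degree a splitting $Z^i=B^i\oplus\tilde H^i$ followed by a complement $C^i$ with $W^i=Z^i\oplus C^i$. Then
\[
W^i=B^i\oplus\tilde H^i\oplus C^i,\qquad d|_{B^i\oplus\tilde H^i}=0,\qquad d|_{C^i}\colon C^i\buildrel\sim\over\lra B^{i+1}.
\]
One checks directly from these relations that $\mathcal H=\bigoplus_i\tilde H^i$ (with zero differential) and the acyclic part $\mathcal A$, with $\mathcal A^i=B^i\oplus C^i$, are subcomplexes and that $W^\bullet=\mathcal H\oplus\mathcal A$ as complexes. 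The complex $\mathcal H$ has zero differential, so a choice of basis in each $\tilde H^i$ exhibits it as a direct sum of copies of type~(1). For $\mathcal A$, grouping each $C^i$ with its image $B^{i+1}=d(C^i)$ gives subcomplexes $P_i$ concentrated in the adjacent degrees $i,i+1$ on which $d$ is an isomorphism, and the relations above show $\mathcal A=\bigoplus_i P_i$. Choosing a basis of $C^i$ and transporting it by $d$ to $B^{i+1}$ writes each $P_i$ as a direct sum of copies of $\k\buildrel\Id\over\lra\k$, i.e.\ of type~(2). Reinstating the $\ZZ/(n+1)$-action by $\chi$ on every summand — in particular by the \emph{same} $\chi$ on both terms of each $P_i$ — produces exactly the two listed families of indecomposables.

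I do not expect a serious obstacle, since the entire content is the idempotent reduction together with the standard splitting of a complex over a field. The only point demanding care is that $V$ is completely arbitrary — unbounded and possibly infinite-dimensional in each degree — so the resulting decomposition is an (arbitrary) infinite direct sum and the splittings $Z^i=B^i\oplus\tilde H^i$, $W^i=Z^i\oplus C^i$ rely on choices of complements and bases (the axiom of choice). Crucially, there are no convergence or finiteness subtleties: every construction is a purely algebraic internal direct-sum decomposition carried out degree by degree, which is precisely what makes the lemma available in the unbounded setting needed for Lemma~\ref{lem:CC-lambda}. Thus the main thing to verify carefully is simply that the three families $B^i,\tilde H^i,C^i$ assemble into honest subcomplexes and a genuine internal direct sum of complexes, which the identities $d(C^i)=B^{i+1}$ and $d|_{B^i\oplus\tilde H^i}=0$ make transparent.
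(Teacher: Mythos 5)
Your proof is correct and follows essentially the same route as the paper: split $V$ into character eigencomplexes (using semisimplicity of $\k[\ZZ/(n+1)]$ in characteristic $0$, with $\k$ assumed algebraically closed), then decompose each eigencomplex as a complex of vector spaces. The only difference is that you spell out the degreewise splitting $W^i = B^i\oplus\tilde H^i\oplus C^i$ underlying the $n=0$ case, which the paper simply cites as well known.
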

  
  Lemma \ref{lem:com-indec} implies Lemma \ref{lem:CC-lambda} because the indecomposable objects
  are bounded and so \ref{lem:CC-lambda} holds for them.
  
  \vskip .2cm
  
  \noindent {\sl Proof of Lemma \ref{lem:com-indec}:} This is well known if $n=0$ (i.e., if we consider just cochain
  complexes with no group action). Now, given $V$ with action of $\ZZ/(n+1)$, we first split it (as a complex)
  into a  direct sum of eigencomplexes $V_\chi$ corresponding to the characters $\chi$  of $\ZZ/(n+1)$.
   Then we decompose each complex $V_\chi$ into indecomposables. \qed
  
  \begin{cor}\label{cor:HC-Hlambda}
  The complex $CC_\bullet(A)$ is quasi-isomorphic to Connes' complex
  \[
  C_\bullet^\lambda (A) 
  \,\,=\,\,
  \Tot \, \bigl \{ \cdots  \buildrel b\over\lra A^{\otimes 3}_{\ZZ/3} \buildrel b\over\lra (A\otimes A)_{\ZZ/2} \buildrel  b\over\lra A
  \bigr\}.
  \]
  \end{cor}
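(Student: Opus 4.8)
The plan is to promote Lemma~\ref{lem:CC-lambda} from a single $\ZZ/(n+1)$-complex to the whole tensor-degree direction of the cyclic complex. First I would produce the comparison map. Applying, column by column, the projections $A^{\otimes(p+1)}_C \to A^{\otimes(p+1)}_{\ZZ/(p+1)}$ (the last term onto the $\ZZ/(p+1)$-coinvariants) assembles into a morphism of total complexes
\[
\pi \colon CC_\bullet(A) \longrightarrow C_\bullet^\lambda(A).
\]
That $\pi$ is a chain map is the routine check that both the internal differential $\dbar$ (which acts tensor-factor-wise) and the Hochschild differential $b$ are compatible with passage to $\ZZ/(p+1)$-coinvariants; for $b$ this is exactly the classical fact that the cyclic $b$ descends to the Connes complex, and it is purely formal.

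Next I would introduce on each side the increasing filtration $F_\bullet$ by the number of tensor factors, with $F_p$ spanned by the columns carrying at most $p+1$ tensor factors; this filtration is exhaustive and bounded below, with $F_{-1}=0$, and $\pi$ is filtered. Passing to the $p$-th associated graded, $\pi$ restricts to the single column map $A^{\otimes(p+1)}_C \to A^{\otimes(p+1)}_{\ZZ/(p+1)}$, which is precisely the morphism of Lemma~\ref{lem:CC-lambda} for $V = A^{\otimes(p+1)}$ and is therefore a quasi-isomorphism.

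Finally I would deduce that $\pi$ itself is a quasi-isomorphism by an elementary filtered-acyclicity argument, deliberately avoiding any spectral-sequence convergence statement. I would form $\Cone(\pi)$, which inherits the exhaustive, bounded-below filtration by tensor degree and whose $p$-th associated graded is the cone of the column map above, hence acyclic by the previous step. A cycle in $\Cone(\pi)$ lies in some $F_p$ (the total complexes are built with direct sums, so every element has finite filtration degree); its class in $\on{gr}_p$ is a boundary, so after subtracting a boundary the cycle drops into $F_{p-1}$, and iterating reaches the base $F_{-1}=0$ in finitely many steps. Thus every cycle is a boundary, $\Cone(\pi)$ is acyclic, and $\pi$ is a quasi-isomorphism.

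The hard part is not any of these steps individually but the fact, flagged before the definitions, that $A$ is graded without bound, so the columns $A^{\otimes(p+1)}_C$ are unbounded and the usual spectral-sequence comparison need not converge. The finite-descent argument above replaces it, and it goes through only because the tensor-degree filtration is bounded below while the total complexes use direct sums; all the substantive content for unbounded $V$ is already carried by Lemmas~\ref{lem:CC-lambda} and \ref{lem:com-indec}.
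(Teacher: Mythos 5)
Your proposal is correct and matches the paper's (implicit) argument: the corollary is stated there as an immediate consequence of Lemma \ref{lem:CC-lambda}, applied column by column along exactly the tensor-factor filtration you use. Your elementary cone-and-descent step is a valid substitute for the filtered comparison the paper implicitly invokes, which in any case does converge here since the filtration is exhaustive, bounded below, and the totalization is by direct sums.
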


\paragraph{B. Morita invariance.} For unital dg-algebras, Keller  \cite{keller-dgalg} proved that cyclic homology is Morita-invariant.
 To formulate the results compactly, it is convenient to extend the definition of Hochschild and cyclic complexes and cohomology
  to small dg-categories
 $\Ac$
 (a unital dg-algebra is the same as a dg-category with one object). That is, we define $C_\bullet^\Hoch (A)$ to be the total
 complex of the double complex
 \[
 \cdots \buildrel b\over\to  \bigoplus_{a_0, \dots, a_n\in\Ob(\Ac)} \bigotimes_{i=0}^n
  \Hom^\bullet_\Ac(a_i, a_{i+1}) \buildrel b\over\to  \cdots \to \bigotimes_{a_0\in\Ob(\Ac)} \Hom^\bullet_\Ac(a_0, a_0), 
 \]
 (here we put $a_{n+1}:=a_0$) and similarly for $CC_\bullet(\Ac)$. 
 
 We recall  \cite {toen-morita}
 that a functor $f: \Ac\to\Bc$ is called a {\em Morita equivalence}, if $f_*: \Perf_\Ac\to\Perf_\Bc$ is a quasi-equivalence. 
 Recall further that $\Perf_\Ac$ is essentially small, if $\Ac$ is small, and the canonical  (Yoneda)
 embedding $\upsilon: \Ac\to\Perf_\Ac$ is a Morita equivalence.

 \begin{prop}[Keller, \cite{keller-cyex}] \label{prop:HC-morita}
 \begin{itemize}
 \item[(a)] If $f:\Ac\to\Bc$ is a Morita equivalence of small dg-categories, then $HC_\bullet(f): HC_\bullet(\Ac)\to HC_\bullet(\Bc)$ is
 an isomorphism.
 
\item[(b)] In particular, if $\Ac$ is an essentially small dg-category, then $HC_\bullet(\Ac')$ where $\Ac'\subset\Ac$ is an equivalent
 small dg-subcategory, are canonically identified, and denoted $HC_\bullet(\Ac)$.
 
 \item[(c)]  It follows that $\upsilon$ induces an isomorphism $HC_\bullet(\Ac)\to HC_\bullet(\Perf_\Ac)$.
 
 \item[(d)]  Therefore any dg-functor $\phi: \Perf_\Ac\to\Perf_\Bc$ induces a morphism $\phi_*: HC_\bullet(\Ac)\to HC_\bullet(\Bc)$
 which is an isomorphism, if $\phi$ is a quasi-equivalence. 
 \end{itemize}
  \end{prop}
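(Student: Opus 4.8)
The plan is to reduce parts (a), (b) and (d) to the single assertion (c), namely that the Yoneda embedding $\upsilon:\Ac\to\Perf_\Ac$ induces an isomorphism on cyclic homology, and then to concentrate the real work on (c). For (a), naturality of $\upsilon$ gives, for a functor $f:\Ac\to\Bc$, a square of dg-functors commuting up to natural isomorphism, with horizontal arrows $f$ and the induced $f_*:\Perf_\Ac\to\Perf_\Bc$ and vertical arrows the two Yoneda embeddings. Applying $HC_\bullet$ and invoking (c) for the vertical arrows, it suffices to know that $HC_\bullet(f_*)$ is an isomorphism; but $f_*$ is a quasi-equivalence by hypothesis, and $HC_\bullet$ sends quasi-equivalences to isomorphisms, because (by the dg-category analog of the spectral sequence $E_2=HC_\bullet(H^\bullet_{\dbar}(A))\Rightarrow HC_\bullet(A)$ established above) it descends to the homotopy category of small dg-categories, in which a quasi-equivalence is invertible. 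Part (b) is then the special case where $f$ is the inclusion of an equivalent small dg-subcategory, which is a quasi-equivalence and a fortiori a Morita equivalence; canonicity is just functoriality. For (d), given an arbitrary $\phi:\Perf_\Ac\to\Perf_\Bc$ I would set $\phi_*$ to be the composite $HC_\bullet(\Ac)\simeq HC_\bullet(\Perf_\Ac)\xrightarrow{HC_\bullet(\phi)}HC_\bullet(\Perf_\Bc)\simeq HC_\bullet(\Bc)$, using (c) at both ends, and note that $HC_\bullet(\phi)$ is an isomorphism when $\phi$ is a quasi-equivalence by the same invariance.

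The substance is thus (c), which I would prove following Keller \cite{keller-cyex}\cite{keller-dgalg} by factoring the Yoneda embedding through the successive enlargements
\[
\Ac \hookrightarrow \Ac^{\oplus} \hookrightarrow \Ac^{\mathrm{pretr}} \hookrightarrow (\Ac^{\mathrm{pretr}})^{\natural} \simeq \Perf_\Ac,
\]
the additive, pretriangulated and idempotent (Karoubi) completions, and showing that $HC_\bullet$ is unchanged at each step. The additive step is the categorical form of the classical identity $HH_\bullet(\Mat_r(A))\cong HH_\bullet(A)$: a generalized trace map furnishes a quasi-inverse which, being compatible with the cyclic operator $t$, lifts from the Hochschild to the cyclic complex. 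The pretriangulated step (adjoining shifts and cones) is the additivity theorem, which I would prove by filtering the Hochschild complex of $\Ac^{\mathrm{pretr}}$ according to the number of cone generators appearing in the tensor factors and identifying the associated graded with that of $\Ac$. The idempotent-completion step follows from cofinality together with the observation that a split idempotent contributes only a contractible summand to the normalized complex.

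The main obstacle, and the only genuinely new point relative to \cite{keller-dgalg}, is that we allow the Hom-complexes to be \emph{unbounded} in the grading, so that the tensor powers occurring in $C^{\Hoch}_\bullet$ and $CC_\bullet$ need not be bounded and the totalizations are infinite direct sums. Consequently the spectral sequences underlying the three comparison steps are no longer automatically half-plane, and their convergence must be argued rather than assumed. The plan is to handle this exactly as in the proof of Lemma \ref{lem:CC-lambda}: each of the double and triple complexes in sight carries the exhaustive increasing filtration by the number of tensor factors, the associated spectral sequence converges because the filtration is exhaustive and the total complex is formed with direct sums, and this is precisely what allows the stepwise quasi-isomorphisms to assemble into a quasi-isomorphism of totalizations. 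I would therefore verify that every comparison map in the Keller argument respects this filtration, so that the convergence—and hence the invariance—carries over verbatim to the unbounded setting.
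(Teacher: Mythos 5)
The first thing to know is that the paper contains no proof of this proposition: it is stated as a quotation of Keller \cite{keller-cyex}, and the paper's own contribution is limited to setting up the definitions (the complexes $C^{\Hoch}_\bullet$ and $CC_\bullet$ of a small dg-category, totalized by direct sums) so that the statement makes sense without grading restrictions, together with the convergent spectral sequence coming from the exhaustive filtration by the number of tensor factors. So your proposal cannot be measured against ``the paper's proof''; the right benchmark is Keller's argument, and your outline --- reduce (a), (b), (d) to invariance under the Yoneda embedding $\upsilon$, then prove that invariance by factoring through the additive, pretriangulated and idempotent completions, handled respectively by trace maps, additivity, and cofinality, with convergence of all spectral sequences secured by the exhaustive filtration --- is a faithful reconstruction of that strategy, and it correctly isolates the unbounded grading as the one point where the paper's setting exceeds the boundedness hypotheses of the classical literature.

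There is, however, one genuine logical gap in your reduction. To deduce (a) from (c) you need $HC_\bullet(f_*)$ to be an isomorphism for the quasi-equivalence $f_*\colon \Perf_\Ac \to \Perf_\Bc$, and you justify this by saying that $HC_\bullet$ ``descends to the homotopy category of small dg-categories, in which a quasi-equivalence is invertible''. That is circular: descending to the homotopy category is \emph{equivalent to}, not a reason for, sending quasi-equivalences to isomorphisms. The paper's spectral sequence argument yields this only for quasi-isomorphisms of dg-algebras (or dg-functors bijective on objects), where the comparison on $E_2$ is induced by an isomorphism of graded algebras. For a genuine quasi-equivalence, the $E_2$-comparison is induced by an equivalence --- not an isomorphism --- of graded $\k$-linear categories, so you still need the fact that an equivalence of graded categories induces an isomorphism on $HC_\bullet$; this is classical (it follows, e.g., from the lemma that naturally isomorphic functors induce chain-homotopic maps on cyclic complexes), but it is precisely a small instance of the Morita invariance you are trying to prove, and must be argued separately. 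Two smaller points: part (c) cannot even be stated before part (b), since $\Perf_\Ac$ is only essentially small and $HC_\bullet(\Perf_\Ac)$ has no meaning until (b) supplies it, so your reversed order (c) $\Rightarrow$ (a) $\Rightarrow$ (b) should be run with a fixed small model of $\Perf_\Ac$, with independence of the choice checked afterwards; and in the pretriangulated step --- where the real work lies --- your filtration ``by the number of cone generators'' still needs verification that it is preserved by the cyclic operator and that the twisted (strictly triangular) differentials of twisted complexes die in the associated graded.
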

  
  Let us note a more elementary instance of this proposition, cf. \cite{loday} 1.2.4 and 2.2.9. 
  
  \begin{prop}\label{prop:loday-trace}
  Let $A$ be a unital dg-algebra and $r\geq 1$. The collection of the trace maps
  \[
  \begin{gathered}
    \Mat_r(A)^{\otimes (n+1)} \simeq \bigl(\Mat_r(\k)\otimes_\k A\bigr)^{\otimes (n+1)}
    \buildrel\tr_{A,n}\over\lra A^{\otimes (n+1)},
    \\
    \tr_{A,n} \bigl( u_0a_0\otimes\cdots \otimes u_na_n\bigr) \,\,=\,\,
\tr(u_0 \cdots u_n) a_0\otimes\cdots \otimes a_n,   
    \end{gathered} 
    \]
   defines quasi-isomorphisms of complexes
  \[
  C_\bullet^\Hoch(\Mat_r(A)) \buildrel \tr\over\lra C_\bullet^\Hoch (A), \quad
  CC_\bullet(\Mat_r(A))  \buildrel \tr\over\lra  CC_\bullet(A). 
  \]
  The morphisms on $C_\bullet^\Hoch$ and $CC_\bullet$,
   induced by the embedding $A\hookrightarrow\Mat_r(A)$, are quasi-inverse to  $\tr$. \qed
  \end{prop}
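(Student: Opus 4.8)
The plan is to reduce the statement to the classical (non-derived) Morita invariance of Hochschild and cyclic homology, using the convergent spectral sequence recorded above; this reduction is legitimate precisely because $\tr_{A,n}$ respects the filtration by the number of tensor factors. First I would check that $\tr$ is a genuine morphism of the total complexes in both the Hochschild and the cyclic case. Writing a generator of $\Mat_r(A)^{\otimes(n+1)}$ as $u_0a_0\otimes\cdots\otimes u_na_n$ with $u_i\in\Mat_r(\k)$ and $a_i\in A$, one verifies that $\tr_{A,n}$ commutes with the Hochschild face maps, with the cyclic operator $t$ and the norm $N=1+t+\cdots+t^n$, and with the internal differential $\dbar$ (which acts trivially on the matrix slots, $\dbar u_i=0$). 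The only nontrivial points are compatibility with the last face map and with $t$, where the cyclic invariance $\tr(u_0u_1\cdots u_n)=\tr(u_nu_0\cdots u_{n-1})$ of the matrix trace is used; the Koszul signs match because the matrix units sit in degree $0$. This exhibits the advertised maps on $C_\bullet^{\Hoch}$ and $CC_\bullet$.

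Next I observe that $\tr_{A,n}$ sends $\Mat_r(A)^{\otimes(n+1)}$ into $A^{\otimes(n+1)}$, i.e.\ preserves the number of tensor factors, and is therefore a morphism of the increasing, exhaustive filtrations by number of tensor factors on $C_\bullet^{\Hoch}$ and on $CC_\bullet$. Consequently $\tr$ induces a morphism of the associated convergent spectral sequences, whose $E_2$-terms are $HH_\bullet\bigl(H^\bullet_{\dbar}(-)\bigr)$ and $HC_\bullet\bigl(H^\bullet_{\dbar}(-)\bigr)$. Using the canonical identification $H^\bullet_{\dbar}(\Mat_r(A))=\Mat_r(B)$ with $B:=H^\bullet_{\dbar}(A)$, a unital $\ZZ$-graded algebra carrying the zero differential, the induced map on $E_2$ is exactly the classical generalized-trace map for the graded algebra $B$.

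At this point I invoke the classical Morita invariance for the unital graded algebra $B$ (\cite{loday}, 1.2.4 and 2.2.9): the trace is there a quasi-isomorphism, with quasi-inverse induced by the corner embedding $\iota\colon B\to\Mat_r(B)$, $b\mapsto bE_{11}$, witnessed by an explicit finite chain homotopy $\iota_*\circ\tr\simeq\Id$. Since $B$ carries the zero differential, no totalization subtlety arises and this applies verbatim (with Koszul signs). Hence $\tr$ is an isomorphism on $E_2$, and the comparison theorem for convergent spectral sequences shows $\tr$ is a quasi-isomorphism on $C_\bullet^{\Hoch}(\Mat_r(A))$ and $CC_\bullet(\Mat_r(A))$. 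Finally, the corner embedding $\iota\colon A\to\Mat_r(A)$, $a\mapsto aE_{11}$, is a (non-unital) ring homomorphism, so it induces chain maps $\iota_*$ on both complexes, and $\tr\circ\iota_*=\Id$ holds on the nose because $\tr(E_{11}\cdots E_{11})=\tr(E_{11})=1$; combined with $\tr$ being a quasi-isomorphism, this forces $\iota_*$ to be a quasi-isomorphism and quasi-inverse to $\tr$.

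The main obstacle I expect lies in the first and third steps: verifying, with correct Koszul signs, that $\tr$ is compatible with the \emph{full} cyclic structure (the operators $t$ and $N$, not merely the Hochschild differential $b$), and confirming that the classical homotopy realizing $\iota_*\circ\tr\simeq\Id$ survives in the unbounded $\ZZ$-graded setting for $B$. Both rest on the matrix units living in degree $0$ and on the cyclic invariance of the trace, so they are bookkeeping rather than conceptual; everything genuinely derived is absorbed into the convergent spectral sequence, and no new analytic input beyond its convergence is needed.
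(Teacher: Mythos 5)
Your proposal is correct and is essentially the argument the paper intends: the paper states this proposition without written proof, citing \cite{loday} 1.2.4 and 2.2.9 and presenting it as an elementary instance of Keller's Morita invariance (Proposition \ref{prop:HC-morita}), and your reduction to the classical graded case via the filtration by number of tensor factors is exactly the ``straightforward extension'' being invoked --- indeed it uses the same convergent spectral sequence the paper itself introduces at the start of the section to prove quasi-isomorphism invariance of $HH_\bullet$ and $HC_\bullet$. The points you verify (compatibility of $\tr$ with $b$, $t$, $N$ and $\dbar$ via cyclic invariance of the matrix trace and the degree-zero placement of matrix units, the identity $\tr\circ\iota_*=\Id$ on the nose for the corner embedding, and the persistence of Loday's explicit homotopy for a $\ZZ$-graded algebra with zero differential) are all sound and suffice.
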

  
\paragraph{C. Localization sequence.}

 \begin{Defi}
A localization sequence of perfect dg-categories is a homotopy cofiber sequence
\[
\Ac \lra \Bc \lra \Cc
\]
in the Morita model category of dg-categories (see Appendix), such that the functor $\Ac \to \Bc$ is (quasi-)fully faithful.
\end{Defi}

\begin{rem}
Given such a localization sequence, the homotopy category $[\Ac]$ is a thick subcategory of  the triangulated category
$[\Bc]$ and $[\Cc]$ is equivalent to the
 Verdier quotient $ [\Bc] / [\Ac]$.
\end{rem}

\begin{thm}[Keller, \cite{keller-cyex}]\label{thm:localizationHC}
A localization sequence of perfect dg-categories $\Ac \to \Bc \to \Cc$ induces a cofiber sequence
\[
CC(\Ac) \lra CC(\Bc) \lra CC(\Cc)
\]
in the category of complexes. In particular we get a long exact sequence
\[
\cdots \lra HC_p(\Ac) \lra HC_p(\Bc) \lra HC_p(\Cc) \lra HC_{p-1}(\Ac) \lra \cdots
\]

\end{thm}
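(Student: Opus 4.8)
The plan is to factor the cyclic complex through the Hochschild mixed complex and then reduce the statement to a localization property of Hochschild homology, following Keller and adapting his argument to the unbounded grading considered in this subsection. First I would pass to the level of mixed complexes. Recall that $CC_\bullet(\Ac)$ is produced functorially from the Hochschild complex $C_\bullet^{\Hoch}(\Ac)$ together with Connes' operator $B$, that is, from the mixed complex $(C_\bullet^{\Hoch}(\Ac), b, B)$ regarded as a dg-module over $\Lambda=\k[B]/(B^2)$ with $B$ of homological degree $+1$: concretely $CC_\bullet(\Ac)=\Tot$ of the $(b,B)$-bicomplex. Since every totalization here is formed with direct sums, this construction commutes with the formation of cones without any convergence hypothesis, and hence carries cofiber sequences of mixed complexes to cofiber sequences of complexes. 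It therefore suffices to exhibit a cofiber sequence of mixed complexes $C_\bullet^{\Hoch}(\Ac)\to C_\bullet^{\Hoch}(\Bc)\to C_\bullet^{\Hoch}(\Cc)$ in the derived category $\Dc(\Lambda)$; the invariance of $C_\bullet^{\Hoch}$ under quasi-equivalences, recorded above, makes this well posed.

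Next I would fix convenient models. By Proposition \ref{prop:HC-morita} the cyclic and Hochschild complexes are unchanged under passage to Morita-equivalent small models, so I may assume that $\Ac$ is a full dg-subcategory of $\Bc$ and that $\Cc$ is the dg-quotient $\Bc/\Ac$, presented in the Drinfeld--Keller style: $\Cc$ has the objects of $\Bc$, and $\Hom_\Cc$ is obtained from $\Hom_\Bc$ by freely adjoining, for every object $a$ of $\Ac$, a morphism $\epsilon_a$ of degree $-1$ with $d\epsilon_a=\Id_a$. The canonical functor $\Bc\to\Cc$ is the identity on objects and morphisms, so the induced map $C_\bullet^{\Hoch}(\Bc)\to C_\bullet^{\Hoch}(\Cc)$ identifies $C_\bullet^{\Hoch}(\Bc)$ with the $B$-stable subcomplex spanned by cyclic words not involving any adjoined generator; this is the second map of the putative sequence.

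The heart of the argument is to identify the cofiber. I would filter $C_\bullet^{\Hoch}(\Cc)$ by the number of generators $\epsilon_a$ occurring in a cyclic word, the zeroth stage being $C_\bullet^{\Hoch}(\Bc)$. The relations $d\epsilon_a=\Id_a$ then provide explicit contractions which annihilate every word except those whose surviving factors all lie in $\Ac$, so that the quotient $C_\bullet^{\Hoch}(\Cc)/C_\bullet^{\Hoch}(\Bc)$ becomes, after accounting for the single surviving degree $-1$ generator, quasi-isomorphic to $C_\bullet^{\Hoch}(\Ac)[1]$. As all of these maps and homotopies commute with $B$, rotating the resulting triangle yields exactly the cofiber sequence $C_\bullet^{\Hoch}(\Ac)\to C_\bullet^{\Hoch}(\Bc)\to C_\bullet^{\Hoch}(\Cc)$ of mixed complexes; applying the functor of the first step and passing to homology produces the long exact sequence in the statement.

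The main obstacle is precisely the one flagged at the head of this subsection. The $\epsilon$-filtration is natural, but the $\Hom$-complexes are unbounded above, so in a fixed total degree infinitely many strata can contribute and the associated spectral sequence need not converge; an $E_1$-degeneration argument is therefore unavailable. I would instead bypass the spectral sequence, as in the proofs of Lemmas \ref{lem:CC-lambda} and \ref{lem:com-indec}: the relations $d\epsilon_a=\Id_a$ define an explicit contraction $h$ by $\epsilon$-insertion which, applied to any single cyclic word, produces a finite sum of words and hence is a well-defined endomorphism of the direct-sum totalization. Verifying the homotopy identity $\Id-\iota\pi=dh+hd$ directly, checking that the retract is genuinely $C_\bullet^{\Hoch}(\Ac)[1]$ rather than merely abstractly quasi-isomorphic to it, and carrying all of this out $B$-equivariantly, is the delicate computational point that replaces the non-convergent spectral sequence.
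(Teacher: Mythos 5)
The paper offers no proof of this statement at all: it is quoted as a theorem of Keller with the reference [keller-cyex], so your proposal is an attempt to supply an argument the authors deliberately outsource. Your general architecture (reduce to mixed complexes, use Morita invariance to replace $\Cc$ by the Drinfeld quotient with adjoined contractions $\epsilon_a$, then identify the cofiber of $C^{\Hoch}_\bullet(\Bc)\to C^{\Hoch}_\bullet(\Cc)$) is a viable route, but the step where the whole theorem lives is wrong as you describe it. You claim the quotient complex retracts onto the words whose factors lie in $\Ac$ together with ``the single surviving degree $-1$ generator''. Test this on $\Ac=\Bc=\k[x]$: then $\Cc=\Bc/\Ac$ is Morita-trivial, so by Morita invariance $C^{\Hoch}_\bullet(\Cc)$ is acyclic and the quotient complex must compute $HH_\bullet(\k[x])[1]$, which is nonzero in two adjacent degrees ($HH_0=\k[x]$ and $HH_1=\k[x]\,dx$). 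But the subcomplex of words with all factors in $\Ac$ and exactly one $\epsilon$ is the Hochschild complex of $\Ac$ with coefficients in the free bimodule $\bigoplus_a \Ac(a,-)\otimes \Ac(-,a)[1]$, which computes only $\bigoplus_a\Ac(a,a)[1]=\k[x][1]$, concentrated in a single degree. So the proposed retraction cannot exist: the shift $[1]$ does not come from ``one surviving $\epsilon$''.

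The correct identification has to let the number of $\epsilon$'s vary. Filtering the quotient by $\epsilon$-count, the stratum with exactly $k$ epsilons is a cyclic tensor product of two-sided bar constructions $\mathrm{Bar}(\Bc(a_i,-),\Bc,\Bc(-,a_{i+1}))$, each of which contracts onto the composite morphism, which lies in $\Ac(a_i,a_{i+1})$ by fullness; the relation $d\epsilon_a=\Id_a$ then induces, across strata, precisely the Hochschild boundary $b$ of $\Ac$, so that $k$-epsilon words correspond to the $k$-factor column of $C^{\Hoch}_\bullet(\Ac)$, all shifted uniformly by $[1]$. Note also that your stated reason for abandoning the spectral sequence is misplaced here: the $\epsilon$-count filtration is bounded below and exhaustive, so the classical convergence and comparison theorems apply to it unconditionally (the convergence problems this subsection warns about concern filtrations unbounded in the relevant direction, such as the cyclic periodicity direction). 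Two further points remain even after this repair: the comparison of the all-$\Ac$-word subcomplex with $C^{\Hoch}_\bullet(\Ac)[1]$ still needs an input such as the acyclicity of $C^{\Hoch}_\bullet(\Ac/\Ac)$ (again Morita invariance), and a rotation of your short exact sequence only yields the asserted statement after you check that its connecting map agrees, up to homotopy, with the map induced by the inclusion functor $\Ac\to\Bc$ — a verification your sketch omits. These are not bookkeeping details; together they constitute the content of Keller's theorem, which is why the paper cites rather than reproves it.
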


\subsection{Cyclic homology of  schemes}\label{subsec:cyc-hom-sch}

\paragraph{A. Sheaf-theoretic definition and Hodge decomposition.} 
We recall the basic constructions and results of Weibel. 

\begin{Defi}[\cite{weibel-sch}]
Let $X$ be a  $\k$-scheme.   Denote by  $\Cc^\Hoch_{\bullet, X}$  
the complex of sheaves on the Zariski topology of $X$
obtained by sheafifying (term by term) the complex of presheaves
$U\mapsto C^\Hoch_\bullet(\Oc(U))$. 
 We define the Hochschild homology of $X$  as the hypercohomology of this complex:
  \[
\HHH(X) = \HH^k(X,\Cc^\mathrm{H}_\bullet(X)). 
\]
We also define the complex of sheaves $\Cc\Cc_{\bullet, X}$ as the
 sheafification of the complex of presheaves 
$U \mapsto CC _{\bullet}(\Oc(U))$, and the cyclic homology of $X$ as 
\[
\HC(X) = \mathbb{H}^k(X,\mathrm{Tot}(\Cc\Cc_{\bullet, X})). 
\]
\end{Defi}
 So $\HHH$ and $\HC$ fit into Mayer-Vietoris sequences by  definition. 
Weibel  proved that for an affine scheme $X=\Spec(A)$,  one has 
\[
\HHH(X)=HH(A), \quad \HC(X) = HC(A).
\]

Let us also recall here that the cyclic homology of a commutative algebra has
  a Hodge decomposition (also called $\lambda$-decomposition, see \cite{loday}):
\[
HH_\bullet(A) = \bigoplus_i  HH^{(i)}_\bullet(A)
\hspace{1cm} \text{and} \hspace{1cm}
HC_\bullet(A) = \bigoplus_i HC^{(i)}_\bullet(A)
\]
Weibel extends this decomposition to the case of schemes and describes it in term of
 HKR-isomorphism in the smooth case.
 
\begin{thm}[\cite{weibel-hod}]\label{thm:hodge-wei}
Let $X$ be a qcqs (quasi-compact and quasi-separated) $\k$-scheme. 
There are Hodge decompositions
\[
\HHH_\bullet(X) = \bigoplus_i \HHH^{(i)}_\bullet(X)
\hspace{1cm} \text{and} \hspace{1cm} 
\HC_\bullet(X) = \bigoplus_i \HC_\bullet^{(i)}(X). 
\]
Moreover, if $X$ is smooth, then we have for any $i$ and $k$ 
\[
\HHH_k^{(i)}(X) = {H}^{i -k}(X,\Omega_X^{i})
\hspace{1cm} \text{and} \hspace{1cm}
\HC_k^{(i)}(X) = \mathbb{H}^{2 i -k}(X,\Omega_X^{\leq i}). 
\]
\end{thm}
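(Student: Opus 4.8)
The plan is to bootstrap from the algebra-level $\lambda$-decomposition (assumed above, see \cite{loday}) together with the affine comparison $\HHH(X)=HH(\Oc(X))$, $\HC(X)=HC(\Oc(X))$ recalled just before the statement, by first sheafifying the decomposition and then passing to hypercohomology. For a commutative $\k$-algebra $A$ in characteristic $0$, the Eulerian idempotents $e_n^{(i)}\in\QQ[\Sen_n]$ act on $A^{\otimes\bullet}$ and split $C_\bullet^\Hoch(A)=\bigoplus_i C_\bullet^{\Hoch,(i)}(A)$ compatibly with $b$, and the cyclic complex $CC_\bullet(A)=\bigoplus_i CC_\bullet^{(i)}(A)$ compatibly with $b$ and Connes' operator $B$. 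Since these idempotents are natural in $A$, this is a decomposition of the \emph{presheaves} $U\mapsto C_\bullet^\Hoch(\Oc(U))$ and $U\mapsto CC_\bullet(\Oc(U))$, hence of their sheafifications:
\[
\Cc^\Hoch_{\bullet, X} \,=\, \bigoplus_i \Cc^{\Hoch,(i)}_{\bullet, X}, \qquad
\Cc\Cc_{\bullet, X} \,=\, \bigoplus_i \Cc\Cc^{(i)}_{\bullet, X}.
\]
I would then \emph{define} $\HHH^{(i)}_\bullet(X)$ and $\HC^{(i)}_\bullet(X)$ to be the hypercohomology of the respective summands.

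The first genuine point is to see that hypercohomology distributes over this (possibly infinite) direct sum, so that $\HHH_\bullet(X)=\bigoplus_i\HHH^{(i)}_\bullet(X)$ and similarly for $\HC$. Here I would invoke the qcqs hypothesis: sheaf cohomology of a qcqs scheme commutes with filtered colimits of abelian sheaves, and a direct sum is the filtered colimit of its finite partial sums. Feeding this into the hypercohomology spectral sequence of $\mathrm{Tot}(\Cc\Cc_{\bullet,X})$ (and of $\Cc^\Hoch_{\bullet,X}$) yields the splitting with no finiteness hypothesis on $X$. This is precisely where ``qcqs'' is used.

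For the smooth Hochschild statement I would argue locally. On an affine open $U=\Spec A$ with $A$ smooth over $\k$, the HKR theorem combined with the affine comparison gives $HH^{(i)}_n(A)=\Omega^i_{A/\k}$ for $n=i$ and $0$ otherwise. Hence the homology sheaves of $\Cc^{\Hoch,(i)}_{\bullet,X}$ vanish except in degree $i$, where they equal $\Omega^i_X$, so $\Cc^{\Hoch,(i)}_{\bullet,X}\simeq\Omega^i_X[i]$ in the derived category of sheaves; taking hypercohomology gives $\HHH^{(i)}_k(X)=\mathbb{H}^{i-k}(X,\Omega^i_X)=H^{i-k}(X,\Omega^i_X)$. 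For the cyclic statement, the substantive input is the identification, under HKR, of Connes' operator $B$ with the de Rham differential $d$. Granting this, the weight-$i$ summand $\Cc\Cc^{(i)}_{\bullet,X}$ becomes quasi-isomorphic, locally and hence globally, to the truncated de Rham complex shifted as $\Omega^{\leq i}_X[2i]$, the doubling of the cyclic bicomplex accounting for the shift by $2i$; hypercohomology then produces $\HC^{(i)}_k(X)=\mathbb{H}^{-k}(X,\Omega^{\leq i}_X[2i])=\mathbb{H}^{2i-k}(X,\Omega^{\leq i}_X)$. Alternatively I would run the $SBI$ long exact sequence, which respects the weight grading (with $S$ lowering the weight by one), to reduce the cyclic computation inductively to the Hochschild one already settled.

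The main obstacle I expect is exactly the $B=d$ comparison at the level of sheaf complexes, together with the accompanying index and shift bookkeeping: one must verify that the local HKR quasi-isomorphisms can be chosen compatibly with $B$ and $d$ so that they glue to a global quasi-isomorphism $\Cc\Cc^{(i)}_{\bullet,X}\simeq\Omega^{\leq i}_X[2i]$, rather than merely matching homology sheaves termwise. A secondary technical care, already isolated above, is the commutation of hypercohomology with the infinite direct sum, which is precisely what the qcqs assumption secures.
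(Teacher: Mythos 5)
This theorem is not proved in the paper: it is quoted as background from Weibel \cite{weibel-hod}, so the only ``proof'' in the text is the citation itself. Your proposal is in substance a correct reconstruction of the argument of that cited reference: naturality of the Eulerian idempotents gives the $\lambda$-decomposition of the sheafified Hochschild and cyclic complexes, the qcqs hypothesis (finite affine covers, which give both commutation of cohomology with the direct sum and convergence of the hypercohomology spectral sequence via finite cohomological dimension for quasi-coherent homology sheaves) yields the splitting of $\HHH$ and $\HC$, and in the smooth case HKR together with the identification of Connes' $B$ with the de Rham differential $d$ --- a map that is already natural at the level of presheaf complexes, so the gluing issue you worry about does not actually arise --- gives the stated formulas.
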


\paragraph{B. Relation to HC of dg-algebras and categories.} We first recall the following result of Keller.

\begin{thm}[\cite{keller-sch}]\label{thm:keller}  
For any qcqs scheme $X$ we have
\[
\HHH_\bullet(X) \simeq HH_\bullet(\Perf(X))
\hspace{1cm} \text{and} \hspace{1cm}
\HC_\bullet(X) \simeq HC_\bullet({\Perf}(X)), 
\]
 where $\Perf$ denotes the dg-category of perfect complexes.
\end{thm}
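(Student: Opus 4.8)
The plan is to reduce the statement to the affine case, where it becomes a direct consequence of Morita invariance and Weibel's theorem, and then to propagate it along a finite affine cover using Zariski descent on both sides. Throughout I will treat the cyclic case; the Hochschild case is handled identically (and in fact follows by truncating the corresponding mixed complexes).

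First I would dispose of the affine case. For $X=\Spec A$ the dg-category $\Perf(X)$ is precisely $\Perf_A$, and the Yoneda embedding $\upsilon\colon A\to\Perf_A$ (with $A$ regarded as a dg-category with one object) is a Morita equivalence. Hence Proposition \ref{prop:HC-morita}(c) gives a canonical isomorphism $HC_\bullet(\Perf(X))\cong HC_\bullet(A)$, and likewise for $HH_\bullet$. On the other side, Weibel's theorem \cite{weibel-sch} identifies $\HC_\bullet(X)$ with $HC_\bullet(A)$. Composing yields the desired isomorphism over affines, and I would record that it is natural in $X$ -- compatible with restriction to affine opens -- since every map in sight (Yoneda, the Morita comparison, and Weibel's identification) is functorial. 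This naturality is what makes the subsequent gluing argument go through via the five lemma.

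Next I would establish Zariski descent for the right-hand side. The left-hand functors $\HHH$ and $\HC$ satisfy Mayer--Vietoris by construction, being hypercohomology of complexes of sheaves. For $HC_\bullet(\Perf(-))$ I would invoke the Thomason--Trobaugh localization theorem: for a Zariski cover $X=U\cup V$ with $Z=X\setminus U\subset V$, restriction gives localization sequences $\Perf_Z(X)\to\Perf(X)\to\Perf(U)$ and $\Perf_Z(V)\to\Perf(V)\to\Perf(U\cap V)$, while excision identifies $\Perf_Z(X)\simeq\Perf_Z(V)$. Applying Keller's localization theorem (Theorem \ref{thm:localizationHC}) to both sequences and using this identification of the fibers produces a Mayer--Vietoris long exact sequence for $HC_\bullet(\Perf(-))$ that matches the sheaf-theoretic one on the left.

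Finally I would run the induction. A qcqs scheme admits a finite affine cover $X=U_1\cup\cdots\cup U_m$, and I would induct on $m$, writing $X=U\cup U_m$ with $U=U_1\cup\cdots\cup U_{m-1}$; here $U$, $U_m$, and $U\cap U_m=\bigcup_i(U_i\cap U_m)$ are each covered by fewer than $m$ affines and are again qcqs, so the inductive hypothesis applies. Comparing the two Mayer--Vietoris sequences through the natural comparison map and the five lemma then yields the isomorphism for $X$, the base case $m=1$ being the affine case above. I expect the main obstacle to be precisely this descent step for $\Perf$: setting up the localization sequence with the correct excision statement so that its connecting maps are identified with those of the sheaf-theoretic Mayer--Vietoris sequence, and checking that the comparison is natural enough for the five-lemma argument. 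The non-separated case, in which $U\cap U_m$ need not be affine, is handled automatically by the induction, since that intersection is still qcqs and carries a strictly smaller affine cover.
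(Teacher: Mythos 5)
First, a point of reference: the paper does not prove this statement at all --- it is quoted from Keller \cite{keller-sch} and used as a black box (e.g.\ in the proof of Theorem \ref{cor:hc-qaffine}). So the relevant comparison is with Keller's published argument, and your sketch reproduces its overall architecture faithfully: the affine case via Morita invariance (Proposition \ref{prop:HC-morita}) together with Weibel's identification $\HC_\bullet(\Spec A)\simeq HC_\bullet(A)$, a Mayer--Vietoris sequence for $HC_\bullet(\Perf(-))$ obtained by feeding the Thomason--Trobaugh localization and excision statements into Keller's localization theorem (Theorem \ref{thm:localizationHC}), and induction on the cardinality of an affine cover, which also handles the non-separated case. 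Each of these ingredients is correct and available.

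The genuine gap is that the morphism to which you want to apply the five lemma is never constructed. A five-lemma argument needs a commutative ladder between the two long exact sequences, hence a comparison map $\HC_\bullet(W)\to HC_\bullet(\Perf(W))$ defined for \emph{every} qcqs scheme $W$, natural in $W$, and compatible with both connecting homomorphisms; in your induction step this is required for $X$ itself, for $U=U_1\cup\cdots\cup U_{m-1}$ and for $U\cap U_m$, none of which are affine in general, whereas your construction produces an isomorphism only over affines. Naturality ``for affine $X$'' does not bootstrap: an isomorphism extracted from the five lemma at an earlier stage of the induction is not yet known to be natural, so it cannot be inserted into the next ladder. The standard repair --- and the actual content of Keller's proof --- is to build the comparison at the level of presheaves: the dg-functor sending the one-object dg-category $\Oc(W)$ to the object $\Oc_W\in\Perf(W)$ induces a map of presheaves of cyclic complexes $CC(\Oc(-))\to CC(\Perf(-))$; it is a quasi-isomorphism on affines by Morita invariance, hence a quasi-isomorphism of sheafifications (stalks are filtered colimits over affine opens), so Weibel's $\HC_\bullet(X)$ is computed by the hypercohomology of the sheafification of $CC(\Perf(-))$. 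The theorem then reduces to showing that the canonical map from $CC(\Perf(X))$ to the hypercohomology of its own sheafification --- a map defined for all qcqs $X$ at once and tautologically natural --- is a quasi-isomorphism, and it is on \emph{this} map that the Mayer--Vietoris induction runs. Even then, one must verify that the connecting maps supplied by Theorem \ref{thm:localizationHC} agree with the sheaf-theoretic ones; that compatibility is the technical heart of Keller's paper and is precisely the ``obstacle'' you flagged but did not resolve.
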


Next, we use this result to relate cyclic homlogy of schemes with that of
dg-algebras of derived functions. 
  
\begin{Defi}
A scheme $U$ is called quasi-affine if it is isomorphic to a  qcqs
 open subscheme of an affine scheme.
\end{Defi}

The following will be used in the setup leading to Theorem \ref{thm:res-invariant}.

\begin{thm}\label{cor:hc-qaffine}
For any quasi-affine scheme $U$, we have
\[
\HC_\bullet(U) \simeq HC_\bullet(R \Gamma(U,\Oc)). 
\]
\end{thm}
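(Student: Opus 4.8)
The plan is to reduce the statement to Keller's comparison theorem (Theorem \ref{thm:keller}) and the Morita invariance of cyclic homology (Proposition \ref{prop:HC-morita}). Set $B = R\Gamma(U,\Oc)$; since $U$ is qcqs this is a commutative dg-algebra, and it computes the derived endomorphisms $\RHom_{\QCoh(U)}(\Oc_U,\Oc_U)$ of the structure sheaf. The heart of the argument will be to produce a Morita equivalence of dg-categories $\Perf(U)\simeq\Perf(B)$. Granting it, I would conclude with the chain
\[
\HC_\bullet(U) \;\simeq\; HC_\bullet(\Perf(U)) \;\simeq\; HC_\bullet(\Perf(B)) \;\simeq\; HC_\bullet(B) \;=\; HC_\bullet(R\Gamma(U,\Oc)),
\]
in which the first isomorphism is Theorem \ref{thm:keller}, the second is the invariance of $HC_\bullet$ under quasi-equivalences of perfect dg-categories (Proposition \ref{prop:HC-morita}), and the third is part (c) of the same proposition applied to the one-object dg-category $B$.

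The main technical input is that $\Oc_U$ is a compact generator of the unbounded derived category $D_{qc}(U)$. Compactness is immediate: on the qcqs scheme $U$ the compact objects are exactly the perfect complexes, and $\Oc_U$, being a line bundle, is perfect. For generation I would exploit quasi-affineness directly. By definition there is a quasi-compact open immersion $j\colon U\hookrightarrow X=\Spec(A)$. Suppose $\Fc\in D_{qc}(U)$ satisfies $\RHom(\Oc_U,\Fc[k]) = H^k R\Gamma(U,\Fc)=0$ for all $k$, so that $R\Gamma(U,\Fc)$ is acyclic. Then $R\Gamma(X,Rj_*\Fc)=R\Gamma(U,\Fc)$ is acyclic as well; as $j$ is quasi-compact and quasi-separated, $Rj_*\Fc$ lies in $D_{qc}(X)$, and since $X$ is affine the global sections functor on $D_{qc}(X)$ is an equivalence onto $D(A)$ and in particular conservative, forcing $Rj_*\Fc=0$. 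Because $j$ is an open immersion, $\Fc=j^*Rj_*\Fc=0$. Hence $\Oc_U$ generates.

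With a compact generator whose endomorphism dg-algebra is $B$, the dg-refinement of tilting theory yields an equivalence of dg-enhanced derived categories $D_{qc}(U)\simeq D(B)$ carrying $\Oc_U$ to $B$ (here $B$ is commutative, so there is no left/right subtlety). Restricting to compact objects — the perfect complexes on the qcqs scheme $U$ on one side, the perfect dg-modules over $B$ on the other — gives the sought Morita equivalence $\Perf(U)\simeq\Perf(B)$. The step I expect to be the main obstacle is precisely this upgrade: the reconstruction from a compact generator must be realized as an honest dg-quasi-equivalence, not merely a triangulated equivalence, so that Proposition \ref{prop:HC-morita} applies, and one must check that the dg-enhancement of $\Perf(U)$ so produced agrees with the one implicitly used in Theorem \ref{thm:keller}. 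Once these enhancements are matched, the displayed chain of isomorphisms finishes the proof. Alternatively, one could attempt an induction over a finite affine cover of $U$ using the localization sequence of Theorem \ref{thm:localizationHC} together with the Mayer--Vietoris property of $\HC_\bullet$, but identifying the resulting terms with $HC_\bullet(R\Gamma(U,\Oc))$ seems less direct than the generator argument.
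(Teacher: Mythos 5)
Your proposal is correct, and it follows the paper's reduction exactly up to the key technical step, but then diverges there. Like the paper, you invoke Theorem \ref{thm:keller}, Morita invariance (Proposition \ref{prop:HC-morita}), and the characterization of perfect objects as compact ones, so that everything rests on an equivalence $\mathrm{D}_{\mathrm{qcoh}}(U)\simeq \dgMod_{R\Gamma(U,\Oc)}$. The paper proves this equivalence by hand: writing $u\colon U\to X=\Spec B$ and $p\colon B\to A=R\Gamma(U,\Oc)$, it shows that $p_*$ is fully faithful via the explicit \v Cech-cover computation $A\otimes^L_B A\simeq A$ (Lemma \ref{lem:AABA}, i.e.\ $B\to A$ is a homotopy epimorphism), and then matches the essential images of $p_*$ and $u_*$ inside $\dgMod_B$ by a conservativity argument. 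You instead prove that $\Oc_U$ is a compact generator of $\mathrm{D}_{\mathrm{qcoh}}(U)$ --- your generation argument via $Rj_*$ and conservativity of $R\Gamma$ on the affine $X$ is correct --- and then invoke dg-tilting theory (Schwede--Shipley/Keller) to conclude. Both proofs ultimately exploit the same geometric input, the finite affine cover of $U$ inside $X$: the paper in its tensor-product lemma, you in the generation step. What your route buys is brevity and conformity with the standard Bondal--Van den Bergh-style treatment of quasi-affine schemes; its cost is the reliance on the tilting theorem as a black box together with the enhancement-matching issue you rightly flag (the equivalence must be a genuine dg/Morita equivalence compatible with the enhancement implicit in Theorem \ref{thm:keller}). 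This is a real but standard point, and it is within the paper's own toolkit, since \cite{schwede-shipley} is invoked elsewhere (in the proof of Theorem \ref{thm:compar-1}). What the paper's route buys is self-containedness and an explicit statement of independent interest: the homotopy-epimorphism property of $B\to A$, which realizes $\dgMod_A$ concretely as a full subcategory of $\dgMod_B$ rather than abstractly reconstructing it from a generator.
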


\noindent {\sl Proof:} By Theorem \ref{thm:keller} and Proposition \ref{prop:HC-morita}, 
it is enough to show that
$\Perf(X)$ is equivalent to $\Perf_{R \Gamma(U,\Oc)}$. Because perfect complexes
(resp. dg-modules) are intrinsically characterized as compact objects in the
derived category of all quasicoherent sheaves (resp. all dg-modules), we reduce to:

\begin{prop}\label{prop:derivedcat-qaffine}
Let $U$ be a quasi-affine scheme. The derived category of quasi-coherent sheaves on
 $U$ is equivalent to the derived category of dg-modules on the cdga $R \Gamma(U,\Oc)$. 
\end{prop}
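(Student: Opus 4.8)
The plan is to prove the equivalence by exhibiting a single tilting-type object and invoking the general ``affine-like'' behavior of quasi-affine schemes. First I would recall that a quasi-affine scheme $U$ sits as a qcqs open subscheme $j\colon U \hookrightarrow \Spec(A)$, so that $\Oc_U$ is a quasi-coherent sheaf of algebras and $R\Gamma(U,\Oc)$ is a (connective in the schematic case, but possibly unbounded in ours) cdga. The natural candidate functor is $R\Gamma(U,-)\colon \mathrm{QCoh}(U) \to \dgMod_{R\Gamma(U,\Oc)}$, with adjoint given by tensoring with $\Oc_U$ over the global sections; my goal is to show this adjunction is an equivalence on derived categories. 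The key structural input is that on a quasi-affine scheme the unit $\Oc_U$ is a compact generator of $D_{\mathrm{qc}}(U)$: generation follows because $U$ is quasi-affine, so the global sections functor is conservative (a quasi-coherent sheaf with vanishing $R\Gamma$ after all twists must vanish, using that $U$ is an open in an affine and the structure sheaf already generates), and compactness of $\Oc_U$ holds since $U$ is qcqs.

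The central step is then an application of the Lurie--Schwede--Shipley style reconstruction theorem: if $\Tc$ is a compactly generated triangulated (or stable $\infty$-/dg-) category with a compact generator $E$, then $\Tc$ is equivalent to $\dgMod_{\REnd(E)}$ via $X \mapsto \RHom(E,X)$. I would apply this with $\Tc = D_{\mathrm{qc}}(U)$ and $E = \Oc_U$, noting $\REnd(\Oc_U) = R\Gamma(U, \Oc)$ as a dg-algebra (and in fact a cdga since $\Oc_U$ is commutative). This directly yields the desired equivalence $D_{\mathrm{qc}}(U) \simeq \dgMod_{R\Gamma(U,\Oc)}$. Restricting to compact objects on both sides then identifies $\Perf(U)$ with $\Perf_{R\Gamma(U,\Oc)}$, which is exactly the input needed to feed back into the proof of Theorem~\ref{cor:hc-qaffine} via Theorem~\ref{thm:keller} and Proposition~\ref{prop:HC-morita}.

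The main obstacle I anticipate is verifying that $\Oc_U$ is genuinely a \emph{compact} generator in the setting relevant to the paper, where the cdga $R\Gamma(U,\Oc)$ can be cohomologically unbounded (recall $\Aen_n^\bullet$ lives in degrees $[0,n-1]$, so this is not the classical connective situation). Compactness of $\Oc_U$ as an object of $D_{\mathrm{qc}}(U)$ is standard for $U$ qcqs, since the structure sheaf is perfect and perfect complexes are compact on qcqs schemes; the subtlety is rather ensuring that the reconstruction theorem one cites applies to possibly non-connective endomorphism algebras. I would address this by using a formulation of the theorem for stable $\infty$-categories (or $\k$-linear dg-categories) that makes no connectivity hypothesis on $\REnd(E)$ — only compact generation of the ambient category — which is precisely the generality in which Lurie's and Schwede--Shipley's theorems are stated.

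With these pieces in place, generation is the one genuinely quasi-affine-specific point: I would prove it by noting that for $U \hookrightarrow \Spec(A)$ open and quasi-compact, any $\Fc \in D_{\mathrm{qc}}(U)$ with $\RHom(\Oc_U, \Fc[k]) = R\Gamma(U, \Fc[k]) = 0$ for all $k$ must be zero, because the restrictions of $\Oc_{\Spec A}$ together with its twists already generate and sections over $U$ detect vanishing on the quasi-affine open. This completes the verification that $\Oc_U$ is a compact generator and hence secures Proposition~\ref{prop:derivedcat-qaffine}.
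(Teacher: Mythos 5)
Your proposal is correct, but it takes a genuinely different route from the paper's. You reconstruct $\mathrm{D}_{\mathrm{qcoh}}(U)$ abstractly: $\Oc_U$ is compact because $U$ is qcqs, it generates because global sections are conservative on a quasi-affine scheme, and $\RHom_U(\Oc_U,\Oc_U)\simeq R\Gamma(U,\Oc)$, so the Schwede--Shipley/Lurie tilting theorem (correctly invoked in its non-connective form, which needs no boundedness of the endomorphism cdga) gives the equivalence at once. The paper never appeals to this reconstruction theorem; instead, writing $u\colon U\to X=\Spec B$ and $p\colon B\to A=R\Gamma(U,\Oc)$, it works inside the ambient affine: the key step is an explicit \v Cech computation showing that the multiplication $A\otimes^L_B A\to A$ is a quasi-isomorphism, hence $p_*\colon \dgMod_A\to\dgMod_B$ is fully faithful; both $\dgMod_A$ and $\mathrm{D}_{\mathrm{qcoh}}(U)$ are then identified with full subcategories of $\dgMod_B$, and the proof finishes by checking that $F=p^*u_*$ is conservative using the finite cover $U=\bigcup\Spec B[f_i^{-1}]$. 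Your route is shorter and more modular, outsourcing the work to two standard theorems (compactness of perfect complexes on qcqs schemes, and tilting theory for compactly generated stable categories); the paper's is more self-contained, with the only nontrivial input being the elementary descent lemma $A\otimes^L_BA\simeq A$, and it produces the equivalence concretely as $u^*p_*$, compatible with the affine embedding. One blemish in your generation step: the appeal to ``twists'' of the structure sheaf of the ambient affine is a red herring (there are no twists in play); the clean argument is that $R\Gamma(U,-)=R\Gamma(X,Ru_*(-))$ is conservative, since $R\Gamma(X,-)$ is an equivalence for $X$ affine and $u^*Ru_*\simeq\Id$ for the open immersion $u$, so $\RHom_U(\Oc_U,\Fc[k])=0$ for all $k$ forces $Ru_*\Fc=0$ and hence $\Fc\simeq u^*Ru_*\Fc=0$.
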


Note that this statement is not true for more general schemes (e.g., for $U=\PP^n$). 

\vskip .2cm

\noindent {\sl Proof of the proposition:} 
Let us denote by $A$ any model for the cdga $R \Gamma(U,\Oc)$. Let $u \colon U \to X = \Spec B$ be an open immersion. In particular, we get a map of cdgas $p \colon B \to A$.
It induces an adjunction of $\infty$-categories
\[
p^* \colon \dgMod_B \leftrightarrows \dgMod_A \,:\, p_*
\]
We will prove the functor $p_*$ to be fully faithful and its essential image to coincide with that of $u_*$.
Recall that both $\dgMod_A$ and $\dgMod_B$ are stable $\infty$-categories. In particular, $p_*$ preserves finite colimits because it preserves finite limits.
Moreover, filtered colimits in both $\dgMod_A$ and $\dgMod_B$ are computed on the underlying $\k$-complexes. It follows that $p_*$ preserves all small colimits.

The element $A$ is a compact generator of $\dgMod_A$ and both $p_*$ and $p^*$ preserve small colimits. 
To show that $p_*$ is fully faithful, it suffices to show that the adjunction map $p^* p_* A \to A$ is an equivalence.
This map is equivalent to the multiplication map $A \otimes^L_B A \to A$.

\begin{lem}\label{lem:AABA}
 The multiplication map $A \otimes^L_B A \to A$ is a quasi-isomorphism. 
\end{lem}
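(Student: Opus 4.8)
The plan is to extract the geometric content of the lemma: the equivalence $A\otimes^L_B A\simeq A$ is, morally, the statement that the open immersion $u\colon U\to X=\Spec B$ is a monomorphism, so that $U\times_X U\simeq U$. I would realize this through a base-change (projection) formula for $R\Gamma(U,-)$; the main subtlety is precisely the one flagged at the start of this section, namely that the complexes involved are unbounded, so the comparison must be made through homotopy colimits and the fact that $R\Gamma(U,-)$ commutes with them.

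The first step is purely formal. Since $u$ is an open immersion it is a monomorphism, and the counit $u^*Ru_*\mathcal F\to\mathcal F$ is an equivalence for every quasi-coherent sheaf $\mathcal F$ on $U$ (restricting a pushforward back to the open subscheme recovers the sheaf), so in particular $u^*Ru_*\Oc_U\simeq\Oc_U$. Because $X$ is affine we have the standard equivalence $D_{\mathrm{qc}}(X)\simeq\dgMod_B$ implemented by $R\Gamma(X,-)$, under which $Ru_*\Oc_U$ corresponds to the $B$-module complex $R\Gamma(X,Ru_*\Oc_U)=R\Gamma(U,\Oc_U)=A$. In other words, writing $\wt A$ for the quasi-coherent sheaf on $X$ attached to the $B$-module $A$, we obtain $\wt A|_U\simeq u^*Ru_*\Oc_U\simeq\Oc_U$.

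The heart of the argument is the base-change formula
\[
A\otimes^L_B N\;\simeq\;R\Gamma(U,\wt N|_U),\qquad N\in\dgMod_B,
\]
which I would prove by a generation argument rather than by appealing to a black-box base-change theorem. There is a natural $A$-linear comparison map from left to right, and both functors $N\mapsto A\otimes^L_B N$ and $N\mapsto R\Gamma(U,\wt N|_U)$ preserve homotopy colimits; for the second functor this is exactly where the quasi-compactness and quasi-separatedness of $U$ are used, since these guarantee that $R\Gamma(U,-)$ commutes with arbitrary direct sums and hence with all homotopy colimits. The comparison map is visibly an equivalence for $N=B$, where both sides give $A$, and since $B$ generates $\dgMod_B$ under shifts and homotopy colimits, the map is an equivalence for every $N$. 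Geometrically this formula, specialized to $N=A$, is the Künneth identification $R\Gamma(U\times_X U,\Oc)\simeq A\otimes^L_B A$.

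Finally I would specialize to $N=A$ and combine with the first step:
\[
A\otimes^L_B A\;\simeq\;R\Gamma(U,\wt A|_U)\;\simeq\;R\Gamma(U,\Oc_U)\;=\;A,
\]
which matches $U\times_X U\simeq U$ (the diagonal being an isomorphism because $u$ is a monomorphism). It then remains to check that this equivalence is induced by the multiplication map; this follows by unwinding the construction, the comparison map being $A$-bilinear and sending $1\otimes 1$ to $1$, so that under $\wt A|_U\simeq\Oc_U$ it is exactly the product of functions. I expect the main obstacle to be the careful justification of the base-change formula in the unbounded setting, that is, verifying colimit-preservation of $R\Gamma(U,-)$ and that $B$ generates $\dgMod_B$, rather than any of the geometric input, which is elementary.
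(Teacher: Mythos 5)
Your proof is correct, but it takes a genuinely different route from the paper's. The paper argues by explicit finite \v{C}ech descent: choosing a finite family $(f_i)_{i\in I}$ in $B$ with $U=\bigcup \Spec B[f_i^{-1}]$, it writes $A\simeq \holim_{\varnothing\neq J\subset I}B_J$ with $B_J=\bigotimes^L_{i\in J}B[f_i^{-1}]$, and then computes directly
$A\otimes_B^L A\simeq \holim_J\holim_{J'}(B_J\otimes^L_B B_{J'})\simeq \holim_J\holim_{J'}B_{J\cup J'}\simeq \holim_J B_J\simeq A$,
using that $B_J\otimes^L_B B_{J'}\simeq B_{J\cup J'}$ (localizations are flat and compose), that tensoring commutes with these \emph{finite} homotopy limits (finite limits and colimits interchange in the stable setting), and cofinality ($J$ is initial among the $J''\supset J$). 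Everything there is finite, so the unbounded-complex subtleties you flag never arise. Your argument instead proves the stronger projection formula $A\otimes_B^L N\simeq R\Gamma(U,\wt{N}|_U)$ for all $N\in\dgMod_B$ by generation from $B$; its real external inputs are (i) that $R\Gamma(U,-)$ preserves arbitrary coproducts for $U$ qcqs (equivalently, compactness of $\Oc_U$ in $\mathrm{D}_{\mathrm{qcoh}}(U)$, a B\"okstedt--Neeman type fact, or alternatively the finite cohomological amplitude coming from a finite affine cover), and (ii) the counit equivalence $u^*Ru_*\simeq \on{id}$ for the open immersion $u$. This is a heavier hammer than the paper's, but it buys more: your formula identifies $p^*\colon \dgMod_B\to\dgMod_A$ with $R\Gamma(U,-)\circ u^*$, which is essentially the compatibility the paper needs again in the conservativity step of Proposition \ref{prop:derivedcat-qaffine} (the identity $v_i^*(E)=F(E)\otimes_A B_i$). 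One step you should spell out a little more: the identification of your abstract equivalence with the multiplication map. The comparison map at $N=A$ is the $A$-linear extension of the restriction map $A\to R\Gamma(U,\wt{A}|_U)$; since $\wt{A}\simeq Ru_*\Oc_U$, its composite with the counit is the identity by the triangle identity for the adjunction $(u^*,Ru_*)$, and $A$-linearity then forces the composite $A\otimes_B^L A\to A$ to be $a\otimes a'\mapsto aa'$. This is routine, but it is the point where "unwinding the construction" actually happens.
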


\noindent {\sl Proof of the lemma:} 
Let $(f_i)_{i \in I}$ denote a finite family of elements of $B$ such that $U = \bigcup X_{f_i} \subset X$, where $X_{f_i} = \Spec(B[f_i^{-1}])$.  For any  non-empty subset $J \subset I$, 
let  $B_J$ denote the (derived) tensor product
\[
B_J = \bigotimes_{{ B}}^{i \in J}{}^{L} {} \, B[f_i^{-1}].
\]
  By definition, the cdga $A$ is equivalent to the homotopy limit $\holim_{\varnothing \neq J \subset I} B_J$. The derived tensor product $A \otimes^L_B A \to A$ is then equivalent to
\begin{multline*}
A \otimes ^L _B A \,\, \simeq\,\,  \holim_J B_J \otimes^L_B  \holim_{J'} B_{J'}  \,\,\simeq \,\, \holim_J \holim_{J'} (B_J \otimes_A B_{J'}) \,\, \stackrel{\mu}\simeq \\
\simeq \,\,  \holim_J \holim_{J'} B_{J \cup J'} \,\,
\simeq \,\,  \holim_{J} \holim_{J \subset J''} B_{J''}  \,\,\simeq\,\, \holim_{J} B_{J} \,\, \simeq\,\,  A; 
\end{multline*}
where $\mu$ is induced by the multiplication maps.
Lemma \ref{lem:AABA} is proved. 

\vskip .2cm

To continue with Proposition \ref{prop:derivedcat-qaffine}, 
we can now identify $\dgMod_A$ with the full stable and presentable $\infty$-subcategory of 
$\dgMod_B$ generated by $A$. We remark that the derived category of quasi-coherent sheaves on $U$ identifies, through the functor $u_*$, with a full stable and presentable $\infty$-subcategory of $\dgMod_B$ containing $u_* \Oc_U \simeq \RR \Gamma(U,\Oc) \simeq A$.
We hence get an adjunction 
\[
F := p^* u_* \colon \mathrm{D}_\mathrm{qcoh}(U) \rightleftarrows \dgMod_A \,:\, G = u^* p_*
\]
where the functor $G$ is fully faithful. It therefore suffices to prove that $F$ is conservative. Let then $E \in \mathrm{D}_\mathrm{qcoh}(U)$ such that $F(E) = 0$. For any $i$, if we denote by $v_i \colon \Spec(B_i) \to U$ the Zariski embedding, we get
\[
v_i^*(E) = F(E) \otimes_A B_i = 0. 
\]
As  $\{\Spec(B_i)\}_{i\in I}$ is a cover of $U$, we deduce that $E$ is acyclic and hence that the functor $F$ is conservative.
Proposition \ref{prop:derivedcat-qaffine} and therefore Theorem \ref{cor:hc-qaffine}, is proved. 

\subsection{The residue class $\rho\in HC^1(A_n^\bullet)$}\label{subsec:res-class}

\paragraph{A. Definition via a cyclic cocycle.} 
Let $(A_n^\bullet,\dbar) \simeq R\Gamma(\pA^n, \Oc)$ be the commutative dg-algebra
from \eqref{eq:defAn}. We recall that $A_n^\bullet$ is included into the doubly graded differential algebra
$(A_n^{\bullet\bullet}, \del, \dbar)\simeq R\Gamma(\pA^n, \Omega^\bullet)$: we have
$A_n^\bullet = A_n^{0,\bullet}$. 

Consider the  $(n+1)$-linear functional
\be
r: (A_n^\bullet)^{\otimes (n+1)}\lra \k, \quad r(f_0, \dots, f_n) = \Res(f_0\del f_1 \cdots \del f_n).
\ee
Here $\del f_i\in A_n^{1,\bullet}$, and $\Res: A_n^{n-1, n}\to \k$ is the residue
map from \eqref{eq:def-res}. By definition $r(f_0,\dots, f_n)$ is assumed to be equal to $0$
unless $f_0\del f_1\cdots \del f_n$ lies in $A_n^{n, n-1}$. 

\begin{prop}
The functional $r$ is  a degree 1 cocycle in the Connes cochain complex
$C^\bullet_\lambda(A_n^\bullet) = \Hom_\k(C_\bullet^\lambda(A_n^\bullet), \k)$. 
 \end{prop}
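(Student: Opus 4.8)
The plan is to deduce the cocycle property from two structural features of the residue functional $\Res$ of \eqref{eq:def-res}, both already available. First, $\Res$ is a \emph{graded trace} on the commutative bigraded dg-algebra $A_n^{\bullet\bullet}$: since this algebra is graded-commutative, $\Res(\eta\eta') = (-1)^{|\eta|\,|\eta'|}\Res(\eta'\eta)$ automatically, for forms $\eta,\eta'$ of complementary bidegree. Second, $\Res$ annihilates both $\del$-exact and $\dbar$-exact forms of the relevant bidegree, which is exactly the Algebraic Stokes formula (Proposition \ref{prop:res-d=0}, which holds verbatim for $A_n^\bullet$ as for $A_{[n]}^\bullet$). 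In Connes' language these two facts say that $(A_n^{\bullet\bullet},\del)$, together with the inclusion $A_n^\bullet = A_n^{0,\bullet}$ and the trace $\Res$, is a \emph{cycle} over the dg-algebra $A_n^\bullet$ and that $r$ is its character; the only genuinely new feature is the internal differential $\dbar$, which forces a second grading into the bookkeeping.

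First I would record the degree count. As $\del f_1\cdots\del f_n$ has form-degree $n$, the form $f_0\,\del f_1\cdots\del f_n$ lies in $A_n^{n,\bullet}$, and $\Res$ is nonzero only on $A_n^{n,n-1}$; hence $r(f_0,\dots,f_n)$ vanishes unless $\sum_j|f_j| = n-1$. With the convention that a cochain on $(A_n^\bullet)^{\otimes(n+1)}$ detecting total internal degree $d$ has total degree $n-d$, this yields total degree $n-(n-1)=1$, as asserted. The same count separates the two pieces of the total differential $D$ on $C^\bullet_\lambda(A_n^\bullet)$ (whose only components, by the $\lambda$-complex description of Corollary \ref{cor:HC-Hlambda}, are the Hochschild coboundary $b$ and the differential $\delta_{\dbar}$ induced by $\dbar$): the term $br$ sits in bar-degree $n+1$, internal degree $n-1$, while $\delta_{\dbar}r$ sits in bar-degree $n$, internal degree $n-2$. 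These are distinct bidegrees, so $Dr=0$ may, and must, be checked piece by piece.

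Next I would verify that $r$ descends to the cyclic coinvariants and is $\dbar$-closed. For cyclic invariance I would expand $0=\Res\,\del\bigl(f_0 f_n\,\del f_1\cdots\del f_{n-1}\bigr)$ by Leibniz (using $\del^2=0$); the left-hand side vanishes by Proposition \ref{prop:res-d=0}, and reordering the two resulting terms by graded-commutativity identifies $r(f_n,f_0,\dots,f_{n-1})$ with the cyclic sign times $r(f_0,\dots,f_n)$. For $\dbar$-closedness I would use that $\del$ and $\dbar$ anticommute, so that $\dbar(\del f_i)=-\del(\dbar f_i)$; then the Leibniz expansion of $\dbar\bigl(f_0\,\del f_1\cdots\del f_n\bigr)$ reassembles, up to sign, the alternating sum $\sum_i \pm\, r(f_0,\dots,\dbar f_i,\dots,f_n)=\delta_{\dbar}r$. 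Here $f_0\,\del f_1\cdots\del f_n\in A_n^{n,n-2}$, so its $\dbar$ is a $\dbar$-exact form in bidegree $(n,n-1)$, killed by $\Res$ through Proposition \ref{prop:res-d=0}; hence $\delta_{\dbar}r=0$. Finally, to prove $br=0$, I would expand $\del(f_if_{i+1})=(\del f_i)f_{i+1}+(-1)^{|f_i|}f_i\,\del f_{i+1}$ in each interior term of $br(f_0,\dots,f_{n+1})$: the resulting terms telescope pairwise, leaving only the $i=0$ term and the cyclic wrap-around term, which cancel after transporting $\del f_{n+1}$ around the product via graded-commutativity and the graded-trace property of $\Res$. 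This is Connes' lemma that the character of a closed graded trace is a Hochschild, hence cyclic, cocycle, applied with $d=\del$ and $\int=\Res$.

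The main obstacle I expect is not conceptual but the consistent tracking of Koszul signs. Unlike the classical one-variable situation, the $f_i$ carry nonzero internal $\dbar$-degrees, so every reordering, every application of the Leibniz rule, and the cyclic permutation operator $t$ all acquire signs governed simultaneously by the form-degree (from $\del$) and the internal degree (from $\dbar$). Checking that these signs match those built into the cyclic operator of $C_\bullet^\lambda(A_n^\bullet)$ — so that the three computations above assemble into the single identity $Dr=0$ in total degree $2$ — is the one place demanding genuine care.
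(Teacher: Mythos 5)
Your proposal is correct and takes essentially the same route as the paper's proof: the same degree count, cyclic symmetry deduced from the Leibniz rule for $\del$ plus the vanishing of $\Res$ on $\del$-exact forms, and the splitting of the total coboundary into the Hochschild part and the $\dbar$-part, each killed separately using the Leibniz rules and the algebraic Stokes formula (Proposition \ref{prop:res-d=0}). Your Connes cycle/character packaging and the explicit telescoping merely spell out in more detail what the paper's terse three-step argument asserts.
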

 
 \noindent {\sl Proof:} \underbar{\sl Degree 1:} The expression  $f_0\del f_1\cdots \del f_n$
 always lies in $A_n^{n, \bullet}$. For it to lie in $A_n^{n, n-1}$, we  must have
 $\sum \deg(f_i) = n-1$. The horizontal grading of $(A_n^\bullet)^{\otimes(n+1)}$ in the
 Hochschild  chain complex of $A$, is $(-n)$. So $r$, as an element of the dual complex,
 has degree $+1$. 
 
 \vskip .2cm
 
 \noindent \underbar{\sl Cyclic symmetry:} With the Koszul sign rule taken into account,
 the condition for $r$ to lie in $C^\bullet_\lambda(A_n^\bullet)$ is
 \[
 r(f_0, \dots, f_n) \,\,=\,\, (-1)^{n+ \deg(f_0)(\deg(f_1)+\cdots + \deg(f_n))} r(f_1, \dots, f_n, f_0).
 \]
This follows at once from the Leibniz rule for $\del$ and the fact that $\Res$ vanishes on the
image of $\del$.

\vskip .2cm

\noindent \underbar{\sl $r$ is a cocycle:} As $C^\bullet_\lambda(A_n^\bullet)$ is a subcomplex in
$C^\bullet_\Hoch(A_n^\bullet)$, we need to show that $r$ is closed under the 
differential in $C^\bullet_\Hoch$. This differential is the sum $\beta+\dbar^*$, where $\beta$ is
the Hochschild cochain differential, and $\dbar^*$ is induced by the differential   $\dbar$
in $A_n^\bullet$. We claim that both $\beta(r)$ and $\dbar^*(r)$ vanish. These statements follow
from the Leibniz rule for $\del$ and $\dbar$ respectively and the fact that $\Res$
vanishes on the images of both $\del$ and $\dbar$. \qed

\vskip .2cm

We denote by $\rho\in HC^1(A_n^\bullet)$ the class of $r$ and call it the
{\em residue class}. By construction, $\rho$ is $GL_n$-invariant. 

\paragraph{B. Definition via the Hodge decomposition.} 
 By Theorems \ref{cor:hc-qaffine} and \ref{thm:hodge-wei},
 we have the Hodge decomposition
 \be\label{eq:An-Hodge}
  HC_1(A_n^\bullet) \,\,\simeq\,\,  \HC_1(\pA^n) \,\,=\,\,\bigoplus_{i} \HC_1^{(i)}(\pA^n)
  \,\,\simeq \,\, \bigoplus_{i = 1}^n \HH^{2i-1}(\pA^n, \Omega^{\leq i}). 
 \ee
  We consider the projection to the summand
 \be\label{eq:An-dR}
 \HC_1^{(n)}(\pA^n) \,\,=\,\,\HH^{2n-1}(\pA^n, \Omega^{\leq n}) \,\,=\,\,
 \HH^{2n-1}(\pA^n, \Omega^\bullet) \,\,=\,\,\k,
 \ee
 which is the $(2n-1)$-st de Rham cohomology of $\pA^n$ (e.g., for $\k=\CC$, the  $(2n-1)$-st
 topological cohomology of $\CC^n\setminus\{0\} \sim S^{2n-1}$). 
 
\vskip .2cm

The following will be used in Theorem \ref{thm:compar-1}. 
 
 \begin{thm}\label{thm:res-invariant}
 The class in $HC^1(A_n^\bullet)$ given by the projection of \eqref{eq:An-Hodge}
 to \eqref{eq:An-dR}, is the unique, up to scalar, $GL_n$-invariant element in 
  $HC^1(A_n^\bullet)$. In particular, it is proportional 
 to the residue class $\rho$. 
 \end{thm}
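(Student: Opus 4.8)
The plan is to prove that the space of $GL_n$-invariants in $HC^1(A_n^\bullet)$ is one-dimensional; since both the de Rham projection (the class described in the statement) and the residue class $\rho$ are such invariants, proportionality is then automatic. Because $\k$ has characteristic $0$ we have $HC^1(A_n^\bullet) = \Hom_\k(HC_1(A_n^\bullet),\k)$, and every $GL_n$-module occurring is rational and completely reducible. Hence a $GL_n$-invariant functional is determined by its restriction to the trivial isotypic component, and it suffices to show that the trivial isotypic component of $HC_1(A_n^\bullet)$ is one-dimensional and concentrated in the top Hodge summand $\HC_1^{(n)}(\pA^n)$ of \eqref{eq:An-Hodge}. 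The Hodge ($\lambda$-)decomposition is canonical, hence $GL_n$-equivariant, so each summand may be analyzed separately. I will assume $n\geq 2$; the case $n=1$ reduces to the classical residue, the unique invariant cyclic $1$-cocycle on $\k((z))$, via $H^1_{\mathrm{dR}}(\GG_m)=\k$.

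First I would compute the $GL_n$-invariants on the $E_1$-page of the (stupid-filtration) hypercohomology spectral sequence $E_1^{p,q}=H^q(\pA^n,\Omega^p)\Rightarrow \HH^{p+q}(\pA^n,\Omega^{\leq i})$ for $\HC_1^{(i)}=\HH^{2i-1}(\pA^n,\Omega^{\leq i})$. Since $(-)^{GL_n}$ is exact, it commutes with this spectral sequence, so the invariants of the abutment are computed from the invariants of the $E_1$-terms. Using the equivariant splitting $\Omega^p_{\pA^n}\simeq \Oc_{\pA^n}\otimes_\k \Lambda^p(V)$, where $V=\langle z_1,\dots,z_n\rangle$, and the vanishing of $H^q(\pA^n,\Oc)$ for $q\notin\{0,n-1\}$ (Proposition \ref{prop:H^iO}), only two rows survive. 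On the row $q=0$ one has $H^0(\pA^n,\Omega^p)=S(V)\otimes\Lambda^p(V)$ (Hartogs), whose Schur decomposition consists of $\Sigma^\lambda(V)$ with $|\lambda|=p+\deg>0$ except in bidegree $(p,q)=(0,0)$; so the only invariant on this row is the constants. On the row $q=n-1$ one has $H^{n-1}(\pA^n,\Omega^p)=\Lambda^p(V)\otimes H^{n-1}(\pA^n,\Oc)$, and—either from the residue pairing with $H^0(\pA^n,\Omega^n)$ or directly from the weights of $z_1^{-1}\cdots z_n^{-1}\k[z_1^{-1},\dots,z_n^{-1}]$—one identifies $H^{n-1}(\pA^n,\Oc)\simeq \Lambda^n(V^*)\otimes\Sym(V^*)$. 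A short Schur-functor computation then gives $\Lambda^p(V)\otimes\Lambda^n(V^*)\otimes S^j(V^*)\simeq \Lambda^{n-p}(V^*)\otimes S^j(V^*)$, which contains the trivial representation only when $p=n$ and $j=0$. Thus the unique invariant on this row sits at $(p,q)=(n,n-1)$ and is spanned by the class of the Martinelli--Bochner form wedged with $dz_1\wedge\cdots\wedge dz_n$, cf.\ Proposition \ref{prop:MB-H}.

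It then remains to place these two invariants in the Hodge grading. The constant at $(0,0)$ has total degree $0$ and contributes to $\HC_0$, hence is irrelevant to $HC_1$. The invariant at $(n,n-1)$ has $p=n$, so it appears in $\Omega^{\leq i}$ only for $i=n$; its total degree is $2n-1$, so it contributes to $\HC_1^{(n)}(\pA^n)=\HH^{2n-1}(\pA^n,\Omega^\bullet)$, the top de Rham summand \eqref{eq:An-dR}. Because of the truncation $p\leq n$ and the vanishing outside $q\in\{0,n-1\}$, every differential of the spectral sequence into or out of bidegree $(n,n-1)$ has no admissible nonzero source or target, so this invariant is a permanent cycle. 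Consequently $(\HC_1^{(i)})^{GL_n}=0$ for $i<n$ and $(\HC_1^{(n)})^{GL_n}=\k$. Dualizing, $(HC^1(A_n^\bullet))^{GL_n}$ is one-dimensional and spanned by the projection of \eqref{eq:An-Hodge} onto \eqref{eq:An-dR}; since $\rho$ is $GL_n$-invariant by construction, it is proportional to this projection.

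I expect the representation-theoretic bookkeeping to be the main obstacle: pinning down the $GL_n$-module structure $H^{n-1}(\pA^n,\Oc)\simeq \Lambda^n(V^*)\otimes\Sym(V^*)$ (the duality/weight identification) and then confirming that the single surviving invariant is annihilated by no differential of the Hodge-to-de Rham spectral sequence. The exactness of $(-)^{GL_n}$ and the identification $HC^1=(HC_1)^*$ are routine but should be stated explicitly, as they are precisely what convert the homology-level computation into the asserted statement about $HC^1$.
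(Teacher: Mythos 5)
Your proposal is correct and follows essentially the same route as the paper: pass to the Hodge decomposition, run the hypercohomology spectral sequence with $E_1$-terms $H^q(\pA^n,\Omega^p)$, and use exactness of $GL_n$-invariants to reduce to the statement that invariants occur only in $H^0(\pA^n,\Oc)$ and $H^{n-1}(\pA^n,\Omega^n)$, each one-dimensional. The paper states this last representation-theoretic proposition without proof, whereas you additionally supply the Schur-functor computation (via $H^{n-1}(\pA^n,\Oc)\simeq \Lambda^n(V^*)\otimes\operatorname{Sym}(V^*)$), the permanent-cycle check, and the dualization step, all of which are consistent with the paper's argument.
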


\noindent {\sl Proof:} We analyze the $GL_n$-module structure of the $\HH^{2i-1}(\pA^n, \Omega^{\leq i})$.
Applying the spectral sequence starting from $H^p(\pA^n,  \Omega^q)$ and converging to
$\HH^{2i-1}(\pA^n, \Omega^{\leq i})$, we reduce to the following statement.

\begin{prop}\label{prop:Hpq-An}
The vector space $H^p(\pA^n, \Omega^q)$ has no $GL_n$-invariants unless
$p=q=0$ or $p=n-1$, $q=n$, in which case the space of invariants is 1-dimensional. 
\end{prop}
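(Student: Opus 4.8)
The plan is to use that over a field of characteristic $0$ the $GL_n$-invariants functor $M\mapsto M^{GL_n}$ is exact, hence commutes with cohomology: $\bigl(H^p(\pA^n,\Omega^q)\bigr)^{GL_n}$ is the $p$-th cohomology of the invariant subcomplex $\bigl((A_{[n]}^{q,\bullet})^{GL_n},\dbar\bigr)$ of the column of the Jouanolou bicomplex modelling $R\Gamma(\pA^n,\Omega^q)$ (here the first superscript $q$ is the form degree and the second is the $\dbar$-degree). Since $\Omega^q_{\pA^n}\simeq\Oc\otimes_\k\Lambda^q(V)$ equivariantly, we have $A_{[n]}^{q,p}\cong\Lambda^q(V)\otimes_\k A_{[n]}^p$ as $GL_n$-modules. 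First I would compute the invariants termwise: $\bigl(\Lambda^q(V)\otimes A_{[n]}^p\bigr)^{GL_n}\cong\Hom_{GL_n}\bigl((\Lambda^q V)^*,A_{[n]}^p\bigr)$, and since $(\Lambda^q V)^*=\Sigma^{0,\dots,0,-1,\dots,-1}(V)$ (with $q$ entries equal to $-1$), this dimension is the multiplicity of that Schur module in $A_{[n]}^p$, which by the simple-spectrum Theorem~\ref{thm:irr} is $0$ or $1$.

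Next I would read off from the interlacing inequalities in Theorem~\ref{thm:irr} when $\Sigma^{0,\dots,0,-1,\dots,-1}(V)$ occurs in $A_{[n]}^p$; a short check gives precisely $p\in\{q-1,q\}$, intersected with the allowed range $0\le p\le n-1$. Consequently the invariant subcomplex of the full bicomplex $(A_{[n]}^{\bullet\bullet})^{GL_n}$ is one-dimensional exactly at the diagonal spots $(q,p)=(q,q)$ for $0\le q\le n-1$ and the sub-diagonal spots $(q,p)=(q,q-1)$ for $1\le q\le n$, and is zero elsewhere. The crucial bookkeeping point is that these $2n$ lines sit in pairwise distinct total degrees $p+q$, one in each degree $0,1,\dots,2n-1$, so the invariant total complex is a single zig-zag of lines in which $\dbar$ carries each sub-diagonal line to the diagonal line above it and $\del$ carries each diagonal line to the sub-diagonal line to its right.

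The crux is to decide this zig-zag. Here I would invoke that $\Tot(A_{[n]}^{\bullet\bullet})$ computes the algebraic de Rham cohomology of $\pA^n$, which is $\k$ in degrees $0$ and $2n-1$ and $0$ otherwise (over $\CC$, $\pA^n\sim S^{2n-1}$). As $GL_n$ is connected it acts trivially on de Rham cohomology, so by exactness of invariants the one-dimensional total complex $(A_{[n]}^{\bullet\bullet})^{GL_n}$ has the same cohomology $\k$ in degrees $0$ and $2n-1$ only. A complex of lines with cohomology concentrated at its two ends must have its boundary maps zero at the two ends and isomorphisms in between; in particular each connecting map $\dbar\colon (A_{[n]}^{q,q-1})^{GL_n}\to (A_{[n]}^{q,q})^{GL_n}$ with $1\le q\le n-1$ is an isomorphism. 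Therefore, for $1\le q\le n-1$, both $H^{q-1}(\pA^n,\Omega^q)^{GL_n}=\Ker\dbar$ and $H^q(\pA^n,\Omega^q)^{GL_n}=\Coker\dbar$ vanish and no other $p$ contributes, whereas the surviving end lines give exactly $H^0(\pA^n,\Oc)^{GL_n}=\k$ (the case $p=q=0$) and $H^{n-1}(\pA^n,\Omega^n)^{GL_n}=\k$ (the case $p=n-1$, $q=n$), as claimed.

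I expect the last step to be the only real obstacle: the shape of the staircase is formal from Theorem~\ref{thm:irr}, but identifying the connecting differentials requires genuine external input — the de Rham cohomology of $\pA^n$ together with triviality of the connected-group action — because a term-by-term weight count alone cannot decide whether $\dbar$ is zero or an isomorphism on the relevant invariant lines.
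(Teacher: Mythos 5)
The paper never actually proves this proposition: it is the last statement of \S 2.3, left to the reader, the intended argument presumably being the direct one via Proposition \ref{prop:H^iO} and Theorem \ref{thm:irr}. Your route is genuinely different, and it is essentially correct. The exactness of $(-)^{GL_n}$ on rational representations, the equivariant identification $A_{[n]}^{q,p}\simeq \Lambda^q(V)\otimes A_{[n]}^p$, and the multiplicity count are all right: for $\alpha=(0,\dots,0,-1,\dots,-1)$ with $q$ entries equal to $-1$, the interlacing pattern of Theorem \ref{thm:irr}(b) imposes exactly $p\ge q-1$ (the zeros must reach position $n-p-1$) and $p\le q$ (the $-1$'s must start by position $n-p+1$), so $\Lambda^q(V)^*$ occurs in $A_{[n]}^p$ precisely when $q-1\le p\le q$, giving your $2n$-step staircase with one line in each total degree $0,\dots,2n-1$. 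What your approach buys is that you never need the $GL_n$-module structure of the cohomology itself, only of the complex; the price is the external input (algebraic de Rham cohomology of $\pA^n$ plus triviality of the action of a connected group on it). The shorter route the paper seems to intend: $H^p(\pA^n,\Omega^q)\simeq H^p(\pA^n,\Oc)\otimes\Lambda^q(V)$ vanishes unless $p\in\{0,n-1\}$ by Proposition \ref{prop:H^iO}, and the torus characters recorded there determine the $GL_n$-structure of the two survivors, namely $H^0=\bigoplus_{d\ge 0}S^d(V)$ and $H^{n-1}=\bigoplus_{d\ge 0}\Sigma^{-1,\dots,-1,-1-d}(V)$; reading off when $\Lambda^q(V)^*$ appears gives the claim with no de Rham input at all.

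One intermediate assertion of yours is false and must be repaired, although it does not sink the proof. ``A complex of lines with cohomology concentrated at its two ends must have its boundary maps zero at the two ends and isomorphisms in between'' fails as soon as the complex has length at least four (i.e.\ $n\ge 3$): in a complex of one-dimensional spaces two consecutive differentials can never both be isomorphisms, since $d\circ d=0$. The pattern that is actually forced is alternating: $d_0=0$ (because $H^0$ is the whole first line); $d_1$ injective, hence an isomorphism (because $H^1=0$); then $d_2=0$ because $d_2\circ d_1=0$ with $d_1$ surjective; then $d_3$ an isomorphism; and so on, ending with $d_{2n-2}=0$, which is consistent with $H^{2n-1}=\k$. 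Fortunately, in your zig-zag the odd-indexed differentials are precisely the maps $\dbar\colon (A_{[n]}^{q,q-1})^{GL_n}\to (A_{[n]}^{q,q})^{GL_n}$ for $1\le q\le n-1$, and the even-indexed ones are the $\del$'s; so the correct alternating pattern delivers exactly the conclusion you extract (each such $\dbar$ is an isomorphism, hence its kernel and cokernel vanish), and your argument goes through once this step is restated as the induction above.
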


\noindent {\sl Proof:} We have, $GL_n$-equvariantly,
\[
H^p(\pA^n, \Omega^q) \,=\, \Lambda^q(V) \otimes_\k H^p(\pA^n, \Oc),
\]
which vanishes unless $p=0$ or $p=n-1$. 

If $p=0$, then 
\[
H^0(\pA^n,\Oc) = H^0(\AAA^n, \Oc) = \bigoplus_{d\geq 0} S^d(V),
\]
 and
$\Lambda^q(V)\otimes S^d(V)$ has no $GL_n$-invariants unless $q=d=0$.

If $p=n-1$, then Propositions \ref{prop:MB-H} (b) and (c) imply that
\[
\begin{gathered}
H^{n-1}(\pA^n, \Oc) \,=\,\Lambda^n(V^*) \otimes \bigoplus_{d\geq 0} S^d(V^*), \text{  and therefore}
\\
H^{n-1}(\pA^n, \Omega^q)\,=\, \bigoplus_{d\geq 0} \Lambda^q(V)\otimes\Lambda^n(V^*) \otimes S^d(V^*),
\end{gathered}
\] 
which has no invariants unless $q=n$ and $d=0$. \qed

\section{Higher current algebras and their central extensions}\label{sec:hi-curr}

\subsection{Cyclic homology and Lie algebra homology}\label{subsec:cyc-Lie-hom}

\paragraph{A. Reminder on the dg-Lie algebra (co)homology.}
    Let $\len^\bullet$ be a dg-Lie algebra with differential $\dbar$ and bracket
  $c: \Lambda^2\len\to\len$. The {\em Chevalley-Eilenberg cochain complex} of $\len^\bullet$
  is the symmetric algebra
  $\CE^\bullet(\len) = (S^\bullet(\len^\bullet[1]))^*$ equipped with the algebra differential $d_\Lie + D$,
  where:
  \begin{itemize}
  \item $D$ is the algebra differential on the symmetric algebra which extends
  $(\dbar[1])^*$ by the Leibniz rule.
  
 \item  $d_\Lie$ is the algebra differential given on the generators by the map
  $(s \circ c[2])^*: (\len^\bullet [1])^* \to (S^2 (\len^\bullet [1]))^*$
(i.e., extended from the generators by the Leibniz rule). Here we suppressed the
notation for $\dec_2$ from \eqref{eq:dec-n}. 
  \end{itemize}
 
 The cohomology of $\CE^\bullet(\len)$ (equipped with the total grading)
 will be denoted $\HH^\bullet_\Lie(\len)$.
 
In fact, $\CE^\bullet(\len)$ is dual to the {\em Chevalley-Eilenberg chain complex}
$\CE_\bullet (\len) = S^\bullet(\len^\bullet[1])$ whose differential is 
 defined by obvious ``undualizing" of  the formulas above. In other words, it can be
 described in terms of the coalgebra structure on $\CE_\bullet (\len)$.
 We have an increasing exhaustive filtration of $\CE_\bullet(\len)$ by the number of tensor factors
which gives a convergent spectral sequence $H^\Lie_\bullet(H^\bullet_{\dbar} (\len))\Rightarrow H_\bullet^\Lie(\len)$.
This implies that $H^\Lie_\bullet$, and therefore $H^\bullet_\Lie$, are quasi-isomorphism invariant,
in particular, descend to functors on the homotopy category $[\dgLie]$.
 The following is then standard. 
 
 \begin{prop}\label{prop:Lie-central-ext}
  We have a canonical identification
\[
\Hom_{[\dgLie]}(\len, \k[n]) \,\,=\,\, \HH^{n+1}_\Lie(\len). \qed
\]
 \end{prop}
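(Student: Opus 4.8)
The plan is to compute the left-hand side by resolving $\len$ through the Chevalley--Eilenberg/Koszul formalism and matching the outcome with reduced Lie algebra cohomology. The essential feature to exploit is that the target $\k[n]$ is an \emph{abelian} dg-Lie algebra (zero bracket, zero differential, $\k$ placed in cohomological degree $-n$), so that maps into it are governed by the derived abelianization of $\len$ and hence by its chain complex $\CE_\bullet(\len)$.

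First I would reduce the homotopy-category Hom to a computation with $L_\infty$-morphisms. Over a field of characteristic $0$ the inclusion $\dgLie \hookrightarrow L_\infty\text{-alg}$ is a Quillen equivalence and every object is fibrant, so $\Hom_{[\dgLie]}(\len,\k[n])$ is the set of $L_\infty$-morphisms $\len \to \k[n]$ modulo $L_\infty$-homotopy (equivalently, strict maps out of a cofibrant/quasi-free replacement of $\len$, up to homotopy). This is the step I expect to be the main obstacle: it rests on the standard but nontrivial homotopy theory of $L_\infty$-algebras of Hinich/Quillen --- the path-object structure and the equivalence $\mathrm{Ho}(\dgLie)\simeq \mathrm{Ho}(L_\infty)$ --- which I would invoke rather than reprove.

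Next, because $\k[n]$ is abelian, an $L_\infty$-morphism $\len \to \k[n]$ is by definition a morphism of conilpotent cocommutative dg-coalgebras $\CE_\bullet(\len) \to \CE_\bullet(\k[n]) = \Sym(\k[n+1])$, where the target carries the zero codifferential (both the internal differential and the bracket of $\k[n]$ vanish). By cofreeness of $\Sym(\k[n+1])$, such a morphism is the same datum as its corestriction to cogenerators, a degree-$0$ linear map $F\colon \ol{\CE}_\bullet(\len) \to \k[n+1]$ out of the reduced (coaugmentation-coideal) Chevalley--Eilenberg chains; compatibility with the codifferentials then says precisely that $F$ annihilates the image of $d_{\CE}$, i.e.\ that $F$ is a degree-$0$ cocycle in $\Hom_\k(\ol{\CE}_\bullet(\len),\k[n+1])$. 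Tracking the grading, $\Hom_\k^0(\ol{\CE}_\bullet(\len),\k[n+1]) = \ol{\CE}^{\,n+1}(\len)$, so these cocycles are exactly $Z^{n+1}(\ol{\CE}^\bullet(\len))$. Two such $L_\infty$-morphisms into the abelian target are $L_\infty$-homotopic if and only if the associated cocycles differ by a coboundary, whence homotopy classes form $H^{n+1}(\ol{\CE}^\bullet(\len)) = \ol{\HH}^{\,n+1}_\Lie(\len)$; since $n+1\ge 1$ the reduced and full Chevalley--Eilenberg cohomologies coincide (they differ only by the degree-$0$ unit), yielding $\HH^{n+1}_\Lie(\len)$. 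The abelian group structure on $\Hom_{[\dgLie]}(\len,\k[n])$ coming from the abelian target matches the additive structure on cohomology.

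As a conceptual cross-check I would note the derived-abelianization route: the free-Lie/abelianization adjunction gives $\Map_{\dgLie}(\len,V) \simeq \Map_{\dgVect}(\mathbb{L}\mathrm{ab}(\len),V)$ for abelian $V$, with $\mathbb{L}\mathrm{ab}(\len) \simeq \ol{\CE}_\bullet(\len)[-1]$; setting $V=\k[n]$, applying $\pi_0$, and dualizing reproduces $\HH^{n+1}_\Lie(\len)$ with no explicit homotopy analysis. Either way, once the homotopy-theoretic identification of the second step is granted, everything else is the formal cofree-coalgebra-plus-duality bookkeeping described above.
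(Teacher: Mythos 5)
Your proof is correct, and there is nothing in the paper to contrast it with: the paper states this proposition with only the remark that it ``is then standard'' (after establishing, via the tensor-factor filtration, that $\HH^\bullet_\Lie$ descends to $[\dgLie]$), so your argument is exactly a filling-in of the standard fact being invoked. Both your main route (cofibrant replacement by the cobar--bar resolution, cofree-coalgebra corestriction to cogenerators, and the identification of $L_\infty$-homotopies into an abelian target with coboundaries) and your cross-check via the derived abelianization $\mathbb{L}\mathrm{ab}(\len)\simeq \ol{\CE}_\bullet(\len)[-1]$ are the standard argument, with the degree bookkeeping ($\k[n]$ in degree $-n$, cogenerators in degree $-(n{+}1)$, hence $\HH^{n+1}_\Lie$) carried out correctly.
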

 
 In particular, $\HH^2_\Lie(\len)$ classifies central extensions of $\len$. 
 
 \paragraph{B. Loday's homomorphism $\theta$.}
 
 Let $A$ be an associative dg-algebra (possibly without unit). The graded commutator
\[
[a,b] \,\,=\,\, ab - (-1)^{\deg(a)\cdot \deg(b)} ba
\]
makes $A$ into a dg-Lie algebra which we denote $A_\Lie$. If $A$ is ungraded, then
\[
 H_1^\Lie(A_\Lie) \,\,=\,\,           HC_0(A) \,\,=\,\,          A/[A,A],
\]
Extending \cite{loday} (10.2.3) straightforwardly  from the ungraded case, we include this in the
 following.

\begin{prop}\label{prop:theta}
For any associative dg-algebra $A$ there is a natural morphism of complexes
\[
\begin{gathered}
\theta^A: \CE_{\bullet+1}(A_\Lie) \lra C_\bullet^\lambda (A),
\\
a_0\wedge \cdots \wedge a_n \,\,\mapsto \,\, \sum_{\sigma\in S_{n}} \on{sgn}(\sigma)
[(Id,\sigma)^* (a_0\otimes\cdots\otimes a_n)].
\end{gathered} 
\]
Here, for $\sigma\in S_n$, we denote  by  $(Id,\sigma)\in S_{n+1}$ the permutation of $\{0,1,\dots, n\}$
fixing $0$ and acting on $1,2,\dots, n$ as $\sigma$. The notation $[x]$ means the class of $x$ in the coinvariant
space of $\ZZ/(n+1)$.  In particular, this gives natural maps 
 \[
\theta^A = \theta_i^A: \HH_{i+1}^\Lie (A_\Lie) \to HC_i(A).
\]
 \end{prop}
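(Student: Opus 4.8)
The plan is to take the displayed formula as the definition of $\theta^A$ on generators $a_0\wedge\cdots\wedge a_n\in \CE_{n+1}(A_\Lie)=S^{n+1}(A_\Lie[1])$ and to prove two things: that it descends to a well-defined map into the cyclic coinvariant space $A^{\otimes(n+1)}_{\ZZ/(n+1)}$, and that it intertwines the total differentials. Since both complexes carry a structural and an internal differential which are bihomogeneous for the evident bigrading, I would split the total differential on the source as $d_{\Lie}+D$ (with $D$ induced by $\dbar$ and $d_{\Lie}$ the contraction by the graded bracket of $A_\Lie$) and on the target as $b+b_{\dbar}$ (the Hochschild boundary descended to coinvariants, plus the differential induced termwise by $\dbar$), and then check the two compatibilities $\theta^A D=b_{\dbar}\,\theta^A$ and $\theta^A d_{\Lie}=b\,\theta^A$ separately.

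First I would establish well-definedness. The source is graded-antisymmetric in all $n+1$ arguments (because of the $[1]$-shift), so $\theta^A$ must be insensitive, up to the Koszul sign, to every graded transposition of the $a_i$. The antisymmetrization over $\sigma\in S_n$, weighted by $\on{sgn}(\sigma)$, already produces the correct sign for any permutation fixing the distinguished slot of $a_0$. To exchange $a_0$ with another argument I would invoke the passage to $\ZZ/(n+1)$-coinvariants on the target: a cyclic rotation moving a chosen factor into the $0$-slot is identified with the original chain, and combining this with the antisymmetrization realizes the full graded symmetry required on the source. Since graded transpositions of adjacent factors generate everything, this suffices.

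Compatibility with the internal differentials is the easy half. As $\dbar$ is a graded derivation of $A$, the operator $D$ acts on $a_0\wedge\cdots\wedge a_n$ by the signed Leibniz sum over the factors, and $\theta^A$ transports this verbatim to the termwise action $b_{\dbar}$ on $A^{\otimes(n+1)}$; the only point is that the Koszul signs incurred when commuting $\dbar$ past the preceding arguments match those already present in the permutation sum, which holds by construction.

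The heart of the argument, and the step I expect to be the main obstacle, is the structural identity $\theta^A d_{\Lie}=b\,\theta^A$. Here $d_{\Lie}$ replaces a pair $(a_i,a_j)$ by the graded commutator $[a_i,a_j]=a_ia_j-(-1)^{\deg(a_i)\deg(a_j)}a_ja_i$, whereas $b$ multiplies \emph{adjacent} tensor slots with alternating signs, the top term folding $a_na_0$ around through the cyclic structure. The computation is the graded analogue of Loday \cite{loday} (10.2.3): after antisymmetrizing, each product $a_ia_{i+1}$ appearing in $b\,\theta^A$ is paired with the product $a_{i+1}a_i$ coming from the transposed permutation, and in the cyclic coinvariant space these two assemble exactly into a bracket term of $\theta^A d_{\Lie}$. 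The entire difficulty is bookkeeping: one must check that the Koszul signs produced by the graded commutator, the signatures $\on{sgn}(\sigma)$, and the cyclic identification conspire to reproduce Loday's ungraded matching, with no new contributions. Granting this, $\theta^A$ is a morphism of complexes, and passing to homology yields the natural maps $\theta^A_i\colon \HH_{i+1}^\Lie(A_\Lie)\to HC_i(A)$.
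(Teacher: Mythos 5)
Your proposal is correct and takes essentially the same route as the paper: the paper offers no proof at all, merely asserting that Loday's (10.2.3) extends ``straightforwardly'' from the ungraded case, and your outline---well-definedness via antisymmetrization over $S_n$ combined with the cyclic identification in the target, then checking $\theta^A D = b_{\dbar}\,\theta^A$ and $\theta^A d_\Lie = b\,\theta^A$ separately, with the commutator terms of $d_\Lie$ assembling from adjacent products in $b$ after passing to coinvariants---is precisely that extension. The Koszul-sign bookkeeping you defer is exactly what the paper also leaves implicit.
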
 

\subsection{Higher current algebras and their central extensions}\label{subsec:hi-cur}

 Let $\gen$ be a finite-dimensional reductive Lie algebra over $\k$. We consider the
  dg-Lie algebra
  \[
  \gen_n^\bullet \,\,=\,\, \gen\otimes_\k A^\bullet_n \,\,\simeq \,\, R\Gamma(\pA^n,\gen \otimes_\k\Oc). 
  \]
  We call $\gen_n^\bullet$ the $n$th {\em derived current algebra} associated to $\gen$.
  For $n=1$ we get the Laurent polynomial algebra $\gen_1=\gen[z, z^{-1}]$.

  \vskip .2cm
  
  Let $P\in S^{n+1}(\gen^*)^\gen$ be an invariant polynomial on $\gen$, homogeneous of degree
  $(n+1)$. We will also consider $P$ as a symmetric $(n+1)$-linear form
  $(x_0,\dots, x_n)\mapsto P(x_0, \dots, x_n)$ on $\gen$. 
 
  \begin{thm}\label{thm:gamma-P}
  (a) Consider the functional $\gamma_P: (\gen_n^\bullet[1])^{\otimes (n+1)}\to \k$ given by 
  \[
  \gamma_P \bigl((x_0\otimes f_0) \otimes (x_2\otimes f_1)\otimes \cdots \otimes(x_n\otimes f_n)\bigr)  \,=\,
 P(x_0, \dots, x_n) \cdot  \Res\bigl( f_0 \cdot \del f_1 \wedge \cdots\wedge \del f_n\bigr). 
  \]
  Here $x_\nu\in\gen$, $f_\nu\in A_n^\bullet$, $\del$ is the degree $(1,0)$ differential in 
  $A_n^{\bullet\bullet}\supset A_n^\bullet$,
  and 
   we consider $\Res$ as a functional on the entire $A_n^{\bullet\bullet}$ vanishing on all $A_n^{p,q}$
  with $(p,q)\neq (n, n-1)$. 
  
  This functional is of total degree 2, is symmetric and is annihilated by both differentials $d_\Lie$ and $D$.
  Therefore it is a  cocycle in $\CE^2(\gen_n^\bullet)$ and 
   defines a class $[\gamma_P]\in\HH^2_\Lie(\gen_n^\bullet)$. 
   
   \vskip .2cm
   (b) Assume $\gen$ semisimple. 
   The correspondence $P\mapsto [\gamma_P]$ given an embedding
   $S^{n+1}(\gen^*)^\gen \subset \HH^2_\Lie(\gen^\bullet_n)$. 
   \end{thm}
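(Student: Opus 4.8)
The plan is to establish part (a) by verifying the three cocycle conditions one at a time, isolating the ``$A_n^\bullet$-part'' (controlled by the residue) from the ``$\gen$-part'' (controlled by the invariance of $P$), and then to prove the injectivity in part (b) by pairing $[\gamma_P]$ against a single explicit cycle built from the Martinelli--Bochner form and the Cartan subalgebra.

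For (a) I first record the degree count. Writing $|f|=\deg f$, the form $f_0\,\partial f_1\wedge\cdots\wedge\partial f_n$ lives in bidegree $(n,\sum_i|f_i|)$, so $\Res$ is nonzero on it only when $\sum_{i=0}^n|f_i|=n-1$; since each $x_i\otimes f_i\in\gen_n^\bullet[1]$ sits in internal degree $|f_i|-1$, the element $(x_0\otimes f_0)\cdots(x_n\otimes f_n)$ has degree $(n-1)-(n+1)=-2$ in $S^\bullet(\gen_n^\bullet[1])$, whence $\gamma_P$ has total degree $2$. For the graded symmetry it suffices to check invariance under a generating set of $S_{n+1}$. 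Swapping two differentiated factors $i,j\ge 1$ reorders $\partial f_i\wedge\partial f_j$ and produces exactly the Koszul sign $(-1)^{(1+|f_i|)(1+|f_j|)}=(-1)^{(|f_i|-1)(|f_j|-1)}$ demanded by the symmetric algebra on the shift, while $P$ is unchanged; the cyclic permutation is precisely the cyclic symmetry of the residue cocycle $r(f_0,\dots,f_n)=\Res(f_0\,\partial f_1\cdots\partial f_n)$ established in \S\ref{subsec:res-class}, again combined with the full symmetry of $P$. These transpositions and the cyclic permutation generate $S_{n+1}$, giving the symmetry.

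Next I treat the two differentials. Closedness under $D$ is immediate: replacing one $f_i$ by $\dbar f_i$ and summing via the Leibniz rule gives $\pm\Res\bigl(\dbar(f_0\,\partial f_1\cdots\partial f_n)\bigr)=0$, since $\Res$ annihilates $\dbar$-exact top forms (Proposition \ref{prop:res-d=0}). The crux is closedness under $d_\Lie$. Evaluated on $n+2$ arguments, $d_\Lie\gamma_P$ is a sum of terms in which two factors are bracketed, $[x_i\otimes f_i,\,x_j\otimes f_j]=[x_i,x_j]\otimes f_if_j$, yielding $P([x_i,x_j],\dots)\cdot\Res(\cdots\partial(f_if_j)\cdots)$. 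Expanding $\partial(f_if_j)$ by Leibniz, integrating by parts against the remaining $\partial f_k$ via $\Res\circ\partial=0$ (again Proposition \ref{prop:res-d=0}), and then collecting the result by the invariance identity $\sum_k P(\dots,[y,x_k],\dots)=0$ makes everything cancel. This is the one genuinely intricate computation and I expect it to be the main obstacle; conceptually it expresses that pairing the $\gen$-invariant form $P$ against the cyclic $1$-cocycle $\rho$ on $A_n^\bullet$ produces a Lie cocycle, so one may alternatively deduce it from the cyclic-cocycle property of $r$ together with invariance of $P$ via Loday's map $\theta$ of Proposition \ref{prop:theta}.

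For (b), fix a Cartan subalgebra $\hen\subset\gen$ and elements $x_0,\dots,x_n\in\hen$, and consider the chain
\[
c \,=\, (x_0\otimes\Omega)\wedge (x_1\otimes z_1)\wedge\cdots\wedge(x_n\otimes z_n)\,\in\,\CE_\bullet(\gen_n^\bullet),
\]
where $\Omega=\Omega(z,z^*)\in A_n^{n-1}$ is the Martinelli--Bochner form. Since $A_n^\bullet$ is concentrated in degrees $[0,n-1]$, the top-degree element $\Omega$ is automatically $\dbar$-closed, and each coordinate function $z_i\in A_n^0$ is $\dbar$-closed, so $Dc=0$; as the $x_i$ lie in the abelian subalgebra $\hen$ every bracket $[x_i,x_j]$ vanishes, so $d_\Lie c=0$. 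Thus $c$ is a cycle, and using $\partial z_i=dz_i$ with the normalization $\Res(\Omega\,dz_1\wedge\cdots\wedge dz_n)=1$ of \eqref{eq:res-taut} we obtain
\[
\langle[\gamma_P],[c]\rangle \,=\, \gamma_P(c)\,=\, P(x_0,\dots,x_n).
\]
Hence $[\gamma_P]=0$ forces $P(x_0,\dots,x_n)=0$ for all $x_0,\dots,x_n\in\hen$, so in particular $x\mapsto P(x,\dots,x)$ vanishes on $\hen$. Since every regular semisimple element of $\gen$ is conjugate to an element of $\hen$ and $P$ is $\gen$-invariant, $P$ then vanishes on a dense subset of $\gen$ and is therefore $0$. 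This shows $P\mapsto[\gamma_P]$ is injective, completing part (b).
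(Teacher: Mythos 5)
Part (a) of your proposal has a genuine gap. Your degree count, the graded symmetry (via $\Res\circ\del=0$, Proposition \ref{prop:res-d=0}), and $D$-closedness are fine, but the crux --- $d_\Lie\gamma_P=0$ for an \emph{arbitrary} invariant $P$ --- is exactly the step you defer, and neither of your two suggested routes is complete. The direct route (Leibniz for $\del$, integration by parts against $\Res\circ\del=0$, then the invariance identity $\sum_k P(\dots,[y,x_k],\dots)=0$) is the right strategy and can in principle be pushed through, but it is a delicate sign-heavy graded computation that you explicitly do not perform. Your fallback via Loday's map does not apply as stated: $\theta^A$ in Proposition \ref{prop:theta} is defined for an associative dg-algebra $A$ and its Lie algebra $A_\Lie$, so pulling back $\theta^*\tr^*\rho$ produces Chevalley--Eilenberg cocycles only on $\gl_r(A_n^\bullet)$, hence, after restriction along a representation $\phi\colon\gen\to\gl_r$, it only handles invariants of the special form $P_\phi(x)=\tr(\phi(x)^{n+1})$. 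The missing idea --- which is the heart of the paper's proof of (a) --- is Lemma \ref{lem:repres-1}: every homogeneous $P\in S^{n+1}(\gen^*)^\gen$ is a linear combination of such trace polynomials (proved via the exponential map and density of characters in $\wh\Oc_{G,1}^\gen$); combined with the linearity of $P\mapsto\gamma_P$, this makes the formal Loday-map argument cover all $P$ with no computation at all. Without either the full computation or this reduction lemma, (a) is unproven.

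Part (b), by contrast, is correct and takes a genuinely different route from the paper. The paper runs the spectral sequence of the bicomplex $\CE^{\bullet\bullet}(\gen_n^\bullet)$: the permanent cycle $(\gamma_P)\in E_1^{-n+1,n+1}$ could only be killed by a differential emanating from $E^{0,1}$, and $E_2^{0,1}=H^1_\Lie(\gen[z])=\bigl(\gen[z]/[\gen[z],\gen[z]]\bigr)^*$ vanishes for $\gen$ semisimple. You instead pair $[\gamma_P]$ against the explicit cycle $c=(x_0\otimes\Omega)\wedge(x_1\otimes z_1)\wedge\cdots\wedge(x_n\otimes z_n)$ with $x_i\in\hen$: the checks $Dc=0$ and $d_\Lie c=0$ are right, and $\gamma_P(c)=P(x_0,\dots,x_n)$ by the normalization \eqref{eq:res-taut} (note that this evaluation quietly uses the symmetry of $\gamma_P$ from (a) to handle the permutations that put $\Omega$ in a differentiated slot, since $\del\Omega\neq 0$; so (b) depends on (a)). This is more elementary than the spectral sequence, and it proves slightly more: it uses only a Cartan subalgebra and density of semisimple orbits, hence gives injectivity for any reductive $\gen$ (consistent with the Heisenberg example for $\gl_1$), whereas the vanishing of $E_2^{0,1}$ genuinely requires $[\gen,\gen]=\gen$. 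Of course, as a proof of the theorem your (b) is only as solid as your (a), which it cites.
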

   
   The cocyles constructed in  theorem \ref{thm:gamma-P}, will be used in the local Riemann-Roch theorem 
   (Corollary \ref{cor:localRR}). 
   
   \begin{ex}
   let $\gen = \gen\len_r$ and $P_{\tr} (x)=\tr(x^{n+1})$ or, in the polarized form,
   \[
   P_{\tr} (x_0,\dots, x_n) \,\,=\,\,  {1\over (n+1)!} \sum_{s\in S_{n+1}} \tr(x_{s(0)}\cdots x_{s(n)}).
   \]
   In this case $\gamma_{P_{\tr}}$ is the image of the residue cocycle $\rho\in C^1_\lambda(A_n^\bullet)$ 
   under the
   composite map
   \[
   C^1_\lambda (A_n^\bullet) \buildrel \tr^*\over\lra C^1_\lambda(\gen\len_r(A_n^\bullet))
   \buildrel \theta_{A_n^\bullet}\over\lra \CE^2 (\gen\len_r(A_n^\bullet)),
   \]
   where $\tr^*$ is dual to the trace map  $\tr_{A_n^\bullet, n}$ from
   Proposition \ref{prop:loday-trace}, and $\theta_{A_n^\bullet}$ is dual to the
   map $\theta^{A_n^\bullet}$ from Proposition \ref{prop:theta}. In particular, we see
   that $\gamma_{P_{\tr}}$ is a Chevalley-Eilenberg cocycle satisfying all the conditions
   of part (a) of  Theorem \ref{thm:gamma-P}. 
    \end{ex} 
    
    \noindent{\sl Proof of Theorem \ref{thm:gamma-P}:} 
    (a) Let $\phi: \gen\to\gen\len_r$ be a representation
    of $\gen$. Take $P_{\phi, n} (x) =\tr(\phi(x)^{n+1})$. In this case $\gamma_{P_{\phi, n}}$ is induced from
    $\gamma_{P_{\tr}}\in \CE^2 (\gen\len_r(A_n^\bullet))$ by $\phi$ and therefore 
    satisfies the conditions of (a).  Further, notice that $\gamma_P$ depends on   $P$
    in a linear way. Now, the statement follows from the next lemma, to be proved further below. 
    
     \begin{lem}\label{lem:repres-1}
     Fix  $m\geq 0$. 
  Any  $P\in S^{m}(\gen^*)^\gen$ is a linear combination of polynomials
     of the form $P_{\phi,m}: x\mapsto \tr(\phi(x)^m)$. 
    \end{lem}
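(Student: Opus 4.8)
The plan is to pass to a Cartan subalgebra and reduce to a classical assertion about power sums of weights. Fix a Cartan subalgebra $\hen \subset \gen$ with Weyl group $W$. By the Chevalley restriction theorem (valid for reductive $\gen$ in characteristic $0$), restriction of functions gives an isomorphism of graded algebras $\Sym^\bullet(\gen^*)^\gen \xrightarrow{\sim} \Sym^\bullet(\hen^*)^W$, identifying $S^m(\gen^*)^\gen$ with $(S^m\hen^*)^W$. Since $P_{\phi,m}(x)=\tr(\phi(x)^m)$ is manifestly $\mathrm{ad}$-invariant, it suffices to show that the restrictions to $\hen$ of the $P_{\phi,m}$ span $(S^m\hen^*)^W$. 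If $\phi$ has weights $\mu$ with multiplicities $n_\mu$, then for $h\in\hen$ one has $\tr(\phi(h)^m)=\sum_\mu n_\mu\,\mu(h)^m$, so that $P_{\phi,m}|_\hen=\sum_\mu n_\mu\,\mu^m\in(S^m\hen^*)^W$, where the multiset $\{(\mu,n_\mu)\}$ is $W$-invariant. I may assume $\k$ algebraically closed, since the spanning assertion is unaffected by extension of scalars and the relevant representations become defined over $\k$ after such an extension.

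The first half of the argument (call it Claim A) is that the orbit power sums span all $W$-invariants. For a weight $\nu$ write $q_\nu=\sum_{\mu\in W\nu}\mu^m\in(S^m\hen^*)^W$. In characteristic $0$ the powers $\ell^m$ of linear forms $\ell\in\hen^*$ span $S^m\hen^*$, so their averages $\sum_{w\in W}(w\ell)^m$ span $(S^m\hen^*)^W$. The assignment $\ell\mapsto\sum_w(w\ell)^m$ is a polynomial map $\hen^*\to(S^m\hen^*)^W$, and the weight lattice $\Lambda$ is Zariski dense in $\hen^*$ (it spans over $\QQ$, hence over $\k$); since the locus where a polynomial map lands in a fixed subspace is Zariski closed, the span of its values on $\Lambda$ already equals the span of all its values. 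As $\sum_w(w\ell)^m$ is a nonzero multiple of $q_\ell$, the orbit power sums $q_\nu$, indexed by dominant weights $\nu$, span $(S^m\hen^*)^W$.

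The second half (Claim B) uses highest weight theory to produce each $q_\lambda$ from the functions actually available. For $\lambda$ dominant let $V_\lambda$ be the irreducible representation of highest weight $\lambda$; grouping its weights into $W$-orbits and using that weight multiplicities $m_\lambda(\nu)=\dim V_\lambda[\nu]$ are $W$-invariant gives
\[
P_{V_\lambda,m}\big|_\hen \;=\; \sum_{\substack{\nu\ \mathrm{dominant}\\ \nu\preceq\lambda}} m_\lambda(\nu)\,q_\nu ,
\]
where the sum is over the finite set of dominant weights $\preceq\lambda$ in the dominance order and $m_\lambda(\lambda)=1$. Thus the transition matrix from $\{q_\nu\}$ to $\{P_{V_\lambda,m}|_\hen\}$ is unitriangular for any total order refining $\preceq$, hence invertible; inverting it expresses every $q_\lambda$ as a finite linear combination of the $P_{V_\nu,m}|_\hen$ with $\nu\preceq\lambda$. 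Combined with Claim A and the Chevalley isomorphism, this shows the $P_{\phi,m}$ span $S^m(\gen^*)^\gen$, proving the lemma.

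The heart of the matter is the interplay of these two spanning statements: Claim A, which exhausts the $W$-invariants by orbit power sums and rests on the characteristic $0$ fact that powers of linear forms span symmetric powers together with the density argument needed to restrict to lattice weights; and Claim B, which converts the ``full character'' power sums that representations genuinely produce into single-orbit power sums. I expect the main technical point to watch to be the bookkeeping around Claim B — verifying the finiteness of the set of dominant weights below a given $\lambda$ so that the unitriangular inversion is legitimate — together with the routine but necessary reduction to $\ol{\k}$.
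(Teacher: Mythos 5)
Your core argument is correct and takes a genuinely different route from the paper's. The paper never touches a Cartan subalgebra: it packages all the $P_{\phi,m}$ into the generating series $Q_\phi(x)=\tr(e^{\phi(x)})$, chooses a reductive group $G$ with $\Lie(G)=\gen$, uses $\exp$ to identify $\wh S^\bullet(\gen^*)$ with the completed local ring $\wh\Oc_{G,1}$ equivariantly, and then quotes two facts: $\k[G]$ is dense in $\wh\Oc_{G,1}$, and characters $\tr(\Phi)$ of algebraic representations span $\k[G]^\gen$. Your proof stays at the Lie-algebra level and is degree-by-degree explicit: Chevalley restriction, Claim A (polarization plus Zariski density of the weights, which correctly handles the possible linear dependencies among the $q_\nu$ since you only need the formal unitriangular inversion), and Claim B (weight multiplicities are unitriangular in dominance order, and the set of dominant $\nu\preceq\lambda$ is indeed finite). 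What the paper's route buys is brevity, at the cost of importing the group-level statement about characters spanning class functions; what yours buys is a self-contained weight-combinatorial argument identifying exactly which representations $V_\nu$, $\nu\preceq\lambda$, are needed.

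The one genuine gap is your reduction to $\k=\ol{\k}$. The sentence ``the spanning assertion is unaffected by extension of scalars'' is not merely unjustified — it is false, because extending scalars enlarges the supply of representations, and the lemma genuinely depends on the $\k$-form of $\gen$. Concretely, take $\gen=\mathfrak{su}(3)$ over $\k=\RR$. Every finite-dimensional real representation $\phi$ integrates to the simply connected compact group $SU(3)$ and hence carries an invariant inner product, so each $\phi(x)$ is skew-symmetric and $\tr(\phi(x)^m)=0$ identically for all odd $m$; yet $S^3(\gen^*)^\gen\neq 0$, since it contains the real cubic Casimir $x\mapsto i\,\tr(x^3)$ (here $\tr$ is taken in the defining representation, and this function is real-valued and nonzero on anti-Hermitian matrices). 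So the statement itself fails for this non-split form, and no descent argument can close the gap you left open: your proof is valid precisely when $\gen$ is split over $\k$ (in particular when $\k=\ol{\k}$), and the reduction sentence should simply be replaced by that hypothesis. In fairness, the paper's own proof carries the same unstated restriction — its claim that characters of $\k$-representations span $\k[G]^\gen$ fails for $G=SU(3)$ over $\RR$, where $g\mapsto \operatorname{Im}\tr(g)$ is a class function not in the span of characters of real representations — so your argument is sound in exactly the same generality as the paper's.
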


(b) Note    that  $\CE^\bullet(\gen_n^\bullet)$ is the total complex of a bicomplex $\CE^{\bullet\bullet}(\gen_n^\bullet)$
   with
  \[
   \CE^{p q}(\gen_n^\bullet) \,=\, \Hom^p_\k(\Lambda^q \gen_n^\bullet, \k),
  \]
  and with differentials $d_\Lie$ and $D$. We consider the corresponding spectral sequence 
   \be\label{eq:SS}
  E_1^{pq} \,=\, H^{p,q}_D \bigl(  \CE^{\bullet \bullet }(\gen_n^\bullet )\bigr) \,=\, \Hom^p_\k
  \bigl(\Lambda^q H^\bullet_{\dbar}(\gen^\bullet_n), \k\bigr) 
  \, \Rightarrow \, \HH^{p+q}_\Lie(\gen_n^\bullet). 
  \ee
  Since $\gen_n^\bullet$ has $\dbar$-cohomology only in degrees $0$ and $(n-1)$, 
  the spectral sequence is supported in the fourth quadrant,  
  on  the horizontal lines
  \[
  q\geq 0, \,\, p=l(1-n), \, l \geq 0. 
  \]
  Since $\gamma_P$ is annihilated by both differentials, it gives a permanent cycle in $E_1^{-n+1, n+1}$, denote it
  $(\gamma_P)$. For the class $[\gamma_P]$ in $\HH^2_\Lie(\gen_n^\bullet)$ to be zero,
  $(\gamma_P)$ must be killed by some differential of the spectral sequence. From the shape of it,
  the only possible such differential is
  $
  d_n: E_n^{0,1} \to E_n^{-n+1, n+1}
  $.  
  But, denoting $\gen[z]=\gen[z_1, \dots, z_n]$, we have
  \[
  E_2^{0,1} \,=\, H^1_\Lie (H^0_{\dbar}(\gen_n^\bullet)) \,=\, H^1_\Lie (\gen[z])\, = \, \bigl( \gen[z] \bigl/ \bigl[ \gen[z],\gen[z]\bigr]
  \bigr)^*  
  \]
  and this vanishes for a semi-simple $\gen$. Therefore $E_n^{0,1}=0$ and $(\gamma_P)$ cannot be killed. 
  Theorem \ref {thm:gamma-P} is proved. 
  
  \vskip .2cm

       \noindent {\sl Proof of the lemma \ref{lem:repres-1}:} Consider the completion $\wh S^\bullet(\gen^*)$, i.e.,
    the ring of formal power series on $\gen$ near $0$, with its natural adic topology. 
    To any representation $\phi$ of $\gen$ we associate the invariant series
    \[
    Q_\phi(x) \,\, = \,\, \tr(e^{\phi(x)}) \,\,=\,\,\sum_{m=0}^\infty {1\over m!} P_{\phi, m}(x)
    \,\,\in \,\, \wh S^\bullet(\gen^*)^\gen. 
    \]
    By separating the series into homogeneous components, the lemma is equivalent to
    the following statement.
    
\begin{lem}\label{lem:repres-2}
The $\k$-linear space spanned by series $Q_\phi(x)$ for finite dimensional representations $\phi$ of $\gen$, is dense in $\wh S^\bullet(\gen^*)^\gen$. 
\end{lem}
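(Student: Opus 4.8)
The plan is to restrict everything to a Cartan subalgebra and then to exploit the fact that the span of characters is stable under the integer dilations $\lambda\mapsto k\lambda$ of dominant weights. First I would fix a Cartan subalgebra $\hen\subset\gen$ with Weyl group $W$ and invoke the Chevalley restriction theorem, which gives a graded isomorphism $S^\bullet(\gen^*)^\gen\xrightarrow{\ \sim\ }S^\bullet(\hen^*)^W$ and hence, after completion, an isomorphism $\wh S(\gen^*)^\gen\simeq\wh S(\hen^*)^W$. Under this restriction $Q_\phi|_{\hen}$ becomes the expanded formal character $\chi_\phi=\sum_\mu m_\mu(\phi)\,e^{\mu}$, where $\mu$ runs over the weights of $\phi$; so the space $L:=\mathrm{span}_\k\{Q_\phi\}$ is identified with the span of formal characters, and it suffices to prove that $L$ is dense in $\wh S(\hen^*)^W$ for the $\men$-adic (equivalently, product) topology. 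Passing from irreducible characters to orbit sums by the usual unitriangular change of basis, $L$ contains every orbit sum $S(\lambda)=\sum_{\mu\in W\lambda}e^\mu$ with $\lambda$ dominant. The observation that makes the argument work is the exact dilation identity: for $k\in\ZZ_{>0}$ the weight $k\lambda$ is again dominant, $S(k\lambda)\in L$, and the degree-$d$ homogeneous component of $S(k\lambda)$ equals $k^d\,s_d(\lambda)$, where $s_d(\lambda)=\tfrac1{d!}\sum_{\mu\in W\lambda}\mu(\cdot)^d$ is the degree-$d$ component of $S(\lambda)$.

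With this in hand I would upgrade a degreewise spanning statement to genuine density by a Vandermonde interpolation. Fix $N$. Since the matrix $(k^d)_{1\le k\le N+1,\,0\le d\le N}$ is a Vandermonde matrix in the distinct nodes $1,\dots,N+1$, for each $d\le N$ there are scalars $c_1,\dots,c_{N+1}$ with $\sum_k c_k k^j=\delta_{jd}$ for all $0\le j\le N$. Then $w_d(\lambda):=\sum_{k=1}^{N+1}c_k\,S(k\lambda)\in L$ satisfies $w_d(\lambda)\equiv s_d(\lambda)\pmod{\men^{N+1}}$ and has vanishing components in every degree $j\le N$ with $j\ne d$. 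Separately, for each fixed $d$ the elements $\{s_d(\lambda):\lambda\in P^+\}$ span $S^d(\hen^*)^W$: writing $s_d(\lambda)=\tfrac1{d!}\mathrm{Av}_W(\ell_\lambda^{\,d})$ with $\ell_\lambda=\langle\lambda,\cdot\rangle$, one uses that $d$-th powers of linear forms span $S^d(\hen^*)$ in characteristic $0$, that the Weyl averaging $\mathrm{Av}_W$ surjects onto the $W$-invariants, and that $P^+$ is Zariski dense in $\hen^*$ (a polynomial in $\lambda$ vanishing on the full-dimensional lattice cone $P^+$ vanishes identically). Assembling: given a target $f=\sum_d f_d\in\wh S(\hen^*)^W$ and an $N$, I express each $f_d$ for $d\le N$ as a finite combination of the $s_d(\lambda)$ and replace each $s_d(\lambda)$ by the corresponding $w_d(\lambda)$; the resulting element of $L$ agrees with $f$ in all degrees $\le N$. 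As $N$ is arbitrary, $L$ is dense, which is the assertion of the lemma.

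The genuinely substantive point — the one that the phrase ``separating the series into homogeneous components'' quietly papers over — is precisely this last upgrade from degreewise surjectivity to density: the $Q_\phi$ are inhomogeneous, so knowing that their degree-$d$ parts span $S^d(\hen^*)^W$ for every $d$ does not by itself produce elements of $L$ concentrated in a single degree. The dilation identity $S(k\lambda)\mapsto k^d s_d(\lambda)$ together with the Vandermonde solve is exactly what manufactures, inside $L$, elements with a prescribed degree-$d$ leading term and no lower-degree contamination up to degree $N$, and this is where I expect the only real care to be needed. The remaining ingredients (Chevalley restriction, the unitriangularity relating characters and orbit sums, and the spanning of $S^d(\hen^*)$ by powers of linear forms) are standard. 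The same argument applies verbatim when $\gen$ is merely reductive: the center only contributes additional degree-one invariants, which the powers-of-linear-forms step already accommodates.
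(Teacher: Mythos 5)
Your proof is correct, and it takes a genuinely different route from the paper's. The paper argues globally: choosing a reductive algebraic group $G$ with $\Lie(G)=\gen$, it uses $\exp^*$ to identify $\wh{S}^\bullet(\gen^*)$ with $\wh{\Oc}_{G,1}$ compatibly with the adjoint actions, quotes the density of $\k[G]$ in $\wh{\Oc}_{G,1}$ together with the classical fact that the characters $\tr(\Phi)$ span $\k[G]^\gen$, and concludes that the span of the $Q_\phi=\exp^*\tr(\Phi)$ is dense in $\wh{\Oc}_{G,1}^\gen\simeq\wh{S}^\bullet(\gen^*)^\gen$; the (unstated) passage from density of $\k[G]$ to density of its invariant part is where linear reductivity enters. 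Your argument never leaves the Lie algebra: Chevalley restriction to $(\hen,W)$, the unitriangular passage from irreducible characters to orbit sums $S(\lambda)$, and then the dilation identity (the degree-$d$ part of $S(k\lambda)$ equals $k^d s_d(\lambda)$) combined with Vandermonde interpolation at the nodes $k=1,\dots,N+1$ to manufacture elements of $L$ that agree with $s_d(\lambda)$ modulo $\men^{N+1}$ and have no other components in degrees $\le N$. You correctly single out this step as the substantive one: it is what upgrades the degreewise statement (that the averaged powers $s_d(\lambda)$, $\lambda\in P^+$, span $S^d(\hen^*)^W$, which you justify properly via polarization, surjectivity of Weyl averaging, and Zariski density of $P^+$) to genuine adic density; in the paper this upgrade is outsourced to the group-level density of $\k[G]$ in $\wh{\Oc}_{G,1}$. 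What the paper's route buys is brevity and freedom from weight combinatorics; what yours buys is an effective, self-contained approximation scheme --- explicit finite $\k$-combinations of orbit sums of the dilates $k\lambda$ matching a given invariant to any prescribed order --- requiring no algebraic group and no exponential map. Two small points of hygiene, neither of which damages the argument: for merely reductive $\gen$ the center can act non-semisimply in a finite-dimensional representation, but $Q_\phi=\tr(e^{\phi(x)})$ depends only on the semisimplification of $\phi$, so your identification of $Q_\phi|_{\hen}$ with the expanded formal character stands; and, like the paper's own proof, yours tacitly works in the split situation (weights lie in $\hen^*$), which is the intended setting.
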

    
    \noindent {\sl Proof of Lemma \ref{lem:repres-2}:} 
     Let $G$ be a  reductive algebraic
    group with Lie algebra $\gen$. 
    The exponential map
    $\exp: \gen\to G$ identifies  $\wh S^\bullet(\gen^*)$
     with $\wh\Oc_{G,1}$, the completed local
    ring of $G$ at $1$, in a way compatible with the adjoint $\gen$-action on both spaces.
    Now, $\k[G]$, the coordinate algebra of $G$, is dense in $\wh\Oc_{G,1}$.  
     Let $\Phi: G\to GL_r$ be an algebraic representation of $G$. Then  the character $\tr(\Phi)$
     is a $\gen$-invariant element in $\k[G]^\gen$, and  such elements span $\k[G]^\gen$.
     Therefore the space spanned by the $\tr(\Phi)$, is dense in     $\wh\Oc_{G,1}^\gen$.
     If now $\phi$ is the representation of $\gen$ tangent to $\Phi$, 
     then the image of $\tr(\Phi)$ under the above identification
     $\exp^*: \wh\Oc_{G,1}^\gen\to \wh S^\bullet(\gen^*)^\gen$, is precisely the series $Q_\phi(x)$.
     So the space of such series is dense in $\wh S^\bullet(\gen^*)$, as
     claimed. \qed
     
     \begin{ex}[(Heisenberg dg-Lie algebra)]
     Let $\gen = \gl_1$ (abelian) and $P(x) = x^{n+1}$. In this case the (2-)cocycle
     \[
     \gamma(f_0, \dots, f_n) \,\,=\,\, \Res\bigl( f_0 \,\del f_1 \cdots \del f_n)
     \]
     defines a central extension $\Hc_n$ of the abelian dg-Lie algebra $\gen_n^\bullet = A_n^\bullet$.
     The dg-Lie algebra $\Hc_n$ is the $n$-dimensional analog of the Heisenberg Lie algebra
     associated to the vector space $\k((z))$ equipped with the skew-symmetric form $[f_0, f_1]=\Res(f_0 df_1)$. 
     \end{ex}
     
\section{The Tate extension and local Riemann-Roch}\label{sec:tate-RR}

\subsection{Background on Tate complexes}\label{subsec:tate-back}

The concept of a Tate space being elementary, in this section we give an equally
elementary treatment of the corresponding derived category. Some of the statementswe list,
are particular cases of more general results, which we indicate. 

\paragraph{A. The quasi-abelian category of  linearly topological spaces.} 
We will use the concept of a {\em quasi-abelian category}  \cite{schneiders},
 a weakening of that of an abelian category.
 In particular, in a quasi-abelian category $\Ac$ any morphism $a: V\to W$ has 
 categorical 
kernel $\Ker(u)$, cokernel, image $\Im(a) = \Ker\{W\to\Coker(a)\}$  
 and the coimage ${\Coim}(a) = \Coker \{\Ker(a)\to V\}$ but  the canonical morphism ${\Coim}(a)\to\Im(a)$
  need not be an isomorphism. If it is, $a$ is called {\em strict}. 
  
   As pointed out in \cite{schneiders},  any quasi-abelian category $\Ac$
  has an intrinsic structure of exact category in the sense of 
   Quillen. 
 
 \begin{ex}
 Let  $\TV_\k$ be  the category of Hausdorff linearly topological $\k$-vector spaces  
with a countable base of neighborhoods of $0$. Here $\k$ is considered with discrete topology.
See \cite{lefschetz}, Ch. 2 for background; in this paper we  additionally impose the countable base assumption.

 For a  morphism $a: V\to W$, 
the kernel $\Ker(a)\subset V$ is the usual kernel with induced topology, $\Im(a)$ is the closure of
the set-theoretic image,  and $\Coim(a)$ is the set-theoretical quotient of $V$ by $\Ker(a)$ with the quotient topology. 

The category $\TV_\k$ is analogous to several  categories treated in the literature   \cite{prosmans}  \cite{pro-schneiders}    \cite{prosmans-2}. 
 In particular, arguments  similar to \cite{prosmans-2} Cor. 3.1.5 and  Prop. 3.1.8,  give:
\end{ex} 

\begin{prop}
(a) The category $\TV_\k$ is quasi-abelian. 

(b) A morphism $a: V\to W$ in $\TV_\k$ is strict, if its set-theoretical image is closed and $a$ is quasi-open.
That is, for any open subspace $U\subset V$ there is an open subspace $U'\subset W$
such that $a(U) \supset a(V)\cap U'$. 
\qed
\end{prop}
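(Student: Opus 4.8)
The plan is to verify Schneiders' axioms for a quasi-abelian category \cite{schneiders} directly, by computing all the universal constructions in $\TV_\k$ explicitly, and to obtain (b) as a byproduct. First I would record that $\TV_\k$ is additive (finite biproducts carry the product topology, which agrees with the coproduct topology) and that kernels and cokernels exist: for $a\colon V\to W$ the kernel is $a^{-1}(0)$ with the induced topology, while the cokernel is $W/\overline{a(V)}$ with the quotient topology. Dividing by the closure of the image is exactly what is needed to keep the target Hausdorff, and both operations preserve the countable base (under an open quotient map the images of a countable base again form a base), so all these objects remain in $\TV_\k$. Consequently $\Im(a)=\overline{a(V)}$ and $\Coim(a)=V/\Ker(a)$, and the canonical arrow $\Coim(a)\to\Im(a)$ is the continuous linear bijection of $V/\Ker(a)$ onto the dense subspace $a(V)\subset\overline{a(V)}$. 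Pullbacks and pushouts then exist automatically, as $\Ker(E\oplus F'\to F)$ and $\Coker(E\to F\oplus E')$ respectively.

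Part (b) now falls out by unwinding. The canonical map $\Coim(a)\to\Im(a)$ is an isomorphism in $\TV_\k$ precisely when it is surjective --- i.e. $a(V)=\overline{a(V)}$, the image is closed --- and open onto its image --- i.e. $a\colon V\to a(V)$ is open, which by translation invariance of the linear topology is exactly the quasi-openness condition that for each open $U\subset V$ there is an open $U'\subset W$ with $a(U)\supset a(V)\cap U'$. Combining this with the facts that in $\TV_\k$ the monomorphisms are the injections and the epimorphisms are the maps with dense image, I read off the two working descriptions I will use in the axioms: a strict monomorphism is a closed embedding (a topological embedding with closed image), and a strict epimorphism is an open linear surjection (a quotient map).

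For the first axiom, let $f\colon E\to F$ be a strict epimorphism, i.e. an open surjection, and form the pullback $E\times_F F'$ along $g\colon F'\to F$; concretely this is the set-theoretic fibre product with the induced topology. The projection $\pi\colon E\times_F F'\to F'$ is surjective because $f$ is, and it is open: the image under $\pi$ of a basic open set $(U\oplus U'')\cap(E\times_F F')$ equals $U''\cap g^{-1}(f(U))$, which is open since $f(U)$ is open ($f$ being open) and $g$ is continuous. Thus $\pi$ is again an open surjection, hence a strict epimorphism.

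The second axiom is the main obstacle. Let $g\colon E\to F$ be a strict monomorphism, regarded as a closed embedding $E\subset F$, let $h\colon E\to E'$ be arbitrary, and write $q\colon F\oplus E'\to F'$ for the quotient map to the pushout $F':=(F\oplus E')/K$, where $K=\{(g(e),-h(e)):e\in E\}$; the morphism to check is $g'\colon E'\to F'$. The key topological point is that $K$ is already closed, with no passage to the closure needed: it is the graph of the continuous map $-h\circ g^{-1}\colon g(E)\to E'$ over the closed subspace $g(E)\subset F$, hence closed because $E'$ is Hausdorff, so $F'$ lies in $\TV_\k$. One checks that $g'$ is injective ($\Ker(g')=h(\Ker g)=0$) and has closed image, since the $q$-saturated preimage of $g'(E')$ is $g(E)\oplus E'$, which is closed in $F\oplus E'$. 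The remaining, and most delicate, point is that $g'$ is a topological embedding: given a neighbourhood $N'$ of $0$ in $E'$, choose $N''$ with $N''+N''\subset N'$, and then, using continuity of $h$ together with the fact that $g$ is an embedding, an open $M\ni 0$ in $F$ with $h(g^{-1}(M))\subset N''$; a direct computation gives $q(M\oplus N'')\cap g'(E')=g'\bigl(h(g^{-1}(M))+N''\bigr)\subset g'(N')$, exhibiting $g'(N')$ as relatively open. Hence $g'$ is a closed embedding, i.e. a strict monomorphism, which completes the verification of the axioms and with it the proof of (a). The step requiring the most care is this last embedding argument, where the linear structure of the topology is what lets one pass from mere continuity of $h$ to the uniform containment $h(g^{-1}(M))\subset N''$.
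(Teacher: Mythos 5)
Your proof is correct and complete, but it takes a genuinely different route from the paper, which gives no self-contained argument at all: there the proposition is justified by analogy, citing results of Prosmans \cite{prosmans-2} (Cor.~3.1.5 and Prop.~3.1.8) for closely related quasi-abelian categories of topological vector spaces, with the explicit formulas for $\Ker(a)$, $\Im(a)=\overline{a(V)}$ and $\Coim(a)=V/\Ker(a)$ merely recorded beforehand (your computations reproduce exactly these). What you do instead is verify Schneiders' axioms directly in the linearly topologized setting, and this makes explicit everything the citation hides: the dictionary ``strict monomorphism $=$ closed topological embedding'' and ``strict epimorphism $=$ open surjection'', from which part (b) falls out as a formal consequence rather than being a parallel claim; the observation that pushouts create no Hausdorffness problem, since $K=\{(g(e),-h(e)):e\in E\}$ is automatically closed, being the graph of a continuous map over a closed subspace; and the role of linearity in the final embedding step, where $N''$ may be taken to be an open linear subspace so that $N''+N''=N''$, with no convexity or metrizability entering --- precisely the point where the analogy with the settings of \cite{prosmans} and \cite{prosmans-2} would otherwise need to be checked by hand. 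Your argument also isolates the fact that the countable-base hypothesis is irrelevant to quasi-abelianness per se; it only has to be preserved by kernels, quotients and sums so that these stay in $\TV_\k$, which you verify. In short, the paper's approach buys brevity and places $\TV_\k$ within a family of known examples, while yours buys a complete, checkable proof actually tailored to linear topologies, which the cited results do not literally cover.
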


\vskip .2cm

Every quasi-abelian category $\Ac$ gives rise to 
the (bounded) derived category $D^b(\Ac)$
equipped with a canonical t-structure whose heart $^\heartsuit\Ac$ (called {\em left heart} in \cite{schneiders}),
is a natural abelian envelope of $\Ac$. It is equipped with a fully faithful left exact embedding $h: \Ac\to {}^\heart\Ac$
  (so $h$ preserves kernels but not cokernels). 
  
  Objects of ${}^\heart\Ac$  can be  thought of as formal  ``true cokernels"  of monomorphisms
$a$ in $\Ac$ and in fact have the form $\Coker_{{}^\heart\Ac}(a)$  (actual cokernels in 
${}^\heart\Ac$).
See \cite{schneiders} Cor. 1.2.21 or, in the  more general framework of exact categories,
\cite{laumon}, Def. 1.5.7. 

\begin{ex}
The formal quotient $\k[[t]]/\k[t]$ represents an object of the abelian category $^\heart\TV_\k$. The 
short exact sequence
\[
0\to \k[t] \lra \k[[t]] \lra \k[[t]]/\k[t]\to 0
\]
represents a nontrivial extension in $^\heart\TV_\k$. 
\end{ex}
  
  One can identify $D^b(\Ac)$ with  the localization 
  \be\label{eq:DbA-loc}
D^b(\Ac) \,\,\simeq \,\,  
  K^b(\Ac)[\on{qis}^{-1}]
  \ee
 of $K^b(\Ac)$, the 
homotopy category  of bounded complexes over $\Ac$,  with respect to quasi-isomorphisms
(understood in the sense of complexes over $^\heart\Ac$), see \cite{laumon} \cite{schneiders}. 
Further, the natural functor $D^b(\Ac)\to D^b(^\heart\Ac)$
into the usual bounded derived category of $\Ac$, is an equivalence (\cite{schneiders} Prop. 1.2.32).

  \begin{rems}
  (a) We note that $K^b(\Ac)$ has a natural dg-enhancement: it is   the $H^0$-category of the  dg-category of bounded complexes over
$\Ac$.  Therefore   \eqref{eq:DbA-loc} can be used to represent $D^b(\Ac)$ as 
 the $H^0$-category of a dg-category, by using  the Gabriel-Zisman localization for dg-categories
(\cite{toen-dg}, \S 2.1) which is the dg-analog of the Dwyer-Kan simplicial localization for categories
\cite{dwyer-kan}\cite{dwyer-kan-simplicial}. 

\vskip .2cm

 (b)  The concept of a quasi-abelian category is self-dual. Therefore there exist another abelian envelope
  $\Ac^\heart  =( ^\heart(\Ac^\op))^\op$ (the right heart) whose objects can be thought as formal ``true kernels" of epimorphisms in $\Ac$,
  and a right exact embedding $\Ac\to {}^\heart\Ac$. In our example $\Ac=\TV_\k$,  the categorical
  kernels coincide with set-theoretical ones so it is natural to use the left heart to keep the kernels unchanged. 
\end{rems}

\begin{prop}\label{prop:relopen}
Let $\Ac$ be quasi-abelian. 
 A bounded complex $V^\bullet$ over $\Ac$ has all $H^i(V^\bullet)\in \Ac\subset {}^\heart\Ac$,
  if and only if
all the differentials are strict. In particular, for a monomorphism $a: V\to W$ in $\Ac$, the cokernel
$\Coker_{^\heart\Ac}(a)$ lies in $\Ac$ if and only if $a$ is strict.  
\end{prop}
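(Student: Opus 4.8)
The plan is to compute the cohomology objects $H^i(V^\bullet)$ inside the abelian left heart $^\heart\Ac$, using the equivalence $D^b(\Ac)\simeq D^b(^\heart\Ac)$, and then to match the property ``$H^i(V^\bullet)\in\Ac$'' with strictness of the differentials, reducing everything to the monomorphism case already recalled (\cite{schneiders} Cor. 1.2.21). First I would record a convenient formula for the cohomology. Since $h\colon\Ac\to{}^\heart\Ac$ is left exact it preserves kernels, so $\Ker_{^\heart\Ac}(h\,d^i)=h(Z^i)$ with $Z^i:=\Ker_\Ac(d^i)$; and because $d^i\circ d^{i-1}=0$ the differential $d^{i-1}$ corestricts to a morphism $\bar d^{\,i-1}\colon V^{i-1}\to Z^i$ in $\Ac$, with $d^{i-1}=\iota_i\circ\bar d^{\,i-1}$, where $\iota_i\colon Z^i\hookrightarrow V^i$ is the (strict) kernel inclusion. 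As $h\iota_i$ is a monomorphism, the image of $h\,d^{i-1}$ coincides, as a subobject of $h(Z^i)$, with the image of $h\bar d^{\,i-1}$, whence
\[
H^i(V^\bullet)\;\simeq\;\Coker_{^\heart\Ac}\bigl(h\bar d^{\,i-1}\bigr).
\]

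Next I would upgrade the cited monomorphism case to arbitrary morphisms. The claim is: for any $g\colon X\to Y$ in $\Ac$, one has $\Coker_{^\heart\Ac}(hg)\in\Ac$ if and only if $g$ is strict. To prove it, factor $g$ in $\Ac$ as $X\xrightarrow{e}\Coim(g)\xrightarrow{u}\Im(g)\xrightarrow{m}Y$, where $e$ is a strict epimorphism, $m$ a strict monomorphism, and $u$ the canonical map, which in a quasi-abelian category is always both mono and epi. Since $h$ sends strict epimorphisms to epimorphisms (this is the easy direction of the mono case: a strict short exact sequence becomes a distinguished triangle, so $\Coker_{^\heart\Ac}$ of a strict mono agrees with $h$ of the $\Ac$-cokernel), we get $\Coker_{^\heart\Ac}(hg)\simeq\Coker_{^\heart\Ac}\bigl(h(mu)\bigr)$, and $mu$ is a monomorphism in $\Ac$. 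By \cite{schneiders} Cor. 1.2.21 this cokernel lies in $\Ac$ iff $mu$ is strict; and by cancellation of strictness along the strict monomorphism $m$, together with the fact that a strict bimorphism is an isomorphism, $mu$ is strict iff $u$ is an isomorphism, i.e. iff $g$ is strict.

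Combining the two steps finishes the proof. Applying the morphism-level criterion to $g=\bar d^{\,i-1}$ gives that $H^i(V^\bullet)\in\Ac$ iff $\bar d^{\,i-1}$ is strict; and since $d^{i-1}=\iota_i\circ\bar d^{\,i-1}$ with $\iota_i$ a strict monomorphism, cancellation of strictness shows $\bar d^{\,i-1}$ is strict iff $d^{i-1}$ is strict. Thus $H^i(V^\bullet)\in\Ac\iff d^{i-1}$ is strict, and letting $i$ range over all integers yields that all $H^i(V^\bullet)$ lie in $\Ac$ exactly when all differentials are strict. The final assertion is the special case $V^\bullet=[V\xrightarrow{a}W]$ in degrees $-1,0$ with $a$ a monomorphism, where $H^0=\Coker_{^\heart\Ac}(a)$ and $H^{-1}=\Ker_\Ac(a)=0$.

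The main obstacle I anticipate is purely categorical bookkeeping: carefully controlling the discrepancy between images and cokernels formed in the exact category $\Ac$ and those formed in the abelian heart $^\heart\Ac$ — precisely the possible failure of the canonical map $\Coim\to\Im$ to be an isomorphism. Concretely, the delicate points are that $h$ carries strict epimorphisms to epimorphisms, and that strictness cancels along strict monomorphisms (if $\iota$ is a strict mono then $\iota\circ\bar g$ is strict iff $\bar g$ is); both are standard properties of strict morphisms in quasi-abelian categories (\cite{schneiders} §1.1), but they must be invoked exactly, since it is their failure that detects non-strict differentials.
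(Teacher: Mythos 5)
Your proof is correct, but it follows a genuinely different route from the paper, whose entire proof of this proposition is the citation ``see \cite{prosmans}, Cor.~1.13'' (where exactly this statement is established). You instead reduce the general bounded-complex case to the monomorphism criterion --- $\Coker_{{}^\heart\Ac}(a)\in\Ac$ iff $a$ is strict --- which is the content the paper attributes to \cite{schneiders} Cor.~1.2.21 just before the proposition. All three of your ingredients check out: the identification $H^i(V^\bullet)\simeq\Coker_{{}^\heart\Ac}(h\bar d^{\,i-1})$ (using that $h$ preserves kernels and that $h\iota_i$ is monic); the reduction $\Coker_{{}^\heart\Ac}(hg)\simeq\Coker_{{}^\heart\Ac}(h(mu))$ via the factorization $g=m\circ u\circ e$ and the fact that $h$ carries strict epimorphisms to epimorphisms; and the strictness calculus (cancellation along strict monomorphisms, strict bimorphism $=$ isomorphism) giving $mu$ strict $\iff u$ iso $\iff g$ strict. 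What your route buys is a self-contained argument and a sharper, index-by-index statement --- $H^i(V^\bullet)\in\Ac$ if and only if $d^{i-1}$ is strict, the differential $d^i$ being irrelevant --- which a bare citation conceals. One caution: your argument is non-circular only because the monomorphism criterion (which is literally the ``in particular'' clause you are proving) is genuinely available in the literature; the paper's sentence citing \cite{schneiders} Cor.~1.2.21 invokes only the representation of heart objects as formal cokernels of monomorphisms, so you should make sure your reference includes the full ``if and only if'' (it does appear in \cite{schneiders}, \S 1.2, and is in any case subsumed by \cite{prosmans}, Cor.~1.13). Correspondingly, your final derivation of the ``in particular'' clause is a restatement of that input rather than an independent consequence --- harmless, but it should be presented as such.
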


\noindent{\sl Proof:}  See \cite{prosmans}, Cor. 1.13. \qed

\paragraph{B. Tate spaces and Tate complexes.} 
 For $V\in\TV_\k$ we have the {\em topological dual} $V^\vee = \ul\Hom_\k (V, \k)$
(continuous linear functionals, with weak topology). The functor $V\mapsto V^\vee$ is
not a perfect duality on $\TV_\k$; however, the canonical morphism $V\to V^{\vee\vee}$
is an isomorphism on the following  full subcategories in $\TV_\k$:
\begin{itemize}
\item[(1)] The  category $\Vect_\k$ of  discrete (at most countably dimensional) vector spaces
 $V\simeq \bigoplus_{i\in I} \k.$

\item[(2)] The category $\LC_\k$ of linearly compact spaces $V\simeq\prod_{i\in I}\k$. 

\item[(3)] The category $\Ta_\k$ of locally linearly compact  spaces (\cite{lefschetz}, Ch. 2, \S 6)
which we will call {\em Tate spaces}. Each Tate space $V$ can be represented as $V\simeq V^d\oplus V^c$
with $V^d\in\Vect_\k$ and $V^c\in\LC_\k$. 
\end{itemize}
Thus, the topological dual identifies
\[
\Vect_k^\op \simeq \LC_\k, \quad \LC_\k^\op \simeq\Vect_\k, \quad \Ta_\k^\op\simeq\Ta_\k. 
\]
In particular, since $\Vect_\k$ is abelian, so is $\LC_\k$, while $\Ta_\k$ is a  self-dual quasi-abelian
(in particular, exact) 
 category, cf. \cite{beilinson-perv} \cite{bgw-tate}. 
 Let us add two more examples  to the above list:
 \begin{itemize}
 
 \item[(4)]  The quasi-abelian category  $\ILC_\k$ formed by  {\em  inductive limits of linearly compact spaces}. 
 
 \item[(5)] The quasi-abelian category $\PVect_\k$ formed by {\em projective limits of discrete vector spaces}
 or, what is the same,  objects in $\TV$ which, considered  as topological vector spaces,
 are {\em complete}. 
 \end{itemize}
   \begin{ex}
 The space of Laurent series $\k((z)) = \k[[z]][z^{-1}]$ is an object of $\Ta_\k$. The localized ring
 $\k[[z_1, z_2]][(z_1z_2)^{-1}]$ is an object of $\ILC_\k$ but not of $\Ta_\k$. 
 Similarly, the ring $\k[[z]][z^*][(zz^*)^{-1}]$, see \S \ref{subsec:exp-mod} C, is an object of $\ILC_\k$. 
 \end{ex}
 
 \begin{prop}
 We have
 \[
 \ILC_k^\op\simeq \PVect_\k, \quad \PVect_\k^\op \simeq \ILC_\k, \quad \Ta_\k = \ILC_\k \cap \PVect_\k, 
 \]
 the first two identifications given by forming the topological dual. 
 \end{prop}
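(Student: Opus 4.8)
The plan is to treat the three assertions in order, taking the already-established biduality $\Vect_\k^\op \simeq \LC_\k$ as the only input and bootstrapping from it. For the first identification I would show that the topological dual $V \mapsto V^\vee = \ul\Hom_\k(V,\k)$ carries $\ILC_\k$ into $\PVect_\k$. Any $V \in \ILC_\k$ is a filtered colimit $V \simeq \varinjlim_n L_n$ of linearly compact spaces along strict monomorphisms (closed embeddings); since $\ul\Hom_\k(-,\k)$ sends colimits to limits — a continuous functional on $\varinjlim_n L_n$ is precisely a compatible family of continuous functionals on the $L_n$, and the weak topology on the dual is the corresponding limit topology — we get $V^\vee \simeq \varprojlim_n L_n^\vee$. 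Each $L_n^\vee$ lies in $\Vect_\k$ by the known duality, and the transition maps, being duals of closed embeddings, are surjections; hence $V^\vee \in \PVect_\k$.

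For the reverse inclusion $V^\vee \in \ILC_\k$ when $V \in \PVect_\k$, I would exploit the discreteness of $\k$. Write $V \simeq \varprojlim_m V/U_m$ with $\{U_m\}$ a countable decreasing base of open subspaces, so the $V/U_m$ are discrete and the transition maps are surjective. The crucial point is that any continuous functional $V \to \k$ has open kernel, hence factors through some $V/U_m$; this yields $V^\vee \simeq \varinjlim_m (V/U_m)^\vee$, a filtered colimit of linearly compact spaces (duals of discrete ones) along the closed embeddings dual to the surjections $V/U_{m+1}\twoheadrightarrow V/U_m$, so $V^\vee \in \ILC_\k$. Biduality on each family then gives the equivalences: for $V \in \ILC_\k$ one has $V^{\vee\vee} \simeq \varinjlim_n L_n^{\vee\vee} \simeq \varinjlim_n L_n \simeq V$, using $L_n^{\vee\vee}\simeq L_n$ in $\LC_\k$, and the canonical map $V \to V^{\vee\vee}$ is this isomorphism by naturality; the dual statement handles $\PVect_\k$. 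This establishes $\ILC_\k^\op \simeq \PVect_\k$ and $\PVect_\k^\op \simeq \ILC_\k$.

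For the third identification, the inclusion $\Ta_\k \subseteq \ILC_\k \cap \PVect_\k$ is routine: a Tate space splits as $V \simeq V^d \oplus V^c$ with $V^d \in \Vect_\k$ and $V^c \in \LC_\k$; writing $V^d$ as the filtered colimit of its finite-dimensional subspaces (each linearly compact) shows $V^d \in \ILC_\k$, while $V^c \in \LC_\k \subseteq \ILC_\k$, and both categories are closed under finite direct sums, so $V \in \ILC_\k$; dually $V \in \PVect_\k$. The substantive direction is $\ILC_\k \cap \PVect_\k \subseteq \Ta_\k$, and this is where I expect the main difficulty. Given $V$ in the intersection, membership in $\PVect_\k$ makes $V$ complete with a countable base of open subspaces, hence metrizable and therefore a Baire space; membership in $\ILC_\k$ writes $V = \bigcup_n L_n$ as a countable increasing union of linearly compact subspaces, which are closed in the Hausdorff space $V$. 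By the Baire category theorem some $L_n$ has nonempty interior, and a linear subspace with nonempty interior contains an open subspace, hence is itself open. That $L_n$ is then an open linearly compact subspace, so $V$ is locally linearly compact, i.e. a Tate space. The points needing care are the topological compatibility in the colimit/limit computations of the dual (forcing us to use both the discreteness of $\k$ and the countable-base hypothesis), and the verification that the $\ILC_\k$-presentation of $V$ realizes it as an honest increasing union of \emph{closed} subspaces so that Baire applies.
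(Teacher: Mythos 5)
Your proposal is correct, but on the one statement the paper actually proves in detail --- the equality $\Ta_\k = \ILC_\k \cap \PVect_\k$ --- you take a genuinely different route. The paper never invokes Baire: it first notes that a space $V=\varprojlim V_i \in \PVect_\k$, presented as a limit of discrete spaces along surjections $q_i$, is Tate if and only if $\Ker(q_i)$ is finite-dimensional for all but finitely many $i$; assuming for contradiction that all the kernels are infinite-dimensional, it observes that each linearly compact $L_j$ has finite-dimensional image $V_i^{(j)}$ in each discrete $V_i$, and then runs a Cantor-style diagonal construction producing a compatible system $(v_i)$ with $v_i \notin V_i^{(i)}$ for all $i$, i.e.\ an element of $V$ lying in no $L_j$. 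Your argument replaces this bookkeeping with general topology: completeness and the countable base of open subspaces (both built into the paper's definitions of $\PVect_\k$ and $\TV_\k$) give a complete invariant ultrametric, hence a Baire space; a continuous image of a linearly compact space is linearly compact and therefore closed in a Hausdorff linearly topological space, so the $\ILC_\k$-presentation really does exhibit $V=\bigcup_n L_n$ as a countable union of closed subspaces; Baire then forces some $L_n$ to have interior, hence to be open, which is precisely local linear compactness. Both proofs use countability in an essential way, and both use the key finiteness/closedness properties of linearly compact subspaces; what the paper's diagonal argument buys is that it stays entirely inside the linear-algebra formalism (no metric, no appeal to Baire), while yours buys brevity and the slightly stronger conclusion that one of the given subspaces $L_n$ is already an open lattice. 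Your treatment of the two duality statements, which the paper asserts without written proof, is also sound: the two pillars are that a continuous functional on an object of $\PVect_\k$ has open kernel (discreteness of $\k$) and hence factors through a discrete quotient, and that functionals on a colimit are compatible families on the pieces; combined with biduality between $\Vect_\k$ and $\LC_\k$ this yields the anti-equivalences exactly as you describe.
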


 \noindent{\sl Proof:} A more general statement appears in \cite{BGHW} Prop. 2.1. For convenience of the reader we give here a direct
 proof. 
 
 The first two statements are obvious. 
Let us prove the third statement.
 It is clear that $ \Ta_\k \subset  \ILC_\k \cap \PVect_\k$. Let us prove the inverse inclusion. 
Suppose
 \[
 V \,\, = \,\, \varprojlim \,\,  \bigl\{ \cdots \buildrel q_3\over \lra V_2\buildrel q_2\over \lra V_1\buildrel q_1\over\lra V_0
 \bigr\}\,\,\in \PVect_\k
 \]
 is represented as the projective limit of a diagram $(V_i)$ of discrete vector spaces and surjections
 $q_i$. Then $V$ is Tate, if and only if $\Ker(q_i)$ is finite-dimensional for all but finitely many $i$. 
 Suppose this is not so. Then we can, without loss of generality, assume that all $\Ker(q_i)$
 are inifinite-dimensional, by composing finite strings of the arrows in $(V_i)$ and getting a diagram
 with the same projective limit.

  With this assumption, suppose that $V=\varinjlim_j \, L_j$ where $(L_j)_{j\geq 0}$ is
  an increasing chain of linearly compact subspaces. Then, for each $j$, 
  \[
  L_j  \,\, = \,\, \varprojlim \,\,  \bigl\{ \cdots  \lra V_2^{(j)}  \lra V_1^{(j)} \lra V_0^{(j)}
 \bigr\}
  \]
  where $V_i^{(j)}$ is the image of $L_j$ in $V_i$, a finite-dimensional subspace in $V_i$. 
  We now construct an element $v\in V$, i.e., a compatible system $(v_i\in V_i)$,
  by a version of the Cantor diagonal process. That is, we take $v_0=0$, then
  take $v_1$ from $\Ker(q_1)$  (an infinite-dimensional space)
  not lying in $V_1^{(1)}$ (a finite-dimensional space).  Then take $v_2\in V_2$
  with $q_2(v_2)=v_1$ in such a way that $v_2\notin V_2^{(2)}$ (this is possible
  since $\Ker(q_2)$ is infinite-dimensional), and so on. We get an element $(v_i)$
  of the projective limit with $v_i\notin V_i^{(i)}$ for all $i$. Such an element cannot lie in the
  union of the $L_i$. \qed

  \begin{Defi}\label{def:tate-complex}
(a) A {\em Tate complex} over $\k$ is a bounded  complex  $V^\bullet$ over $\ILC_\k$ whose cohomology 
groups $H^i(V^\bullet)\in {}^\heart\ILC_\k$  belong to $^\heart\Ta_\k$. 
We denote by $\Tate_\k$ the full  dg-subcategory in $C^b(\ILC_\k)$ formed by Tate complexes. 

(b) A Tate complex $V^\bullet$ is called {\em strict}, if  all the $H^i(V^\bullet)$ belong to $\Ta_\k\subset {}^\heart\Ta_\k$
or, what is the same, if its differentials are strict (Prop. \ref{prop:relopen}). 
\end{Defi}

Note that $\Tate_\k$ is a perfect dg-subcategory, i.e., $[\Tate_\k]$, the corresponding
cohomology category, is triangulated and closed under direct summands, see Appendix A.C. 

\begin{ex}\label{ex: A_n-tate-complex} 
The Jouanolou complex $A^\bullet_n$ is a strict Tate complex. More precisely,  the topology  on each $A_n^p$
 is given by the convergence of series. An explicit representation of $A_n^p$ as an inductive limit of linearly
  compact spaces is given by the filtration
of Corollary \ref {cor:filt}. Thus $A_n^\bullet$ is a complex over $\ILC_\k$. As we have seen, 
  its cohomology groups are Tate vector spaces. For $n>0$, we have only two cohomology spaces:
   $H^0$, linearly compact and  $H^{n-1}$, discrete. 

More generally, for any finite dimensional vector bundle $E$ on $D^\circ_n$, the complex $A^\bullet_n(E)$ is naturally made into a strict Tate complex.
\end{ex}
 
 \paragraph{C. Tate complexes, algebraically.} 
  For a category $\Cc$ we denote by $\Ind(\Cc)$
 and $\Pro(\Cc)$ the category of countable ind- and pro-objects in $\Cc$,  see \cite{artin-mazur},
 \cite{KS-categories} for general background. In particular, we will use the notation
 $\ilim_{i\in I} C_i$ for an object of $\Ind(\Cc)$ represented by a filtering inductive system $(C_i)_{i\in I}$
 over $\Cc$. Similarly for $\plim_{i\in I} C_i$, an object of $\Pro(\Cc)$. 
   
 Assume $\Cc$ is abelian.  Then so are $\Ind(\Cc)$
 and $\Pro(\Cc)$. In this case  we denote by $\Ind^s(\Cc), \Pro^s(\Cc)$  the full subcategories
 formed by  ind- and pro-objects which are  {\em essentially strict} 
  i.e.,  isomorphic to objects $\ilim_{i\in I} C_i$, resp.  $\plim_{i\in I} C_i$
  where $(C_i)$ is an
 filtering inductive (resp. projective)
 system formed by monomorphisms (resp. epimorphisms). 
 These are  quasi-abelian but not,  in general,  abelian
 categories.

  Let $\Vect_\k^f$ be the category of finite-dimensional $\k$-vector spaces. 
 
 \begin{prop}\label{prop:ind-fin}
(a) We have
  \[
 \Ind^s(\Vect_\k^f)= \Ind(\Vect_\k^f)\simeq \Vect_\k, \quad \Pro^s(\Vect_\k^f)=\Pro(\Vect_\k^f)\simeq \LC_\k.
 \]  
 
 (b) Further, the taking of inductive or projective limits in $\TV_\k$ gives identifications
 \[
 \Ind^s(\LC_\k) \simeq \ILC_\k, \quad \Pro^s(\Vect_\k) \simeq \PVect_\k. 
 \]

 (c) The abelian envelopes of the semi-abelian categories in (b) are identified as
 \[
^\heart \ILC_\k  \simeq \Ind(\LC_\k), \quad ^\heart\PVect_\k  \simeq \Pro(\Vect_\k)
 \]
(the categories of all, not necessarily strict, ind- and pro-objects). 
 \end{prop}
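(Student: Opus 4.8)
The plan is to treat the three parts in turn, deriving the $\Pro$/$\PVect$ statements from the $\Ind$/$\ILC$ ones by topological duality wherever possible.

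For (a), I would show that the colimit functor $\varinjlim\colon \Ind(\Vect_\k^f)\to\Vect_\k$ is an equivalence. It is essentially surjective because every (at most countably dimensional) discrete space is the filtered union of its finite-dimensional subspaces. It is fully faithful because finite-dimensional spaces are compact objects of $\Vect_\k$, so in the Ind-Hom formula $\Hom_{\Ind}((C_i),(D_j))=\varprojlim_i\varinjlim_j\Hom(C_i,D_j)$ the inner colimit computes $\Hom(C_i,\varinjlim_j D_j)$ and the outer limit computes $\Hom(\varinjlim_i C_i,\varinjlim_j D_j)$. The equality $\Ind^s=\Ind$ comes from strictification: replacing a system $(C_i)$ by the images of the $C_i$ in $\varinjlim C_i$ yields a monomorphic (hence essentially strict) system with the same colimit. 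Applying the topological dual $(-)^\vee$, an anti-equivalence of $\Vect_\k^f$ with itself, transports all of this to $\Pro(\Vect_\k^f)\simeq\LC_\k$ and $\Pro^s=\Pro$.

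For (b), the two equivalences are exchanged by the dualities $\ILC_\k^\op\simeq\PVect_\k$, $\LC_\k^\op\simeq\Vect_\k$, so it suffices to prove $\Pro^s(\Vect_\k)\simeq\PVect_\k$. The functor is $\varprojlim$ into $\TV_\k$; it lands in $\PVect_\k$ and is essentially surjective because any complete $V$ equals $\varprojlim_U V/U$ over its open subspaces, a strict (epimorphic) pro-system of discrete spaces (here the countable base is used). Full faithfulness rests on the key lemma: for a strict pro-system $(V_i)$ and a discrete $W$, a continuous $f\colon\varprojlim V_i\to W$ has open kernel, hence $\Ker f\supseteq\Ker(p_i)$ for some $i$ since the $\Ker(p_i)$ form a neighborhood base of $0$; as the projections $p_i$ are surjective (Mittag-Leffler for epimorphic systems), $f$ factors through $V_i$, giving $\Hom_{\TV_\k}(\varprojlim V_i,W)=\varinjlim_i\Hom(V_i,W)$. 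Assembling over $W=W_j$ recovers the Pro-Hom $\varprojlim_j\varinjlim_i\Hom(V_i,W_j)$.

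For (c), I would identify the left heart using Schneiders' description of ${}^\heart\ILC_\k$ as the full subcategory of $D^b(\ILC_\k)$ on two-term complexes $[E\xrightarrow{f}F]$ with $f$ a monomorphism, together with part (b)'s identification $\ILC_\k\simeq\Ind^s(\LC_\k)$. The comparison functor $\Phi\colon {}^\heart\ILC_\k\to\Ind(\LC_\k)$ sends $[E\xrightarrow f F]$ to $\Coker_{\Ind(\LC_\k)}(f)$; this is legitimate because the embedding $\Ind^s(\LC_\k)\hookrightarrow\Ind(\LC_\k)$ is fully faithful and left exact, so $f$ stays a monomorphism in the abelian category $\Ind(\LC_\k)$. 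Essential surjectivity follows from the telescope presentation: writing $X=\varinjlim_n L_n$ along a cofinal chain, $X=\Coker\big(\bigoplus_n L_n\xrightarrow{\,1-\mathrm{shift}\,}\bigoplus_n L_n\big)$ with the displayed map an admissible monomorphism of the strict ind-objects $\bigoplus_n L_n$. Full faithfulness is then checked by the roof calculus in $D^b(\ILC_\k)$, using exactness of filtered colimits in $\Ind(\LC_\k)$ and the strictification from (b). The second identification ${}^\heart\PVect_\k\simeq\Pro(\Vect_\k)$ is entirely dual, with $\Pro(\Vect_\k)$, strict epimorphisms and the dual telescope in place of $\Ind(\LC_\k)$, monomorphisms and the telescope above.

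The hard part will be (c), and specifically confirming that the category one lands in is the correct heart with the claimed identification rather than its dual. In a quasi-abelian category a single morphism (such as $\k[t]\hookrightarrow\k[[t]]$, simultaneously a monomorphism and an epimorphism that is not an isomorphism) can behave very differently after the embedding into $\Ind(\LC_\k)$, so one must track precisely which of monomorphisms/epimorphisms that embedding preserves. This is exactly where the observation from the Remarks --- that in these subcategories of $\TV_\k$ categorical kernels coincide with the topological ones --- is used to pin down the variance and to verify the hypotheses of Schneiders' recognition criterion for the heart: that $\Ind(\LC_\k)$ is abelian, the embedding is fully faithful and left exact, $\ILC_\k$ is closed under extensions, and every object of $\Ind(\LC_\k)$ is the cokernel of a monomorphism between objects of $\ILC_\k$.
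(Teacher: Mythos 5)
Your proposal is correct and takes essentially the same route as the paper: your telescope presentation $0\to\bigoplus_n L_n\xrightarrow{1-\mathrm{shift}}\bigoplus_n L_n\to X\to 0$ is exactly the paper's key step, which views inductive systems as graded $\k[t]$-modules in $\LC_\k$ (the strict ones being the torsion-free modules) and uses the free-module resolution $0\to K\to V\otimes_\k\k[t]\to V\to 0$ to exhibit every object of $\Ind(\LC_\k)$ as the cokernel of a monomorphism in $\Ind^s(\LC_\k)$, concluding via the same Schneiders-type description of the left heart as formal cokernels of monomorphisms that your recognition criterion encodes; your arguments for (a) and (b) simply supply the standard details that the paper labels ``well known'' and ``clear''. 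One small correction: the telescope map $1-\mathrm{shift}$ is in general only a monomorphism, not an ``admissible'' (i.e.\ strict) one, and this is as it must be, since by Proposition \ref{prop:relopen} the cokernel of a strict monomorphism would lie in $\ILC_\k$ itself and could never produce a general object of $\Ind(\LC_\k)$.
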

 
 \noindent {\sl Proof:} (a) This is well known. For instance,  because of the Noetherian and Artinian property of
 $\Vect_\k^f$, and ind- or pro-object in this category is essentially strict.   
 Part (b) is also
 clear. 

(c) Let us prove the first statement,  the second one is dual.
 Consider the abelian category $\Ac$ of arbitrary chains of morphisms (inductive systems)
 \be\label{eq:k[t]}
V=\bigl\{  V_0\lra V_1\lra V_2\lra \cdots\bigr\}
 \ee
 in $\LC_\k$, i.e.,  of graded $\k[t]$-modules in $\LC_\k$. Let
 $\Ac^s\subset \Ac$ be  the semi-abelian subcategory
  formed
 by chains of monomorphisms, i.e., torsion-free $\k[t]$-mod-ules.
  Every object $V$ of $\Ac$ can be represented as the cokernel of
 a monomorphism $i$ in $\Ac^s$. More precisely, we have a short exact sequence
 \be\label{eq:2-term-res}
 0\to K\buildrel i\over\lra  V\otimes_\k \k[t] \buildrel c\over \lra V\to 0.
 \ee
 Here $V\otimes_\k \k[t]$ is the free $\k[t]$-module generated by $V$ as a graded
 vector space, 
$c$ is the canonical map given by the $\k[t]$-module structure and $K=\Ker(c)$. 
 This implies that any object in $\Ind(\LC_\k)$ is the cokernel of a monomorphism
 in $\Ind^s(\LC_\k)$. 
  \qed
 
 \begin{prop}\label{prop:homdim1}
(a)  The objects of $\ILC_k\subset {} ^\heart\ILC_k  = \Ind(\LC_k)$ are projective. 
In particular, Tate spaces are projective 
objects in the abelian category $^\heart\Ta_\k$. 

(b)  The abelian categories $\Ind(\LC_\k)$, $\Pro(\Vect_\k)$ have homological dimension 1
(i.e.,  the $\Ext^{\geq 2}$ in these categories vanish). 
 \end{prop}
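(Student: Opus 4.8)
The plan is to argue throughout in the module description of Proposition~\ref{prop:ind-fin}(c), under which the abelian envelope ${}^\heart\ILC_\k \simeq \Ind(\LC_\k)$ is the category of graded $\k[t]$-modules in $\LC_\k$, that is, of chains $\{V_0 \to V_1 \to \cdots\}$ in $\LC_\k$, while the quasi-abelian subcategory $\ILC_\k = \Ind^s(\LC_\k)$ consists of the torsion-free modules, i.e.\ the chains of monomorphisms. The engine of the whole argument is that $\LC_\k$ is \emph{semisimple}: the topological dual is an exact anti-equivalence $\LC_\k^\op \simeq \Vect_\k$, and every short exact sequence of discrete vector spaces splits, so every object of $\LC_\k$ is simultaneously projective and injective and every monomorphism in $\LC_\k$ admits a splitting.

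For part (a) I would first record that a free module $V \otimes_\k \k[t]$, with $V$ a graded object of $\LC_\k$, is projective: the free--forgetful adjunction identifies $\Hom_{\k[t]}\bigl(V \otimes_\k \k[t], N\bigr)$ with $\Hom(V, UN)$ computed in graded objects of $\LC_\k$, where $U$ forgets the $t$-action, and this is exact in $N$ since $U$ is exact and every graded object of $\LC_\k$ is projective. I then claim that every torsion-free $M = \{M_0 \hookrightarrow M_1 \hookrightarrow \cdots\}$ is itself free. Writing $W_n = M_n/M_{n-1} \in \LC_\k$ (with $M_{-1}=0$) and choosing, by semisimplicity, sections $s_n \colon W_n \to M_n$ of the quotient maps, the induced $\k[t]$-linear map $\phi \colon \bigl(\bigoplus_n W_n\bigr)\otimes_\k \k[t] \to M$ is an isomorphism in each degree $m$, because splitting the finite filtration $M_0 \subset \cdots \subset M_m$ identifies $M_m$ with $\bigoplus_{n\le m} W_n$. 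Thus $M$ is free, hence projective. The ``in particular'' assertion follows once one checks that $\Ta_\k \hookrightarrow \ILC_\k$ induces a fully faithful exact embedding ${}^\heart\Ta_\k \hookrightarrow {}^\heart\ILC_\k$: given an epimorphism $A \twoheadrightarrow B$ in ${}^\heart\Ta_\k$ and a morphism from a Tate space $V$ to $B$, one lifts it inside ${}^\heart\ILC_\k$, where $V$ is projective, and transports the lift back by full faithfulness.

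For part (b) I would feed part (a) into the canonical presentation from the proof of Proposition~\ref{prop:ind-fin}(c): every $M \in \Ind(\LC_\k)$ fits into a short exact sequence $0 \to K \to V \otimes_\k \k[t] \to M \to 0$ in which $K$, being a submodule of a torsion-free module, is torsion-free and hence projective by~(a). This is a length-one projective resolution, so every object has projective dimension $\le 1$ and the homological dimension is $\le 1$; it is exactly $1$ since every projective is $t$-torsion-free (being a summand of a free module), whereas a torsion module such as $\k$ placed in a single degree with $t$ acting by $0$ is not projective. The case of $\Pro(\Vect_\k) \simeq {}^\heart\PVect_\k$ follows by the dual argument, or from the exact anti-equivalence $\ILC_\k^\op \simeq \PVect_\k$, which matches the hearts by Proposition~\ref{prop:ind-fin} and preserves homological dimension; dually, every pro-object receives a length-one resolution by injective ``cofree'' objects. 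The one point that genuinely requires care is the descent of projectivity to ${}^\heart\Ta_\k$, namely verifying that $\Ta_\k \hookrightarrow \ILC_\k$ is a morphism of exact categories (it preserves strict epimorphisms) so that the induced functor on hearts is exact; granting this, everything else is a formal consequence of the semisimplicity of $\LC_\k$.
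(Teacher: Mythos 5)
Your core computations do coincide with the paper's: both arguments rest on semisimplicity of $\LC_\k$, on the observation that a chain of monomorphisms is a \emph{free} graded $\k[t]$-module in $\LC_\k$ (this is exactly the paper's Lemma \ref{lem:projective}), and on the two-term free resolution for part (b). But there is a genuine gap at your very first step: you assert that ${}^\heart\ILC_\k \simeq \Ind(\LC_\k)$ \emph{is} the category $\Ac$ of graded $\k[t]$-modules in $\LC_\k$. That identification is false, and it is not what Proposition \ref{prop:ind-fin}(c) says. The module category $\Ac$ is only a \emph{presentation} of $\Ind(\LC_\k)$: the functor $\ilim \colon \Ac \to \Ind(\LC_\k)$ is exact and essentially surjective, but it is neither faithful nor conservative. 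For instance the diagram $\{\k \to 0 \to 0 \to \cdots\}$ is a nonzero object of $\Ac$ killed by $\ilim$, and morphisms of ind-objects are computed by $\varprojlim_i \varinjlim_j \Hom(V_i,W_j)$, which allows "shifts" along the diagrams that are invisible in $\Ac$. Consequently, proving that torsion-free modules are free, hence projective \emph{in $\Ac$}, does not by itself give projectivity of objects of $\ILC_\k$ \emph{in $\Ind(\LC_\k)$}: an epimorphism in $\Ind(\LC_\k)$ onto $\ilim M$, with $M\in\Ac^s$, need not be induced by an epimorphism in $\Ac$ onto $M$ itself, so the lifting property you established cannot be invoked directly.

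Bridging this gap is precisely the content of the paper's Lemmas \ref{lem: lim-exact} and \ref{lem:essentially}: any epimorphism in $\Ind(\LC_\k)$ is isomorphic to $\ilim$ of an epimorphism $g\colon N \to V$ in $\Ac$, where $V$ is a priori only \emph{essentially} strict; one then constructs an epimorphism $q\colon V \to M$ in $\Ac$ with $M \in \Ac^s$ and $\ilim(q)$ an isomorphism, splits the surjection $qg$ onto the projective object $M$ inside $\Ac$, and pushes the splitting down with $\ilim$. Without this d\'evissage (or an equivalent substitute) part (a) is unproved, and part (b) inherits the problem, since your length-one resolution also lives in $\Ac$ and must be transported along $\ilim$ using (a). A telling symptom of the conflation: your witness for non-projectivity, the torsion module $\k$ in a single degree with $t$ acting by $0$, becomes the \emph{zero} object under $\ilim$, so it says nothing about $\Ind(\LC_\k)$. (Your remarks on descending projectivity to ${}^\heart\Ta_\k$ via full faithfulness and exactness, and on dualizing to $\Pro(\Vect_\k)$, are fine as far as they go.)
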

 
  \noindent {\sl Proof:}  We start with three lemmas.
  As in the proof of Proposition \ref {prop:ind-fin}(c), let $\Ac$ be the category
  of diagrams as in \eqref{eq:k[t]}. We have a functor
  \[
  \ilim \, : \Ac\lra \Ind(\LC_\k), \quad V\mapsto \ilim V_i. 
  \]
 
 \begin{lem}\label{lem: lim-exact}
 $\ilim\,\,$ is an exact, essentially surjective functor. Furthermore,  any epimorphism in $\Ind(\LC_\k)$
 is isomorphic (in the category of arrows) to an image of an epimorphism in $\Ac$. 
  \end{lem}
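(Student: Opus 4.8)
The plan is to treat $\Ac=\Fun(\NN,\LC_\k)$, the category of chains of linearly compact spaces (graded $\k[t]$-modules in $\LC_\k$), as an abelian category in which kernels and cokernels are computed termwise, so that a morphism is an epimorphism exactly when it is termwise surjective. Under this identification $\ilim$ is the functor sending a chain $(V_i)$ to its filtered colimit $\colim_i V_i$ formed inside $\Ind(\LC_\k)$, and all three assertions reduce to standard facts about countable ind-objects. First I would record exactness: a short exact sequence in $\Ac$ is a termwise short exact sequence, and taking the colimit over the filtered poset $\NN$ preserves exactness because filtered colimits are exact in $\Ind(\LC_\k)$ (\cite{KS-categories}). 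Hence $\ilim$ is exact; in particular it preserves monomorphisms and epimorphisms.

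For essential surjectivity, every object of $\Ind(\LC_\k)$ is by definition represented by a countable filtering inductive system $(C_i)_{i\in I}$ of linearly compact spaces. Since $I$ is countable and directed it admits a cofinal increasing sequence (enumerate $I=\{i_0,i_1,\dots\}$ and inductively pick $j_{k+1}$ to be an upper bound of $j_k$ and $i_{k+1}$); restricting to this sequence yields a chain, i.e.\ an object of $\Ac$, with the same colimit. Thus $\ilim$ is essentially surjective.

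The substantive point is the statement about epimorphisms. Starting from an epimorphism $f\colon X\to Y$ in $\Ind(\LC_\k)$, I would write $X=\ilim_i X_i$ and $Y=\ilim_i Y_i$ as chains using the previous paragraph, and then represent $f$ by a \emph{level} morphism: each $X_i$ is a compact object, so $X_i\to X\to Y$ factors through some $Y_{j(i)}$, and after passing to cofinal subchains the usual interleaving argument produces a strictly commuting family $\phi_i\colon X_i\to Y_i$ with $\ilim(\phi_i)=f$. Setting $V_i=\Im(\phi_i)\subseteq Y_i$, the transition maps of $(Y_i)$ carry $V_i$ into $V_{i+1}$, so $(V_i)$ is a subchain of $(Y_i)$ and $\phi$ factors termwise as $X_i\twoheadrightarrow V_i\hookrightarrow Y_i$. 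The left factor $\tilde\phi\colon(X_i)\to(V_i)$ is termwise surjective, hence an epimorphism in $\Ac$. Since $\ilim$ is exact, $\ilim V_i\hookrightarrow \ilim Y_i=Y$ is a monomorphism through which $f$ factors; as $f$ is epi this monomorphism is also epi, hence an isomorphism $\ilim V_i\buildrel\sim\over\lra Y$. Under it $\ilim(\tilde\phi)$ is carried to $f$, exhibiting $f$ as isomorphic, in the arrow category, to the $\ilim$-image of the epimorphism $\tilde\phi$.

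The hard part will be the preliminary level representation of $f$. A morphism of ind-objects is only an element of $\lim_i\colim_j\Hom(X_i,Y_j)$, so the naively chosen maps $X_i\to Y_{j(i)}$ commute with the structure maps merely after postcomposition with a later transition map of $Y$. Forcing strict commutativity on the nose requires interleaving the two index sequences and repeatedly enlarging the target indices, which is precisely where the countability of the index set is used; once this is in place the image-factorization and the exactness of $\ilim$ make the rest formal.
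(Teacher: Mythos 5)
Your proof is correct, and its overall skeleton coincides with the paper's: exactness of $\ilim$ reduces to the exactness of filtered colimits in the ind-category of an abelian category, and essential surjectivity comes from the fact that every countable filtering category admits a cofinal map from $\ZZ_+$. The one genuine divergence is the epimorphism statement: the paper disposes of it (together with exactness) by citing Kashiwara--Schapira, Lemmas 8.6.4 and 8.6.7, as a general property of ind-categories of abelian categories, whereas you prove it by hand for $\NN$-indexed chains --- representing the given epimorphism by a strictly commuting ladder (using that objects of $\LC_\k$ are compact in $\Ind(\LC_\k)$ together with the interleaving trick, which is exactly where countability enters), factoring termwise through images, and concluding via exactness of $\ilim$ and the fact that a morphism which is both mono and epi in the abelian category $\Ind(\LC_\k)$ is an isomorphism. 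All the individual steps check out: images exist since $\LC_\k$ is abelian, they are carried into one another by the transition maps by functoriality of images, and termwise surjections are epimorphisms in $\Ac$. What the citation buys the paper is brevity and independence of the shape of the index category; what your explicit argument buys is self-containedness, a proof that lands directly in $\Ac$ (chains over $\ZZ_+$) with no further reindexing of a general filtrant diagram, and a transparent localization of where countability is used. In effect you have reproved the cited Kashiwara--Schapira lemma in the special case the paper needs.
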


 \noindent{\sl Proof of the lemma:} Exactness of $\ilim$ and the 
statement about epimorphisms are
   general properties of ind-categories
 of abelian categories, see \cite{KS-categories}, Lemmas 8.6.4 and  8.6.7. Next,  any countable filtering category
  admits a cofinal map from
 the poset $\ZZ_+ = \{0,1,2,\dots\}$. This means that any countably indexed ind-object is isomorphic
 to the object of the image of $\ilim$ as above. 
    \qed
 
 \begin{lem}\label{lem:projective}
 Objects of $\Ac^s$ are projective in $\Ac$. 
  \end{lem}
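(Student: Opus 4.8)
The plan is to prove the stronger statement that every object of $\Ac^s$ is in fact \emph{free} — isomorphic to a degreewise finite direct sum of shifted free cyclic modules valued in $\LC_\k$ — and then to observe that such free modules are projective in $\Ac$. The decisive structural input is that $\LC_\k$ is ``semisimple'': being equivalent under the topological dual to $\Vect_\k$ (in which every object is simultaneously projective and injective), every short exact sequence in $\LC_\k$ splits. In particular every monomorphism and every epimorphism in $\LC_\k$ admits a splitting, and every object of $\LC_\k$ is projective.

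For $V = \{V_0 \to V_1 \to \cdots\} \in \Ac^s$, the transition maps $t \colon V_{i-1} \to V_i$ are monomorphisms in $\LC_\k$, hence split. Writing $G_0 = V_0$ and $G_i = \Coker(t \colon V_{i-1} \to V_i)$ for $i \geq 1$, I would choose complements $V_i = t(V_{i-1}) \oplus W_i$ with $W_i \cong G_i$ and let $\sigma_i \colon G_i \hookrightarrow V_i$ be the resulting inclusion. Denote by $F_i(L)$ the free graded $\k[t]$-module on a generator in degree $i$ valued in $L \in \LC_\k$ — explicitly $L \otimes_\k \k[t](-i)$, the system concentrated in degrees $\geq i$ with all transition maps equal to the identity of $L$. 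I then define a $\k[t]$-linear map $\Phi \colon \bigoplus_{i\geq 0} F_i(G_i) \to V$ which in degree $n$ sends $(g_i)_{i \leq n}$ to $\sum_{i\leq n} t^{n-i}\sigma_i(g_i)$. Because the system is bounded below, only finitely many summands contribute in each degree, so the direct sum exists in $\Ac$ and $\Phi$ is well defined. A straightforward induction on $n$, using $V_n = t(V_{n-1})\oplus W_n$ together with the inductive hypothesis that $\Phi_{n-1}$ is an isomorphism and the injectivity of $t$, shows that each $\Phi_n$ is an isomorphism in $\LC_\k$; hence $\Phi$ is an isomorphism and $V$ is free.

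It then remains to check that each $F_i(L)$ is projective. By freeness, $\Hom_\Ac(F_i(L), M) \simeq \Hom_{\LC_\k}(L, M_i)$ naturally in $M$, and this is an exact functor of $M$, since passing to the degree-$i$ component is exact in $\Ac$ (kernels and cokernels are formed degreewise) and $\Hom_{\LC_\k}(L,-)$ is exact because $L$ is projective in $\LC_\k$. A direct sum of projectives is projective, so $V \cong \bigoplus_i F_i(G_i)$ is projective in $\Ac$. I expect the main obstacle to be a matter of careful bookkeeping rather than of substance: one must track that the bounded-below hypothesis (the index set is $\ZZ_{\geq 0}$) is exactly what makes the ``torsion-free $\Rightarrow$ free'' step go through — it both starts the induction and renders the direct sum locally finite — and that the required splittings genuinely exist, which is precisely where the semisimplicity of $\LC_\k$ enters. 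No convergence or completeness issue arises, as every construction takes place degreewise in $\LC_\k$.
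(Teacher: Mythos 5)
Your proof is correct and is essentially the paper's own argument: both exploit the semisimplicity of $\LC_\k = \Vect_\k^\op$ to split each transition monomorphism $V_{i-1}\to V_i$, exhibit $V$ as the free graded $\k[t]$-module on the graded object of complements, and deduce projectivity from freeness. You merely spell out the explicit isomorphism with the free module and the verification that free implies projective, steps the paper compresses into a single line.
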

  
  \noindent {\sl Proof of the lemma:} A diagram $V$ as in \eqref{eq:k[t]} formed by
  embeddings, is a free 
  graded $\k[t]$-module in $\LC_\k$. Indeed, since $\LC_\k=\Vect_\k^\op$
  is a semisimple abelian category, the embedding $V_{i-1}\to V_{i}$ admits a direct
  sum complement $W_i$. Considering $W=(W_i)$ as a $\ZZ_+$-graded object in
  $\LC_\k$, we see that $V\simeq W\otimes_\k \k[t]$ is free. This, and
  semisimplicity of $\LC_\k$,  implies projectivity of $V$. 
  
  \qed
  
  \vskip .2cm
  
  Let us call an object $V\in\Ac$ essentially strict, if $\ilim (V)$ is an essentially strict
  object of $\Ind(\LC_\k)$, i.e., is isomorphic to $\ilim (M)$ where $M\in\Ac^s$. 
  
  \begin{lem}\label{lem:essentially}
  If $V\in\Ac$ is essentially strict, then there is $M\in\Ac^s$ and an epimorphism $q: V\to M$
  in $\Ac$ such that $\ilim (q)$ is an isomorphism in $\Ind(\LC_\k)$. 
  \end{lem}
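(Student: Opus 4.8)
The plan is to construct $M$ as the quotient of $V$ by its ``eventually vanishing'' part, and then to verify, by a Mittag--Leffler argument, that the essential strictness hypothesis is exactly what forces this part to stabilize. Write $V = \{V_0 \to V_1 \to \cdots\}$ (recall $\Ac$ consists of such towers by definition) and denote by $f_{j,i}\colon V_i \to V_j$ the composite transition maps for $j \geq i$. The subspaces $\Ker(f_{j,i}) \subseteq V_i$ increase with $j$, and I would set $K_i = \bigcup_{j \geq i} \Ker(f_{j,i})$, the transition maps $f_i$ restricting to maps $K_i \to K_{i+1}$. Granting for the moment that each $K_i$ is a genuine (closed) subobject of $\LC_\k$, the quotients $M_i = V_i/K_i$ assemble into a tower $M \in \Ac$, and the projections give a level-wise epimorphism $q\colon V \to M$ in $\Ac$.

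The crux is to show that the chain $\Ker(f_{j,i})$ stabilizes in $j$ for each fixed $i$, and this is where essential strictness enters. I would pass to the topological dual of \S\ref{subsec:tate-back}, which identifies $\LC_\k \simeq \Vect_\k^\op$ and hence $\Ind(\LC_\k)^\op \simeq \Pro(\Vect_\k)$. Dualizing $V$ yields a tower $W = \{W_0 \leftarrow W_1 \leftarrow \cdots\}$ of discrete vector spaces with transition maps $g_{i,j}\colon W_j \to W_i$, and under the duality $\Ker(f_{j,i})$ corresponds to $\Coker(g_{i,j}) = W_i/\Im(g_{i,j})$; thus stabilization of kernels is equivalent to stabilization of the descending chain of images $\Im(g_{i,j}) \subseteq W_i$, i.e.\ to the Mittag--Leffler condition on $W$. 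Now by hypothesis $\ilim V$ is isomorphic to $\ilim M'$ with $M' \in \Ac^s$ a tower of monomorphisms; dualizing, $\plim W$ is pro-isomorphic to a tower of \emph{epimorphisms}, and by the classical characterization of such towers (\cite{KS-categories}) this is precisely the Mittag--Leffler property. Hence the images $\Im(g_{i,j})$, equivalently the kernels $\Ker(f_{j,i})$, stabilize.

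With stabilization in hand the construction goes through cleanly: for each $i$ there is $j_0$ with $K_i = \Ker(f_{j,i})$ for all $j \geq j_0$, so $K_i$ is an honest kernel and thus an object of $\LC_\k$, and the identity $f_i^{-1}(K_{i+1}) = K_i$ (taking $j$ large enough to stabilize both levels) shows that the induced maps $M_i \to M_{i+1}$ are monomorphisms, i.e.\ $M \in \Ac^s$. Moreover every element of $K_i$ is killed by some $f_{j,i}$, so $\ilim K = 0$; applying the exactness of $\ilim$ (Lemma~\ref{lem: lim-exact}) to the short exact sequence $0 \to K \to V \to M \to 0$ in $\Ac$ then shows that $\ilim(q)\colon \ilim V \to \ilim M$ is an isomorphism in $\Ind(\LC_\k)$, as required. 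The main obstacle is the middle step: converting the abstract essential-strictness hypothesis into concrete Mittag--Leffler stabilization. The passage to the dual tower of discrete vector spaces is the decisive move, since it reduces the statement to the classical theory of towers of vector spaces, where the equivalence between ``pro-isomorphic to an epimorphism tower'' and the Mittag--Leffler condition is standard.
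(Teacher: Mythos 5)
Your proof is correct, and the object you build is the same one the paper builds: the paper defines $M_i$ as the stabilized image $\Im\{V_i \to V_j\}$ for $j \gg i$, which is precisely your quotient $V_i/K_i$. The difference lies in how the crucial stabilization claim is handled. The paper's proof simply \emph{asserts} that essential strictness forces the chain of epimorphisms $V_i \to \Im\{V_i\to V_{i+1}\} \to \Im\{V_i\to V_{i+2}\} \to \cdots$ to become isomorphisms eventually; you actually prove this, by dualizing ($\LC_\k \simeq \Vect_\k^\op$, hence $\Ind(\LC_\k)^\op \simeq \Pro(\Vect_\k)$) and invoking the classical fact that a countable tower of vector spaces is pro-isomorphic to a tower of epimorphisms if and only if it satisfies the Mittag--Leffler condition. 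This reduction is sound: duality exchanges the increasing chain of kernels $\Ker(f_{j,i}) \subset V_i$ with the decreasing chain of images $\Im(g_{i,j}) \subset W_i$, and monomorphism towers with epimorphism towers, so your argument rests the paper's unproved assertion on a citable classical statement. (A more self-contained alternative, probably what the paper intends: factor $V_i \to \ilim V \simeq \ilim M'$ through some $M'_k$, and $M'_k \to \ilim M' \simeq \ilim V$ through some $V_j$; since the transition maps of $M'$ are monic and $\ilim$ is exact, $M'_k \to \ilim M'$ is a monomorphism, whence $\Ker(f_{j',i}) = \Ker(V_i \to M'_k)$ for all $j' \geq j$, which is the same stabilization.) Your closing verifications likewise fill in steps the paper leaves implicit, and they are correct; the only wording to tighten is ``every element of $K_i$ is killed by some $f_{j,i}$, so $\ilim K = 0$'': elementwise vanishing with varying $j$ would not by itself make the ind-object vanish, but stabilization gives a single $j_0$ with $f_{j_0,i}$ vanishing on all of $K_i$, which is exactly what is needed.
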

  
  \noindent {\sl Proof of the lemma:} For each $i$ consider the diagram of epimorphisms
  \[
  V_i \lra \Im\{V_i\to V_{i+1}\} \lra \Im\{V_i\to V_{i+2}\}\lra \cdots. 
  \]
   If $V$ is essentially strict, these epimorphisms eventually become isomorphisms,
   so the terms of the diagram stabilize to some $M_i\in\LC_\k$, and we get the diagram
   of monomorphisms
   \[
   M\,\,=\,\,\{ M_0 \lra M_1\lra \cdots \}.
   \]
 We see that $M\in\Ac^s$ equipped with a natural epimorphism $q: V\to M$ in $\Ac$
 which induces an isomorphism on the $\ilim$. 
 \qed
 
 \vskip .2cm
 
 We now prove part (a) of Proposition \ref{prop:homdim1}. Let $f: A\to B$ be an epimorphism
 in $\Ind(\LC_\k)$ with $B$ essentially strict. We prove that $f$ splits.
 By Lemma \ref{lem: lim-exact}, $f$ is isomorphic to $f' = \ilim (g)$ where $g: N\to V$
 is a surjection in $\Ac$. It is enough to prove that $f'$ splits. Now, $V\in\Ac$ is essentially
 strict, so by Lemma \ref{lem:essentially}, there is a surjection $q: V\to M$ in $\Ac$
 with $M\in\Ac^s$ and such that $\ilim(q)$ is an isomorphism. It is enough
 therefore to prove that the epimorphism $f'' = \ilim(q g)$ splits in $\Ind(\LC_\k)$.
 But $qg: N\to M$ is a surjection in $\Ac$ with $M$ projective. So $qg$ splits in $\Ac$
 and therefore $f''$ splits in $\Ind(\LC_\k)$. This proves part (a).

 Part (b), follows by considering the 2-term resolution \eqref{eq:2-term-res}. 
 Proposition \ref{prop:homdim1} is proved.

 \begin{cor}\label{cor:tate-triang-explicit}
 (a) Every object of $D^b(\ILC_\k)\simeq D^b(^\heart\ILC_\k)$ is quasi-isomorphic to
  its graded object of cohomology 
  equipped with zero differential. 
  
  (b) The triangulated category $[\Tate_\k]$ is equivalent to the $D^b(^\heart\Ta_\k)$.

  (c) Any strict Tate complex $V^\bullet$ can be split (in $\ILC_\k$) into a direct sum of complexes $V^\bullet = H^\bullet\oplus E^\bullet$, where $E^\bullet$ is exact and $H^\bullet$ has zero 
  differential (and is thus a graded Tate space isomorphic to $H^\bullet(V^\bullet)$.)
 
 \end{cor}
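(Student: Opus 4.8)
The plan is to prove the three parts in the order (a), (c), (b), treating (a) and (c) as the concrete ingredients that feed (b).

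For part (a), I would deduce formality from the fact that $^\heart\ILC_\k = \Ind(\LC_\k)$ is hereditary. First invoke the Schneiders equivalence $D^b(\ILC_\k) \simeq D^b(^\heart\ILC_\k)$ (\cite{schneiders}, Prop. 1.2.32), so that it suffices to show every object of $D^b(^\heart\ILC_\k)$ is isomorphic to its cohomology with zero differential. Since $^\heart\ILC_\k$ has homological dimension $1$ (Proposition \ref{prop:homdim1}(b)), all $\Ext^{\geq 2}$ vanish, and I would run the standard induction on the cohomological amplitude: split off the top cohomology using the truncation triangle $\tau_{\leq m}C \to C \to \tau_{>m}C \xrightarrow{\delta} (\tau_{\leq m}C)[1]$, where by the inductive hypothesis both truncations are already formal. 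Then $\delta$ lives in a finite direct sum of groups $\Ext^{\,n-n'+1}(H^n(C), H^{n'}(C))$ with $n > m \geq n'$, hence with exponent $\geq 2$, so $\delta = 0$, the triangle splits, and $C \cong \tau_{\leq m}C \oplus \tau_{>m}C \cong \bigoplus_i H^i(C)[-i]$.

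For part (c), strictness of $V^\bullet$ means (Proposition \ref{prop:relopen}) that all cocycles $Z^i$, coboundaries $B^i$, and cohomologies $H^i$ are honest objects of $\Ta_\k$, and the two tautological sequences $0 \to Z^i \to V^i \to B^{i+1}\to 0$ and $0 \to B^i \to Z^i \to H^i \to 0$ are strict short exact sequences of Tate spaces. Because Tate spaces are projective in $^\heart\Ta_\k$ (Proposition \ref{prop:homdim1}(a)), both sequences split, yielding $V^i \cong B^i \oplus H^i \oplus B^{i+1}$ compatibly with the differential, which kills $B^i\oplus H^i$ and carries the summand $B^{i+1}$ isomorphically onto the first summand of $V^{i+1}$. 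Reading off the decomposition, $V^\bullet \cong H^\bullet \oplus E^\bullet$, where $H^\bullet = (H^i)$ carries the zero differential and $E^\bullet$ is the direct sum over $i$ of the two-term contractible complexes $B^{i+1} \xrightarrow{\mathrm{id}} B^{i+1}$ placed in degrees $i, i+1$; in particular $E^\bullet$ is exact.

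For part (b), the crucial input is again Proposition \ref{prop:homdim1}(a): every object of $\ILC_\k$ is projective in $^\heart\ILC_\k$. Hence a Tate complex is a bounded complex of projectives, so the localization $K^b(\ILC_\k) \to D^b(^\heart\ILC_\k) \simeq D^b(\ILC_\k)$ is fully faithful on such complexes and realizes $[\Tate_\k]$ as the full triangulated subcategory of $D^b(^\heart\ILC_\k)$ of objects with cohomology in $^\heart\Ta_\k$. On the other side, Tate spaces are projective in $^\heart\Ta_\k$, and since the left heart is the category of formal cokernels of monomorphisms in $\Ta_\k$ (\cite{schneiders}, Cor. 1.2.21), every object of $^\heart\Ta_\k$ admits a two-term resolution by Tate spaces; together with Proposition \ref{prop:homdim1}(b) this gives $K^b(\Ta_\k) \simeq D^b(^\heart\Ta_\k)$. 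I would then show the inclusion $K^b(\Ta_\k) \hookrightarrow [\Tate_\k]$ induced by $\Ta_\k \subset \ILC_\k$ is an equivalence: it is fully faithful as an inclusion of full subcategories of $K^b(\ILC_\k)$, and essentially surjective by part (a), which makes any Tate complex isomorphic to $\bigoplus_i H^i[-i]$ with each $H^i \in {}^\heart\Ta_\k$ isomorphic, in turn, to its two-term Tate-space resolution. Composing the two equivalences gives $[\Tate_\k]\simeq D^b(^\heart\Ta_\k)$. I expect the main obstacle to be the bookkeeping relating the two hearts $^\heart\Ta_\k \subset {}^\heart\ILC_\k$: one must verify that the exact fully faithful embedding $\Ta_\k \hookrightarrow \ILC_\k$ induces a cohomology-preserving embedding of hearts, so that the cohomology of a complex of Tate spaces, computed in $^\heart\ILC_\k$, genuinely lands in $^\heart\Ta_\k$ (needed for $K^b(\Ta_\k) \subset [\Tate_\k]$) and so that the two-term resolutions agree whether cokernels are formed in $^\heart\Ta_\k$ or in $^\heart\ILC_\k$. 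All of this ultimately rests on Proposition \ref{prop:homdim1}(a), which is precisely what forces homotopy classes of maps between complexes of (I)LC spaces to compute the derived $\Hom$ and thereby collapses the naive and derived pictures.
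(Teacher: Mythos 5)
Your overall strategy matches the paper's intent: the paper states this corollary with no proof at all, treating all three parts as immediate consequences of Proposition \ref{prop:homdim1} (projectivity of objects of $\ILC_\k$ in $^\heart\ILC_\k$, and homological dimension $1$), and your arguments -- hereditary formality for (a), splitting off boundaries by projectivity for (c), and the $K^b(\text{projectives})\simeq D^b$ comparison for (b) -- are exactly the standard homological algebra that this proposition is meant to feed.

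There is, however, one incorrect claim in your proof of (c). Strictness of a Tate complex does \emph{not} put the cocycles $Z^i$ and coboundaries $B^i$ in $\Ta_\k$; Proposition \ref{prop:relopen} together with Definition \ref{def:tate-complex} only gives $Z^i, B^i\in\ILC_\k$ and $H^i\in\Ta_\k$. A concrete counterexample, using the paper's own example of a non-Tate object of $\ILC_\k$: let $W=\k[[z_1,z_2]][(z_1z_2)^{-1}]$ and take the complex $W\xrightarrow{\;\mathrm{id}\;}W$; this is exact with strict differential, hence a strict Tate complex, yet $B^1=Z^1=W\notin\Ta_\k$. Consequently your two short exact sequences are not sequences of Tate spaces, and the splittings cannot be obtained from projectivity of Tate spaces in $^\heart\Ta_\k$ (the ``in particular'' clause of Proposition \ref{prop:homdim1}(a)). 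The repair is immediate and uses the same proposition: the sequences are strict exact in $\ILC_\k$, hence exact in $^\heart\ILC_\k$, and the quotient terms $B^{i+1}$ and $H^i$ are objects of $\ILC_\k$, which are projective in $^\heart\ILC_\k$ by the main clause of Proposition \ref{prop:homdim1}(a); the splittings then exist in $^\heart\ILC_\k$ and automatically consist of morphisms of $\ILC_\k$-objects. The conclusion of (c) is untouched, since it only asserts that $E^\bullet$ is a complex over $\ILC_\k$, not over $\Ta_\k$. Finally, the heart-compatibility issue you flag in (b) -- that cohomology of a complex of Tate spaces computed in $^\heart\ILC_\k$ lands in $^\heart\Ta_\k$, and that $\Hom$ and $\Ext^1$ between objects of $^\heart\Ta_\k$ agree in the two hearts -- is a genuine point that you correctly identify but do not verify; it follows from $^\heart\Ta_\k$ being a full subcategory of $^\heart\ILC_\k$ closed under extensions (asserted in the proof of Proposition \ref{prop:tate-exp-smallest}) together with the factorization of any morphism of Tate spaces through its coimage, and you should include that verification, since the paper glosses over it as well.
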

 
 \noindent{\sl Proof:} Part (a) follows from Proposition \ref{prop:homdim1}(b) and the following well known fact
 that can be proved by induction on the length of the complex.

 \begin{lem}
 Let $\Bc$ be an abelian category of homological dimension $1$. Any object $V^\bullet \in D^b(\Bc)$ is
 quasi-isomorpic to $H^\bullet(V^\bullet)$ with zero differential. \qed
 \end{lem}
 
 Let us prove part (b) of  Corollary \ref{cor:tate-triang-explicit}. For a  strictly full abelian subcategory $\Bc$ in
 an abelian categpry $\Cc$ let $D^b_\Bc(\Cc)\subset D^b(\Cc)$ be the full subcategory formed by complexes
 over $\Cc$ with cohomology in $\Bc$. We use the following.
 
 \begin{lem}\label{lem:D-B-C}
 If both $\Bc$ and $\Cc$ have enough projective, then the natural functor $D^b(\Bc)\to D^b_\Bc(\Cc)$
 is an equivalence. \qed
 \end{lem}

 To continue with the proof of part (b) of  Corollary \ref{cor:tate-triang-explicit}, we notice that Proposition
  \ref{prop:homdim1}(a) means that $[\Tate_\k]$ is the full category in the {\em homotopy category}
  of complexes over $^\heart\ILC_\k$ formed by complexes of   strict (hence projective)
  objects with cohomology lying in $^\heart\Ta_\k$. For two complexes of projective objects, their Hom in the homotopy category is the
  same as their Hom in the derived category. Because of this and of the $2$-term projective resolution
  of every object of $\ILC_\k$, we conclude that $[\Tate_\k]$ is identified with $D^b_{^\heart\Ta_\k} (\ILC_\k)$. 
  Now, $^\heart\Ta_\k$ has enough projectives (the actual Tate spaces), so our statement follows from
  Lemma \ref{lem:D-B-C}. 
  
  \vskip .2cm
  
  Finally, let us prove part (c) of Corollary   \ref{cor:tate-triang-explicit}. A strict Tate complex is a complex of
  projective objects in $^\heart\ILC_\k$ whose cohomology objects are projective in $^\heart\Ta_\k$ and
  in $^\heart\ILC_\k$. Therefore it has a splitting as claimed. Corollary  \ref{cor:tate-triang-explicit} is proved. 
  
  \vskip .2cm

 Inside $\Tate_\k$ we have the full dg-subcategories:
$\Perf_\k$,   complexes with finite-dimensional cohomology; 
$D_\k$, complexes with
discrete cohomology; 
$C_\k$, complexes with linearly compact cohomology. Thus $\Perf_\k=C_\k\cap D_\k$
and the associated cohomology categories are identified as
\[
[\Perf_\k]\simeq D^b(\Vect_\k^f), \quad [D_\k]\simeq D^b(\Vect_k), \quad 
[C_k] \simeq D^b(\LC_\k). 
\]

\begin{prop}\label{prop:tate-exp-smallest}
$[\Tate_\k]$is the smallest strictly full triangulated subcategory in $D^b(\ILC_\k)$ which
contains $[D_\k], [C_\k]$ and is closed under direct summands.
\end{prop}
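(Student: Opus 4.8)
The plan is to prove the two inclusions separately. Write $\mathcal T$ for the smallest strictly full triangulated subcategory of $D^b(\ILC_\k)$ that contains $[D_\k]$ and $[C_\k]$ and is closed under direct summands; the goal is $\mathcal T = [\Tate_\k]$. The inclusion $\mathcal T\subseteq[\Tate_\k]$ is the easy one: $[\Tate_\k]$ is triangulated and closed under direct summands, and a discrete space lies in $\Vect_\k\subseteq\Ta_\k$ while a linearly compact space lies in $\LC_\k\subseteq\Ta_\k$. Hence any complex with discrete (resp. linearly compact) cohomology is a Tate complex, so $[D_\k]\subseteq[\Tate_\k]$ and $[C_\k]\subseteq[\Tate_\k]$, and minimality of $\mathcal T$ gives the claim.

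For the reverse inclusion I would first reduce to single cohomology objects. By Corollary \ref{cor:tate-triang-explicit}(a) any Tate complex $V^\bullet$ is isomorphic in $D^b(\ILC_\k)$ to the finite direct sum $\bigoplus_i H^i(V^\bullet)[-i]$ of shifts of its cohomology objects, each of which lies in ${}^\heart\Ta_\k$. Since $\mathcal T$ is triangulated it is closed under shifts and finite direct sums, so it suffices to prove that every object $M\in{}^\heart\Ta_\k$, placed in a single degree, lies in $\mathcal T$.

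The key step is to resolve such an $M$ by genuine Tate spaces. By the structure theory of the left heart of a quasi-abelian category (Schneiders, recalled in \S\ref{subsec:tate-back}A), $M\simeq\Coker_{{}^\heart\Ta_\k}(a)$ for a monomorphism $a\colon V\to W$ in $\Ta_\k$, giving a short exact sequence $0\to V\to W\to M\to 0$ in ${}^\heart\Ta_\k$. Under the equivalence $[\Tate_\k]\simeq D^b({}^\heart\Ta_\k)$ of Corollary \ref{cor:tate-triang-explicit}(b) this becomes a distinguished triangle $V\to W\to M\to V[1]$ in $D^b(\ILC_\k)$; concretely $M$ is represented by the two-term Tate complex $\Cone(a)=[V\to W]$. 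Now each of $V,W\in\Ta_\k$ splits as $V^d\oplus V^c$ with $V^d\in\Vect_\k\subseteq[D_\k]$ discrete and $V^c\in\LC_\k\subseteq[C_\k]$ linearly compact; both summands lie in $\mathcal T$, so $V,W\in\mathcal T$, and therefore $M\in\mathcal T$ as the cone of a morphism between objects of $\mathcal T$. This yields $[\Tate_\k]\subseteq\mathcal T$ and completes the proof.

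I expect the main obstacle to be the compatibility invoked in the key step: one must verify that the formal cokernel taken in ${}^\heart\Ta_\k$ is genuinely computed by $\Cone(a)$ formed in $D^b(\ILC_\k)$, i.e. that the passage through the left hearts carries the exact sequence to the stated triangle in a way matching Corollary \ref{cor:tate-triang-explicit}(b). This is exactly where Schneiders' identification $D^b(\Ac)\simeq D^b({}^\heart\Ac)$ and the two-term description of objects of the left heart must be applied carefully; once those are in place, the remaining content, namely splitting a Tate space into its discrete and linearly compact parts, is immediate from the definition of $\Ta_\k$.
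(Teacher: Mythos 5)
Your proposal is correct and is essentially the paper's own argument: both establish the hard inclusion $[\Tate_\k]\subseteq\Tc$ by reducing (via Corollary \ref{cor:tate-triang-explicit}) to cohomology objects, writing each object of ${}^\heart\Ta_\k$ as the cone of a monomorphism between Tate spaces, and splitting those Tate spaces as $(\text{discrete})\oplus(\text{linearly compact})$ --- the paper compresses all of this into the single remark that every object of $\Ta_\k$ is a direct sum of an object of $C_\k$ and one of $D_\k$, which your write-up usefully unpacks. The only difference is in the easy inclusion $\Tc\subseteq[\Tate_\k]$, where you invoke the note after Definition \ref{def:tate-complex} that $[\Tate_\k]$ is triangulated and closed under direct summands, while the paper re-derives this on the spot from the fact that ${}^\heart\Ta_\k\subset{}^\heart\ILC_\k$ is closed under extensions, so cones and summands formed from $\Ob(C_\k)\cup\Ob(D_\k)$ keep their cohomology in ${}^\heart\Ta_\k$.
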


 \noindent {\sl Proof:} Let $\Tc$ be the smallest subcategory in question. Then 
 $[\Tate_\k] \subset\Tc$, because any object of the quasi-abelian category
 $\Ta_\k$ is a direct sum of an object
 from $C_\k$ and an object from $D_\k$. Conversely, 
 $^\heart\Ta_\k \subset {}^\heart\ILC_\k$
 is closed under extensions. Therefore forming cones and direct summands,
 starting with $\Ob(C_\k)\cup\Ob(D_\k)$ will always give complexes whose
 cohomology objects lie in $^\heart\Ta_\k$. \qed
 
\subsection{Tate objects in dg-categories} \label{subsec:tate-ob-dgcat}

\paragraph{A. Ind- and pro-objects in dg-categories.} 
For background on ind- and pro-objects in  a (stable) $\infty$-category $\Cc$ we refer to \cite{lurie-htt}, \S 5.3. 
  In this paper we consider only ind- and pro-objects represented by countable filtering diagrams.
  Such objects form new $\infty$-categories $\Ind(\Cc)$, $\Pro(\Cc)$. 
  Thus,   objects of  $\Ind(\Cc)$ can be represented by symbols $\hoind_{\hskip -.2cm i\in I}\,\,  x_i$ where $I$ is a countable
  filtering $\infty$-category and $(x_i)_{i\in I} $ is an $\infty$-functor $I\to \Cc$. Similarly, objects of $\Pro(\Cc)$
  can be represented by symbol $\hopro_{\hskip -.2cm i\in I} \,\, y_i$, where $I$ is as before and $(y_i)_{i\in I}$
  is an $\infty$-functor $I^\op\to\Cc$. 
  
  \vskip .2cm 
  
We apply these concepts to dg-categories by converting them to $\k$-linear $\infty$-categories by 
using the dg-nerve construction \cite{cohn-dgcat}\cite{faonte}. The resulting (countable) ind- and
 pro-categories  associated to a dg-category $\Ac$ will be denoted $\Ind(\Ac)$ and $\Pro(\Ac)$. 
They are still $\k$-linear $\infty$-categories that we can and will see as dg-categories. 
We note that, with this understanding, the dg-categories
 $\Ind(\Ac)$ and $\Pro(\Ac)$ are perfect, whenever $\Ac$ is perfect (see Appendic A.C for the meaning of
 ``perfect''). 
Let us point out the following  more explicit description.

\begin{prop}\label{prop:ind-explicit}
Let $\Ac$ be a dg-category. Then: 
 \begin{itemize}
\item[(I0)]  $\Ind(\Ac)$ is quasi-equivalent to its full dg-subcategory
whose  objects are  $\hoind    x_i$, where
\[
(x_i) \,\,=\,\,\bigl\{ x_0 \buildrel f_{01}\over\lra x_1 \buildrel  f_{12}\over\lra x_2 \buildrel f_{23}\over\lra \cdots\bigr\}
\]
is a diagram consisting of objects $x_i\in\Ac$, $i\geq 0$ and closed degree $0$ morphisms $f_{i, i+1}: x_i\to x_{i+1}$.

\item[(I1)]  We have quasi-isomorphisms  (``$\epsilon$-$\delta$ formula"): 
\[
\Hom^\bullet_{\Ind(\Ac)}  \bigl( \hoind x_i, \hoind y_j\bigr) \,\,\simeq \,\, \holim_i \,\, \hocolim _j\Hom^\bullet_\Ac(x_i, y_j),
\]
where the homotopy limits on the right are taken in the model category $\dgVect$ of complexes. 
\end{itemize} 
\end{prop}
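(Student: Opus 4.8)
The plan is to realize $\Ind(\Ac)$ through the $\k$-linear Yoneda embedding into the dg-category $\dgMod_\Ac$ of dg-presheaves $\Ac^\op\to\dgVect$, and to deduce both assertions from two standard inputs transported to the dg-enriched setting: Lurie's cofinality theory for filtered $\infty$-categories \cite{lurie-htt} and the universal mapping formula for ind-objects. I write $h_x=\Hom^\bullet_\Ac(-,x)$ for the representable presheaf. Recall that $\Ind(\Ac)$ sits as the full subcategory of $\dgMod_\Ac$ generated under countable filtered homotopy colimits by the $h_x$, that the enriched Yoneda lemma supplies quasi-isomorphisms $\RHom(h_x,F)\simeq F(x)$, and that $\hoind x_i$ corresponds under this embedding to $\hocolim_i h_{x_i}$.

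For (I0) I would proceed in two steps. Since only countable filtering diagrams are allowed, every object is $\hoind x_i$ for a countable filtered diagram $(x_i)_{i\in I}$, and such an $I$ admits a cofinal functor from $N(\ZZ_+)$ by the countable cofinality result of \cite{lurie-htt}, \S 5.3.1; restriction along a cofinal map leaves the ind-object unchanged, so $\hoind x_i$ is represented by a coherent diagram $N(\ZZ_+)\to\Ac$. The second step is to rectify: any coherent $\ZZ_+$-indexed diagram is objectwise equivalent to a strict one, namely a sequence $x_0\lra x_1\lra\cdots$ of closed degree $0$ maps with no higher coherences (rectification along a direct poset, implemented by a mapping-telescope model). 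As objectwise-equivalent diagrams have equivalent ind-colimits, the strict sequential diagrams already realize every object of $\Ind(\Ac)$ up to equivalence; the subcategory on them is therefore essentially surjective, and being full it is a quasi-equivalence onto $\Ind(\Ac)$.

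For (I1) I would simply compute inside $\dgMod_\Ac$. Writing $\hoind x_i\simeq\hocolim_i h_{x_i}$ and $\hoind y_j\simeq\hocolim_j h_{y_j}$ and using that $\RHom(-,M)$ turns homotopy colimits in its first variable into homotopy limits, one gets
\[
\Hom^\bullet_{\Ind(\Ac)}\bigl(\hoind x_i,\hoind y_j\bigr)\,\,\simeq\,\,\holim_i\RHom\bigl(h_{x_i},\,\hocolim_j h_{y_j}\bigr).
\]
By the enriched Yoneda lemma the inner term is the evaluation $\bigl(\hocolim_j h_{y_j}\bigr)(x_i)$, and since homotopy colimits of dg-presheaves are computed objectwise this equals $\hocolim_j h_{y_j}(x_i)=\hocolim_j\Hom^\bullet_\Ac(x_i,y_j)$. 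Substituting gives exactly the asserted formula, with the outer $\holim_i$ and inner $\hocolim_j$ taken in $\dgVect$.

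The hard part will be the homotopy-coherent bookkeeping at the two points where strictness enters. In (I0) I must check that rectification along $N(\ZZ_+)$ really yields a strict diagram with the same ind-colimit, i.e. that the telescope preserves the colimit. In (I1) the delicate points are that evaluation $F\mapsto F(x_i)$ commutes with the filtered homotopy colimit $\hocolim_j h_{y_j}$ at the derived level, and that the sequential $\holim_i$ is the genuine derived limit in $\dgVect$ (so that the $\varprojlim^1$ contributions are retained, not discarded). Both become routine once everything is phrased in the projective model structure on the relevant functor categories into $\dgVect$, but they are precisely where the plain ind-category identity must be upgraded to its homotopy-coherent ``$\epsilon$-$\delta$'' form.
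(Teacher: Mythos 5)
Your plan for (I1) is sound and, if anything, more explicit than the paper's treatment: the paper disposes of (I1) by citing the conceptual properties of $\Ind$-categories in \cite{lurie-htt} (p.~378), whereas your computation through the Yoneda embedding into $\dgMod_\Ac$ --- using that $\RHom(-,M)$ turns homotopy colimits into homotopy limits, the enriched Yoneda lemma, and the objectwise computation of filtered homotopy colimits of dg-presheaves --- is a correct self-contained route to the $\epsilon$-$\delta$ formula. Likewise, your cofinality step in (I0), reducing a countable filtering index $\infty$-category to $N(\ZZ_+)$ via \cite{lurie-htt}, \S 5.3.1, is exactly what the paper does.

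The gap is in your rectification step for (I0). A mapping-telescope construction (or any model-categorical rectification carried out in $\dgMod_\Ac$) produces a strict sequential diagram whose terms are telescopes $T_n$: these are objects of $\dgMod_\Ac$ that are merely quasi-isomorphic to the representables $h_{x_n}$, not objects of $\Ac$. Statement (I0) demands a strict diagram whose terms are objects $x_i$ of $\Ac$ itself and whose maps are closed degree-$0$ morphisms of $\Ac$; if you try to transport the strict maps $T_n \to T_{n+1}$ back to maps $h_{x_n} \to h_{x_{n+1}}$, you can only do so up to homotopy, which reintroduces precisely the coherence problem you set out to eliminate. So the telescope preserves the colimit (the point you flagged as the hard part) but fails to produce a diagram of the required form at all. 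The missing idea is the paper's simpler one: the poset $\ZZ_+$ is freely generated, as a category, by the elementary arrows $i \to i+1$ --- equivalently, the spine inclusion into $N(\ZZ_+)$ is inner anodyne, hence a categorical equivalence --- so any $\infty$-functor $N(\ZZ_+) \to \Ac$ is equivalent to the honest functor extending its restriction to the elementary arrows; and in the dg-nerve those restrictions are precisely closed degree-$0$ morphisms $f_{i,i+1}\colon x_i \to x_{i+1}$ of $\Ac$. This keeps the objects in $\Ac$ and yields (I0) directly, with no telescope needed.
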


\begin{rems}
(a) The  $\hocolim_j$ in (I1) is the same as the naive inductive 
 limit in $\dgVect$, see Proposition \ref{prop:dgvect-holim}(a).

  \vskip .2cm

(b) By duality we get a description of $\Pro(\Ac) =\Ind(\Ac^\op)^\op$ in terms of symbols $\hopro  x_i$
where $(x_i)$ is a diagram of objects and closed degree $0$ morphisms in the order  opposite to  that of (I0). 

\end{rems}
  
  \noindent{\sl Proof of Proposition \ref{prop:ind-explicit}:} 
By definition, objects of $\Ind(\Ac)$ are represented by $\infty$-functors $I\to\Ac$
where $I$ is a filtering $\infty$-category. In our setting we assume that $\Ob(I)$
is at most countable. As in the classical case, any filtering $I$ admits a cofinal $\infty$-functor
from a filtering poset (\cite{lurie-htt}, Prop. 5.3.1.16), and in the countable case
we can take this poset to be $\ZZ_+$. Now, the category corresponding to the
poset $\ZZ_+$, is freely generated by the elementary arrows $i\to i+1$.
Therefore any $\infty$-functor $x: \ZZ_+\to\Ac$ can be replaced by an honest
functor obtained by extending $x$ from these elementary arrows. Such functors
are precisely the data in (I0).  Finally, the appearance of the ``$\epsilon$-$\delta$ formula''
in (I1) from conceptual properties of $\Ind(\Ac)$ is explained in  
\cite{lurie-htt}, p.378. 
\qed

  \paragraph{B. Tate $A$-modules.}
 
 Let $A$ be a $\ZZ_{\leq 0}$-graded commutative dg-algebra. For any $m\in\ZZ$
 we then have the full dg-category
 $\Perf^{\leq m}_A\subset\Perf_A$ formed by those perfect dg-modules over $A$ which, as
 complexes, are situated in degrees $\leq m$. Recall that the duality functor
 $M\mapsto M^\vee = \RHom_A(M, A)$ identified $\Perf_A$ with its dual. We define
 the full subcategories in $\Perf_A$: 
 \[
 \Perf^{\geq l}_A \,=\, (\Perf^{\leq -l}_A)^\vee, \quad \Perf^{[l,m]}_A \,=\,\Perf^{\geq l}_A
 \cap\Perf^{\leq m}_A, \quad l\leq m. 
 \]
 We define then the  $\infty$ (or dg-) categories
 \[
 \Db_A =\bigcup_{l,m} \Ind(\Perf^{[l.m]}_A) \subset \Ind(\Perf_A),\quad 
 \Cb_A   = \bigcup_{l,m}\Pro(\Perf^{[l,m]}_A) \subset 
 \Pro(\Perf_A)
  \]
  formed by ind- or pro-diagrams of which all terms belong to $\Perf_A^{[l,m]}$
 for some $l,m$ (depending on the diagram).  These categories are dual to
 each other.
 
 The category $\Db_A$
 is tensored over the category $\Vect_\k$ of all (possibly infinite-dimensional)
 $\k$-vector spaces. In particular, with each object $M$ it contains $\k[z]\otimes M= \bigoplus_{i=0}^\infty M$,
 the direct sum of infinitely many copies of $M$. The category $\Cb_A$
 is tensored over the category of linearly compact topological $\k$-vector spaces.
 In particular, with each object $M$ it contains $\k[[z]]\wh \otimes M  = \prod_{i=0}^\infty M$,
 the direct product of infinitely many copies of $M$.

\vskip .3cm

 We define the dg-category $\Tate_A$  of {\em Tate $A$-modules}
 as the perfect envelope of the full dg-subcategory in $\Ind(\Pro(\Perf_A))$
 whose class of objects is $\Ob(\Cb_A) \cup \Ob(\Db_A)$. Since
 $\Ind(\Pro(\Perf_A))$ is a perfect dg-category, we can view $\Tate_A$
 as the minimal dg-subcategory in $\Ind(\Pro(\Perf_A))$
 containing $\Cb_A, \Db_A$ and closed under forming shifts,  cones and homotopy direct summands.
    
 \begin{rem}
 This is a slight modification of the definition in \cite{hennion-tate} in that, besides restricting to
 countable ind- or pro-diagrams,  we force all objects to be
``bounded complexes". In particular, an infinite resolution of a non-perfect $A$-module $M$
 (and thus such an $M$  itself)
 would be an object in $\Ind(\Perf_A)$ but not in $\Db_A$. In particular, it is, a priori,
 not an object of $\Tate_A$. 
 \end{rem}
 
\begin{prop}\label{prop:tate-comparison}
Let $A=\k$. Then, in comparison with the constructions from \S \ref{subsec:tate-back}:

(a) $\Db_\k$ is quasi-equivalent to $D_\k 
\simeq C^b(\Vect_\k)$  and $\Cb_\k$ to $C_\k \simeq C^b(\LC_\k)$;

(b)  $\Tate_A$ (for $A = \k$) is quasi-equivalent to the category of Tate complexes from \S \ref{subsec:tate-back}. 
\end{prop}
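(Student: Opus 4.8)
The plan is to establish (a) by realizing each bounded-degree ind- (resp. pro-) category of perfect complexes as a category of bounded complexes of discrete (resp. linearly compact) spaces, and then to deduce (b) by comparing the two descriptions of the Tate category as the thick hull of its discrete and linearly compact parts. First I would use that over the field $\k$ there is a quasi-equivalence $\Perf_\k \simeq C^b(\Vect_\k^f)$, the dg-category of bounded complexes of finite-dimensional $\k$-vector spaces (every perfect complex over a field is formal and splits into its cohomology), under which $\Perf_\k^{[l,m]}$ becomes the complexes concentrated in degrees $[l,m]$. By Proposition \ref{prop:ind-explicit} an object of $\Ind(\Perf_\k^{[l,m]})$ is a sequential diagram $x_0 \to x_1 \to \cdots$ of such complexes, and I would send it to its honest colimit $\colim_i x_i \in C^{[l,m]}(\Vect_\k)$ (a degreewise union of finite-dimensional spaces, hence at most countably dimensional, i.e. discrete, in each degree). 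Essential surjectivity is clear, since any bounded complex of at most countably dimensional vector spaces is the filtered union of its finite-dimensional subcomplexes. Full faithfulness follows from the ``$\eps$--$\delta$ formula'' (I1) of Proposition \ref{prop:ind-explicit}: because each $x_i$ is a bounded complex of finite-dimensional, hence compact, spaces, the inner $\hocolim_j$ is the naive colimit and the outer $\holim_i$ is the naive limit, so that $\Hom^\bullet_{\Ind}(x_i, y_j) \simeq \holim_i \hocolim_j \Hom^\bullet(x_i,y_j)$ agrees with $\Hom^\bullet_{C^b(\Vect_\k)}(\colim_i x_i, \colim_j y_j)$ by compactness of the $x_i$. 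This yields $\Db_\k \simeq C^b(\Vect_\k) = D_\k$, whose pointwise shadow is exactly $\Ind(\Vect_\k^f)\simeq\Vect_\k$ of Proposition \ref{prop:ind-fin}.

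Next I would obtain the statement for $\Cb_\k$ by dualizing. The duality $M \mapsto M^\vee = \RHom_\k(M,\k)$ is an anti-autoequivalence of $\Perf_\k \simeq C^b(\Vect_\k^f)$ interchanging $\Ind$ and $\Pro$, so applying it to the previous paragraph turns the colimit functor into a limit functor $(y_j) \mapsto \lim_j y_j$ with values in bounded complexes of products of copies of $\k$, i.e. in $C^b(\LC_\k)$; on cohomology this is $\Pro(\Vect_\k^f)\simeq\LC_\k$ of Proposition \ref{prop:ind-fin}. The same compactness argument, read through the topological dual, shows this functor is a quasi-equivalence $\Cb_\k \simeq C^b(\LC_\k) = C_\k$.

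For (b) the plan is to compare the two Tate categories as thick hulls of their discrete and linearly compact pieces. On the one hand, $\Tate_A$ for $A=\k$ is by definition the minimal dg-subcategory of $\Ind(\Pro(\Perf_\k))$ containing $\Cb_\k$ and $\Db_\k$ and closed under shifts, cones and homotopy direct summands. On the other hand, the Tate complexes of Definition \ref{def:tate-complex} form a dg-category whose homotopy category, by Proposition \ref{prop:tate-exp-smallest} together with Corollary \ref{cor:tate-triang-explicit}(b), is the smallest thick triangulated subcategory of $D^b(\ILC_\k)\simeq D^b({}^\heart\Ta_\k)$ containing $[C_\k]$ and $[D_\k]$. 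Let $\Gc^{\mathrm{new}}$ (resp. $\Gc^{\mathrm{old}}$) be the full dg-subcategory on $\Ob(\Cb_\k)\cup\Ob(\Db_\k)$ (resp. on $\Ob(C_\k)\cup\Ob(D_\k)$). By part (a) the equivalences agree on objects and on the ``pure'' Hom-complexes (within $\Db$/$D$ and within $\Cb$/$C$), so it remains to match the two mixed Hom-complexes. I would compute the mixed Hom in $\Ind(\Pro(\Perf_\k))$ via the iterated formula $\holim_i \holim_j \Hom_{\Perf_\k}^\bullet(x_i,y_j)$ coming from (I1) applied to $\Ind$ and to $\Pro$, and the Hom in $C^b(\ILC_\k)$ via the continuous internal Hom $\ul\Hom(\colim_i x_i, \lim_j y_j) = \lim_i \lim_j \ul\Hom(x_i,y_j)$; granting that these agree gives $\Gc^{\mathrm{new}}\simeq\Gc^{\mathrm{old}}$, and since passing to the thick (idempotent-complete pretriangulated) hull is functorial for quasi-equivalences, the two Tate categories are quasi-equivalent.

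The hard part will be precisely this mixed Hom-complex matching. The algebraic side is a double homotopy limit while the analytic side is an iterated topological-dual limit in $\ILC_\k$, and a priori these could differ by $\lim^1$ or higher-derived terms. I would control this using strictness together with the fact (Proposition \ref{prop:homdim1}) that $\Ind(\LC_\k)$ has homological dimension $1$ with the objects of $\LC_\k$ projective: the pro-systems $(y_j)$ may be taken strict (epimorphic transition maps) and the ind-systems $(x_i)$ strict (monomorphic transition maps), and mapping out of the compact finite-dimensional $x_i$ preserves these surjections, so all relevant $\lim^1$ vanish and each homotopy $(\mathrm{co})$limit reduces to its naive counterpart. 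This is exactly what identifies the $\infty$-categorical double limit with the continuous $\ul\Hom$ of the quasi-abelian category $\ILC_\k$, and it is the only point where the special structure of $A=\k$ (semisimplicity of $\LC_\k$ and finite-dimensionality of the generators) is genuinely used.
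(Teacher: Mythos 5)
Your proposal is correct and follows essentially the same route as the paper's proof: part (a) by identifying bounded-amplitude ind/pro-objects of $\Perf_\k$ with bounded complexes of discrete (resp. linearly compact) spaces via the formula (I1) of Proposition \ref{prop:ind-explicit} together with duality, and part (b) by presenting both Tate categories as perfect envelopes of the full subcategories on $\Ob(\Cb_\k)\cup\Ob(\Db_\k)$, resp. $\Ob(C_\k)\cup\Ob(D_\k)$, reducing everything to matching the two mixed Hom-complexes, which you handle exactly as the paper does, by choosing strict (monomorphic ind-, epimorphic pro-) presentations so that the iterated homotopy (co)limits collapse to naive ones. The only cosmetic discrepancy is the tool you cite for this collapse: the relevant statement is Proposition \ref{prop:dgvect-holim} from the appendix (naive sequential colimits, and $\lim^1$-vanishing for termwise-surjective sequential systems), rather than Proposition \ref{prop:homdim1}, though the mechanism you describe is precisely that of Proposition \ref{prop:dgvect-holim}.
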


\noindent {\sl Proof:}  (a)
Let us show the first identification, the second one follows
by duality.  It is known that $\Ind(\Perf_\k)=\dgVect$. (Both homotopy limits in the RHS of (I1) can be replaced by the ordinary projective
resp. inductive limits because of Proposition \ref{prop:dgvect-holim}(b1).) Then $D_\k$ and $C^b(\Vect_\k)$ are
full subcategories of $\dgVect$ and have the same objects.

\vskip .2cm

We now prove part (b). Let us denote temporarily by $\Tate_{A=\k}$ the dg-category
obtained by specializing the above definition of $\Tate_A$ to $A=\k$, as distinguished
from the dg-category $\Tate_\k$ defined in \S \ref{subsec:tate-back}. Let
\[
IP_\k \,\,=\,\,\bigcup_{l,m} \Ind(\Pro(\Perf_\k^{[l,m]}))\,\,\subset \,\,\Ind(\Cb_\k). 
\]
Then $\Tate_{A=\k}$ is the perfect envelope of 
$\Cb\Db_\k$, the full
dg-subcategory in $IP_\k$ on the class of objects $\Ob(\Cb_\k)\cup\Ob(\Db_\k)$.
Similarly, since $C^b(\ILC_\k)$ is a perfect dg-category, Proposition 
\ref{prop:tate-exp-smallest} implies that $\Tate_\k$ is identified with the
perfect envelope of $CD_\k$, the full subcategory in $C^b(\ILC_\k)$
on the class of objects $\Ob(C_\k)\cup\Ob(D_\k)$. Let us denote both identifications
$\Db_\k\to D_\k$ and $\Cb_\k\to C_\k$ from part (a) by the same letter $\lambda$
(taking the limit). Then it is enough to prove the following,

\begin{lem}
Let $V= \hoind \, V_i^\bullet$ be an object of $\Db_\k$ and $W=\hopro \, W_i^\bullet$
be an object of $\Cb_\k$, with $V_i^\bullet$, resp. $W_i^\bullet$ being
an inductive resp. projective system  over $\Perf^{[l,m]}_\k$. Then the natural 
morphisms of complexes
\[
\begin{gathered}
\Hom^\bullet_{IP_\k}(V, W) \lra \Hom^\bullet_{C^b(\ILC_\k)} (\lambda(V), \lambda(W)), 
\\
\Hom^\bullet_{IP_\k}(W, V) \lra \Hom^\bullet_{C^b(\ILC_\k)} (\lambda(W), \lambda(V))
\end{gathered}
\]
are quasi-isomorphisms. 
\end{lem}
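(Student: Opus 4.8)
The plan is to reduce both comparison maps to statements about iterated homotopy (co)limits of the finite-dimensional complexes $\RHom_\k(V_i^\bullet, W_j^\bullet)$, and then to match these against the ordinary (co)limits that compute the topological $\Hom$-complexes on the right.

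First I would use Proposition \ref{prop:ind-explicit}, applied to the nested construction $IP_\k = \Ind(\Pro(\Perf_\k))$, to unwind the two left-hand $\Hom$-complexes. Viewing $V=\hoind_i V_i^\bullet$ as an ind-object whose terms are constant pro-objects, and $W=\hopro_j W_j^\bullet$ as a constant ind-object, the formula (I1) for the outer $\Ind$ (with $\Ac=\Pro(\Perf_\k)$) together with its pro-dual for the inner $\Pro$ (with $\Ac=\Perf_\k$) give
\[
\Hom^\bullet_{IP_\k}(V,W) \,\simeq\, \holim_i \holim_j \RHom_\k(V_i^\bullet, W_j^\bullet),
\]
\[
\Hom^\bullet_{IP_\k}(W,V) \,\simeq\, \hocolim_i \hocolim_j \RHom_\k(W_j^\bullet, V_i^\bullet).
\]
In the first expression both index directions are genuine towers, since $V_i\to V_{i+1}$ and $W_{j+1}\to W_j$ induce $\RHom(V_{i+1},W_j)\to\RHom(V_i,W_j)$ and $\RHom(V_i,W_{j+1})\to\RHom(V_i,W_j)$; in the second they are filtered inductive systems. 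Because $\k$ is a field and $V_i^\bullet, W_j^\bullet \in \Perf^{[l,m]}_\k$, each $\RHom_\k(V_i^\bullet,W_j^\bullet)$ is represented by a bounded complex of finite-dimensional vector spaces whose amplitude sits in a fixed window independent of $i,j$.

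Next I would compute the right-hand sides using the universal properties of limits and colimits in $\ILC_\k$. Since $\lambda(V)=\varinjlim_i V_i^\bullet$ is a complex of discrete spaces and $\lambda(W)=\varprojlim_j W_j^\bullet$ one of linearly compact spaces, continuous maps out of an inductive limit and into a projective limit yield
\[
\Hom^\bullet_{C^b(\ILC_\k)}(\lambda(V),\lambda(W)) \,=\, \varprojlim_i \varprojlim_j \Hom^\bullet_\k(V_i^\bullet, W_j^\bullet),
\]
while, dually, the fact that a continuous map from a linearly compact space to a discrete one factors through a finite quotient gives
\[
\Hom^\bullet_{C^b(\ILC_\k)}(\lambda(W),\lambda(V)) \,=\, \varinjlim_i \varinjlim_j \Hom^\bullet_\k(W_j^\bullet, V_i^\bullet).
\]
Here the uniform boundedness of the complexes is what lets the (co)limits commute with the finite products over the cohomological degree, and the finite-dimensional representatives make the naive $\Hom^\bullet_\k$ compute $\RHom_\k$.

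It then remains to compare homotopy (co)limits with ordinary ones. For the second morphism this is formal: both sides are filtered sequential colimits of the same complexes, and $\hocolim\simeq\varinjlim$ in $\dgVect$ by Proposition \ref{prop:dgvect-holim}(a), so the natural map is a quasi-isomorphism. The hard part will be the first morphism, where I must show that the canonical comparison $\holim_i\holim_j \to \varprojlim_i\varprojlim_j$ is a quasi-isomorphism. For this I would invoke that towers of finite-dimensional $\k$-vector spaces are automatically Mittag--Leffler (the images of the transition maps stabilize), so the relevant $\varprojlim^1$ terms vanish degreewise; combined with the uniform amplitude bound, this lets Proposition \ref{prop:dgvect-holim}(b1) replace each homotopy limit by the ordinary one. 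The entire content of the lemma therefore rests on this $\varprojlim^1$-vanishing, which is precisely where perfectness of the $V_i^\bullet,W_j^\bullet$ (finite-dimensional cohomology) and their uniform boundedness are used.
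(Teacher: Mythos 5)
Your reduction of both sides---(I1) applied twice to obtain the double $\holim$/$\hocolim$ presentations, the identification of the topological Hom-complexes with iterated $\varprojlim$/$\varinjlim$, and the disposal of the $\hocolim$ case by Proposition \ref{prop:dgvect-holim}(a)---is exactly the paper's route. The gap is in the step you yourself flag as carrying the entire content of the lemma: replacing $\holim_i \holim_j \RHom_\k(V_i^\bullet, W_j^\bullet)$ by the iterated ordinary limit. Proposition \ref{prop:dgvect-holim}(b1), together with the Mittag--Leffler property of towers of \emph{finite-dimensional} spaces, does justify the inner replacement $\holim_j \simeq \varprojlim_j$ for each fixed $i$. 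But it says nothing about the outer tower: its terms $\varprojlim_j \Hom^\bullet_\k(V_i^\bullet, W_j^\bullet)$ are projective limits of finite-dimensional complexes, hence linearly compact and in general infinite-dimensional in each degree; they are not perfect complexes, and towers of such spaces are not covered by the finite-dimensional ML argument. So ``replace each homotopy limit by the ordinary one'' is unjustified precisely at the second (outer) stage.

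The paper closes this hole by a normalization you omit: one may assume that the inductive system $(V_i^\bullet)$ consists of injections and the projective system $(W_j^\bullet)$ of surjections. Over a field these are degreewise split, so all transition maps in the double Hom-diagram are termwise \emph{surjective}; this surjectivity survives the inner $\varprojlim_j$ (the kernel towers again have surjective transition maps, so their $\varprojlim^1$ vanishes), and then part (b2) of Proposition \ref{prop:dgvect-holim}---not (b1)---applies to the outer tower of non-perfect complexes. Alternatively, you could salvage your argument by a cofinality trick: rewrite $\holim_i\holim_j$ as a homotopy limit over the poset $\ZZ_+\times\ZZ_+$ and restrict to the diagonal, which is cofinal; the diagonal tower does consist of perfect complexes, so (b1) applies, and the same cofinality identification holds for the ordinary limits. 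Either way, some input beyond (b1) and finite-dimensional Mittag--Leffler is required, and your write-up as it stands does not supply it.
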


\noindent{\sl Proof of the lemma:} We can assume that $(V_i^\bullet)$ consists of injective
morphisms and $(W_i^\bullet)$ consists of surjective morphisms of perfect complexes. 
Then, applying  the formula (I1) from Proposition \ref{prop:ind-explicit} twice, we realize
$\Hom^\bullet_{IP_\k}(V,W)$ as double $\holim$ of a diagram of perfect complexes and surjective
maps. Applying Proposition \ref{prop:dgvect-holim}(b) once, we replace the first
$\holim$ by $\varprojlim$ and get a $\holim$ of a diagram of complexes and surjective
maps. Applying  Proposition \ref{prop:dgvect-holim}(b) once again, we replace the
second $\holim$ by $\varprojlim$. After this the result reduces to the set of
continuous morphisms of complexes $\lambda(V)\to\lambda(W)$. 

Similarly, we realize $\Hom^\bullet_{IP_\k}(W,V)$ as double $\hocolim$ of a diagram of perfect complexes and  injective maps. Applying Proposition \ref{prop:dgvect-holim}(a),
we reduce it to double $\varinjlim$ which again gives the space of continuous
morphisms of complexes $\lambda(W)\to\lambda(V)$.
This proves the lemma and Proposition \ref{prop:tate-comparison}. 

\qed

 \vskip .2cm

 \begin{prop}\label{prop:RGammak}
The construction $E \mapsto A_n^\bullet(E) = \Gamma(\wh J,\Omega^\bullet_{\wh J/D^\circ_n} \otimes \wh \pi^* E)$ defines an exact functor 
\[
R\Gamma \colon \Perf_{D^\circ_n} \lra \Tate_k.
\]
\end{prop}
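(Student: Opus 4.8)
The plan is to reduce the statement to a single property of the algebra $A_n^\bullet$, using that $D_n^\circ$ is quasi-affine. Indeed, $D_n^\circ$ is an open subscheme of the affine scheme $D_n=\Spec\k[[z_1,\dots,z_n]]$, so Proposition~\ref{prop:derivedcat-qaffine} yields an equivalence $\mathrm{D}_{\mathrm{qcoh}}(D_n^\circ)\simeq\dgMod_{A_n^\bullet}$ whose underlying functor carries $\Oc_{D_n^\circ}$ to the free module $A_n^\bullet$ (here $u_*\Oc\simeq R\Gamma(D_n^\circ,\Oc)\simeq A_n^\bullet$, and the multiplication map is an equivalence by Lemma~\ref{lem:AABA}). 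Restricting to compact objects gives $\Perf_{D_n^\circ}\simeq\Perf_{A_n^\bullet}$, and for $E\leftrightarrow M$ one computes
\[
A_n^\bullet(E)\;\simeq\;R\Gamma(D_n^\circ,E)\;\simeq\;\RHom_{\Oc}(\Oc,E)\;\simeq\;\RHom_{A_n^\bullet}(A_n^\bullet,M)\;\simeq\;M ,
\]
so that $R\Gamma$ is identified with the functor sending a perfect $A_n^\bullet$-module to its underlying complex of $\k$-vector spaces. It therefore suffices to show that this underlying complex is always a Tate complex and that the assignment is exact.

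First I would treat the generators. By Example~\ref{ex: A_n-tate-complex} the underlying complex of $A_n^\bullet$ is a strict Tate complex; since $\Tate_k$ is closed under shifts and finite direct sums, every finite free module $(A_n^\bullet)^{\oplus r}[m]$ also has Tate underlying complex. Next I would check continuity of the structure maps: a morphism of finite free modules is a matrix over $A_n^\bullet$, and multiplication by a fixed element of $A_n^\bullet$ respects the pole-order topology because the filtration of Corollary~\ref{cor:filt} satisfies $F_r\cdot F_{r'}\subset F_{r+r'}$. Hence every such morphism is a morphism in $C^b(\ILC_\k)$, and the corresponding cones are computed identically in $\dgVect$ and in $C^b(\ILC_\k)$.

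I would then invoke that $A_n^\bullet$ is a compact generator, so $\Perf_{A_n^\bullet}$ is the thick subcategory it generates: every perfect module arises from shifted finite free modules by finitely many cones and retracts. Because $[\Tate_k]$ is a triangulated subcategory of $[C^b(\ILC_\k)]$ closed under direct summands (the remark after Definition~\ref{def:tate-complex}, together with Proposition~\ref{prop:tate-exp-smallest}), these operations preserve membership in $\Tate_k$. This shows at once that the underlying complex of every perfect $A_n^\bullet$-module is a Tate complex, that $R\Gamma$ lifts to a functor valued in $\Tate_k$, and that it is exact: the forgetful functor preserves cofiber sequences, and $\Tate_k\hookrightarrow C^b(\ILC_\k)$ is a full triangulated embedding which reflects exact triangles.

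The main obstacle is keeping track of the topology through the triangulated generation. Concretely, a general perfect $A_n^\bullet$-module need not be strictly perfect (a finite complex of finite free modules) but only a retract of one, so closure of $\Tate_k$ under direct summands is genuinely required rather than a naive representative by a bounded complex of vector bundles on $D_n^\circ$; and one must ensure that cones formed in $C^b(\ILC_\k)$ agree with the cones of the underlying $\k$-complexes, which is exactly what the continuity statement derived from Corollary~\ref{cor:filt} guarantees.
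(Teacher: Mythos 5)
Your proof is correct, but it takes a genuinely different route from the paper's. The paper never invokes the equivalence $\Perf_{D_n^\circ}\simeq\Perf_{A_n^\bullet}$: instead it topologizes the construction at the level of the affine torsor itself, making $N\mapsto \Gamma(\wh J,\Omega^\bullet_{\wh J/D_n^\circ}\otimes N)$ into a strict functor $\on{Mod}_B\to\Ind\Pro\Perf_\k$, where $B=\Oc(\wh J)$ (in the style of Drinfeld's Theorem 7.2, as the paper notes), then derives it to an exact dg-functor on $\on{D}_{\mathrm{qcoh}}(\wh J)\simeq \dgMod_B$ and restricts along $\wh \pi^*$. You instead reduce, via Proposition \ref{prop:derivedcat-qaffine}, to lifting the underlying-complex functor on $\Perf_{A_n^\bullet}$, topologize it on finite free modules using the continuity coming from Corollary \ref{cor:filt}, and propagate membership in $\Tate_\k$ by thick generation. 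What the paper's approach buys is a construction defined on all quasi-coherent complexes on $\wh J$ which survives base change in the test cdga; this is the shape of argument needed for the family version (Proposition \ref{prop:buildRGamma}), where your reduction is unavailable since Proposition \ref{prop:derivedcat-qaffine} is stated only for classical schemes. What your approach buys is transparency over $\Spec\k$: Tate-ness of the values and exactness become formal consequences of generation by $A_n^\bullet$ together with $\Tate_\k$ being a perfect dg-category, compatibility with the regular representation $l$ used in Theorem \ref{thm:compar-1} is automatic, and in fact your generation argument supplies the justification, left implicit in the paper's terse proof, for why the restricted functor lands in $\Tate_\k$ rather than merely in $\Ind\Pro\Perf_\k$.

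One point to tighten: verifying object-by-object that images lie in $\Tate_\k$ does not by itself produce a functor valued in $\Tate_\k$, because being a Tate complex is \emph{structure} (a topology on each term), not a property of a complex of $\k$-vector spaces. The fix is already latent in your argument: on the full dg-subcategory of finite cell modules $A_n^\bullet\otimes_\k V$ ($V$ finite-dimensional graded, differential given by $d_{A_n^\bullet}$ plus a matrix), \emph{every} morphism, and not only those between genuinely free modules, is given by a matrix over $A_n^\bullet$, hence is continuous by Corollary \ref{cor:filt}. This yields an honest dg-functor into $\Tate_\k\subset C^b(\ILC_\k)$ preserving shifts and cones, which extends along the Morita equivalence with $\Perf_{A_n^\bullet}$ precisely because $\Tate_\k$ is perfect (Proposition \ref{prop:tate-exp-smallest} and the remark after Definition \ref{def:tate-complex}); exactness of the extension is then automatic, so your closing appeal to reflection of exact triangles is not needed.
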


This statement,  as well as the stronger Proposition \ref{prop:buildRGamma} later, can be seen as analogs, in our setting, of
Theorem 7.2 of \cite{drinfeld}.

\begin{proof}
The functor $\Gamma(\wh J,\Omega^\bullet_{J/D^\circ_n} \otimes -)$ is naturally made into a (strict) functor
\[
\on{Mod}_B \lra \Ind \Pro \Perf_k
\]
where $B$ is the ring of function of the affine scheme $\wh J$.
Deriving this functor, we get an exact dg-functor
\[
\on{D}_{\mathrm{qcoh}}(\wh J) \simeq \dgMod_B \lra \Ind \Pro \Perf_\k
\]
which restricts along $\wh \pi^*$ to the announced functor
\[
R\Gamma \colon \Perf_{D^\circ_n} \lra \Tate_\k.
\]
\end{proof}

\subsection{The Tate class, the residue class and the local Riemann-Roch}\label{subsec:LRR}

\paragraph{A. The Tate class in cyclic cohomology.}
We start with a delooping  result.

\begin{thm}\label{thm:Hc-dloop}
There is a canonical isomorphism
\[
HC_{*}(\Tate_\k) \simeq HC_{*-1}(\Perf_\k) \simeq HC_{*-1}(\k).
\]
\end{thm}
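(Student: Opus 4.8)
The plan is to realize $\Tate_\k$ as the result of gluing the two ``large'' categories $C_\k$ and $D_\k$ along their common intersection $\Perf_\k = C_\k\cap D_\k$, and then to exploit that both $C_\k$ and $D_\k$ are \emph{flasque}. The localization theorem for cyclic homology (Theorem \ref{thm:localizationHC}) will then force the only surviving contribution to appear with a single degree shift, which is exactly the asserted delooping. The final comparison $CC(\Perf_\k)\simeq CC(\k)$ is then pure Morita invariance (Proposition \ref{prop:HC-morita}), using $[\Perf_\k]\simeq D^b(\Vect_\k^f)$.

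First I would record the vanishings $CC(D_\k)\simeq 0$ and $CC(C_\k)\simeq 0$. By the discussion around Proposition \ref{prop:tate-comparison}, $D_\k$ is tensored over $\Vect_\k$ and hence carries the endofunctor $\Sigma\colon M\mapsto\bigoplus_{i\geq 0}M$, together with a natural isomorphism $\Sigma\simeq \mathrm{id}_{D_\k}\oplus\Sigma$. Additivity of cyclic homology, a formal consequence of Theorem \ref{thm:localizationHC} applied to split localization sequences, yields $CC(\Sigma)=\mathrm{id}+CC(\Sigma)$ as self-maps of $CC(D_\k)$, whence $CC(D_\k)\simeq 0$ by the Eilenberg swindle. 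The statement for $C_\k$ is dual, using the product $M\mapsto\prod_{i\geq 0}M$ over $\LC_\k$.

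Next I would assemble three localization sequences. The inclusions $\Perf_\k\hookrightarrow D_\k$ and $\Perf_\k\hookrightarrow C_\k$ are full (hence quasi-fully faithful) with thick image, so by Theorem \ref{thm:localizationHC} and the vanishings above,
\[
CC(D_\k/\Perf_\k)\simeq CC(\Perf_\k)[1],\qquad CC(C_\k/\Perf_\k)\simeq CC(\Perf_\k)[1].
\]
Similarly $C_\k\hookrightarrow\Tate_\k$ is a localization sequence, and $CC(C_\k)\simeq 0$ gives $CC(\Tate_\k)\simeq CC(\Tate_\k/C_\k)$. The crux is then the identification $\Tate_\k/C_\k\simeq D_\k/\Perf_\k$: the composite $D_\k\hookrightarrow\Tate_\k\to\Tate_\k/C_\k$ annihilates $\Perf_\k=D_\k\cap C_\k$, inducing $\Phi\colon D_\k/\Perf_\k\to\Tate_\k/C_\k$, and $\Phi$ is essentially surjective because $[\Tate_\k]$ is generated by $[C_\k]$ and $[D_\k]$ (Proposition \ref{prop:tate-exp-smallest}). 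Granting that $\Phi$ is an equivalence, I would conclude
\[
CC(\Tate_\k)\simeq CC(\Tate_\k/C_\k)\simeq CC(D_\k/\Perf_\k)\simeq CC(\Perf_\k)[1],
\]
and, since $CC(\Perf_\k)\simeq CC(\k)$ by Proposition \ref{prop:HC-morita}, passing to homology gives $HC_*(\Tate_\k)\simeq HC_{*-1}(\Perf_\k)\simeq HC_{*-1}(\k)$.

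The main obstacle is the full faithfulness of $\Phi$, i.e. showing that the Verdier quotient $\Tate_\k/C_\k$ records precisely the discrete part of a Tate complex modulo its finite-dimensional (perfect) indeterminacy. I would prove this by computing the relevant morphism complexes inside $\Ind(\Pro(\Perf_\k))$ through the $\epsilon$-$\delta$ formula of Proposition \ref{prop:ind-explicit}: for $X,Y\in D_\k$, every roof defining a map in $\Tate_\k/C_\k$ can be chosen with denominator in $C_\k\cap D_\k=\Perf_\k$, because any morphism from a discrete to a linearly compact Tate complex factors through a finite-dimensional one. Corollary \ref{cor:tate-triang-explicit}, which splits a strict Tate complex as $H^\bullet\oplus E^\bullet$ and reflects the decomposition $V\cong V^d\oplus V^c$, reduces this to a split situation. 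Carrying out this comparison of $\Hom$-complexes, while keeping the ind-pro bookkeeping and the self-duality of $\Ta_\k$ straight, is the technical heart of the argument.
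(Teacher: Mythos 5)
Your overall architecture coincides with the paper's: kill the cyclic homology of the two ``large'' subcategories by an Eilenberg swindle, run Keller's localization theorem (Theorem \ref{thm:localizationHC}) twice, identify two quotient categories, and finish by Morita invariance (Proposition \ref{prop:HC-morita}). The only structural difference is that you quotient by the linearly compact side (aiming at $\Tate_\k/C_\k \simeq D_\k/\Perf_\k$), while the paper quotients by the discrete side and uses $\Cb_\k/\Perf_\k \simeq \Tate_\k/\Db_\k$; by self-duality of Tate objects these are equivalent statements. The substantive difference is that the paper does not prove this identification at all --- it invokes \cite[Proposition 4.2]{hennion-tate} --- whereas you attempt a direct proof, and that is where your argument has a genuine gap.

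The factorization claim on which your full-faithfulness argument rests is false: it is not true that ``any morphism from a discrete to a linearly compact Tate complex factors through a finite-dimensional one''. The inclusion $\k[t] \hookrightarrow \k[[t]]$ is a continuous map (the source is discrete, so every linear map out of it is continuous) from a discrete Tate space to a linearly compact one, injective with infinite-dimensional source, so it factors through no finite-dimensional space --- and not even up to homotopy, as one sees by applying $H^0$ to any purported factorization through a perfect complex. What is true is the \emph{dual} statement: a continuous map from a linearly compact space to a discrete space has open kernel, hence finite-dimensional image, so morphisms from $C_\k$ to $D_\k$ factor through $\Perf_\k$ (at the level of complexes, reduce to cohomology using Corollary \ref{cor:tate-triang-explicit} and the projectivity statement of Proposition \ref{prop:homdim1}). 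Unfortunately, the true direction is precisely the one your chosen calculus of fractions cannot use: in $\Tate_\k/C_\k$ a roof $X \leftarrow Z \rightarrow Y$ whose denominator has cone $c \in C_\k$ is classified by a map $X \to c$, i.e.\ discrete $\to$ compact, which is the false direction. The repair is either to compute morphisms in $\Tate_\k/C_\k$ by the opposite (left) fractions $X \to Z' \leftarrow Y$ --- there the denominator is classified by a map $c[-1] \to Y$ from $C_\k$ to $D_\k$, the true factorization applies, and the octahedral axiom lets you replace any denominator by one whose cone lies in $\Perf_\k$, giving the cofinality you need --- or to swap the roles of $C_\k$ and $D_\k$ throughout, as the paper does, in which case your roofs require exactly the true factorization. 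With that correction (and your essential-surjectivity argument via Proposition \ref{prop:tate-exp-smallest}, which is fine in the Morita setting where surjectivity up to summands suffices), your proof closes the step the paper leaves to an external citation.
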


\begin{proof}
This is essentially a corollary of the main result of Saito \cite{saito} except  in our definition of $\Tate_\k$
we ``first derive'' and then topologize while Saito deals with exact categories such as $\Ta_\k$ and with
K-theory rather than cyclic homology. For convenience of the reader we give a direct prove which uses
the same approach as \cite{saito}.

Let us consider the morphism of localization sequences (of  perfect dg-categories):
\[
\xymatrix{
\Perf_\k\ar[d]  \ar[r] & \Cb_\k \ar[d] \ar[r]& \Cb_\k/\Perf_\k \ar[d]^\alpha
\\
\Db_\k \ar[r]& \Tate_\k \ar[r]& \Tate_\k/\Db_\k
}
\]
It follows from \cite[Proposition 4.2]{hennion-tate} that the functor $\alpha$ is an equivalence.
We will deduce the result from the following lemma.

\begin{lem}\label{lem:vanishHC}
If $\Ac$ is a  perfect dg-category with infinite direct sums, then the cyclic complex $CC(\Ac)$ is acyclic. In particular $HC_*(\Ac) \simeq 0$.
\end{lem}
Let us postpone the proof of the lemma for now. The categories $\Cb_\k$ and $\Db_\k$ both admit infinite sums, and therefore have vanishing cyclic homology.
Using the localization invariance of $HC$ (see Theorem \ref{thm:localizationHC}), we get quasi-isomorphisms
\[
CC(\Tate_\k) \buildrel \sim \over \lra  CC(\Tate_\k/\Db_\k) \buildrel \sim \over \lla CC(\Cb_\k/\Perf_\k) \buildrel \sim \over \lra CC(\Perf_\k)[1].
\]
This concludes the proof of Theorem \ref{thm:Hc-dloop}.
\end{proof}

\noindent{\sl Proof (of Lemma \ref{lem:vanishHC}):} We use the following fact (see  \cite{keller-cyex}): if $f,g$ are two functors from $\Ac$ to $\Bc$ and $\Bc$ is perfect (in particular, has direct sums), then the action of $f \oplus g$ on $HC$ is the sum of the action of $f$ and of $g$.
Specifying to the case $f=\bigoplus_{i=0}^\infty \Id$ and $g=\Id$, then $f\oplus g \simeq f$ and we get $HC(g)=HC(\Id)=0$. 
\qed

\vskip .2cm

\begin{Defi}\label{def:tateclassHC}
Let us denote by $\tau$ the class $\tau \in HC^1(\Tate_\k)$ given by the image of the trace class through the isomorphism $HC^0(\k) \simeq HC^1(\Tate_\k)$.\\
For any object $V\in \Tate_\k$ we have a class $\tau_V\in HC^1 (\End(V))$ induced by $\tau$.
\end{Defi}

\begin{rem}
The class $\tau_V$ vanishes (by construction) as soon as $V$ belongs to either $\Cb_\k$ or $\Db_\k$.
\end{rem}

 \paragraph{B.  Comparison with the residue class and local Riemann-Roch. }
The  Tate complex $A_n^\bullet$ is, as the same time, a  commutative dg-algebra. We note that (left) action of $A_n^\bullet$
on itself gives rise to  a  morphism of associative dg-algebras which we call the {\em regular representation}
\[
l: A_n^\bullet\lra \End_{\Tate_\k}(A_n^\bullet). 
\]
 In other words,  for each $p$ and each $a\in A_n^p$, the  multiplication operator  $l(a): A^\bullet_n\to A^\bullet_n[p] $,  
 is a continuous morphism of  graded objects of 
$\ILC_\k$. This follows from the fact that the filtration on $A_n^\bullet$ from  Corollary \ref {cor:filt} which,
by Example \ref{ex: A_n-tate-complex}, gives the inductive limit representation of $A_n^\bullet$,
is compatible with multiplication. 

\begin{thm}\label{thm:compar-1}
There exists a non-zero constant $\lambda\in\k$ with the following property.
  The pullback $l^*\tau_{\End(A_n^\bullet)}$
  of the Tate class $\tau_{\End(A_n^\bullet)}$ with respect to  $l$
  is a class in $HC^1(A_n^\bullet)$
equal to  $\lambda\cdot \rho$, where $\rho$ is the residue class, see \S \ref{subsec:res-class}.  
\end{thm}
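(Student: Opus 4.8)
The plan is to reduce the identity $l^{*}\tau_{\End_{\Tate_\k}(A_n^\bullet)}=\lambda\cdot\rho$ to the uniqueness statement of Theorem \ref{thm:res-invariant} together with a single nonvanishing computation. Since Theorem \ref{thm:res-invariant} identifies the space of $GL_n$-invariant classes in $HC^1(A_n^\bullet)$ with the one-dimensional line spanned by $\rho$, it suffices to prove that $l^{*}\tau_{\End_{\Tate_\k}(A_n^\bullet)}$ is both $GL_n$-invariant and nonzero; the scalar $\lambda$ is then automatically well defined and nonzero, which is exactly the assertion of the theorem.

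For $GL_n$-invariance, recall from \S\ref{subsec:repan} that $GL_n$ acts on the Jouanolou model $A_n^\bullet$ by dg-algebra automorphisms, hence for each $g$ by an automorphism $\phi_g$ of $A_n^\bullet$ as an object of $\Tate_\k$. The regular representation $l$ intertwines the $GL_n$-action on $A_n^\bullet$ with conjugation by $\phi_g$ on $\End_{\Tate_\k}(A_n^\bullet)$, since $g$ acts by algebra automorphisms. As the Tate class $\tau$ of Definition \ref{def:tateclassHC} is a canonical class on the whole of $\Tate_\k$, it is fixed by conjugation by any object-automorphism; therefore $\tau_{\End_{\Tate_\k}(A_n^\bullet)}$ is conjugation-invariant, and its pullback along the equivariant map $l$ is $GL_n$-invariant. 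By Theorem \ref{thm:res-invariant} we conclude $l^{*}\tau_{\End_{\Tate_\k}(A_n^\bullet)}=\lambda\,\rho$ for a unique $\lambda\in\k$.

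For the nonvanishing, I would pair the class against the canonical generator of the top de Rham summand $HC_1^{(n)}(\mathring{\AAA}^n)\simeq\k$ of $HC_1(A_n^\bullet)$ from \eqref{eq:An-Hodge}--\eqref{eq:An-dR}, on which $\rho$ evaluates nontrivially (up to normalization it is the residue of the Martinelli--Bochner form, by Proposition \ref{prop:MB-H} and \eqref{eq:res-taut}). By adjunction between $HC^\bullet$ and $HC_\bullet$, this pairing equals $\langle\tau,\,l_{*}\zeta\rangle$ for $\zeta$ the generating cycle, and $\langle\tau,-\rangle$ computes, through the delooping isomorphism of Theorem \ref{thm:Hc-dloop}, the image of $l_{*}\zeta$ under the connecting map of the localization sequence $\Perf_\k\to\Cb_\k\to\Cb_\k/\Perf_\k$ (Theorem \ref{thm:localizationHC}) --- an index-type invariant. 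Concretely, $\zeta$ is built from the class of the generator $\delta=[z_1^{-1}\cdots z_n^{-1}]$ of $H^{n-1}(A_n^\bullet)$, and multiplication by $\delta$ shifts the pole-order filtration of Corollary \ref{cor:filt} by one; the resulting regularized trace is nonzero, exactly as in the classical case $n=1$ where the computation returns $\Res(z^{-1}dz)=1$.

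The main obstacle is this last step: carrying out the index/regularized-trace computation rigorously in the dg-Tate setting. The difficulty is twofold. First, one must track the cohomological grading of $A_n^\bullet$ through the delooping isomorphism, since it is precisely this grading --- rather than the ``length'' of a cocycle --- that reconciles the degree bookkeeping of $\rho$ (an $(n+1)$-linear functional of total degree $1$) with that of the two-argument Tate cocycle; unlike the case $n=1$, the two classes admit no evident common cocycle representative, so the comparison can be made only at the level of $HC^1$, which is why the invariance-plus-uniqueness route is essential. Second, one must verify that the connecting map of the localization sequence, applied to $l_{*}\zeta$, returns the residue and not $0$; this amounts to controlling the Koszul signs and the interaction between the pole-order filtration and the decomposition of $A_n^\bullet$ into its discrete and linearly compact parts provided by Corollary \ref{cor:tate-triang-explicit}(c). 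Once this single normalization is pinned down, $\lambda\neq 0$ follows, and with it the theorem.
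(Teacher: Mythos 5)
Your reduction to ``invariance plus uniqueness plus nonvanishing'' is sound as far as it goes, and your invariance argument (equivariance of $l$ under $GL_n$, conjugation-invariance of the canonical class $\tau$) is essentially correct; the paper's proof is likewise, in the end, a $GL_n$-representation-theoretic comparison. The genuine gap is precisely the step you yourself flag as ``the main obstacle'': the nonvanishing of $l^*\tau_{\End(A_n^\bullet)}$. Without it you only obtain $l^*\tau_{\End(A_n^\bullet)}=\lambda\cdot\rho$ for some possibly zero $\lambda$, and the entire content of the theorem --- the local Riemann--Roch statement --- is that $\lambda\neq 0$. Your sketched ``index/regularized trace'' computation through the delooping isomorphism is not carried out, and as formulated it cannot be: to evaluate $\langle \tau, l_*\zeta\rangle$ via the sequence $\Perf_\k\to\Cb_\k\to\Cb_\k/\Perf_\k$ you must lift the cycle $l_*\zeta$, which lives on the Tate side, backwards through the quotient functor, and nothing in your argument produces such a lift; the pole-order filtration and the splitting of Corollary \ref{cor:tate-triang-explicit}(c) do not by themselves give one.

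The key idea your proposal is missing is to replace the abstract Tate localization sequence by the geometric one $\Perf_{D_n,\{0\}}\to\Perf_{D_n}\to\Perf_{D_n^\circ}$ and to prove the two are compatible under global sections: the crucial lemma in the paper is that $Q\circ R\Gamma\simeq \alpha\circ G$, which holds because the inclusion $\k[[z_\bullet]]=H^0(A_n^\bullet)\to A_n^\bullet$ becomes an equivalence after passing to $\Tate_\k/\Db_\k$ (its cone has discrete cohomology). This identifies $l^*\tau_{\End(A_n^\bullet)}$ with the concrete composite
\[
HC_1(\Perf_{D_n^\circ})\buildrel\delta\over\lra HC_0(\Perf_{D_n,\{0\}})\lra HC_0(\Perf_\k),
\]
where $\delta$ is the cyclic-homology boundary map of the geometric localization sequence and the second arrow is induced by $\Gamma_{\{0\}}$. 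Nonvanishing is then a finite computation: Koszul duality identifies $\Perf_{D_n,\{0\}}$ with perfect modules over $S^\bullet(V^*[-1])$, whose $HC_0$ has a unique $GL_n$-coinvariant line on which the augmentation-induced map is nonzero; and the long exact sequence $HC_1(\k[[z_\bullet]])\to HC_1(A_n^\bullet)\buildrel\delta\over\lra HC_0(S^\bullet(V^*[-1]))$, together with the fact that the residue vanishes on the image of $HC_1(\k[[z_\bullet]])$, shows that $\delta$ carries the unique invariant line of $HC_1(A_n^\bullet)$ isomorphically onto that coinvariant line. Supplying this transfer lemma and the Koszul-duality computation is what would be needed to close your gap; the analogy with $\Res(z^{-1}dz)=1$ in the case $n=1$ does not substitute for it.
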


This theorem is close to the main result of Braunling \cite{braunling} Thm. 2.6 which deals with $n$-fold
local fields (and $n$-fold Tate objects) . For $n=1$, the theorem reduces to the cited result which also implies
that $\lambda=1$ in that case. For $n>1$, Theorem \ref{thm:compar-1} highlights the essence of our
derived adelic approach: we get higher-dimensional residues from usual ($1$-fold) Tate objects
but work in the derived category. 

\vskip .2cm 

\noindent{\sl Proof of Theorem \ref{thm:compar-1}:} For any associative dg-algebra $R^\bullet$ let us denote by $BR^\bullet$
the corresponding dg-category with one object. 
The global section functor 
\[
R\Gamma \colon \Perf_{A_n^\bullet} \simeq \Perf_{D_n^\circ} \lra   \Tate_\k
\]
 from Proposition \ref{prop:RGammak} is compatible with the map $l$.  In other words, we have a commutative diagram of dg-categories
 \[
\xymatrix{
BA^\bullet _n \ar[r] \ar[d]^l & \Perf_{A_n^\bullet} \ar[d]^{R\Gamma} \\
B(\End(A_n^\bullet)) \ar[r] & \Tate_\k
}
\]
where the horizontal functors map the one object onto $A_n^\bullet$.
In particular we have 
\[
l^*\tau_{\End(A_n^\bullet)} = R\Gamma^* \tau \,\,\in \,\,  HC^1(\Perf(A_n^\bullet)) \simeq HC^1(A_n^\bullet).
\]

Recall that the dg-category $\Perf_{A_n^\bullet} \simeq \Perf_{D_n^\circ}$ is the dg-quotient of $\Perf_{D_n}$ by the full subcategory $\Perf_{D_n,\{0\}}$ spanned by perfect complexes supported at $0$ (see \cite[\S5]{TT}). The global section functor $\Gamma_{D_n} \colon \Perf_{D_n} \to \Cb_\k$ hence induces a functor $G$ and a commutative diagram
\[
\xymatrix{
\Perf_{\{0\}}(D) \ar[r] \ar[d]^{\Gamma_{\{0\}}} & \Perf(D) \ar[r]^-P \ar[d]^{\Gamma_{D_n}} & \Perf(D^\circ) \ar[d]^{G} \\
\Perf_k \ar[r] \ar[d] & \Cb_\k \ar[r] \ar[d]^i & \Cb_\k/\Perf_\k \ar[d]^\alpha_\simeq \\
\Db_\k \ar[r] & \Tate_k \ar[r]^-Q & \Tate_k/\Db_\k.
}
\]
\begin{lem}
The composite functor $Q \circ R\Gamma$ is equivalent to the composite $\alpha \circ G$.
\end{lem}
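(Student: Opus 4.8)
The plan is to use the universal property of the dg-quotient $P \colon \Perf(D_n) \to \Perf(D_n^\circ)$ together with the local cohomology recollement, reducing the statement to a computation after precomposition with $P$. Since $\Perf(D_n^\circ)$ is the dg-quotient of $\Perf(D_n)$ by $\Perf_{\{0\}}(D_n)$, an exact functor out of $\Perf(D_n^\circ)$ is determined up to equivalence by its precomposition with $P$. As $Q \circ R\Gamma$ and $\alpha \circ G$ are both already given as functors on $\Perf(D_n^\circ)$, it suffices to produce a natural equivalence $Q \circ R\Gamma \circ P \simeq \alpha \circ G \circ P$ of functors $\Perf(D_n) \to \Tate_\k/\Db_\k$.

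First I would chase the two lower-right squares of the diagram to rewrite $\alpha \circ G \circ P$. By construction $G$ is the functor induced on cofibers by $\Gamma_{D_n}$, so $G \circ P$ is the composite of $\Gamma_{D_n}$ with the quotient $\Cb_\k \to \Cb_\k/\Perf_\k$. Composing with $\alpha$ and using the commutativity of the bottom-right square (which expresses that $\alpha$ is the cofiber map of the middle-to-bottom morphism of localization sequences) yields $\alpha \circ G \circ P \simeq Q \circ i \circ \Gamma_{D_n}$.

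Next I would identify $R\Gamma \circ P$. By Proposition \ref{prop:RGammak} the functor $R\Gamma$ computes $E \mapsto R\Gamma(D_n^\circ, E|_{D_n^\circ})$, so $R\Gamma \circ P$ sends $E \in \Perf(D_n)$ to $R\Gamma(D_n^\circ, E|_{D_n^\circ})$. Restriction of global sections along the open immersion $D_n^\circ \hookrightarrow D_n$ furnishes a natural transformation $i \circ \Gamma_{D_n} \Rightarrow R\Gamma \circ P$ of functors valued in $\Tate_\k$, whose fiber is the local cohomology functor $E \mapsto R\Gamma_{\{0\}}(D_n, E)$. For perfect $E$ this fiber is an iterated extension of shifted copies of $R\Gamma_{\{0\}}(D_n, \Oc) = H^n_{\{0\}}(\Oc)$, which by Proposition \ref{prop:H^iO} is a discrete vector space; hence the local cohomology functor factors through $\Db_\k \subset \Tate_\k$. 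Applying $Q$, which annihilates $\Db_\k$ by definition, turns this natural transformation into an equivalence $Q \circ i \circ \Gamma_{D_n} \simeq Q \circ R\Gamma \circ P$. Combined with the previous paragraph this gives $Q \circ R\Gamma \circ P \simeq \alpha \circ G \circ P$, and the universal property of $P$ upgrades it to the desired equivalence $Q \circ R\Gamma \simeq \alpha \circ G$.

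The main obstacle I expect is promoting the local cohomology recollement to a genuine fiber sequence of exact functors valued in $\Tate_\k$, rather than a merely objectwise triangle in the derived category $D(\Vect_\k)$: one must check that $\Gamma_{D_n}$, $R\Gamma \circ P$ and $R\Gamma_{\{0\}}(D_n,-)$ assemble, compatibly with their $\ILC_\k$/$\PVect_\k$ refinements, into a cofiber sequence in $\Tate_\k$, and that the comparison map is exactly the restriction transformation. This is the point where the Tate-complex structure supplied by Proposition \ref{prop:RGammak} (and the comparison of Proposition \ref{prop:tate-comparison}) must be used, and where one confirms that the fiber genuinely lands in $\Db_\k$ and not in some larger subcategory.
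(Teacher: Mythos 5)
Your proposal follows essentially the same route as the paper's proof: reduce via the universal property of the dg-quotient $P$ to comparing $Q \circ R\Gamma \circ P$ with $Q \circ i \circ \Gamma_{D_n}$, then observe that the restriction natural transformation $i \circ \Gamma_{D_n} \Rightarrow R\Gamma \circ P$ has pointwise fiber the local cohomology at $0$, which lies in $\Db_\k$ and is therefore killed by $Q$. Your write-up is correct, and in fact supplies the justification (discreteness of $R\Gamma_{\{0\}}(D_n, E)$ for perfect $E$, via Proposition \ref{prop:H^iO}) that the paper leaves implicit in the phrase ``the pointwise kernel is killed by the projection $Q$''.
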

\noindent{\sl Proof:} 
From the universal property of $\Perf(D_n^\circ)$ as a quotient of $\Perf(D_n)$, it suffices to compare $Q \circ R\Gamma \circ P$ and $\alpha \circ G \circ P \simeq Q \circ i \circ \Gamma_{D_n}$. 
The inclusion $H^0(A_n^\bullet) \to A_n^\bullet$ induces a canonical natural transformation $\alpha \circ \Gamma_{D_n} \to R\Gamma \circ P$. In turn, it induces the required equivalence (as the pointwise kernel of the natural transformation is killed by the projection $Q$). \qed

\vskip 2mm

It follows from this lemma that $R\Gamma^* \tau$ equals the composite
\[
HC_1(\Perf(D_n^\circ)) \overset{\delta}{\lra} HC_0(\Perf_{D_n,\{0\}}) \overset{\Gamma_{\{0\}}}{\lra} HC_0(\Perf_\k).
\]
Let us write $z_\bullet$ for the coordinate system $z_1,\dots, z_n$ on $D_n$ and denote, 
as in \S \ref{subsec:repan},  by  $V = \bigoplus \k  z_i$
the space spanned by the $z_i$.  In what follows we will pay attention to the $GL_n$-action on various spaces arising.

\vskip .2cm

The dg-category $\Perf_{D_n,\{0\}}$ is generated by $\k$ 
(considered as a trivial $\k[[z_\bullet]]$-module), and
we have a $GL_n$-equivariant identification
\be\label{eq:s-lambda}
R\End_{\k[[z_\bullet]]}(\k)\,\,  \simeq \,\,  \Ext^\bullet_{\k[[z_\bullet]]} (\k, \k) \,\,\simeq \,\, S^\bullet(V^*[-1])
\ee
with the exterior algebra of $V$ graded by its degree. 
Using \cite{schwede-shipley}, we get a Morita equivalence between $\Perf_{D_n,\{0\}}$ and $R\End_{\k[[z_\bullet]]}(\k)$
 (a version of the classical $S-\Lambda$ duality \cite{gelfand-manin}). 
  Under this Morita equivalence, the functor $\Gamma_{\{0\}}$ amounts to the augmentation morphism $S^\bullet(V^*[-1]) \to \k$. The induced map $HC_0(\Perf_{D_n,\{0\}}) \to HC_0(\Perf_\k)$ is thus non-trivial.
It is also $GL_n$-invariant.

\begin{lem}
$HC_0(\Perf_{D_n,\{0\}})$ admits a unique $GL_n$-coinvariant class.
\end{lem}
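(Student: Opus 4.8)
The plan is to compute $HC_0(\Perf_{D_n,\{0\}})$ outright as a $GL_n$-module and show it is the $1$-dimensional trivial representation, from which the uniqueness of the coinvariant class is immediate. First I would use the Morita equivalence just established, together with the Morita invariance of cyclic homology (Proposition~\ref{prop:HC-morita}), to identify $GL_n$-equivariantly
\[
HC_\bullet(\Perf_{D_n,\{0\}}) \,\simeq\, HC_\bullet\bigl(R\bigr), \qquad R:=S^\bullet(V^*[-1])\simeq\Lambda^\bullet(V^*),
\]
the free graded-commutative algebra on $V^*$ placed in cohomological degree $1$, with zero differential. Since $R$ is concentrated in homological degrees $[-n,0]$, this lands in the classical ($\ZZ_{\leq 0}$-graded) range of \cite{loday}, so the standard computations apply with no convergence issues, and $R$ is smooth (free graded-commutative).

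Next I would apply the (graded) Hochschild--Kostant--Rosenberg theorem to get a $GL_n$-equivariant identification $HH_\bullet(R)\simeq \Omega^\bullet_R \simeq \Lambda^\bullet(V^*)\otimes_\k S^\bullet(V^*)$, where the summand $\Lambda^a(V^*)\otimes S^p(V^*)$ lies in total homological degree $-a$: the ``function'' part $\Lambda^a(V^*)$ contributes $-a$, while the ``form'' generators $dv^*$ (arising from $1\otimes v^*$, Hochschild degree $1$ and internal degree $1$) are even and contribute degree $0$, so $S^\bullet(V^*)=S^\bullet(dV^*)$ sits in degree $0$. Under this identification the Connes operator becomes the algebraic de Rham differential
\[
d\colon \Lambda^a(V^*)\otimes S^p(V^*)\,\lra\, \Lambda^{a-1}(V^*)\otimes S^{p+1}(V^*).
\]

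Then I would compute the homology of the mixed complex $(HH_\bullet(R),\,b=0,\,d)$. Since $d$ preserves the total weight $w=a+p$, the complex splits as a direct sum over $w\geq 0$ of the pieces
\[
\Lambda^w(V^*)\otimes S^0 \xrightarrow{\,d\,} \Lambda^{w-1}(V^*)\otimes S^1(V^*)\xrightarrow{\,d\,}\cdots\xrightarrow{\,d\,}\Lambda^0\otimes S^w(V^*),
\]
each of which is a Koszul complex; by the algebraic Poincar\'e lemma it is acyclic for $w\geq 1$ and equals $\k$ (the constants) for $w=0$. Hence $H_\bullet(HH_\bullet(R),d)=\k$ concentrated in degree $0$, and Connes periodicity in the $b=0$ case (exactly as for $A=\k$, where $HC_n(\k)=\bigoplus_{i\geq0}H_{n-2i}$) yields $HC_0(R)=\k$. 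The surviving class is that of the constants $\Lambda^0\otimes S^0$, i.e.\ the trivial $GL_n$-representation.

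Finally, since $GL_n$ is reductive in characteristic $0$, the coinvariants functor is exact and isolates the trivial isotypic component; as $HC_0(\Perf_{D_n,\{0\}})\simeq\k$ is already $1$-dimensional and trivial, its space of $GL_n$-coinvariants is $1$-dimensional, which proves the lemma. The hard part is the grading bookkeeping in the graded HKR theorem: one must correctly place the form generators $dv^*$ in even total degree, so that the Connes differential is identified with the de Rham differential and the weight-$w$ summands become precisely the Koszul complexes to which the Poincar\'e lemma applies; everything else is then forced.
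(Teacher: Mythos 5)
Your proof is correct, and it actually establishes something stronger than what the paper proves: you compute $HC_0(\Perf_{D_n,\{0\}})\simeq HC_0(S^\bullet(V^*[-1]))$ outright as the one-dimensional trivial representation, whereas the paper only determines its space of $GL_n$-coinvariants. Both arguments begin identically (Morita invariance, Proposition \ref{prop:HC-morita}, reduces to $R=S^\bullet(V^*[-1])$, and cyclic homology of this free graded-commutative algebra is expressed through de Rham-type complexes with the Connes operator as the de Rham differential), but the key step diverges. The paper takes $GL_n$-coinvariants \emph{before} computing any homology: since the de Rham differential is equivariant and coinvariants are exact in characteristic $0$, it suffices to know that $\Lambda^j V^*\otimes S^i V^*$ has coinvariants only when $i=j=0$ (simple spectrum); no Poincar\'e lemma is needed. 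You compute homology first: the weight decomposition splits the $(b=0,\,B=d)$ mixed complex into Koszul complexes, exact in positive weights, so $HC_0(R)=\k$, and representation theory enters only in the trivial last step. Your route buys the stronger statement $HC_0=\k$, which would serve the surrounding proof of Theorem \ref{thm:compar-1} equally well; the paper's route buys, with no extra computation, the coinvariants of $HC_m^{(p)}$ in all degrees and weights.

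Two imprecisions in your write-up, neither fatal. First, $R$ sits in cohomological degrees $[0,n]$, i.e.\ homological degrees $[-n,0]$; within the paper's conventions this is the \emph{opposite} of the classical connective ($\ZZ_{\leq 0}$ cohomological) range of \cite{loday}, so your justification is backwards. What actually legitimizes the graded HKR, the identification $B=d$, and the Hodge decomposition here is that $R$ is finite-dimensional and bounded, so all relevant filtrations are bounded and every spectral sequence converges. Second, for a mixed complex $(M,\,b=0,\,B)$ the correct formula is $HC_n = M_n/BM_{n-1}\oplus\bigoplus_{i\geq 1}H_{n-2i}(M,B)$: the leading term is a cokernel, not the homology $H_n$, so the periodicity formula you quote is not right in general. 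It is harmless here because $M$ is concentrated in degrees $\leq 0$, so $M_1=0$ and $H_0(M,B)=M_0/BM_{-1}$; moreover your Koszul-exactness argument does prove the needed surjectivity of $B$ onto the positive-weight part of $M_0$. State it that way rather than by analogy with $A=\k$.
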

\noindent{\sl Proof:} 
Consider the Hodge decomposition of $HC_\bullet(S^\bullet V^*)$.
The part of weight $p$ is computed by the complex $S^\bullet (V^*[-1]) \otimes \Lambda^{\leq p} (V^*[-1]) [2p]$ with the de Rham differential. The de Rham differential is $GL_n$-equivariant. Therefore, to compute the $GL_n$-coinvariant elements in the cyclic homology at hand, it suffices to understand the action on each degree of the graded vector space
\[
S^\bullet (V^*[-1]) \otimes \Lambda^{\leq p} (V^*[-1]) [2p] \simeq \bigoplus_{j=0}^n \bigoplus_{i=0}^{p} \Lambda^j V^* \otimes S^i V^* [2p - (2i - j)]
\]
The $GL_n$-representation $\Lambda^j V^* \otimes S^i V^*$ has simple spectrum and admits coinvariants if and only if $i = j = 0$. We get
\[
HC_m^{(p)}(S^\bullet (V^*[-1]))_{GL_n} = \begin{cases} \k & \text{if } m = 2p \\ 0 & \text{else.} \end{cases}
\]
In particular $HC_0(S^\bullet(V^*[-1]))$ has exactly one invariant $1$-dimensional subspace.

\qed

 \vskip .2cm 

We now finish the proof of Theorem \ref{thm:compar-1}. The residue $HC_1(A_n) \to HC_0(\k)$ is $GL_n$-invariant and vanishes when restricted to $HC_1(\k[[z_\bullet]])$. 
By Theorem \ref{thm:res-invariant}, $HC_1(A_n^\bullet)$ containes a unique $GL_n$-invariant line. 
The long exact sequence
\[
\cdots \lra HC_1(\k[[z_\bullet]]) \lra HC_1(A_n) \overset{\delta}{\lra} HC_0(S^\bullet(V^*[-1])) \lra \cdots
\]
implies that $\delta$ maps isomorphically the unique invariant line of $HC_1(A_n)$ onto the unique invariant line of $HC_1(S^\bullet(V^*[-1]))$. This concludes the proof of Theorem \ref{thm:compar-1}.

\begin{rem}
Since $\k$ is allowed to be an arbitrary field of characteristic $0$,
we have $\lambda\in\QQ^*$. We expect $\lambda=\pm 1$. This can
be possibly proved either by direct calculation or by upgrading some of
the considerations of this paper
to fields of arbitrary characteristic. 
\end{rem}

Recall (Proposition \ref{prop:theta}) that the class $\tau_V\in HC^1(\End(V))$,
$V\in \Tate_\k$ gives a Lie algebra cohomology class $\theta^*\tau_V$
in $\HH^2_\Lie(\End(V))$. We will also call $\theta^*\tau_V$ the Tate class. 
Let us consider the following particular case.

\vskip .2cm

Let $r\geq 1$ and $V= (A_n^\bullet)^{\oplus r}$. As before, we have then the regular representation
(morphism of dg-algebras) $l_r: \Mat_r(A_n^\bullet) \to \End_{\Tate_\k}((A_n^\bullet)^{\oplus r})$
which we can also see as a morphism of dg-Lie algebras.
 
\begin{cor}[Local Riemann-Roch]\label{cor:localRR}
The pullback $l_r^*(\theta^*\tau_{(A_n^\bullet)^{\oplus r}})$ is
equal to the class of the cocycle 
$\lambda \cdot \gamma_{P_r}$, where $\lambda$ is the same as in Theorem 
\ref{thm:compar-1} and  $\gamma_{P_r}$ is the special case, for
$P(x) = P_r(x) = \tr(x^{n+1})/(n+1)!$, of the cocycle defined in Theorem 
\ref{thm:gamma-P}. \qed
\end{cor}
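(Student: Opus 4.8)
The plan is to assemble the corollary from three ingredients already established: the rank-one comparison of Theorem~\ref{thm:compar-1}, the naturality of Loday's map $\theta$ (Proposition~\ref{prop:theta}), and the Morita/trace invariance of cyclic homology (Propositions~\ref{prop:HC-morita} and~\ref{prop:loday-trace}). Writing $V=(A_n^\bullet)^{\oplus r}$, the point is that the regular representation $l_r\colon \Mat_r(A_n^\bullet)\to\End_{\Tate_\k}(V)$ is a morphism of associative dg-algebras, hence of dg-Lie algebras, so every class in sight is natural with respect to it and the corollary becomes a formal concatenation whose only real content sits inside Theorem~\ref{thm:compar-1}.

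First I would use that Loday's homomorphism $\theta$ is natural in the associative dg-algebra: applied to the algebra map $l_r$ this yields the commutation $l_r^*(\theta^*\tau_V)=\theta^*(l_r^*\tau_V)$ in $\HH^2_\Lie(\Mat_r(A_n^\bullet))=\HH^2_\Lie(\gen\len_r(A_n^\bullet))$, where $l_r^*\tau_V$ is now read as a class in $HC^1(\Mat_r(A_n^\bullet))$ and $\theta^*$ is the dual of $\theta^{\Mat_r(A_n^\bullet)}$. This reduces the statement to computing the cyclic class $l_r^*\tau_V$ and then applying $\theta^*$.

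For the cyclic class I would set up the rank-$r$ analogue of the diagram used in the proof of Theorem~\ref{thm:compar-1}. The representation $l_r$ arises by applying the exact functor $R\Gamma\colon\Perf_{A_n^\bullet}\simeq\Perf_{D_n^\circ}\to\Tate_\k$ of Proposition~\ref{prop:RGammak} to the endomorphism algebra $\End_{\Perf_{A_n^\bullet}}((A_n^\bullet)^{\oplus r})=\Mat_r(A_n^\bullet)$ of the free rank-$r$ module, so that $l_r^*\tau_V$ is the restriction of the single class $R\Gamma^*\tau\in HC^1(\Perf_{A_n^\bullet})$ to the object $(A_n^\bullet)^{\oplus r}$. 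By Morita invariance (Proposition~\ref{prop:HC-morita}) the cyclic cohomology of $\Mat_r(A_n^\bullet)$ and of $A_n^\bullet$ agree, and Proposition~\ref{prop:loday-trace} realizes this identification through the trace map $\tr$; under it the rank-$r$ restriction of $R\Gamma^*\tau$ corresponds to its rank-one restriction $l^*\tau_{A_n^\bullet}$, which by Theorem~\ref{thm:compar-1} equals $\lambda\rho$. Hence $l_r^*\tau_V=\lambda\cdot\tr^*\rho$ in $HC^1(\Mat_r(A_n^\bullet))$.

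It then remains to apply $\theta^*$ to $\tr^*\rho$, which is exactly the computation carried out in the Example following Theorem~\ref{thm:gamma-P}: there $\theta^*(\tr^*\rho)$ is identified with the Chevalley--Eilenberg cocycle $\gamma_{P_{\tr}}$ attached to $P_{\tr}(x)=\tr(x^{n+1})$. Combining the two steps gives $l_r^*(\theta^*\tau_V)=\lambda\,\theta^*(\tr^*\rho)=\lambda\,\gamma_{P_{\tr}}$, and rewriting this with the Chern-character normalization $P_r=P_{\tr}/(n+1)!$ produces the class $\lambda\,\gamma_{P_r}$ as claimed. The main obstacle is not in this corollary but upstream in Theorem~\ref{thm:compar-1} (the delooping $HC_\ast(\Tate_\k)\simeq HC_{\ast-1}(\k)$ together with the $GL_n$-invariance argument pinning $l^*\tau$ down to the scalar $\lambda$); within the present argument the only delicate point is the bookkeeping of the factor $(n+1)!$ relating $P_{\tr}$ and $P_r$, which is dictated by the symmetric-product pairing conventions of the Chevalley--Eilenberg complex and which I would confirm by a direct low-degree check (for $n=1$, $\theta$ involves only the identity permutation and $P_{\tr}(x_0,x_1)=\tr(x_0x_1)$, so the normalizing factor is visibly $2=(1+1)!$).
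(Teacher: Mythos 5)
Your strategy coincides with the paper's own (implicit) one: the corollary is stated in the paper without proof, as an immediate consequence of Theorem \ref{thm:compar-1}, of the Example following Theorem \ref{thm:gamma-P}, and of the naturality of Loday's map. Your first three steps --- the naturality relation $l_r^*(\theta^*\tau_V)=\theta^*(l_r^*\tau_V)$ for $V=(A_n^\bullet)^{\oplus r}$, the identification $l_r^*\tau_V=\lambda\cdot\tr^*\rho$ obtained by viewing both the rank-$r$ and rank-$1$ restrictions as restrictions of the single class $R\Gamma^*\tau\in HC^1(\Perf_{A_n^\bullet})$ and invoking Propositions \ref{prop:HC-morita} and \ref{prop:loday-trace} together with Theorem \ref{thm:compar-1}, and the appeal to the Example for $\theta^*(\tr^*\rho)=\gamma_{P_{\tr}}$ --- are exactly that derivation and are sound.

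The genuine gap is your last step. Since $P\mapsto\gamma_P$ is linear in $P$ (as noted in the proof of Theorem \ref{thm:gamma-P}), one has $\gamma_{P_r}=\tfrac{1}{(n+1)!}\,\gamma_{P_{\tr}}$, so what your argument actually proves is $l_r^*(\theta^*\tau_V)=\lambda\,\gamma_{P_{\tr}}=\lambda\,(n+1)!\,\gamma_{P_r}$; passing from this to ``$\lambda\,\gamma_{P_r}$ as claimed'' is not a rewriting but an unexplained division by $(n+1)!$. Moreover, your proposed justification --- that the factor is dictated by the symmetric-product pairing conventions of $\CE^\bullet$ and is ``visibly $2$'' for $n=1$ --- does not hold under the paper's conventions: for $n=1$ the sum defining $\theta$ consists of the single term $[a_0\otimes a_1]$, so $\theta^*(\tr^*\rho)$ and $\gamma_{P_{\tr}}$ both send $(x_0\otimes f_0,\,x_1\otimes f_1)$ to $\tr(x_0x_1)\,\Res(f_0\,\del f_1)$; they agree with factor $1$, exactly as the paper's Example asserts, and no factor of $2$ materializes anywhere. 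To be fair, the same $(n+1)!$ tension is already present inside the paper, between its Example (phrased with $P_{\tr}$) and its Corollary (phrased with $P_r$ and ``the same $\lambda$''); but since the statement pins the constant to the $\lambda$ of Theorem \ref{thm:compar-1}, the constant is part of what must be proved, and your argument --- like the paper's --- either needs the conclusion restated with $\lambda(n+1)!$, or needs an honest computation exhibiting a genuine $(n+1)!$ in the comparison of conventions; the low-degree ``check'' you offer does neither.
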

 
\begin{rems}
(a) We note that $P_r(x)$ is  the degree $(n+1)$ component of 
$\tr(e^x)$, the ``Chern character" of $x$.  Corollary \ref{cor:localRR} can be therefore seen
as a simplified version of a local Riemann-Roch-type  theorem where we take into account
the infinitesimal symmetries of a vector bundle but not of the underlying manifold. 

\vskip .2cm

(b) Combined  with Theorem \ref{thm:action-det} below, this result determined explicitly the nature of the higher
Kac-Moody algebra acting on the determinantal bundles. 

\end{rems}

\section{Action on derived moduli spaces}\label{sec:act-der}

  In the rest of the paper we will relate the dg-Lie algebra ${\gen}_n^\bullet$
  and its central extensions,
    with derived moduli spaces of principal bundles on $n$-dimensional manifolds.
    We start with recalling the general setup.

\subsection{Background on derived geometry}\label{subsec:back-der}

\paragraph{A. General conventions.}
   We will work in the framework of derived algebraic geometry. For general results on the subject, we refer to \cite{toen-vezzosi}. For a comprehensive survey, the reader may  look at \cite{toen-ems}.
    
    Derived algebraic geometry can be though as algebraic geometry, where rings are being replaced by 
    ``homological rings''.  Namely, the category of $\k$-algebras will be replaced the category $\cdga$ formed by $\ZZ_{\leq 0}$-graded commutative dg-algebras over $\k$ (up to quasi-isomorphisms).
    It is naturally made into a model category. Moreover, the usual notions of Zariski open or closed immersions, flat, smooth or \'etale morphisms extend to morphisms in $\cdga$. In particular, one can form an \'etale Grothendieck topology.
    
    Given any commutative algebra, one can consider it as an object in $\cdga$ concentrated in degree $0$. On the other hand, for any object $A \in \cdga$, the cohomology space $H^0 A$ is a commutative algebra. We also get a canonical morphism $A\to H^0A$.
    
    Let also $\sSet$ be the category of simplicial sets. Given two objects $A, B \in \cdga$, we get a simplicial set of morphisms $\mathrm{Map}(A,B)$. In particular, we get a Yoneda functor mapping $A$ to a covariant functor $\Spec A \colon \cdga \to \sSet$.
    
    A derived prestack is a covariant functor $\cdga \to \sSet$. A derived stack is a derived prestack
     satisfying the natural homotopy \'etale descent condition.
    A derived stack representable by a cdga is called a derived affine scheme. We will denote by $\dAff$ the category of derived affine schemes. It is equivalent to the opposite category of $\cdga$.

\paragraph{B. Derived stacks and derived categories.}
The category of derived stacks will be denoted by $\St$. It will be considered either as a model category, or as an $\infty$-category. 
Given $A \in \cdga$, the category of $A$-dg-modules is endowed with a standard model structure. We denote by $\dgMod_A$ the ($\k$-linear) dg-category of fibrant-cofibrant $A$-dg-modules. 
Given a derived stack $Y$, we define, following \cite{toen-dgazumaya}
\[
\mathrm{D}_\mathrm{qcoh}(Y) = \holim_{\Spec A \to Y} \dgMod_A
\]
(homotopy limit in the model category of dg-categories). 
An object in $\mathrm{D}_\mathrm{qcoh}(Y)$ can be informally described as the data of a $A$-dg-module
$M_{A, \phi}$ for any $A \in \cdga$ and any map $\phi:  \Spec A \to Y$, together with natural homotopy glueing data.
We also define the $\ZZ_{\leq 0}$-graded derived category
$\mathrm{D}_\mathrm{qcoh}^{\leq 0}(Y) \subset \mathrm{D}_\mathrm{qcoh}(Y)$ 
formed by $(M_{A, \phi})$ consisting of $\ZZ_{\leq 0}$-graded dg-modules. 

\begin{ex}
Any $M\in\mathrm{D}_\mathrm{qcoh}^{\leq 0}(Y)$ gives rise to the {\em dual number stack} $Y[M]$ defined 
by gluing $\Spec (A \oplus M_{A, \phi})$ (the trivial square zero extension). 
\end{ex}

Note that for any map $f \colon Y \to Z$, one gets an adjunction
\[
Lf^* \colon \mathrm{D}_\mathrm{qcoh}(Z) \rightleftarrows \mathrm{D}_\mathrm{qcoh}(Y) \,:\, Rf_*
\]
For example, for $\phi:\Spec(A)\to Y$ and $M\in \mathrm{D}_\mathrm{qcoh}(Y)$,
the object $L\phi^* M$ is just the structure datum $M_{A,\phi}$ (we identify 
$\mathrm{D}_\mathrm{qcoh}(\Spec A)$ with $\dgMod_A$).

Another example: if $Z = \Spec \k$ 
and $f$ is the canonical projection, then  $Rf_* \colon \mathrm{D}_\mathrm{qcoh}(Y) \to \dgMod_k$
computes the cohomology of $Y$ with values in a given object in $\mathrm{D}_\mathrm{qcoh}(Y)$.

\vskip 2mm

We will also need not necessarily quasi-coherent sheaves. The functor $A \mapsto \dgMod_A$ lands in dg-categories, hence in ($\k$-linear) $\infty$-categories. Let $\xi \colon \int \dgMod \to \dAff$ denote the associated Cartesian Grothendieck construction (see \cite[Chap. 3]{lurie-htt}).
\begin{Defi}
Let $Y$ be a derived stack. We define its derived category of $\Oc_Y$-complexes $\on{D}(Y)$ as the $\infty$-category of sections $\dAff/Y \to \int \dgMod$ of $\xi$ over $Y$.
\end{Defi}
Note that by \cite[3.3.3.2]{lurie-htt}, the $\infty$-category $\on{D}_{\mathrm{qcoh}}(Y)$ is the full subcategory of $\on{D}(Y)$ spanned by Cartesian sections.
Informally, an object in $M \in \on{D}(Y)$ is the data of $A$-dg-modules $M_{A,\phi}$ for any map $\phi \colon \Spec A \to Y$, together with coherence maps $\zeta_{f} \colon M_{A,\phi} \otimes_A^L B \to M_{B,\phi \circ f}$ for any map $f \colon \Spec B \to \Spec A$ and higher coherence data.
The module $M$ is then quasi-coherent if and only if all the maps $\zeta_f$ are quasi-isomorphisms.

The category $\on{D}(Y)$ admits internal homs that we will denote by $R\Hom_{\Oc_Y}$.

\paragraph{C. Geometric objects and tangent complexes. }
For the definition of  geometric derived stacks (or, what is the same, derived Artin stacks)
 we refer to \cite{toen-vezzosi}. 
 
This class includes, first  all derived schemes, that is, 
 derived stacks that are Zariski locally equivalent to derived affine schemes. Following \cite{lurie-dagv},
  one can represent derived schemes in terms of "homotopically" ring spaces. 
   Namely, a derived scheme $X$ is a topological space together with a sheaf (up to homotopy) of 
   $\ZZ_{\leq 0}$-graded cdga's $\Oc_X$ such that $(X,H^0(\Oc_X))$ is a scheme.
   
   In fact,   a derived Artin stack is a derived stack that can be obtained from derived affine schemes by 
   a finite number of smooth quotients.  
   
   \vskip .2cm
   
  The {\em cotangent complex} $\LL_Y$ of a derived stack $Y$ is
  an object of $\mathrm{D}_\mathrm{qcoh}(Y)$
   defined (when it exists) by the universal
  property   
  \[
  \Map_{\mathrm{D}_\mathrm{qcoh}(Y)} (\LL_Y, M) \,\,\simeq \,\,
   \Map_{{}^{Y/}\St}
   (Y[M], Y), \quad M\in  \mathrm{D}_\mathrm{qcoh}^{\leq 0}(Y). 
  \]
  Here  ${}^{Y/} \St$ is the comma category of derived stacks under $Y$. 
  The  object $\LL_Y$ is known to exist \cite{toen-vezzosi} when $Y$ is geometric
  (no smoothness assumption). 
  
The {\em tangent complex} $\TT_Y$ is defined as the dual
\[
\TT_Y \,\,=\,\, R\Hom_{\Oc_Y}(\LL_Y, \Oc_Y) \in \on{D}(Y).
\]
If $Y$ is locally of finite presentation \cite{toen-vezzosi}, then $\LL_Y$ is a perfect complex and hence so is $\TT_Y$. In particular $\TT_Y$ is an object of $\mathrm{D}_\mathrm{qcoh}(Y)$. 
  
For a $\k$-point $i_y: y\hookrightarrow  Y$ we will write 
\[
\TT_{Y,y} = Li_y^*( \TT_Y) =  R\Hom_{\Oc_Y}(\LL_Y, \Oc_y)
\]
  for the tangent complex of $Y$ at $y$. This is a complex of $\k$-vector spaces.  
  
  \paragraph{D. Derived intersection:}
 Given a diagram $X\to Z\leftarrow Y$, we have  the {\em derived (or homotopy)
  fiber product} $X\times_Z^h Y$. 
  If $X,Y,Z$ are affine, so our diagram is represented by a diagram $A\leftarrow C\to B$ in $\cdga$, then
\[
X\times_Z^h Y \,\,=\,\,\Spec\bigl( A\otimes_C^L B\bigr). 
\]
We will be particularly interested in the following situation. 
Let $f: X \to Y$ be a morphism of derived stacks, and $y \in Y$ be a $\k$-point. 
Then we have the derived stack (a derived (affine) scheme, if both $X$ and $Y$ are 
derived (affine) schemes) 
\[
Rf^{-1}(y) \,\,=\,\, X\times^h_Y \{y\}. 
\]
It will be called the  {\em derived preimage of } $y$. It is the analog of the homotopy fiber of a map
between spaces in topology.

\subsection{The Kodaira-Spencer homomorphism}\label{subsec:kod-sp}

\paragraph{A. Group objects and actions.} 
By a {\em group stack}
 we will mean
a stack  $G$ together with simplicial stack $G_\bullet$ such that $G_0\simeq \Spec \k$,
$G_1\simeq G$ and which satisfies the {\em Kan condition}: the morphisms corresponding to
the inclusions of horns are equivalences. 
Intuitively, $G_\bullet$ is the nerve of the group structure on $G$, 
see   \cite[ \S 4.2.2]{lurie-halg} for more details. 

\vskip .2cm

Similarly, an {\em action} of a group stack $G$ (given by $G_\bullet$) on a stack $Y$
is a simplicial stack $Y_\bullet$ together with
a morphism $q: Y_\bullet\to G_\bullet$   with an identification $Y_0\simeq Y$
such that, for any $m$, the morphism
\[
(q_m, \del_{\{m\}\hookrightarrow \{0,1,\dots, m\}}): Y_m\lra G_m\times Y_0
\]
is an equivalence.
In this case $Y_\bullet$ satisfies the Kan condition. Intuitively, $Y_\bullet$
is the nerve of the ``action groupoid".   The ``realization" of $Y_\bullet$, i.e.,
the derived stack associated to the prestack $A\mapsto |Y_\bullet(A)|$,
is the quotient derived stack $[Y/G]$. In particular, we have the stack $BG= [*/G]$,
the {\em classifying stack} of $G$. 

\begin{exas}
(a) Let $Y$ be a derived stack and $y\in Y$ be a $\k$-point. 
The {\em pointed loop stack} 
\[
\Omega_y Y = \{y\} \times^\mathrm{h}_Y \{y\}: A \mapsto \Omega(Y(A), y)
\]
 is a group stack. The corresponding simplicial stack 
 $(\ul\Omega_yY)_\bullet$ is  the (homotopy)
 nerve of the morphism $\{y\}  \to Y$, i.e., 
 \[
(\ul \Omega_yY)_m \,\,=\,\,  \{y\} \times^\mathrm{h}_Y \{y\}  \times^\mathrm{h}_Y\cdots  \times^\mathrm{h}_Y
\{y\} 
\,\,\simeq \,\, (\Omega_yY)^m
\]
($(m+1)$-fold product). 

\vskip .2cm

(b) Let $Y$ be any derived stack. Its {\em automorphism stack} is the group stack 
\[
\RAut(Y): A\mapsto 
\Map^\mathrm{eq}_{\St/\Spec A}(Y \times \Spec A, Y \times \Spec A)
\]
 Here the superscript ``eq" means the union of connected components of the mapping space
 formed by vertices which are equivalences. Alternatively, we can describe it
 as the functor
 \[
 A\mapsto \Omega(\St/\Spec A, Y\times \Spec A),
 \]
 the based loop space of the nerve of the category of derived stacks over $\Spec A$ 
 with the base point being the object $Y\times \Spec A$. 
 By construction, we have an action of $\RAut(Y)$ on $Y$; an action of a group
 stack $G$ on $Y$ gives a morphism of group stacks $G\to\RAut(Y)$.

 \end{exas}

\begin{prop}\label{prop:actionofloops}
Let $f \colon X \to Y$ be a map of derived stacks and $y \in Y$ be a $\k$-point.
Then the group stack $\Omega_y Y$ has a natural action on the derived preimage
$Rf^{-1}(y)$. 
 \end{prop}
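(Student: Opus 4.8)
The plan is to construct the action as a simplicial derived stack obtained by base change of the \v Cech nerve of the point $y$, and then to check the axioms of the definition of an action given above. Write $v\colon\{y\}\hookrightarrow Y$ for the $\k$-point. Recall from the example above that the homotopy nerve $(\ul\Omega_yY)_\bullet$ of $v$ is precisely the nerve of the group stack $\Omega_yY$, with $(\ul\Omega_yY)_0=\{y\}=\Spec\k$, $(\ul\Omega_yY)_1=\Omega_yY$, and $(\ul\Omega_yY)_m\simeq(\Omega_yY)^m$; moreover each of its coordinate maps $\{y\}\to Y$ is $v$, so that the augmentation $(\ul\Omega_yY)_m\to(\ul\Omega_yY)_0=\{y\}\xrightarrow{v}Y$ is the constant map at $y$. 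This simplicial stack will play the role of $G_\bullet$ in the definition of an action.

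First I would apply the base-change functor $X\times^{\mathrm h}_Y(-)$ levelwise to the nerve, setting
\[
Z_\bullet \;:=\; X\times^{\mathrm h}_Y(\ul\Omega_yY)_\bullet,\qquad Z_m \;=\; X\times^{\mathrm h}_Y(\ul\Omega_yY)_m .
\]
Since $X\times^{\mathrm h}_Y(-)$ is a homotopy pullback, hence a homotopy limit functor, it commutes with the homotopy limits entering the \v Cech construction; carrying this out at the level of the $\infty$-functor $\Delta^{\op}\to\St$ that defines $(\ul\Omega_yY)_\bullet$ produces $Z_\bullet$ as a genuine simplicial object, with faces and degeneracies induced from those of the nerve. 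By construction $Z_0=X\times^{\mathrm h}_Y\{y\}=Rf^{-1}(y)$, and projecting away the nerve factor gives a morphism of simplicial stacks $q\colon Z_\bullet\to(\ul\Omega_yY)_\bullet$, which is the datum $q$ required in the definition.

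The heart of the argument is the Kan/action condition: for each $m$ the morphism $(q_m,\del_{\{m\}\hookrightarrow\{0,\dots,m\}})\colon Z_m\to(\Omega_yY)^m\times Z_0$ must be an equivalence. Here the cleanest route is to use that the structure map $(\ul\Omega_yY)_m\to Y$ is constant at $y$, so that it factors through $v$; by the pasting law for homotopy pullbacks this yields
\[
Z_m=X\times^{\mathrm h}_Y(\ul\Omega_yY)_m\;\simeq\;\bigl(X\times^{\mathrm h}_Y\{y\}\bigr)\times^{\mathrm h}_{\{y\}}(\ul\Omega_yY)_m\;=\;Z_0\times(\Omega_yY)^m,
\]
the last equality because $\{y\}=\Spec\k$ is terminal. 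Under this identification $\del_{\{m\}}$ is the projection onto the $Z_0$-factor and $q_m$ the projection onto $(\Omega_yY)^m$, so the displayed map is the identity of a product and is in particular an equivalence. Together with $Z_0\simeq Rf^{-1}(y)$ and the morphism $q$, this shows that $Z_\bullet$ defines an action of $\Omega_yY$ on $Rf^{-1}(y)$.

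I expect the main obstacle to be not this computation — which is a formal consequence of the commutation and pasting of homotopy limits — but rather organizing it coherently in the $\infty$-categorical setting: one must exhibit $Z_\bullet$ as an honest functor $\Delta^{\op}\to\St$ together with a compatible system of the equivalences $Z_m\simeq Z_0\times(\Omega_yY)^m$, not merely levelwise equivalences. The plan is to handle this by defining $Z_\bullet$ once and for all as the composite of the nerve functor $\Delta^{\op}\to\St_{/Y}$ with the base-change $\infty$-functor $X\times^{\mathrm h}_Y(-)\colon\St_{/Y}\to\St$, so that all simplicial identities and the Segal-type product decompositions are inherited from the \v Cech nerve by functoriality, and the verification above is read off at the level of objects.
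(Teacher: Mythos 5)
Your proof is correct and is essentially the paper's own argument in a different packaging: the paper takes the \v Cech nerve of $Rf^{-1}(y)\to X$ and identifies its levels with $(\Omega_yY)^m\times Rf^{-1}(y)$, which is canonically the same simplicial object as your base change $X\times^{\mathrm h}_Y(\ul\Omega_yY)_\bullet$, and the key step in both cases is the same pasting of homotopy pullbacks over the square formed by $Rf^{-1}(y)$, $X$, $\{y\}$, $Y$ (together with terminality of $\Spec\k$). Your last paragraph just makes explicit the coherence the paper leaves to "contemplating the commutative diagram."
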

 
 \noindent{\sl Proof:} We define the simplicial stack $\underline Rf^{-1}(y)_\bullet$  
 as the nerve of the morphism $Rf^{-1}(y)\to X$, i.e.,
 \[
\begin{gathered}
\underline Rf^{-1}(y)_m \,\,=\,\ Rf^{-1}(y)  \times^\mathrm{h}_X Rf^{-1}(y)
\times^\mathrm{h}_X\cdots \times^\mathrm{h}_X Rf^{-1}(y) \,\,\simeq \,\, 
\\
\simeq \,\, 
\{y\}  \times^\mathrm{h}_Y \{y\}  \times^\mathrm{h}_Y\cdots  \times^\mathrm{h}_Y X \,\,\simeq\,\,
(\Omega_yY)^m\times Rf^{-1}(y). 
\end{gathered} 
\]
All the required data and properties come from contemplating the commutative diagram, obtained as the nerve of the RHS horizontal morphisms
\[
\xymatrix{
\cdots ~\ar[r] \ar@<3pt>[r] \ar@<-3pt>[r] &
\underline Rf^{-1}(y)_1 \ar@<2pt>[r]\ar@<-2pt>[r] \ar[d] & Rf^{-1}(y) \ar[r] \ar[d] & X \ar[d] \\ 
\cdots ~\ar[r] \ar@<3pt>[r] \ar@<-3pt>[r] &
\Omega_y Y \ar@<2pt>[r]\ar@<-2pt>[r] &\relax \{y\} \ar[r] & Y. 
} 
\]
 \qed
 
 \begin{ex}[(Eilenberg-MacLane stacks)]
 Let $\Pi$ be a commutative algebraic group (in our applications $\Pi=\GG_m$). For each $r\geq 0$ we then
 have group stack $\EM(\Pi,n)$, known as the {\em Eilenberg-MacLane stack}. It is defined in the standard
 way using the Eilenberg-MacLane spaces for abelian groups $\Pi(A)$ for commutative $\k$-algebras $A$.

 Thus $\EM(\Pi,0) = \Pi$ as a group stack, i.e., the corresponding simplicial stack $\EM(\Pi,0)_\bullet = \Pi_\bullet$
 is the simplicial classifying space of $\Pi$. Similarly (the underlying stack of the group stack) $\EM(\Pi, 1)$ 
 is identified with  $B\Pi$. 
 In general, if we denote $\EM(\Pi, n)_\bullet$ the simplicial
 stack describing the group structure on $\EM(\Pi, n)$, then $|\EM(\Pi,n)|= \EM(\Pi, n+1)$. 
 \end{ex}
 
 \begin{Defi}
 Let $G$ be a group stack and $\Pi$ a commutative algebraic group. A {\em central extension} of $G$ by $\Pi$
 is a morphism of group stacks $\phi: G\to B\Pi$ or, what is the same, a morphism of stacks $BG\to\EM(\Pi, 2)$. 
 \end{Defi} 
 
 A central extension $\phi$  gives, in a standard way, a fiber and cofiber sequence of group stacks
 \[
 1\to \Pi\lra\wt G\lra G\to 1, 
 \]
 where $\wt G$ is the fiber of $\phi$. 
 
 \paragraph{B. Formal moduli problems. } We recall Lurie's work
 \cite{lurie-dagx} on formal moduli problems which serve as infinitesimal analogs of derived stacks. 

\begin{Defi}
A cdga  $A\in  \cdga$ is called  Artinian, if:
\begin{itemize}
\item The cohomology of $A$ is finite dimensional (over $\k$);
\item The ring $H^0 A$ is local and the unit induces an isomorphism between $\k$ and the residue field of $H^0 A$.
\end{itemize}
\end{Defi}

 In particular, any Artinian cdga admits a canonical augmentation (the unique point of
 $\Spec A$). 
 Artinian cdga's form an $\infty$-category which we will denote by $\mathbf{dgArt}_\k$. 

\begin{Defi}\label{def:fmp}
A formal moduli problem is a functor (of $\infty$-categories)
\[
F \colon \mathbf{dgArt}_\k \to \sSet
\]
such that:
\begin{itemize}
\item[(1)] 
 $F(\k) \simeq *$ is contractible. 
 
 \item[(2)]  (Schlessinger condition): 
For any diagram $A \to B \leftarrow C$ in $\mathbf{dgArt}_\k$ with both  maps  surjective on $H^0$, the canonical map $F(A \times^\mathrm{h}_B C) \to F(A) \times^\mathrm{h}_{F(B)} F(C)$ is an equivalence.

\end{itemize}
\end{Defi}

\noindent We denote by $\Fun_*(\dgart, \sSet)$ the ($\infty$-)category of functors from $\dgart$ to simplicial sets satisfying the
condition (1) of Definition \ref{def:fmp}, and by $\FMP$ the full subcategory of formal moduli problems. 
General criteria for representability of functors imply (see \cite{lurie-dagx}, 1.1.17) that
we have a left adjoint  (the ``formal moduli envelope")
\be\label{eq:FM-envelope}
\Lc: \Fun_*(\dgart, \sSet) \lra \FMP
\ee
to the embedding functor. 

\vskip .2cm

For a formal moduli problem $F$ we define its {\em  tangent complex} $\mathbb T_{F}$ (at the only point 
$*$ of $F$).
This is a complex of $\k$-vector spaces defined as follows. First, we define
\[
\Tc^{(p)}_F  = F(\k[\epsilon_p]/\epsilon_p^2), \quad \deg(\epsilon_p)=-p, \quad p\geq 0. 
\]
 These simplicial sets are actually  simplicial vector spaces, forming a spectrum in the sense of
 homotopy theory, that is, connected by morphisms (in fact, by equivalences)
  $\Tc^{(p)}_F\to\Omega \Tc^{(p+1)}_F$. We define the complex $\TT_F$
  to correspond to the spectrum $(\Tc_F^{(p)})$  by the Dold-Kan equivalence.

\begin{thm}[Lurie]\label{thm:formalmod}
For any formal moduli problem $F$, the complex  $\mathbb T_{F}[-1]$ has a homotopy Lie structure.
Moreover, the data of $F$ is equivalent (up to homotopy) to the data of a complex
$\TT = \TT_F[-1]$ and of a dg-Lie  structure on  this complex.
 \end{thm}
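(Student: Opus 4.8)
The plan is to prove the theorem by exhibiting an equivalence of $\infty$-categories between $\FMP$ and $\dgLie$, under which a problem $F$ corresponds to a dg-Lie algebra whose underlying complex is $\TT_F[-1]$; the asserted homotopy Lie structure is then read off by transport along this equivalence. First I would construct a functor $\Psi\colon \dgLie\to\FMP$ by the Maurer--Cartan (Deligne--Hinich--Getzler) formula
\[
\Psi(\gen)(A) \,\,=\,\, \mathrm{MC}_\bullet\bigl(\gen\otimes_\k \men_A\bigr),
\]
where $\men_A$ is the augmentation ideal of the Artinian cdga $A$ and $\mathrm{MC}_\bullet$ is the simplicial Maurer--Cartan space of the (nilpotent) dg-Lie algebra $\gen\otimes_\k\men_A$. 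Since $\men_\k=0$ we have $\Psi(\gen)(\k)\simeq *$, so condition (1) of Definition \ref{def:fmp} holds.

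Next I would check that $\Psi(\gen)$ satisfies the Schlessinger condition (2), so that it genuinely lands in $\FMP$. Here one uses that $\gen\otimes_\k(-)$ is exact, that $\men_A$ is nilpotent for Artinian $A$, and that on nilpotent dg-Lie algebras the functor $\mathrm{MC}_\bullet$ carries maps surjective on $H^0$ to Kan fibrations and preserves homotopy pullbacks along such maps. This is the derived incarnation of the classical Schlessinger criterion, and it is the point where the hypothesis $\operatorname{char}\k=0$ first enters, through the existence of a well-behaved simplicial $\mathrm{MC}$ space. Granting this, $\Psi$ factors through $\FMP$.

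The third step is the tangent-complex computation. Evaluating on $A=\k[\epsilon_p]/(\epsilon_p^2)$ with $\men_A=\k\cdot\epsilon_p$ square-zero kills the bracket, so the Maurer--Cartan space collapses to the Dold--Kan space of $(\gen\otimes\men_A)[1]$; assembling these over $p$ through the structure maps $\Tc^{(p)}_F\to\Omega\Tc^{(p+1)}_F$ yields $\TT_{\Psi(\gen)}\simeq\gen[1]$, i.e. $\TT_{\Psi(\gen)}[-1]\simeq\gen$. This already establishes the first assertion of the theorem for every $F$ in the essential image of $\Psi$: the complex $\TT_F[-1]$ carries the dg-Lie structure transported from $\gen$.

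It remains to prove that $\Psi$ is an equivalence, which is the heart of the matter and the main obstacle; its conceptual content is Koszul duality between the Lie and the (shifted) commutative operads over a field of characteristic zero. The inverse is supplied by a derived, completed form of the Chevalley--Eilenberg cochain functor $\CE^\bullet\colon \dgLie^{\op}\to(\cdga)^{\mathrm{aug}}$, and the strategy is to realize $\Psi$ as one half of the induced adjunction and to show its unit and counit are equivalences. The key lemma is \emph{conservativity of the tangent complex}: a morphism of formal moduli problems is an equivalence if and only if it induces an equivalence on tangent complexes. I would prove this by dévissage, using that every surjection in $\dgart$ is built from the base point by iterated square-zero extensions by the $\k[n]$ (pullbacks of the path fibration of an Eilenberg--MacLane object, in the spirit of the $\EM(\Pi,n)$ discussion), so that the Schlessinger condition computes the values and mapping spaces of any $F\in\FMP$ inductively from its tangent data. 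Full faithfulness of $\Psi$ then follows from the tangent computation of the previous paragraph together with conservativity, while essential surjectivity follows by showing that both $\FMP$ and $\dgLie$ are generated under sifted colimits by their free objects — free Lie algebras on perfect complexes on one side and the representable problems $\Psi(\mathrm{Free}(W))$ on the other — with $\Psi$ preserving these colimits and matching generators; the left adjoint $\Lc$ of \eqref{eq:FM-envelope} and $\Psi$ then assemble into mutually inverse equivalences. The delicate points throughout are the convergence of the completions implicit in $\CE^\bullet$ and the verification that all comparison maps are equivalences on tangent complexes, from which the full statement propagates by the dévissage above.
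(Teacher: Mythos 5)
The first thing to note is that the paper does not prove this statement at all: it is quoted as Lurie's theorem and deferred wholesale to \cite{lurie-dagx}, the only added material being the recollection, immediately after the theorem, that the Lie structure on $\TT_F[-1]$ is characterized by $\Map_{\dgLie}(L,\TT_F[-1])\simeq F(\CE^\bullet(L))$ for $L$ free on a finite-dimensional complex in degrees $\geq 1$. Your outline reconstructs the strategy of that cited proof (equivalently the Hinich--Getzler--Pridham Maurer--Cartan picture): the functor $\Psi$, the Schlessinger check, the tangent computation $\TT_{\Psi(\gen)}[-1]\simeq\gen$, conservativity of the tangent complex by square-zero d\'evissage, and Koszul duality via $\CE^\bullet$. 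So in terms of approach there is nothing in the paper to diverge from; you are following the route of the source it cites.

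That said, the wiring of your final step is not right as written, and it sits exactly at what you call the heart of the matter. Conservativity of the tangent complex is a statement about \emph{morphisms of formal moduli problems}; together with $\TT_{\Psi(\gen)}[-1]\simeq\gen$ it does not yield full faithfulness of $\Psi$, which asks that $\Map_{\dgLie}(\gen_1,\gen_2)\to\Map_{\FMP}(\Psi\gen_1,\Psi\gen_2)$ be an equivalence of mapping spaces. The argument that actually closes this runs through three separate verifications you only gesture at: (i) $\Psi(\on{FL}(M))$, for $M$ finite-dimensional in degrees $\geq 1$, is the \emph{representable} problem of the Artinian algebra $\CE^\bullet(\on{FL}(M))\simeq\k\oplus M^*[-1]$ (the Koszul duality unit); (ii) $\Psi$ preserves the relevant sifted colimits --- nontrivial, since colimits in $\FMP$ are not computed pointwise but via the envelope $\Lc$; (iii) conservativity, which is then what carries \emph{essential surjectivity} (build a candidate Lie algebra for a given $F$ and compare tangent complexes), not full faithfulness. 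Finally, $\Lc$ in \eqref{eq:FM-envelope} is the left adjoint to the inclusion $\FMP\subset\Fun_*(\dgart,\sSet)$; it is neither an adjoint nor an inverse of $\Psi$ and cannot play the role you assign it --- the inverse must be produced as an adjoint of $\Psi$ itself or extracted from prorepresentable hypercovers, which is precisely the substantial technical content of \cite{lurie-dagx}, \S 1.5 and \S 2.2--2.3.
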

 
 For future reference we recall   from  \cite{lurie-dagx} \S 2,  the characterization of the Lie algebra
 structure on $\TT_F[-1]$. For a graded $\k$-vector space $M^\bullet$ let $\on{FL}(M^\bullet)$
 be the free graded Lie algebra (with zero differential) generated by $M^\bullet$. 
 Denote by $\on{FLie}^{\geq 1}_\k$ be the full ($\infty$-) subcategory in
 $\dgLie$ whose objects are $\on{FL}(M^\bullet)$ where $M^\bullet$ is finite-dimensional
 and situated in degrees $\geq 1$. For any $L\in \on{FLie}^{\geq 1}_\k$
 the Chevalley-Eilenberg cochain algebra $\CE^\bullet(L)$ is
 an object of $\dgart$. In fact, $\CE^\bullet(L)$ is quasi-isomorphic to
 the dual number algebra $\k \oplus (M^\bullet)^*[-1]$, see
   \cite{lurie-dagx} (2.2.15). 
We then have 
 \be
 \Map_{\dgLie}(L, \TT_F[-1]) \,\,=\,\, F(\CE^\bullet(L)), \quad L\in \on{FLie}^{\geq 1}_\k
 \ee
 (identification of functors on $\on{FLie}^{\geq 1}_\k$).
 Note that this defines the dg-Lie algebra structure uniquely.

 \vskip .2cm

We will also need the following global analog of the Schlessinger condition,
see \cite[Def. 1.4.2.1]{toen-vezzosi}.
  
\begin{Defi}\label{def:infcart}
A derived stack $Y$ is {\em infinitesimally cartesian}
 if the following condition holds. Let 
   $A\in\cdga, M\in \dgMod^{\leq -1}_A$ and 
   \[
   M\lra B\lra A \buildrel\delta\over\lra M[1]
   \]
   be a square zero extension of $A$ by $M$. Then the square
   \[
   \xymatrix{
   F(B)
   \ar[d] \ar[r] & F(A)
   \ar[d]^{F(1+\delta)}
   \\
   F(A) \ar[r]_-{F(1)}& F(A\oplus M[1])
   }
   \]
   is cartesian. 
\end{Defi}

It is known that any derived Artin stack is infinitesimally cartesian \cite[Prop. 1.4.2.5]{toen-vezzosi}. 

\begin{ex}\label{ex-tangentislie}
Any infinitesimally cartesian stack $Y$ and any $\k$-point $y\in Y$ defines a formal moduli problem
\[
\wh{Y}_y \colon A \mapsto Y(A) \times^\mathrm{h}_{Y(\k)} \{y\}. 
\]
Its tangent complex $\TT_{\wh Y_y}$ is identified with  $\TT_{Y,y} = \Hom(\LL_Y, \k_y)$,
if $\LL_Y$ exists (e.g., if $Y$ is geometric). In other cases it can be considered
as the definition of $\TT_{Y,y}$.
By  Theorem \ref{thm:formalmod},
the shifted complex $\mathbb T_{Y,y}[-1]$   carries a homotopy Lie structure. 
It is an analog of the fundamental group $\pi_1(Y,y)$
of a topological space $Y$ at a point $y$. 
\end{ex}

For any derived stacks $Y$ and $X$ we define the mapping stack
\[
\RMap(Y, X): A\mapsto 
\Map_{\St/\Spec A}(Y \times \Spec A, X \times \Spec A). 
\]
Notice that $\RAut(Y)$ is an open substack in $\RMap(Y,Y)$, that is, the pullback of it under any morphism
$U\to \RMap(Y,Y)$ from an affine derived scheme $U$, is an open subscheme in $U$.

\vskip .2cm
  
  \begin{prop}\label{prop:tangent-aut}
(a) If $Y$ is an infinitesimally cartesian derived stack, then so is $\RMap(X,Y)$ for any $X$.
Moreover, suppose  $Y$ is geometric and $f: X\to Y$ is any morphism,
representing a $\k$-point $[f]$ of $\RMap(X,Y)$. Then 
 the tangent complex $\TT_{\RMap(X,Y), [f]}$ 
(defined as the tangent complex of the associated
formal moduli problem)
  is identified with $R\Gamma(X, f^*\TT_Y)$.

(b) Further,  if $Y$ is infinitesimally cartesian, then so is $\RAut(Y)$. In this case
  $\TT_{\RAut(X), \Id}$ 
  is  $R\Gamma(X, \TT_X)$. 
\end{prop}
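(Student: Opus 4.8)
The plan is to prove (a) first and then deduce (b) by taking $X=Y$ and using that $\RAut(X)$ is open in $\RMap(X,X)$. For the infinitesimal cartesianness in (a), I would reduce the mapping stack to affine test objects: using the adjunction $\Map_{\St/\Spec C}(X\times\Spec C,Y\times\Spec C)\simeq\Map_\St(X\times\Spec C,Y)$ and writing $X=\colim_{\Spec R\to X}\Spec R$, one gets
\[
\RMap(X,Y)(C)\;\simeq\;\varprojlim_{\Spec R\to X}Y(R\otimes^L_\k C).
\]
Let $M\to B\to A\xrightarrow{\delta}M[1]$ be a square-zero extension as in Definition \ref{def:infcart}, so that $B\simeq A\times^{\mathrm h}_{A\oplus M[1]}A$. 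The key observation is that each functor $R\otimes^L_\k(-)$ carries this to a square-zero extension of $R\otimes^L_\k A$ by $R\otimes^L_\k M$: the defining homotopy pullback is a pullback of underlying $\k$-complexes, and $R\otimes^L_\k(-)$, being exact on the stable category $\dgMod_\k$, preserves finite homotopy limits of complexes. Applying the hypothesis that $Y$ is infinitesimally cartesian to each cdga $R\otimes^L_\k A$ and then taking $\varprojlim_{\Spec R\to X}$ (which commutes with the finite homotopy limits appearing in the Schlessinger square) shows that $\RMap(X,Y)$ carries the square-zero square to a cartesian one.

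For the tangent complex I would unwind the spectrum computing $\TT_{\RMap(X,Y),[f]}$, whose $p$-th term is
\[
\Tc^{(p)}=\RMap(X,Y)(\k[\epsilon_p]/\epsilon_p^2)\times^{\mathrm h}_{\RMap(X,Y)(\k)}\{[f]\}.
\]
Since $X\times\Spec(\k[\epsilon_p]/\epsilon_p^2)$ is the trivial square-zero (dual number) stack $X[\Oc_X[p]]$ of $X$ by $\Oc_X$ placed in degree $-p$, this is the homotopy fibre over $[f]$ of the restriction $\Map(X[\Oc_X[p]],Y)\to\Map(X,Y)$. By the universal property of $\LL_Y$ pulled back along $f$, deformations of $f$ over a trivial square-zero thickening $X[N]$ are classified by $\Map_{\mathrm{D}_\mathrm{qcoh}(X)}(f^*\LL_Y,N)$; with $N=\Oc_X[p]$ and $f^*\TT_Y\simeq\RHom_{\Oc_X}(f^*\LL_Y,\Oc_X)$ (using that $\LL_Y$ is perfect, e.g.\ when $Y$ is locally of finite presentation), this is the space underlying $R\Gamma(X,f^*\TT_Y)[p]$. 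These identifications are compatible with the structure maps $\Tc^{(p)}\to\Omega\Tc^{(p+1)}$, so by Dold--Kan the spectrum $(\Tc^{(p)})_p$ is that of the complex $R\Gamma(X,f^*\TT_Y)$, giving the claim.

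For (b) I would exploit that $\RAut(X)\subset\RMap(X,X)$ is open. Every ring map occurring in a square-zero extension induces an isomorphism on $H^0$ (as $M$ and $M[1]$ lie in strictly negative degrees), so all the cdgas in the Schlessinger square have the same classical point; since factoring through an open substack is a condition on that classical point alone, each vertex of the $\RAut(X)$-square is the same union of connected components of the corresponding vertex of the (already cartesian) $\RMap(X,X)$-square. Hence the $\RAut(X)$-square is cartesian and $\RAut(X)$ is infinitesimally cartesian. Finally, open immersions are formally étale and therefore induce equivalences of tangent complexes at points of the open locus; applying this at $\Id$ and invoking (a) with $f=\Id$ gives
\[
\TT_{\RAut(X),\Id}\simeq\TT_{\RMap(X,X),\Id}\simeq R\Gamma(X,\Id^*\TT_X)=R\Gamma(X,\TT_X).
\]

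I expect the main obstacle to be the tangent-complex computation in (a): one must justify the identification $X\times\Spec(\k[\epsilon_p]/\epsilon_p^2)\simeq X[\Oc_X[p]]$, invoke the cotangent-complex deformation theory to rewrite the homotopy fibre of the restriction map as $\Map_{\mathrm{D}_\mathrm{qcoh}(X)}(f^*\LL_Y,\Oc_X[p])$, and then verify the compatibility of these equivalences across all $p$ so that the assembled spectrum is genuinely that of $R\Gamma(X,f^*\TT_Y)$. By contrast, the infinitesimal-cartesian argument is essentially formal once one notes that exactness of $R\otimes^L_\k(-)$ on $\dgMod_\k$ preserves the homotopy pullback defining the extension.
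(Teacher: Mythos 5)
Your proposal is correct and follows essentially the same route as the paper: reduction to affines by writing $X$ as a colimit of $\Spec R$ (the paper phrases this as closure of the subcategory $\{X : \RMap(X,Y) \text{ inf.\ cartesian}\}$ under homotopy colimits), the universal property of $\LL_Y$ for the tangent identification, and openness of $\RAut(X)\subset\RMap(X,X)$ for (b). You merely spell out the steps the paper leaves implicit, notably that $R\otimes^L_\k(-)$ preserves square-zero extensions and that the openness condition is detected on $H^0$, which is unchanged under square-zero extensions.
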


\begin{proof} (a) 
 Consider the full subcategory $\Cc$ of $\St$ spanned by those $X \in \St$ such that  $\RMap(X,Y)$ is 
 infinitesimally cartesian.
Since $Y$ is infinitesimally cartesian, $\Cc$ contains all derived affine schemes. 
The category $\Cc$ is moreover stable by (homotopy) colimits. We get $\Cc = \St$.
To identify the tangent complex, we consider the universal properties of both sides. Let us fix a perfect complex $E$ over $\k$. We have
\begin{align*}
 \Map_\k(&E, R\Gamma(X, f^*\TT_Y)) \simeq \Map_{\Oc_X}(E \otimes_\k \Oc_X, f^* \TT_Y) \\
 &\simeq \Map_{X/\St}(X \times \Spec(\k \oplus E^\vee), Y) \\ & \simeq \Map_{\Spec(\k)/\St}(\Spec(\k \oplus E^\vee), \RMap(X,Y)) \\ & \simeq \Map_\k(E, \TT_{\RMap(X,Y), [f]}).
\end{align*}
This equivalence being functorial in $E$ (since every individual step is), we find the announced equivalence $\TT_{\RMap(X,Y), [f]} \simeq R\Gamma(X, f^*\TT_Y)$.

\vskip .2cm

(b) Follows from (a) because $\RAut(X)$ is a  open   in $\RMap(X,X)$ and so their completions
are identified. 
\end{proof}

\paragraph{C. The Lie dg-algebra of a group stack.}
Let $G$ be a group stack (represented by a simplicial stack $G_\bullet$, as in part A). 
Assume that the completion $\wh G_1$ is a formal moduli problem (this is true, for example,
if $G$ is infinitesimally cartesian). In this case we have the tangent complex
$\Lie(G) = \TT_{G,1}$, naturally made into a dg-Lie algebra. Explicitly, we form the 
``classifying formal moduli problem"  $B(\wh G_1)$ as the  formal moduli   envelope
\eqref{eq:FM-envelope}
of the functor 
\[
A\mapsto \hocolim\,  (G_n)^{\wh{\hskip .3cm}}_1 (A), \quad \dgart\lra\sSet. 
\]
Then $\Lie(G)$ is, as a dg-Lie algebra, identified with
$\TT_{B(\wh G_1)}[-1]$. 

\begin{exas}

(a) This construction extends the classical correspondence between group schemes and Lie algebras.
Further, if $\Pi$ is a commutative algebraic group, then $\Lie (\EM(\Pi, n)) \simeq  \Lie(\Pi)[n]$, an abelian
Lie algebra $\Lie(\Pi)$ put in degree $n$. 

\vskip .2cm

(b) For the group stack $\Omega_{Y,y}$ of loop spaces, we have
$\Lie (\Omega_yY) = \TT_{Y,y}[-1]$, because $B((\Omega_{Y,y})^{\wh{\hskip .2cm}}_1)$
is identified with $\wh Y_y$. 

\vskip .2cm

(c)  Let $G$ be a group stack such that $\wh G_1$ is a formal moduli problem, and $\gen=\Lie(G)$.
A central extension of $G$ by $\GG_m$, i.e., a morphism of group stacks $\phi: G\to B(\GG_m)$
gives, after passing to Lie dg-algebras, a morphism $\Lie(\phi): \gen \to \k[1]$, since $\Lie(\GG_m)=\k$.
 By Proposition \ref{prop:Lie-central-ext}, 
 $\Lie(\phi)$ gives rise to  
  a cohomology class
$\gamma_\phi \in\HH^2_\Lie(\gen, \pi)$. 
\end{exas}

 \paragraph{D. The Kodaira-Spencer morphism.}  Let $f: X\to Y$ be a morphism of
 derived stacks and $y\in Y$ be a $\k$-point.  Assume that $\wh Y_y$ is a formal moduli problem
 and the homotopy fiber $Rf^{-1}(y)$ is geometric.   In particular, we have
  the  cotangent complex $\LL_{Rf^{-1}(y)}$. 
  
  \begin{Defi}
  The Kodaira-Spencer morphism of $f$ is the morphism of homotopy Lie algebras
  \[
  \kappa: \TT_{Y,y}[-1]\lra R\Gamma(Rf^{-1}(y), \TT_{Rf^{-1}(y)})
  \]
  obtained as the differential, at the identity element, of the action
  \be\label{eq:action-a}
 a: \Omega_yY\lra \RAut(Rf^{-1}(y))
  \ee
   from  Proposition \ref{prop:actionofloops}. Here we
  use Proposition \ref {prop:tangent-aut}(c) to identify the tangent to $\RAut$. 
  \end{Defi}
  
  \begin{prop}\label{prop:KS-class}
  Assume $X$ is geometric, so we have the normal fiber sequence in
  $\on{D}(Rf^{-1}(y))$
  \[
  \TT_{Rf^{-1}(y)} \lra \TT_X\otimes^L_{\Oc_X}  \Oc_{Rf^{-1}(y)} \lra \TT_{Y,y}\otimes_\k \Oc_{Rf^{-1}(y)}.
  \]
  Let $\delta$ be the coboundary map of this triangle.
  Then $\kappa$ is identified, as a morphism of complexes,
   with the composite of $\delta[-1]$ with the adjunction map
  $\TT_{Y,y}\to R\Gamma(Rf^{-1}(y), \TT_{Y,y}\otimes\Oc_{Rf^{-1}(y)})$. 
  \end{prop}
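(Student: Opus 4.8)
The plan is to compute the Lie-algebra differential $\kappa = \Lie(a)$ of the action $a$ from Proposition \ref{prop:actionofloops} and to match it, as a morphism of complexes, with the stated composite. Write $Z = Rf^{-1}(y)$. By the Examples following the discussion of $\Lie$ of a group stack we have $\Lie(\Omega_y Y) = \TT_{Y,y}[-1]$, while Proposition \ref{prop:tangent-aut}(b) gives $\Lie(\RAut(Z)) = R\Gamma(Z, \TT_Z)$; thus $\kappa$ and the composite
\[
\TT_{Y,y}[-1] \lra R\Gamma(Z, \TT_{Y,y}\otimes_\k \Oc_Z)[-1] \buildrel R\Gamma(Z,\delta[-1])\over\lra R\Gamma(Z, \TT_Z)
\]
are both morphisms $\TT_{Y,y}[-1] \to R\Gamma(Z, \TT_Z)$, the first arrow being the shift of the adjunction unit for the structure map $p \colon Z \to \Spec\k$ applied to $\TT_{Y,y}$. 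The goal is to identify them.

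The key idea is that both maps are manufactured from the single homotopy-cartesian square defining $Z$,
\[
\xymatrix{
Z \ar[r] \ar[d] & X \ar[d]^f \\ \relax \{y\} \ar[r] & Y,
}
\]
by functoriality of the (co)tangent complex, so I would reduce the comparison to this square. First I would use Theorem \ref{thm:formalmod} to detect the morphism of homotopy Lie algebras $\kappa$ on the free objects $\CE^\bullet(L)$, that is, on Artinian test algebras: by the construction of $a$ through the nerve of $Z\to Y$, its value on such a test algebra transports an infinitesimal loop at $y$ to an automorphism of the corresponding infinitesimal deformation of $Z$. Restricting to first order, a tangent vector $v \in \TT_{Y,y}$ is a map $\Spec \k[\epsilon_p]/\epsilon_p^2 \to Y$ through $y$; pulling the family $f$ back along it produces a square-zero deformation $X_v$ of $Z$, and the infinitesimal automorphism $\kappa(v)$ it induces is precisely the deformation (Kodaira--Spencer) class of $X_v$.

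The heart of the argument is then the standard fact that this deformation class is computed by the connecting map of the tangent fiber sequence of the cartesian square. Concretely, base change along the square identifies the pullback of $\TT_Y$ to $Z$ with the constant complex $\TT_{Y,y} \otimes_\k \Oc_Z$, and the transitivity triangle for $Z \to X \to Y$ (equivalently, the normal sequence of the proposition) realizes $\delta$ as the boundary measuring the failure of $v$ to lift to a relative tangent field. Viewing $v$ as a global section through the adjunction $\TT_{Y,y} \to R\Gamma(Z, \TT_{Y,y}\otimes_\k\Oc_Z)$ and applying $R\Gamma(Z, \delta[-1])$ then yields the infinitesimal automorphism $\kappa(v)$; since $v$ is arbitrary and both constructions are additive and natural in the test algebra, this would prove the identification.

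The main obstacle I anticipate is homotopy coherence rather than the geometric content: the differential $\Lie(a)$ is a homotopy-coherent construction (a map of homotopy Lie algebras extracted from a simplicial group action), whereas $\delta$ is a priori only specified up to homotopy as the boundary of a triangle, yet the proposition asserts an equality of morphisms of complexes. To upgrade the homotopy-level agreement to the on-the-nose statement I would fix compatible strict models --- a fixed model of $\LL_Z$ and of the transitivity triangle --- and verify that the nerve construction of Proposition \ref{prop:actionofloops}, differentiated via Theorem \ref{thm:formalmod}, produces the same representative as $R\Gamma(Z,\delta[-1])$ precomposed with the adjunction. The delicate bookkeeping is the matching of shifts and signs, in particular the $[-1]$ on $\delta$ lining up with the shift in $\Lie(\Omega_y Y) = \TT_{Y,y}[-1]$.
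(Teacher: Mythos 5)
Your setup is sound---both sides are maps $\TT_{Y,y}[-1]\to R\Gamma(Z,\TT_Z)$, and probing them on Artinian test algebras via Theorem \ref{thm:formalmod} is a legitimate way to compare them---but the heart of your argument is missing. Everything rests on the two-step identification: (i) ``the infinitesimal automorphism $\kappa(v)$ \dots is precisely the deformation (Kodaira--Spencer) class of $X_v$'', and (ii) ``the standard fact that this deformation class is computed by the connecting map''. Neither step is available off the shelf here. For (i): by definition $\kappa$ is the differential of the loop-group action of Proposition \ref{prop:actionofloops}, and a point of $\Omega_y Y$ over a test algebra $A$ is a self-homotopy of the constant map, acting by an \emph{automorphism} of the trivial family $Z\times\Spec A$; a tangent vector $v$, by contrast, is a possibly nonconstant map $\Spec A\to Y$, producing a \emph{deformation} $X_v$. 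Passing between the two is the delooping $\wh Y_y\simeq B\bigl((\Omega_yY)^{\wh{\hskip 2mm}}_1\bigr)$, i.e.\ exactly the shift $[-1]$ --- and comparing ``automorphism attached to a loop'' with ``class of the deformation attached to a tangent vector'' under this delooping is precisely the boundary-map statement being proved. For (ii): the classical fact that the Kodaira--Spencer class equals the connecting homomorphism is a theorem about the \v Cech-style deformation class of a family; with the paper's definition of $\kappa$ there is no such theorem to cite --- it \emph{is} the proposition. So as written your proof is circular at its key step. Your final paragraph also locates the difficulty in the wrong place: ``identified as a morphism of complexes'' only asks for an identification in the homotopy category, so strictification of models and sign bookkeeping are not the real issue.

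The paper closes this gap by a tautological factorization that avoids deformation classes entirely. For any morphism $i\colon F\to X$ of geometric stacks, form the groupoid $Z_\bullet$ with $Z_m=F\times^h_X\cdots\times^h_X F$; it acts tautologically on $F$, and the differential of this action at the identity section $e\colon F\to Z_1$ is, essentially by construction, the canonical map $c\colon \TT_{F/X}\to\TT_F$ of the transitivity triangle (this is the paper's ``obvious'' Lemma). Taking $F=Rf^{-1}(y)$, one has $Z_1=F\times^h_XF\simeq \Omega_yY\times F$, so the loop-group action $a$ is a restriction of the groupoid action; and the normal sequence of the proposition is the rotation of that transitivity triangle, i.e.\ $\TT_{Y,y}\otimes\Oc_F\simeq \TT_{F/X}[1]$, under which $\delta[-1]$ becomes $c$. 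If you want to rescue your test-algebra strategy, the content you must supply is exactly this groupoid computation: it is what proves, in the derived setting, the ``standard fact'' you invoked.
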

  
  \noindent{\sl Proof:} We deduce the claim from a more general local statement. 
   Let $i: F\to X$ be any morphism of geometric stacks. We then have the relative tangent
  complex $\TT_{F/X}$ on $X$ fitting into the exact triangle 
  \be\label{eq:jacobi-tr}
  \TT_{F/X}
\buildrel c\over\lra \TT_F \lra i^* \TT_X \lra \TT_{F/X}[1].
  \ee
 Let us form the ``groupoid stack" $Z_\bullet$ with 
  $Z_n = F\times_X^h\cdots \times_X^h F$ ($(n+1)$ factors). Thus $Z_0=F$ (the objects),
  $Z_1 = Z:= F\times_X^h F$ (the morphisms) and so on. 
  The  second projection $p_2: Z\to F$ defines
   an action
  of the groupoid $Z_\bullet$ on $F$ which we write as
  \[
  \ul a: Z\times_F F \lra F. 
  \]
   The tangent,  at the identities of the groupoid (which are represented by the diagonal embedding
   $e: F\to Z$) of $\ul a$ is a map
  \[
  d\ul a: e^* (\TT_{Z/F})\lra \TT_F
  \]
  of which the source is identified, via the projection $p_1: Z\to F$, with $\TT_{F/X}$. 
  The following is then obvious. 
    
  \begin{lem}
 In the above situation,  $d\ul a$ is identified with the
  canonical morphism $c: \TT_{F/X}\to  \TT_F$.
  \end{lem}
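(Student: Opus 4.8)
The plan is to reduce the statement to the homotopy-pullback description of the tangent complex of $Z=F\times_X^h F$, under which the canonical morphism $c$ appears tautologically as the inclusion of a homotopy fiber. First I would record that, since the square
\[
\xymatrix{
Z \ar[r]^{p_2}\ar[d]_{p_1} & F \ar[d]^{i}\\
F \ar[r]_{i} & X
}
\]
is homotopy cartesian, base change for relative tangent complexes gives canonical equivalences $\TT_{Z/F}\simeq p_1^*\TT_{F/X}$ and $p^*\TT_X\simeq p_2^*i^*\TT_X$, where $p:=i\,p_1\simeq i\,p_2$. Restricting the first equivalence along the diagonal $e$, for which $p_1e\simeq p_2e\simeq\Id_F$, yields the identification $e^*\TT_{Z/F}\simeq\TT_{F/X}$ asserted in the statement.

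Next I would unwind $d\underline a$ through this identification using the pullback presentation
\[
\TT_Z\;\simeq\;p_1^*\TT_F\times^h_{p^*\TT_X}p_2^*\TT_F,
\]
coming from the same cartesian square, in which $dp_1$ and $dp_2$ are the two projections. The relative tangent $\TT_{Z/F}$ is the homotopy fiber of one projection, and under the presentation it is carried to the homotopy fiber of the corresponding $di$; this fiber inclusion into $p_2^*\TT_F$ is by construction the pullback $p_2^*c$ of the canonical map $c\colon\TT_{F/X}=\on{fib}(di)\to\TT_F$. Since the derivative of the action $\underline a\simeq p_2$ is precisely the composite of the canonical map $\TT_{Z/F}\to\TT_Z$ with $dp_2\colon\TT_Z\to p_2^*\TT_F$, it is identified with $p_2^*c$; restricting along $e$ and using $p_2e\simeq\Id_F$ then collapses the pullback and yields $d\underline a\simeq c$.

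The only real content, and the step that deserves care, is the bookkeeping that matches the two projections: one must verify that the three identifications in play — base change for $\TT_{Z/F}$, the homotopy-pullback formula for $\TT_Z$, and the realisation of $c$ as the fiber inclusion of $di$ — are mutually compatible, so that the composite computes $c$ itself and not the map obtained by swapping the source and target roles of the two factors of $Z$. I would settle this functorially, by comparing the relative tangent triangle of the anchor projection with the relative tangent triangle of $i$ across the cartesian square, rather than by any explicit cocycle computation; once source and target are correctly matched the conclusion is forced. Since every construction here is stable under the derived enhancement, no strictness or convergence issues intervene, which is precisely why the statement may be regarded as formal.
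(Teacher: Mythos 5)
The paper gives you nothing to compare against here: the authors state this lemma as obvious and omit the proof entirely (it carries only a qed). Your unwinding --- the pullback presentation $\TT_Z\simeq p_1^*\TT_F\times^h_{p^*\TT_X}p_2^*\TT_F$, base change for relative tangent complexes, and restriction along the unit $e$ --- is exactly the argument they must have in mind, and the strategy is correct.

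There is, however, one concrete defect to repair, and it is precisely the $p_1$/$p_2$ bookkeeping that you yourself flag in your last paragraph as the only real content: your first two paragraphs use opposite conventions for $\TT_{Z/F}$, and under the convention of the first paragraph the key step of the second would fail. In the first paragraph you identify $\TT_{Z/F}\simeq p_1^*\TT_{F/X}$; by base change in your cartesian square this is the relative tangent complex of $p_2$, i.e.\ $\on{fib}(dp_2)$. In the second paragraph, however, you take the action to be $\ul a\simeq p_2$ and identify $d\ul a$ with the composite $\TT_{Z/F}\to\TT_Z\xrightarrow{dp_2}p_2^*\TT_F$; if $\TT_{Z/F}$ were $\on{fib}(dp_2)$, this composite would be canonically null-homotopic, not $p_2^*c$. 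The consistent choice is: anchor $p_1$, action $p_2$, and $\TT_{Z/F}:=\on{fib}(dp_1)\simeq p_2^*\TT_{F/X}$ --- base change pulls back along the \emph{other} projection. With that convention, under the pullback presentation of $\TT_Z$ the inclusion of $\on{fib}(\pr_1)$ followed by $\pr_2=dp_2$ is indeed the fiber inclusion $p_2^*c$ into the second factor, and restricting along $e$ (using $p_2e\simeq\Id$) gives $c$, as you conclude. The slip is invisible after restriction to units, since $e^*p_1^*\simeq e^*p_2^*\simeq\Id$, which is why your final identification comes out right; but as written the two identifications contradict each other upstairs on $Z$, so replace the first one by $\TT_{Z/F}\simeq p_2^*\TT_{F/X}$ (or, equivalently, exchange the roles of $p_1$ and $p_2$ throughout the second paragraph).
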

  \begin{proof}
   The pullback $Z := F \times_X F$ gives on tangents at $e$ a pullback square
   \[
    \xymatrix{
     e^* \TT_Z \ar[r]^-{d\ul a} \ar[d] & \TT_F \ar[d] \\ \TT_F \ar[r] & i^*\TT_X.
    }
   \]
  In particular, it identifies the fibers of the vertical morphisms $e^*(\TT_{Z/F}) \simeq \TT_{F/X}$. It also identifies the morphisms $d\ul a \colon e^*(\TT_{Z/F}) \to \TT_F$ and $c \colon \TT_{F/X} \to \TT_F$.
  \end{proof}

  We now deduce Proposition \ref{prop:KS-class} from the lemma. Let $F=Rf^{-1}(y)\buildrel i\over\to X$
  be the inclusion of the homotopy fiber. By definition, the action $a$ from
  \eqref{eq:action-a} comes from the action $\ul a$, while $\delta[-1]$ class from the normal sequence,
  as in the proposition. We now note that the normal sequence is canonically identified with
  the shift of the triangle  \eqref{eq:jacobi-tr}, that is, $\TT_{Y,y}\otimes\Oc_F$
  (the ``normal bundle" to $F$)  is the same
  as $\TT_{F/X}[1]$. This finishes the proof.

\subsection{Derived current groups and moduli of $G$-bundles}\label{subsec:der-cur-mod-G}

\paragraph{A. Derived moduli spaces.}
Let $G$ be a reductive algebraic group over $\k$ with Lie algebra $\gen$.
Let  $X$ be a smooth irreducible algebraic variety over  $\k$  and  $x\in X$ be a $\k$-point.
Recall the notations $\wh x=\Spec \Oc_{X,x}\simeq D_n$ and $\wh x^\circ = \wh x-\{x\}
\simeq D_n^\circ$ for the (punctured) formal neighborhood of $x$ in $X$.

\vskip .2cm

Recall that principal $G$-bundles are classified by the $1$-stack $BG = [*/G]$: for any scheme $Y$,
 the (nerve of the) groupoid of principal $G$-bundles on $Y$ is equivalent to the simplicial set of maps
  $Y \to BG$.
This allows us to define the notion of principal $G$-bundles over a derived scheme $Y$: 
first, we denote by $\spRBun_G(Y)$ the simplicial set of morphisms $Y \to BG$. 
A vertex in that simplicial set is called a principal $G$-bundle on $Y$.
 We then define the derived moduli stack of principal $G$-bundles over $Y$ as the following functor from
  $\cdga$ to simplicial sets: 
\[
\RBun_G(Y) = \RMap(Y, BG) \colon\,\,  A \mapsto \spRBun_G(Y \times \Spec A).
\]

Fundamental for us will be the derived stack  $\RBun_G(X)$. 

 \begin{prop}
 Let $P$ be a principal $G$-bundle on $X$ and $[P]\in \RBun_G(X)$ be the corresponding
 $\k$-point. Then 
 \[
 \TT_{[P]} \RBun_G(X) \,\,\simeq \,\, R\Gamma(X, \Ad(P))[1]. 
 \]
 \end{prop}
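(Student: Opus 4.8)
The plan is to reduce everything to the general tangent-complex computation for mapping stacks established in Proposition \ref{prop:tangent-aut}(a), together with the standard identification of the tangent complex of $BG$. First I would observe that, by the very definition $\RBun_G(X) = \RMap(X, BG)$, the $\k$-point $[P]$ is the same datum as a classifying morphism $f_P \colon X \to BG$ for the bundle $P$. Since $BG = [*/G]$ is a derived Artin stack, it is geometric and infinitesimally cartesian (by \cite[Prop. 1.4.2.5]{toen-vezzosi}), so Proposition \ref{prop:tangent-aut}(a) applies with $Y = BG$ and $f = f_P$, yielding
\[
\TT_{[P]}\RBun_G(X) \,\,\simeq\,\, R\Gamma(X, f_P^* \TT_{BG}).
\]

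It then remains to identify $f_P^* \TT_{BG}$ with $\Ad(P)[1]$. The key step is the computation of $\TT_{BG}$ as an object of $\on{D}(BG)$. The stack $BG$ is smooth of dimension $-\dim G$, and its cotangent complex at the canonical point is $\gen^*$ placed in cohomological degree $1$; dually, $\TT_{BG} \simeq \gen \otimes \Oc_{BG}[1]$, where $\gen$ carries the adjoint $G$-action, so that $\TT_{BG}$ is the universal adjoint sheaf shifted into degree $-1$. One can check this either directly from the defining universal property of $\LL_{BG}$ (deformations of a $G$-torsor over a square-zero extension being controlled by $\gen$ in degree $-1$) or from the descent presentation $* \to BG$. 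Pulling back the universal adjoint object along the classifying map $f_P$ produces, by the construction of associated bundles, the adjoint bundle $\Ad(P) = P \times^G \gen$. Hence $f_P^* \TT_{BG} \simeq \Ad(P)[1]$, and combining with the display above gives
\[
\TT_{[P]}\RBun_G(X) \,\,\simeq\,\, R\Gamma(X, \Ad(P)[1]) \,\,=\,\, R\Gamma(X, \Ad(P))[1],
\]
the shift commuting with $R\Gamma$ since it is merely a degree shift of the coefficients.

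The only genuine content beyond citing Proposition \ref{prop:tangent-aut}(a) is the identification $\TT_{BG} \simeq \gen \otimes \Oc_{BG}[1]$ together with its compatibility with pullback, and this is the step I would expect to be the main obstacle: the mathematics is standard, but care is needed in tracking the cohomological shift and in checking that the ``tangent complex of the associated formal moduli problem'' appearing in Proposition \ref{prop:tangent-aut}(a) agrees with the tangent complex $\TT_{[P]}\RBun_G(X)$ of the statement. These two conventions coincide because $\RBun_G(X)$ is geometric and admits a cotangent complex, so its formal completion at $[P]$ is exactly the formal moduli problem whose tangent complex is $\TT_{[P]}\RBun_G(X)$, as in Example \ref{ex-tangentislie}.
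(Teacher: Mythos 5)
Your proof is correct and takes essentially the same route as the paper: the paper's entire proof reads ``Follows from Proposition \ref{prop:tangent-aut} with $Y=BG$,'' which is precisely your first step. The details you supply---that $BG$ is geometric and infinitesimally cartesian, that $\TT_{BG}\simeq \gen\otimes\Oc_{BG}[1]$ with the adjoint $G$-action so that $f_P^*\TT_{BG}\simeq \Ad(P)[1]$, and that the formal-moduli-problem convention for the tangent complex agrees with $\TT_{[P]}\RBun_G(X)$ via Example \ref{ex-tangentislie}---are exactly what the paper leaves implicit.
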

 
 \noindent {\sl Proof:} Follows from Proposition 
 \ref{prop:tangent-aut}  with $Y=BG$.
 
\qed
\vskip .2cm

We then define $\RBun_G(\wh x)$ as the functor  
\[
A \,\,\mapsto \RBun_G(\Spec(A \mathbin{\wh\otimes}_\k \wh\Oc_{X,x})), \quad
A \mathbin{\wh\otimes}_\k \wh\Oc_{X,x} = \holim (A \otimes_\k \Oc_X/\Ic^n).<
\]
Here $\Ic$ is the ideal of $x$. 

\begin{rem}
 Note that  the definition of $\RBun_G(\wh x)$ is not  the result of applying the construction $\RBun_G(Y)$
 to the scheme $Y=\wh x$. More precisely, for
 any integer $n$,  denote by
$x^{(n)}=\Spec(\Oc_X/\mathcal{I}^n)$,
 the $n^\mathrm{th}$
 infinitesimal neighborhood of $x$ in $X$.    Then
  \[
\RBun_G(\wh {x})  \simeq \holim_n  \RBun_G(x^{(n)}) \,\,= \,\, \RBun_G(\on{Spf} \wh\Oc_{X,x}) 
\]
is a particular case of $\RBun$ construction but for a formal scheme. 
\end{rem}
 
We have the restriction map
\[
\lambda: \RBun_G(X)\to\RBun_G(\wh x).
\]
 
 \begin{Defi}
The derived stack of $G$-bundles on $X$ rigidified at $x$ is defined to be
\[
\RBunrig_G(X, x) = \RBun_G(X) \times^h_{\RBun_G(\hat x)} \{\mathrm{Triv}\}
\,\,=\,\,
R\lambda^{-1}(\on{Triv}),
\]
where $\mathrm{Triv} = X\times G$ denotes the trivial bundle. For any cdga $A$, the simplicial set of $A$-points 
of $\RBunrig_G(X,x)$ is 
\[
\spRBun_G(X \times \Spec A ) \times^h_{\spRBun_G(\Spec(A\mathbin {\wh\otimes}  \wh\Oc_{X,x} ) )} \{\mathrm{Triv}\} . 
\]
In other words, this is the groupoid formed by  $G$-bundles on $X \times \Spec A$ endowed with a trivialization on $\Spec(A\mathbin {\wh\otimes}  \wh\Oc_{X,x} )$.
\end{Defi}

\begin{prop}
(a) The derived stack $\RBunrig_G(X, x)$ is represented by a derived Artin stack.

(b) If $X$ is projective, then $\RBunrig_G(X, x)$ is represented by a derived scheme of amplitude $[0, n-1]$.
\end{prop}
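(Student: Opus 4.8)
The plan is to construct $\RBunrig_G(X,x)$ from its defining homotopy fibre square and to extract both representability and the amplitude from the tangent complexes, which are governed by Proposition~\ref{prop:tangent-aut}. For part (a), I would first record that $BG$ is a derived Artin stack (a smooth $1$-stack, since $G$ is a smooth affine group) with affine diagonal, and then invoke the representability theorem for mapping stacks into geometric stacks \cite{toen-vezzosi} to conclude that $\RBun_G(X)=\RMap(X,BG)$ is a derived Artin stack, with tangent complex at $[P]$ equal to $R\Gamma(X,\Ad(P))[1]$ by Proposition~\ref{prop:tangent-aut}. Next I would treat $\RBun_G(\wh x)$: by the Remark preceding the statement it is the homotopy limit $\holim_n \RMap(x^{(n)},BG)$, where each $x^{(n)}=\Spec(\Oc_X/\Ic^n)$ is a finite (hence proper) scheme, so each $\RMap(x^{(n)},BG)$ is Artin; the transition maps, being restriction along the nilpotent thickenings $x^{(n)}\hookrightarrow x^{(n+1)}$, are smooth and affine (deformations over the affine $x^{(n)}$ are unobstructed), so the homotopy limit is again a derived Artin stack. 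Since derived Artin stacks are stable under homotopy fibre products,
\[
\RBunrig_G(X,x)=\RBun_G(X)\times^{\mathrm{h}}_{\RBun_G(\wh x)}\{\mathrm{Triv}\}
\]
is a derived Artin stack. Equivalently, pulling back the affine morphism $\{\mathrm{Triv}\}\to\RBun_G(\wh x)$ (a torsor under $\mathbb{G}=G(\wh x)=\Aut(\mathrm{Triv})$) exhibits $\RBunrig_G(X,x)\to\RBun_G(X)$ as affine, so representability descends from $\RBun_G(X)$.

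For part (b) I would, with $X$ now projective, compute the tangent complex of the fibre product at a $\k$-point $(P,\tau)$. Combining Proposition~\ref{prop:tangent-aut} with the defining square gives
\[
\TT_{\RBunrig_G(X,x),(P,\tau)}\;\simeq\;\mathrm{fib}\bigl(R\Gamma(X,\Ad(P))\lra\Gamma(\wh x,\Ad(P))\bigr)[1].
\]
Here $R\Gamma(X,\Ad(P))$ is concentrated in degrees $[0,n]$ (coherent cohomology on a projective $n$-fold) while $\Gamma(\wh x,\Ad(P))$ sits in degree $0$. The restriction map is injective on $H^0$: a global section of the vector bundle $\Ad(P)$ whose germ vanishes in $\wh\Oc_{X,x}$ already vanishes in $\Oc_{X,x}$ (Krull intersection), hence on all of the irreducible $X$. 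Thus the fibre is concentrated in degrees $[1,n]$, so after the shift $\TT_{\RBunrig_G(X,x)}$ lives in degrees $[0,n-1]$; dually $\LL_{\RBunrig_G(X,x)}$ is connective of amplitude $[1-n,0]$, which is the asserted amplitude $[0,n-1]$.

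It remains to upgrade "derived Artin stack" to "derived scheme". The vanishing of $H^{-1}(\TT)=H^0(\mathrm{fib})$ just established says there are no infinitesimal automorphisms, so $\RBunrig_G(X,x)$ has connective cotangent complex and is therefore $0$-truncated, i.e. a derived algebraic space; concretely, an automorphism of $(P,\tau)$ restricting to the identity on $\wh x$ is identity by the same Krull-intersection argument. Scheme-hood of the underlying classical truncation then follows from the standard fact that rigidifying $G$-bundles at a point of a projective variety produces a (quasi-projective) scheme, and the connective cotangent complex promotes this to a derived scheme of amplitude $[0,n-1]$. The hardest part will be the representability inputs rather than the amplitude bookkeeping: the careful application of the mapping-stack representability theorem to $\RMap(X,BG)$ (in part (a) for non-proper $X$ one must allow local infinite presentation and use the affine diagonal of $BG$), and the verification that $\holim_n\RMap(x^{(n)},BG)$ is genuinely Artin; once these are secured, stability under fibre products gives (a) and the two-line tangent computation gives (b).
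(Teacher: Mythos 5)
Your part (a) contains a genuine gap, and it sits exactly at the point the paper out\-sources to \cite[Lemma 4.2.4]{hennion-floops}. You argue that $\RBun_G(\wh x)\simeq\holim_n\RBun_G(x^{(n)})$ is Artin because ``the transition maps are smooth and affine, so the homotopy limit is again a derived Artin stack''; this fails on both counts. The transition maps are not affine: the homotopy fiber of $\RBun_G(x^{(n+1)})\to\RBun_G(x^{(n)})$ over the trivial bundle is $BK_n$, the classifying stack of the unipotent congruence kernel $K_n=\Ker\bigl(G(\Oc_X/\Ic^{n+1})\to G(\Oc_X/\Ic^{n})\bigr)$ (a vector group $\gen\otimes\Ic^n/\Ic^{n+1}$), and a morphism with positive-dimensional classifying stacks as fibers is not affine. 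Moreover, derived Artin stacks are not closed under countable homotopy limits, even along smooth morphisms: local finite presentation is destroyed in the limit, and indeed the limit here is (on the component of $\mathrm{Triv}$) $[*/G(\wh x)]$ with $G(\wh x)$ a pro-algebraic group of infinite type, for which the tautological map from the point is faithfully flat and affine but not smooth in the locally-finitely-presented sense. Whether $\{\on{Triv}\}\to\RBun_G(\wh x)$ is nevertheless a smooth atlas --- equivalently, your parenthetical assertion that it is an affine $G(\wh x)$-torsor, i.e.\ that every $G$-bundle on $\Spec\bigl(A\mathbin{\wh\otimes}_\k\wh\Oc_{X,x}\bigr)$ trivializes locally on $\Spec A$, so that $\RBunrig_G(X,x)\to\RBun_G(X)$ is affine --- is precisely the nontrivial input the paper imports from Hennion's lemma; you assert it rather than prove it, and nothing else in your proposal substitutes for it. A secondary point: part (a) is stated for an arbitrary smooth variety $X$, whereas the mapping-stack representability theorem you invoke for $\RBun_G(X)=\RMap(X,BG)$ requires the source to be proper; you flag this caveat but do not resolve it.

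Part (b), by contrast, is essentially the paper's own argument: the paper likewise reduces to the underived moduli problem via Lurie's representability criterion for derived schemes \cite[Theorem 3.1.1]{lurie-dagxiv}, quotes that the classical $\Bun_G(X)$ is geometric of amplitude $[-1,n-1]$, locally a quotient of a finite-type scheme by an algebraic group, and notes that rigidification kills the stack structure. Your explicit identification
\[
\TT_{\RBunrig_G(X,x),(P,\tau)}\;\simeq\;\on{fib}\bigl(R\Gamma(X,\Ad(P))\lra\Gamma(\wh x,\Ad(P))\bigr)[1],
\]
together with the Krull-intersection argument showing $H^{-1}(\TT)=0$ (hence trivial automorphism groups), is a correct and usefully more detailed version of the amplitude bookkeeping and of the phrase ``we kill the stack structure'' that the paper leaves implicit; just note that your final ``promotion'' step is exactly an appeal to Lurie's criterion (which also requires nilcompleteness and infinitesimal cohesiveness, formal in this situation), so it should be cited rather than treated as automatic.
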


\noindent{\sl Proof:}  Using the smooth atlas $\Spec(\k) \to BG$, we deduce from \cite[Lemma 3.2.4]{hennion-floops} that the inclusion of the trivial bundle $\{\mathrm{Triv} \} \to \RBun_G(\wh x)$ is a smooth atlas. In particular, we get that the derived stack $\RBunrig_G(X,x)$ is representable by a derived Artin stack.

Using Lurie's representability criterion for derived schemes (see \cite[Theorem 3.1.1]{lurie-dagxiv}, or \cite[Appendix C]{toen-vezzosi}), we can hence reduce to the non-derived moduli problem. It is known that $\Bun_G(X)$ is a geometric stack of amplitude $[-1, n-1]$,  which locally 
is represented as the quotient of a scheme of finite type by an algebraic group. When we introduce the rigidification, we kill the stack structure. \qed

\vskip .2cm

We next define the derived stack $\RBun_G(\wh x^\circ)$ as the functor
  \[
   A \mapsto \spRBun_G \left(\Spec\bigl (A \mathbin{\wh\otimes}_\k \wh\Oc_{X,x}\bigr) - 
   \bigl(\Spec(A) \times \{x\}\bigr) \right). 
   \]

Let us fix the notations $X^\circ = X-\{x\}$.

   \begin{prop}\label{prop:BLaszlo}
   The natural morphisms
   \[
 \RBunrig_G(X, x) \to \RBun_G(X^\circ) \times^h_{\RBun_G(
    \wh x^\circ   )} \{\mathrm{Triv}\}
\]
is an equivalence
   \end{prop}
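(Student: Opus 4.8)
The plan is to reduce the statement to the derived Beauville--Laszlo (formal gluing) theorem for the classifying stack $BG$, and then to conclude by a formal manipulation of homotopy fiber products.

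\emph{Step 1: the formal gluing square.} Since $\RBun_G(-)=\RMap(-,BG)$ and $G$ is reductive, the target $BG$ has affine diagonal and satisfies flat descent, so it is an admissible target for formal gluing. For any $A\in\cdga$ write $X_A=X\times\Spec A$ and let $Z_A=\{x\}\times\Spec A$ be the base-changed closed point. Its formal completion is $\Spec(A\mathbin{\wh\otimes}_\k\wh\Oc_{X,x})$ --- which is exactly the scheme used to define $\RBun_G(\wh x)$ at $A$ --- and deleting $Z_A$ from it recovers the functor defining $\RBun_G(\wh x^\circ)$. First I would invoke the derived Beauville--Laszlo theorem (formal gluing along $x$, cf. \cite{hennion-floops}) to obtain, functorially in $A$, a homotopy cartesian square
\[
\begin{array}{ccc}
\RBun_G(X)(A) & \lra & \RBun_G(\wh x)(A)\\
\downarrow & & \downarrow\\
\RBun_G(X^\circ)(A) & \lra & \RBun_G(\wh x^\circ)(A).
\end{array}
\]
Assembling over all $A$ gives an equivalence of derived stacks
\[
\RBun_G(X)\;\simeq\;\RBun_G(X^\circ)\times^h_{\RBun_G(\wh x^\circ)}\RBun_G(\wh x).
\]

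\emph{Step 2: passing to the rigidified moduli.} By definition $\RBunrig_G(X,x)=\RBun_G(X)\times^h_{\RBun_G(\wh x)}\{\mathrm{Triv}\}$, the fiber being taken along the restriction $\RBun_G(X)\to\RBun_G(\wh x)$, which in the presentation of Step 1 is the projection onto the factor $\RBun_G(\wh x)$. Applying the pasting lemma for homotopy cartesian squares --- using the elementary identity $(A\times^h_B C)\times^h_C D\simeq A\times^h_B D$ with $A=\RBun_G(X^\circ)$, $B=\RBun_G(\wh x^\circ)$, $C=\RBun_G(\wh x)$ and $D=\{\mathrm{Triv}\}$ --- I would obtain
\[
\RBunrig_G(X,x)\;\simeq\;\RBun_G(X^\circ)\times^h_{\RBun_G(\wh x^\circ)}\{\mathrm{Triv}\},
\]
where on the right $\{\mathrm{Triv}\}\to\RBun_G(\wh x^\circ)$ is the composite $\{\mathrm{Triv}\}\to\RBun_G(\wh x)\to\RBun_G(\wh x^\circ)$, i.e. the trivial bundle restricted to $\wh x^\circ$. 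A final check that these identifications assemble into the restriction morphism named in the statement then finishes the argument.

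\emph{Main obstacle.} The crux is Step 1: establishing the derived formal gluing square with the required functoriality in the test cdga $A$. Two points demand care. First, one must check that forming the formal completion and its puncturing commutes with derived base change $-\mathbin{\wh\otimes}_\k A$, so that the completion of $X_A$ along $Z_A$ is genuinely $\Spec(A\mathbin{\wh\otimes}_\k\wh\Oc_{X,x})$ and its complement matches the prescribed functor for $\RBun_G(\wh x^\circ)$; this is precisely what the shape of the definitions is engineered to guarantee. Second, one must know that $G$-bundles satisfy descent along the Beauville--Laszlo square, which reduces to the gluing of quasi-coherent sheaves along the completion and is the content of the cited formal-gluing result for the affine-diagonal stack $BG$. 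Once these are secured, Step 2 is purely formal.
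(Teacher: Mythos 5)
Your proof is correct and takes essentially the same route as the paper: the paper's entire proof is a citation of the formal-gluing theorem of Hennion--Porta--Vezzosi (\cite[Theorem 6.20]{HennionPortaVezzosi}), which is exactly the derived Beauville--Laszlo input of your Step 1, and your Step 2 is the routine pasting of homotopy pullback squares. The only adjustment needed is the citation: the formal gluing result along $x$ for the stack $BG$ is in \cite{HennionPortaVezzosi}, not \cite{hennion-floops} (the latter is used elsewhere in the paper, for the atlas statement about $\RBun_G(\wh x)$).
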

   
   This is proved in \cite[Theorem 6.20]{HennionPortaVezzosi}.
   
   \paragraph{B. Derived current groups and their action.}
   We define $G(\wh x^\circ)$ as the functor
   \[
   A \mapsto \on{Map}\biggl( \Spec (A\wh\otimes\wh\Oc_{X,x}) - (\Spec(A) \times \{x\}), \, G\biggr). 
   \]
   This is group object in derived stacks.     
   
   \begin{prop}\label{prop:Lie-alg-G-hat}
   (a)  $G(\wh x^\circ)$  is a derived affine ind-scheme, identified with the group of automorphisms
    of the point $\on{Triv}$ in the derived stack $\RBun_G( \wh x^\circ)$. 
    
    \vskip .2cm
    
    (b) The  Lie algebra of $G(\wh x^\circ)$ is identified with 
   \[
   \Lie(G(\wh x^\circ)) \,\,=\,\, \TT_e G(\wh x^\circ) \,\,=\,\,
   \gen\otimes R\Gamma(\wh x^\circ, \Oc) \,\,\simeq  \,\,  \gen_x^\bullet 
   \,\,\simeq \,\,\gen_n^\bullet
   \]
   studied earlier. 

    \end{prop}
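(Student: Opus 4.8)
The plan is to establish (a) from the compatibility of the mapping-stack construction with based loop spaces, and (b) by computing the tangent complex with Proposition \ref{prop:tangent-aut} and then pinning down its Lie bracket via the lax monoidal structure of $R\Gamma$. For the identification in (a), I would use the equivalence of group stacks $\Omega_* BG \simeq G$ together with the fact that $\RMap(\wh x^\circ, -)$ preserves homotopy limits, hence based loop spaces, so that
\[
\Aut_{\RBun_G(\wh x^\circ)}(\on{Triv}) \simeq \Omega_{\on{Triv}}\RMap(\wh x^\circ, BG) \simeq \RMap(\wh x^\circ, \Omega_* BG) \simeq \RMap(\wh x^\circ, G) = G(\wh x^\circ).
\]

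For the representability of $G(\wh x^\circ)$ by a derived affine ind-scheme, I would fix a closed embedding $G \hookrightarrow \AAA^M$, which presents $G(\wh x^\circ)$ as a closed subfunctor of $\AAA^M(\wh x^\circ)$, namely of $A \mapsto \bigl(A \mathbin{\wh\otimes} R\Gamma(\wh x^\circ, \Oc)\bigr)^{\oplus M}$. By Example \ref{ex: A_n-tate-complex} and Proposition \ref{prop:RGammak}, the complex $R\Gamma(\wh x^\circ, \Oc) \simeq A_n^\bullet$ is a strict Tate complex, and the pole filtration of Corollary \ref{cor:filt} exhibits it as a countable increasing union of pro-finite-dimensional pieces. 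This realizes $\AAA^M(\wh x^\circ)$ as a derived affine ind-scheme, and a closed subfunctor of such is again one.

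For (b), recall that $\Lie(G(\wh x^\circ)) = \TT_e G(\wh x^\circ)$ by the definition of the Lie algebra of a group stack. Since $G(\wh x^\circ) = \RMap(\wh x^\circ, G)$ with $G$ an affine, hence geometric, scheme whose tangent bundle is trivialized at the identity, $\TT_G \simeq \gen \otimes \Oc_G$, Proposition \ref{prop:tangent-aut}(a) applied at the constant map $e \colon \wh x^\circ \to G$ yields
\[
\TT_e G(\wh x^\circ) \simeq R\Gamma(\wh x^\circ, e^* \TT_G) \simeq \gen \otimes R\Gamma(\wh x^\circ, \Oc) =: \gen_x^\bullet
\]
as complexes, and a choice of formal coordinates $\wh\Oc_{X,x} \simeq \k[[z_1,\dots,z_n]]$ identifies $\gen_x^\bullet$ with $\gen_n^\bullet$. (As a consistency check, the loop-space description of (a) together with $\Lie(\Omega_y Y) \simeq \TT_{Y,y}[-1]$ and $\TT_{BG,*}\simeq\gen[1]$ gives the same complex, the two shifts cancelling.)

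The main obstacle is to promote this to an equivalence of \emph{dg-Lie algebras} carrying the current bracket $[\xi \otimes a, \eta \otimes b] = [\xi, \eta] \otimes ab$, rather than the a priori opaque bracket furnished by Lurie's correspondence (Theorem \ref{thm:formalmod}). I would settle this by functoriality in the target group together with monoidality of $R\Gamma$: the assignment $H \mapsto \RMap(\wh x^\circ, H)$ is functorial on group stacks and compatible with $\Lie$, so for $H = G$ it carries the bracket $\gen \otimes \gen \to \gen$ to the induced bracket on $R\Gamma(\wh x^\circ, \Oc) \otimes \gen$. Because the Jouanolou model $A_n^\bullet(-)$ of $R\Gamma(\wh x^\circ, -)$ is lax symmetric monoidal (Proposition \ref{prop:RGammaJ}(b)), it carries the fiberwise Lie-algebra object $\gen \otimes \Oc_{\wh x^\circ}$ to a dg-Lie algebra whose bracket is the composite of the lax structure map with the multiplication of $A_n^\bullet$ and the bracket of $\gen$ --- precisely the current bracket. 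The base case $\Lie(\Omega_* BG) \simeq \gen$, which recovers the ordinary bracket of $\gen$, anchors the computation, and the monoidal argument then propagates it to all of $G(\wh x^\circ)$.
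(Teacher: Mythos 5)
Your use of Proposition \ref{prop:tangent-aut}(a) to compute the tangent complex is the right tool and matches the paper's strategy in spirit, but the proposal has a genuine gap at its foundation: you silently identify $G(\wh x^\circ)$ with $\RMap(\wh x^\circ, G)$ and $\RBun_G(\wh x^\circ)$ with $\RMap(\wh x^\circ, BG)$, and neither identification holds. In the paper these objects are defined through the \emph{completed} tensor product: $G(\wh x^\circ)(A) = \Map\bigl(\Spec(A \mathbin{\wh\otimes}_\k \wh\Oc_{X,x}) - \Spec(A)\times\{x\},\, G\bigr)$, whereas the mapping stack satisfies $\RMap(\wh x^\circ, G)(A) = \Map(\wh x^\circ \times \Spec A, G)$, which involves the ordinary tensor product $A \otimes_\k \wh\Oc_{X,x}$. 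These are different functors on $\cdga$: already for $n=1$ and $A = \k[t]$, the element $\sum_k t^k z^k$ lies in $\k[t]\mathbin{\wh\otimes}_\k \k[[z]] = \k[t][[z]]$ but not in $\k[t]\otimes_\k\k[[z]]$ (whose elements have bounded $t$-degree). The distinction is deliberate in the paper --- see the Remark following the definition of $\RBun_G(\wh x)$. Consequently, your chain of equivalences in (a) computes the loop group of the wrong stack (the correct identification in (a) is, as the paper says, ``obvious'' precisely because both $G(\wh x^\circ)$ and $\RBun_G(\wh x^\circ)$ use the completed tensor product), and in (b) Proposition \ref{prop:tangent-aut}(a) hands you the Lie algebra of $\RMap(\wh x^\circ, G)$, not of $G(\wh x^\circ)$.

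What is missing is exactly the bridge that constitutes the paper's proof of (b): the canonical map $A\otimes_\k \wh\Oc_{X,x} \to A \mathbin{\wh\otimes}_\k \wh\Oc_{X,x}$ is a quasi-isomorphism whenever $A$ has finite-dimensional cohomology, i.e., on all Artinian cdgas; hence the comparison morphism $f\colon \RMap(\wh x^\circ, BG)\to \RBun_G(\wh x^\circ)$ is formally \'etale at $\on{Triv}$, the associated formal moduli problems agree, and only then do the Lie algebras coincide. Without this step your argument establishes the statement for a different object. Two secondary points: the ind-affineness in (a) is not a formal consequence of choosing an embedding $G\hookrightarrow\AAA^M$ --- for $n>1$, mapping out of the punctured formal disk is a genuinely derived problem (the target of your ``closed subfunctor'' argument is controlled by a Tate complex with cohomology in two degrees), and the paper delegates this to \cite{hennion-floops}; and your closing discussion of the current bracket, while a legitimate concern, is not where the difficulty lies --- the bracket identification comes from the naturality of the constructions in Lurie's formalism, which your monoidality argument only re-derives informally.
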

    
    \noindent {\sl Proof:}
    
    (a) The first claim follows from Lemma 1.4.5 and the proof of Lemma 3.3.6 in \cite{hennion-floops}. The second is obvious.

    \vskip .2cm
    
  (b) We use (a) and compare   $\RBun_G(\wh x^\circ)$ with the mapping stack
\[
\RMap\left(\wh x^\circ, BG\right) \colon A \mapsto \Map\left(\wh x^\circ\times \Spec A, BG\right). 
\]
The group of automorphisms of $\on{Triv}$ in $\RMap\left(\wh x^\circ, BG\right)$ has Lie algebra
identified with $\gen\otimes R\Gamma(\wh x^\circ, \Oc)$ by Proposition
\ref {prop:tangent-aut}(a).

\vskip .2cm
 
We now consider the morphism of  derived stacks
\[
f:  \RMap\left(\wh x^\circ, BG\right) \lra \RBun_G(\wh x^\circ)
\]
induced by the canonical maps
\be\label{eq:can-maps-comp}
A \otimes_\k \wh \Oc_{X,x} \to A \mathbin{\wh \otimes}_\k \wh \Oc_{X,x}. 
\ee
We note that \eqref{eq:can-maps-comp} is a quasi-isomorphism whenever $A$ has finite dimensional
cohomology. This implies that $f$ is  formally \'etale at the point $\on{Triv}$ and therefore
the induced morphism of  tangent Lie algebras at $\on{Triv}$ is a quasi-isomorphism. 

\qed

    \vskip .2cm
    
     Consider the projection of derived stacks
\[
g: \mathbb{R}\mathbf{B}{\mathbf{un}}_G(X^\circ) \lra \mathbb{R}\mathbf{B}{\mathbf{un}}_G
(\wh x^\circ).  
\]
Proposition \ref{prop:BLaszlo} identifies $\RBunrig_G(X,x)$ with the homotopy fiber
$Rg^{-1}(\on{Triv})$. Therefore 
Propositions \ref{prop:Lie-alg-G-hat} and \ref{prop:actionofloops} imply: 
    
\begin{thm}\label{prop-groupaction}
(a) The derived group stack $G(\wh x^\circ)$ acts on $\RBunrig_G(X,x)$ by changing the trivialization.

\vskip .2cm

(b)   The Kodaira-Spencer map of $g$
gives rise to a morphism of dg-Lie algebras
\[
\beta: \gen_x^\bullet\lra R\Gamma(\RBunrig_G(X,x), \TT). 
\]
\qed
\end{thm}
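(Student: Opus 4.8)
The plan is to deduce both statements by combining the homotopy-fiber description of $\RBunrig_G(X,x)$ with the loop-group/Kodaira--Spencer machinery developed above. First I would invoke Proposition \ref{prop:BLaszlo} to identify $\RBunrig_G(X,x)$ with the derived preimage $Rg^{-1}(\on{Triv})$ of $\on{Triv}$ under the projection $g\colon\RBun_G(X^\circ)\to\RBun_G(\wh x^\circ)$. This places us exactly in the setting $f=g$, $Y=\RBun_G(\wh x^\circ)$, $y=\on{Triv}$ to which Proposition \ref{prop:actionofloops} and the Kodaira--Spencer construction apply.

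For part (a), I would observe that the automorphism group of the point $\on{Triv}$ in $\RBun_G(\wh x^\circ)$ is by definition the pointed loop stack $\Omega_{\on{Triv}}\RBun_G(\wh x^\circ)$. By Proposition \ref{prop:Lie-alg-G-hat}(a) this automorphism group is precisely $G(\wh x^\circ)$, so $G(\wh x^\circ)\simeq\Omega_{\on{Triv}}\RBun_G(\wh x^\circ)$ as group stacks. Proposition \ref{prop:actionofloops} then produces a natural action of $\Omega_{\on{Triv}}\RBun_G(\wh x^\circ)$, hence of $G(\wh x^\circ)$, on $Rg^{-1}(\on{Triv})=\RBunrig_G(X,x)$; unwinding the commuting square in the proof of that proposition identifies this with the action that changes the trivialization, which is (a).

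For part (b), I would apply the Kodaira--Spencer construction to $g$ at $\on{Triv}$. The requisite hypotheses hold: $Rg^{-1}(\on{Triv})=\RBunrig_G(X,x)$ is geometric since it is representable by a derived Artin stack, and the completion of $\RBun_G(\wh x^\circ)$ at $\on{Triv}$ is a formal moduli problem because the mapping-type stacks in play are infinitesimally cartesian. The Kodaira--Spencer morphism is then, by its very construction, a morphism of homotopy Lie algebras
\[
\kappa\colon \TT_{\RBun_G(\wh x^\circ),\,\on{Triv}}[-1]\lra R\Gamma\bigl(Rg^{-1}(\on{Triv}),\TT\bigr)=R\Gamma\bigl(\RBunrig_G(X,x),\TT\bigr),
\]
whose target is exactly the one appearing in the theorem. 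Combining the loop-stack identification $\Lie(\Omega_{\on{Triv}}Y)=\TT_{Y,\on{Triv}}[-1]$ with Proposition \ref{prop:Lie-alg-G-hat}(b), the source is canonically $\Lie(G(\wh x^\circ))\simeq\gen_x^\bullet\simeq\gen_n^\bullet$, and setting $\beta=\kappa$ yields the asserted morphism of dg-Lie algebras.

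The main obstacle is ensuring that the source of $\kappa$ is genuinely $\gen_n^\bullet$ \emph{as a dg-Lie algebra}, i.e. that the two Lie structures one obtains — one via $\Lie(G(\wh x^\circ))=\gen_x^\bullet$ from Proposition \ref{prop:Lie-alg-G-hat}(b), the other via the loop-stack identification $\Lie(\Omega_{\on{Triv}}\RBun_G(\wh x^\circ))=\TT_{\RBun_G(\wh x^\circ),\,\on{Triv}}[-1]$ — carry the same bracket. This is precisely the point where the compatibility of the group-stack structure on $G(\wh x^\circ)$ with its loop-stack structure, established in part (a), is essential, and it is the only step that is not purely formal; everything else reduces to matching the hypotheses and conclusions of the cited propositions.
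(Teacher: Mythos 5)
Your proposal is correct and is essentially identical to the paper's own argument: the paper deduces the theorem directly by combining Proposition \ref{prop:BLaszlo} (identifying $\RBunrig_G(X,x)$ with $Rg^{-1}(\on{Triv})$), Proposition \ref{prop:actionofloops} (the loop-group action on a derived preimage) and Proposition \ref{prop:Lie-alg-G-hat} (identifying $G(\wh x^\circ)$ with the automorphism group of $\on{Triv}$ and its Lie algebra with $\gen_x^\bullet$), exactly as you do. Your closing concern about the two Lie brackets agreeing is already absorbed into Proposition \ref{prop:Lie-alg-G-hat}(b), whose proof produces the identification $\Lie(G(\wh x^\circ))\simeq \gen_x^\bullet$ as dg-Lie algebras via the formally \'etale comparison with the mapping stack $\RMap(\wh x^\circ, BG)$ and Proposition \ref{prop:tangent-aut}.
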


\begin{rem} \label {rem:def-moduli}
 In particular, $\beta$ induces a 
 morphism  on the $(n-1)$-st cohomology of
the dg-Lie algebra $\gen_x^\bullet$:
\[
H^{n-1}_\dbar(\gen_x^\bullet) \lra \HH^{n-1}\bigl( \RBunrig_G(X,x), \TT\bigr). 
\]
Consider the first non-classical case $n=2$, when $X$ is a surface. In this case we are dealing with $\HH^1$ of the tangent complex
which has the meaning of the space of deformation of $\RBunrig_G(X,x)$. Theorem \ref{prop-groupaction}
produces therefore a class of deformations of the rigidified  derived moduli space labelled by the space  of ``polar parts"
$H^{1}_\dbar(\gen_x^\bullet) = H^2_{\{x\}}(\gen\otimes \Oc_X)$. 

\vskip .2cm

A natural way of deforming the moduli space
 would be to ``twist"  the cocycle condition $g_{ik}^{-1}g_{jk}g_{ij}=1$ defining  $G$-bundles by replacing it with 
$g_{ik}^{-1}g_{jk}g_{ij}=\lambda_{ijk}$ with some ``curvature data" $\lambda=(\lambda_{ijk})$.
 Considering such twisted bundles is standard when $G=GL_r$ and 
 $\lambda$ consists of scalar  functions (we then get modules over an Azumaya algebra).
   In our case,  elements of $H^2_{\{x\}}(\gen\otimes \Oc_X)$
 can be seen as providing infinitesimal germs of more general  (non-abelian) twistings and thus deformations of the 
 rigidified moduli space.

 \end{rem}

\subsection{Central extensions associated to Tate complexes}\label{subsec:cent-ext-Tate}

\paragraph{A. The group stack $GL(V)$ and its Lie algebra.}
The categories of Tate modules assemble into two prestacks
\[
\bTate: A \mapsto \Tate_A, \quad\ul\bTate: A \mapsto \Tate_A^{\on {equiv}}
\]
where ``equiv" means the maximal $\infty$-groupoid in the $\infty$-categorical envelope (nerve) of the dg-category.   

In particular, for any object $V$ of $\Tate_\k$ we have a group prestack of automorphisms
$GL(V) = \Omega_V\ul\bTate$. 

\begin{ex}
If $V$ is the Tate space $\k((z))$ in degree $0$, then $GL(V)$ is the
non-derived group ind-scheme $GL(\infty)$ studied in \cite{kashiwara}.
For a general $V$, we get a derived analog of $GL(\infty)$. 
\end{ex}

\begin{prop}\label{prop:GL-end}
(a) Each prestack $GL(V)$ is an infinitesimally cartesian stack.

(b) The Lie algebra $\Lie (GL(V))$  is identified
with $\End(V)$, the algebra of endomorphism of $V$ in $\Tate_\k$ made into a Lie algebra. 

\end{prop}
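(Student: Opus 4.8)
The plan is to verify the two claims by reducing everything to the already-established infinitesimal behaviour of the mapping and automorphism stacks and to Lurie's dictionary between formal moduli problems and dg-Lie algebras (Theorem~\ref{thm:formalmod}). First I would treat part (a). The prestack $GL(V) = \Omega_V\ul\bTate$ is by definition the loop stack of the prestack $\ul\bTate$ at the point $V$, so by Example~(a) following Proposition~\ref{prop:actionofloops} it suffices to know that $\ul\bTate$ is infinitesimally cartesian at $V$; equivalently, since $GL(V)$ is an open substack of the endomorphism (mapping) object $\RMap$-type construction $\mathbf{End}(V)$ inside $\ul\bTate$, I would invoke Proposition~\ref{prop:tangent-aut}(b): if the target $\ul\bTate$ is infinitesimally cartesian then so is its automorphism stack. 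The key input is therefore that $\ul\bTate$ satisfies the Schlessinger condition (Definition~\ref{def:infcart}) at $V$. This in turn follows from the fact that $\Tate_A$ is defined (\S\ref{subsec:tate-back}--\ref{subsec:cyc-Lie-hom}, via $\Ind\Pro$ of $\Perf_A$) by a homotopy-limit procedure that commutes with the square-zero pullbacks $A\times^h_B C$, because derived tensor products and the formation of $\Perf$, $\Ind$, $\Pro$ all preserve the relevant cartesian squares of cdgas.

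Next I would compute the Lie algebra in part (b). By the definition of $\Lie$ of a group stack (\S\,C of the Kodaira--Spencer subsection), $\Lie(GL(V)) = \TT_{GL(V),\,\mathrm{Id}}$, and since $GL(V)$ is open in the endomorphism object with the same completion at $\mathrm{Id}$, Proposition~\ref{prop:tangent-aut}(b) identifies this tangent complex with the endomorphism complex of $V$ in $\Tate_\k$. Concretely, deforming the identity automorphism of $V\times\Spec A$ over a square-zero extension $A\oplus M$ is controlled, to first order, precisely by $\End_{\Tate_A}(V)\otimes M$; passing to the spectrum $\Tc^{(p)}_F = F(\k[\epsilon_p]/\epsilon_p^2)$ that defines $\TT_F$ yields $\End_{\Tate_\k}(V)$ placed in the correct degrees. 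The dg-Lie bracket coming from Theorem~\ref{thm:formalmod} is then identified with the commutator bracket on $\End(V)$: this is forced by the group-multiplication structure on $GL(V)$, whose linearisation at $\mathrm{Id}$ is exactly composition of endomorphisms, so the antisymmetrised product is the graded commutator making $\End(V)$ into the dg-Lie algebra $\End(V)_\Lie$ in the sense of \S\ref{subsec:cyc-Lie-hom}B.

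The main obstacle, and the step deserving the most care, is establishing that $\ul\bTate$ (equivalently $\Tate_A$ as a functor of $A$) genuinely satisfies the infinitesimally cartesian condition, since $\Tate_\k$ is built from a two-step $\Ind\Pro$ completion rather than from a single geometric stack, and one must check that this completion interacts correctly with square-zero extensions. I would handle this by working one layer at a time: the assignment $A\mapsto \Perf_A$ is well known to satisfy étale descent and the Schlessinger condition (it is the source of the geometricity of $\RBun_G$), and both $\Ind$ and $\Pro$ of a functor landing in dg-categories preserve homotopy-cartesian squares because they are computed as (co)limits over the fixed countable filtered diagrams, which commute with the finite homotopy limit $A\times^h_B C$. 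Once this stability is in place, the identification of the tangent complex with $\End(V)$ is the standard adjunction argument already used in the proof of Proposition~\ref{prop:tangent-aut}(a), so no further essentially new difficulty arises.
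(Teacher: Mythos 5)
There is a genuine gap, and it occurs at the step you lean on hardest: Proposition \ref{prop:tangent-aut} does not apply here. That proposition concerns $\RMap(X,Y)$ and $\RAut(Y)$ where $X$ and $Y$ are \emph{derived stacks}; but $V$ is an object of the dg-category $\Tate_\k$, not a derived stack, and $GL(V)=\Omega_V\ul\bTate$ is the automorphism prestack of an \emph{object inside a prestack of dg-categories}. There is no prior result in the paper identifying the tangent complex of such an automorphism prestack with the endomorphism dg-algebra of the object --- that identification is precisely the content of Proposition \ref{prop:GL-end}(b), so invoking \ref{prop:tangent-aut}(b) for part (b) is circular. The paper's actual proof of (b) has to do real work: it constructs a map $\Lie(GL(V))\to\End(V)$ by testing against free dg-Lie algebras $L\in\on{FLie}^{\geq 1}_\k$, using the Koszul duality $U(L)\simeq\RHom_{\CE^\bullet(L)}(\k,\k)$ so that a Tate deformation $W$ of $V$ over $\CE^\bullet(L)$ yields an $L$-action on $V$; it must moreover descend this map through the formal moduli envelope $\Lc$, since $\wh{\ul\bTate}_V$ is not a priori a formal moduli problem (a subtlety your proposal ignores). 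Proving the map is an equivalence then reduces to $L$ free on one generator, where $\CE^\bullet(L)=\k[\epsilon]$, and the essential step is a nontrivial lemma: for a representation $(V,f)$ of $L$ in $\Tate_\k$, the complex $\Cone(f)[-1]$ with its $\k[\epsilon]$-module structure is again a \emph{Tate} object over $\k[\epsilon]$ --- proved via a lattice argument --- followed by a boundedness argument showing the counit is a quasi-isomorphism. Your assertion that ``deforming the identity automorphism over $A\oplus M$ is controlled, to first order, precisely by $\End_{\Tate_A}(V)\otimes M$'' is exactly the statement requiring proof, and the claim that the bracket is ``forced by the group multiplication'' is heuristic rather than an argument; in the paper the Lie structure comes for free because the comparison map is built landing in $\Map_{\dgLie}(L,\End(V))$ and Lurie's characterization of the Lie structure on tangent complexes pins it down.

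Part (a) also has a flaw, though a more repairable one. Your justification that $\ul\bTate$ is infinitesimally cartesian rests on the claim that $\Ind$ and $\Pro$ ``preserve homotopy-cartesian squares'' of dg-categories; this is false in general ($\Ind$ of a fiber product of categories is not the fiber product of the $\Ind$-categories). The paper argues differently: it embeds $\Tate_A$ fully faithfully into $\Ind(\Pro(\Perf_A))$, resp.\ into right-bounded dg-modules, and invokes Lurie's theorem that the stack of right-bounded dg-modules is infinitesimally cartesian. Full faithfulness (plus detection of invertibility) suffices here precisely because for the automorphism prestack of the \emph{fixed} object $V$ one only needs control of mapping spaces, not essential surjectivity of any gluing functor --- a reduction your proposal does not make, and which is what allows the argument to go through without the false commutation claim.
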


\noindent {\sl Proof:}  
(a) let $A\to B$ be a homotopy \'etale cover in $\cdga$. Denote $B_n = B^{\otimes n}_A$,
so that $B_\bullet = (B_n)$ is a cosimplicial object in $\cdga$. The natural functor
$F: \Tate_A\to \holim\,  \Tate_{B_n}$ is fully faithful by embedding into the functor
\[
\Ind(\Pro(\Perf_A))\lra \holim_n \Ind(\Pro(\Perf_{B_n})).
\]
Fully faithfulness of this last functor follows from \'etale descent for the categories of
dg-modules. Now, fully faithfulness of $F$ implies that each $GL(V)$ is a stack. 

Next, we show that $GL(V)$ is infinitesimally cartesian. We will prove that $GL(V)$ is actually cartesian: it preserves any pullback of cdga's. Let $D := A \times_B C \in \cdga$.
The canonical base change functor
\[
 \alpha \colon \dgMod_D \lra \dgMod_A \underset{\dgMod_B}{\times} \dgMod_C
\]
admits a right adjoint $\beta$. For any $M \in \dgMod_D$, the unit $M \to \beta \alpha(M) \simeq (M \otimes_D A) \times_{M \otimes_D B} (M \otimes_D C) \simeq M$ is an equivalence and $\alpha$ is thus fully faithful.
It follows that the induced functor $\Tate_D \to \Tate_A \times_{\Tate_B} \Tate_C$ is fully faithful, and therefore that $GL(V)$ preserves fiber products of cdga's, whatever $V \in \Tate_\k$ is

\vskip .2cm

(b)  We construct a morphism of dg-Lie algebras 
\be\label{eq:morphism-phi}
\phi: \Lie(GL(V))\lra\End(V).
\ee
For this we construct a natural transformation $\Phi$ of the functors represented by
both Lie algebras on  $L\in \on{FLie}^{\geq 1}_\k$.

\vskip 2mm
Given such an $L$, we denote by $\Rep_\Tate(L)$ the category of Tate complexes over $\k$ endowed with an action of $L$. By definition, the Lie algebra $\End(V)$ represents the functor
\[
 L \mapsto \Rep_\Tate(L) \times_{\Tate_\k} \{V\}.
\]
We also denote by $\Rep_{\Perf}(L)$ the category of perfect $\k$-complexes endowed with an action of $L$. The functors $\Rep_\Tate$ and $\Rep_{\Perf}$ come with a pointwise fully faithful natural transformation $\Rep_{\Perf} \to \Rep_\Tate$.

Koszul duality between $L$ and its Chevalley-Eilenberg cochains gives a functorial equivalence $\Rep_{\Perf}(L) \simeq \Perf_{\CE^\bullet(L)}$ (see for instance \cite[Lemma 3.38]{hennion-tgtlie}). Moreover, both $\Rep_{\Perf}(L)$ and $\Perf_{\CE^\bullet(L)}$ have a natural functor to $\Perf_\k$ (namely forgetting the action and taking the fiber at $\CE^\bullet(L) \to \k$), and the above equivalence commutes with those functors. Composing with the inclusion, we find a natural transformation $\psi \colon \Perf_{\CE^\bullet(-)} \to \Rep_\Tate$ over $\Tate_\k$.
From the universal property of the category of Tate objects (see \cite[Theorem 2.7]{hennion-tate}), the transformation $\psi$ extends to a natural transformation $\Tate_{\CE^\bullet(-)} \to \Rep_\Tate$ over $\Tate_\k$.

By taking the fiber over $V$, we find a natural transformation
\[
 \Psi \colon \ul\bTate^{\wh{\hskip 2mm}}_V(\CE^\bullet(-)) \lra \Map(-, \End(V)).
\]
We get a natural transformation between the induced formal moduli problems, and thus the announced dg-Lie algebra map $\phi \colon \Lie(GL(V)) \to \End(V)$.

\vskip .2cm

We now prove that $\phi$ is an equivalence or, what this the same, that $\Psi_L$ is an equivalence
whenever $L$ is free on one generator of degree  $d\geq 1$. In this case $\CE^\bullet(L) = \k[\epsilon]$
is the algebra of dual numbers with generator $\epsilon=\epsilon_{d-1}$ of degree $1-d$. 

We include $\Psi_L$ into an adjunction
\[
\begin{gathered}
\psi_L: \Ind(\Pro(\Perf_{\k[\epsilon]})) \leftrightarrow \left\{ \text{Representations of $L$ in $\Ind(\Pro(\Perf_{\k}))$}\right\}:
\eta_L
\\
\psi_L(W) = \k\otimes^L_{\k[\epsilon]} W, \quad \eta_L (V, f: V\to V[d]) = \CE^\bullet(L, V) \simeq \on{Cone}(f)[-1]. 
\end{gathered}
\]
Here $f$ is the action of (the generator of) $L$ on $V$. We note that $\on{Cone}(f)[-1]$ can be
written as $V[\epsilon]=V\otimes_\k \k[\epsilon]$ (free $\k[\epsilon]$-module) with an additional differential given by $f$. 

Note that $\psi_L$ is fully faithful. This is  a formal consequence of   the fact  that 
\[
\psi^{\Perf}_L:  \Perf_{\k[\epsilon]} \leftrightarrow \text{ Representations of $L$ in $\Perf_{\k}$ }
\]
is fully faithful. In fact, $\psi_L^{\Perf}$ is an equivalence (Koszul duality). 

\vskip .2cm

Therefore $\Psi_L$ is fully faithful. Let us prove that it is essentially surjective.

\begin{lem}
Let $(V,f)$ be a representation of $L$ in $\Tate_\k$. Then $\eta_L(V,f)\in\Tate_{\k[\epsilon]}$.
\end{lem}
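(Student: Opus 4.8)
The plan is to reason with the explicit model recorded just above the lemma: as a graded $\k[\epsilon]$-module, $\eta_L(V,f)$ is the free module $V\otimes_\k\k[\epsilon]$ equipped with the twisted differential $d_V\otimes 1+f\otimes\epsilon$, the generator $\epsilon$ sitting in degree $1-d\le 0$. I would separate the two features of this object — the free base-changed module and the twist — and control them by different means. First I would treat base change. Since $\k[\epsilon]$ is a finite-dimensional, hence perfect, $\k$-module concentrated in degrees $[1-d,0]$, the functor $b=-\otimes_\k\k[\epsilon]$ carries $\Perf_\k^{[l,m]}$ into $\Perf_{\k[\epsilon]}^{[l+1-d,\,m]}$, so it preserves perfectness and bounded amplitude up to a fixed shift; tensoring with a finite-dimensional space commutes with all products and coproducts, hence with the filtered limits and colimits defining $\Pro$ and $\Ind$, so $b$ sends $\Cb_\k$ into $\Cb_{\k[\epsilon]}$ and $\Db_\k$ into $\Db_{\k[\epsilon]}$. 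As $b$ is exact and preserves retracts, and $\Tate_{\k[\epsilon]}$ is by construction the minimal subcategory of $\Ind(\Pro(\Perf_{\k[\epsilon]}))$ containing $\Cb_{\k[\epsilon]},\Db_{\k[\epsilon]}$ and closed under shifts, cones and homotopy direct summands, this gives $b(\Tate_\k)\subset\Tate_{\k[\epsilon]}$; in particular the untwisted module $b(V)$ is Tate.

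Next I would incorporate the twist. The term $g:=f\otimes\epsilon$ is a closed, degree-one, $\k[\epsilon]$-linear endomorphism of $b(V)$ with $g^2=f^2\otimes\epsilon^2=0$, and $\eta_L(V,f)$ is exactly the twist $(b(V),d_{b(V)}+g)$. I would emphasize that the naive route of filtering by powers of $\epsilon$ fails: the associated subquotients are $\k$-modules pushed forward along the augmentation $\k[\epsilon]\to\k$, and $\k$ is not perfect over $\k[\epsilon]$, so they escape $\Tate_{\k[\epsilon]}$. Instead I would argue through Koszul duality. On perfect objects $\psi_L^{\Perf}\colon\Perf_{\k[\epsilon]}\xrightarrow{\ \sim\ }\{L\text{-representations in }\Perf_\k\}$ is an equivalence, so its inverse sends a perfect representation $(V_0,f_0)$ to a perfect $\k[\epsilon]$-module; moreover $\eta_L(V_0,f_0)\simeq\Cone(f_0)[-1]$ has amplitude inside a window depending only on that of $V_0$ and on $d$, so $\eta_L$ respects the bounds $[l,m]$ up to a fixed shift. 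Being a fibre — a finite limit, and simultaneously the right adjoint to $\psi_L$ — the functor $\eta_L$ commutes with the filtered colimits and the limits defining $\Ind$ and $\Pro$; hence it carries ind-, respectively pro-, systems of perfect representations of bounded amplitude into $\Db_{\k[\epsilon]}$, respectively $\Cb_{\k[\epsilon]}$, and it is exact.

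The main obstacle is the final matching step: an arbitrary Tate representation $(V,f)$ with $V\in\Tate_\k$ need not be an $L$-equivariant iterated cone of ind- and pro-systems of perfect representations, so one cannot immediately invoke the generation of $\Tate_\k$. I would resolve this inside the category of $L$-representations itself. The full subcategory of those $(V,f)$ for which $\eta_L(V,f)\in\Tate_{\k[\epsilon]}$ is closed under shifts, cones and retracts — because $\eta_L$ is exact and $\Tate_{\k[\epsilon]}$ is thick — and, by the previous paragraph, contains all ind- and pro-systems of perfect representations of bounded amplitude. It therefore suffices to show that every representation with underlying Tate object is generated by such systems, equivalently that the forgetful functor to $\Ind(\Pro(\Perf_\k))$ detects the Tate condition at the equivariant level. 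This I would deduce by lifting a presentation of $V$ as a retract of cones of objects of $\Cb_\k$ and $\Db_\k$ to the equivariant setting along the equivalence $\Ind(\Pro(\Perf_{\k[\epsilon]}))\simeq\bigl(\Ind(\Pro(\Perf_\k))\bigr)^{L}$ extending $\psi_L^{\Perf}$ — which is precisely where the finiteness of $\k[\epsilon]$ and the amplitude bookkeeping established above are used.
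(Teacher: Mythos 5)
Your first two paragraphs are correct (in particular the observation that the $\epsilon$-adic filtration fails because $\k$ is not perfect over $\k[\epsilon]$ is a good one), but the argument breaks down exactly at the point you yourself flag as the main obstacle, and the device you invoke there does not resolve it. The equivalence $\Ind(\Pro(\Perf_{\k[\epsilon]}))\simeq\bigl(\Ind(\Pro(\Perf_\k))\bigr)^{L}$ is nowhere proved, and it is not a formal consequence of the perfect-level Koszul duality: applying $\Ind\circ\Pro$ to $\psi_L^{\Perf}$ only gives $\Ind(\Pro(\Perf_{\k[\epsilon]}))\simeq \Ind(\Pro(\text{$L$-reps in }\Perf_\k))$, and forming $L$-representations does \emph{not} commute with $\Ind$ and $\Pro$. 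An $L$-action on an ind-pro object presented by a diagram $(x_{ij})$ is a coherent family of maps from each $x_{ij}$ into the whole (shifted) ind-pro object, with varying target indices, not a family of endomorphisms $x_{ij}\to x_{ij}[d]$; rewriting such data as an ind-pro system of representations is precisely the content of the lemma, so assuming that a presentation of $V$ by objects of $\Cb_\k$ and $\Db_\k$ "lifts to the equivariant setting" is assuming what is to be proved. That the commutation genuinely fails in general can be seen already at the level of $\Ind$: the free module $U(L)$, viewed as an $L$-representation in $\Ind(\Perf_\k)$, cannot lie in $\Ind$ of $L$-representations in $\Perf_\k$, since being compact it would then be a retract of a representation with finite-dimensional cohomology. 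So any correct argument must use the amplitude bounds or the topology in an essential way, and yours never actually does: the "amplitude bookkeeping" is announced but never deployed to construct the lift.

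What closes the gap in the paper is a concrete straightening of $f$ using the topology of Tate objects rather than formal category theory. One first reduces, by the splitting of strict Tate complexes (Corollary \ref{cor:tate-triang-explicit}(c)), to the case where $V$ is a graded Tate space with zero differential, so that $f\colon V\to V[d]$ is a continuous morphism of graded Tate spaces. Continuity then furnishes two lattices $V_1^c\subset V_2^c$ (linearly compact open subspaces) with $f(V_1^c)\subset V_2^c[d]$: take any lattice $V_2^c$ and intersect it with the open subspace $f^{-1}(V_2^c[d])$. Setting $V_i^d=V/V_i^c$, the complex $\Cone(f^c)[-1]$ becomes a pro-perfect $\k[\epsilon]$-module ($\epsilon$ acting through the embedding $V_1^c\subset V_2^c$), the complex $\Cone(f^d)[-1]$ becomes an ind-perfect one ($\epsilon$ acting through the surjection $V_1^d\to V_2^d$), and the short exact sequence
\[
0\to \Cone(f^c)[-1]\lra \Cone(f)[-1]\lra \Cone(f^d)[-1]\to 0
\]
exhibits $\eta_L(V,f)$ as an extension of an object of $\Db_{\k[\epsilon]}$ by an object of $\Cb_{\k[\epsilon]}$, hence as an object of $\Tate_{\k[\epsilon]}$. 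It is this lattice argument --- the equivariant substitute for your missing "lifting" step --- that your proposal lacks; with it, your first two paragraphs become unnecessary.
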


\noindent{\sl Proof:} We can assume that $V$ is a graded Tate space with a zero differential,
and so $f$ is a morphism of graded Tate spaces. So $V$ has two lattices $V_1^c \subset V_2^c$
so that $f$ induces a morphism of pro-finite-dimensional spaces $f^c: V_1^c\to V_2^c[d]$.
We make $\Cone(f^c)[-1]$ into a (pro-perfect) $\k[\epsilon]$ module by making $\epsilon$ acts by the
embedding $V_1^c\to V_2^c$. 

Similarly, $f$ induces a morphism of ind-finite-dimensional spaces $f^d: V_1^d\to V_2^d$
where $V_i^d=V/V_i^c$. Note that we have the quotient map $V_1^d\to V_2^d$
and so $\Cone(f^d)[-1]$ is made into an (ind-perfect) $\k[\epsilon]$-module. Now we have
a short exact sequence
\[
0\to \Cone(f^c)[-1]\lra\Cone(f)[-1]\lra\Cone(f^d)[-1]\to 0
\]
which implies that $\eta_L(V,f)=\Cone(f)[-1]\in\Tate_{\k[\epsilon]}$. \qed

\vskip .2cm

We now prove that the canonical map $c: \psi_L(\eta_L(V,f))\to (V,f)$ is a quasi-isomorphism
in the category of representations of $L$. 
i.e., that $C= (\Cone(c), g)$ is contractible. By the above, $\eta_L(c)$ is a quasi-isomorphism,
i.e., $\eta_L(C,g) = \Cone(g)$ is contractible. So $g$ is a degree $d$ quasi-isomorphism
of $C$ to itself. But $C$ is bounded by our assumption. So $C$ is contractible. 
This finishes the proof of Proposition \ref{prop:GL-end}. \qed

\paragraph{B. K-theoretic extensions.}

Recall the stack of categories $\bPerf$ defined by
\[
\bPerf(A) = \Perf_A
\]
for any $A \in \cdga$.

\vskip .2cm

For a perfect dg-category $\Ac$ we denote by $K(\Ac)$ the space of K-theory of $\Ac$,
so that $\pi_i K(\Ac)=K_i(\Ac)$. Explicitly, we can define $K(\Ac) = \Omega |S_\bullet(\Ac)|$
as the loop space of the Waldhausen S-construction (in which all $S_n(\Ac)$ are understood
as $\infty$-groupoids). 

We now make K-theory into a prestack 
\[
\K = K\circ \bPerf: A\mapsto K(\Perf(A)).
\]
By composing $K$ with $\bTate$ we get the prestack
\[
\K\bTate : A \mapsto K(\Tate_A).
\]
We have the morphism of prestacks
\[
\ul\bTate \lra \K\bTate
\]
induced by the identification
\[
\Tate_A^{\on {grp}}
 \lra S_1(\Tate_A). 
\]

The following is proven in \cite{hennion-tate} (see also \cite{saito} for a result about the exact category $\Ta_\k$).

\begin{thm}\label{thm:K-deloop}
$\K\bTate$ is identified with $B(\K) = |S_\bullet(\bPerf)|$ (after stackification on Nisnevich topology).\qed
\end{thm}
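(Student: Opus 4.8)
The plan is to deduce the statement from a pointwise delooping of $K$-theory, obtained by running the exact $K$-theoretic analogue of the cyclic-homology argument of Theorem \ref{thm:Hc-dloop}, and then to promote the resulting comparison to prestacks and identify it after Nisnevich stackification. So I would first fix $A \in \cdga$ and prove a natural equivalence $K(\Tate_A) \simeq B\,K(\Perf_A)$, and only afterwards address the sheaf-theoretic globalisation.

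For fixed $A$ I would use the morphism of localization sequences of perfect dg-categories
\[
\xymatrix{
\Perf_A\ar[d]  \ar[r] & \Cb_A \ar[d] \ar[r]& \Cb_A/\Perf_A \ar[d]^\alpha
\\
\Db_A \ar[r]& \Tate_A \ar[r]& \Tate_A/\Db_A
}
\]
with $\alpha$ an equivalence by \cite[Proposition 4.2]{hennion-tate}, exactly as in the proof of Theorem \ref{thm:Hc-dloop}. Applying $K$ and using that nonconnective $K$-theory sends a localization sequence to a fiber sequence, the role of the vanishing Lemma \ref{lem:vanishHC} is now played by the Eilenberg swindle: the categories $\Cb_A$ and $\Db_A$ admit, respectively, countable products and countable coproducts, so the associated countably-infinite sum endofunctor $T$ satisfies $T \simeq \Id \oplus T$, and additivity in $K$-theory forces $K(\Cb_A) \simeq 0$ and $K(\Db_A) \simeq 0$. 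Chaining the induced equivalences yields
\[
K(\Tate_A) \buildrel \sim \over \lra K(\Tate_A/\Db_A) \buildrel \sim \over \lla K(\Cb_A/\Perf_A) \buildrel \sim \over \lra B\,K(\Perf_A),
\]
the last map being the boundary equivalence attached to the upper fiber sequence, whose middle term $K(\Cb_A)$ is contractible.

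All the inputs — the localization sequences, the swindle, and $\alpha$ — are natural in $A$, so the chain of equivalences assembles into a natural transformation of prestacks $\K\bTate \to B(\K)$, where $B(\K) = |S_\bullet(\bPerf)|$ is the classifying prestack of $\K = K\circ\bPerf$. It then remains to identify this map as an equivalence. Here I would invoke that $\bPerf$ satisfies Nisnevich descent, so that nonconnective $K$-theory is a Nisnevich sheaf of spectra, and that the Tate construction $\K\bTate$ inherits the same descent; the naive realization $|S_\bullet(\bPerf)|$ is only a prestack, and the two objects coincide precisely after Nisnevich sheafification.

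The main obstacle is this last step. The pointwise delooping is essentially formal once Theorem \ref{thm:Hc-dloop} is in hand, but it does not by itself produce an equivalence of prestacks: the geometric realization $|S_\bullet(\bPerf)|$ is a prestack-level homotopy colimit with no a priori descent, whereas $\K\bTate$ is built from the Tate categories and is closer to the sheafification. Controlling this discrepancy — showing that the comparison morphism is a Nisnevich-local equivalence and that both sides stackify to the same object — is where the genuine work of \cite{hennion-tate} lies, the remaining ingredients being formal consequences of localization and the swindle.
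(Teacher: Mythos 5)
Your pointwise step is where the argument breaks, and it breaks for a reason that is intrinsic to the statement. The paper's $\K$ is the \emph{connective} K-theory space, $K(\Ac)=\Omega|S_\bullet(\Ac)|$, and for connective K-theory a localization sequence does \emph{not} induce a fiber sequence: the induced map $\on{cofib}(K(\Ac)\to K(\Bc))\to K(\Cc)$ is an isomorphism on $\pi_{\geq 1}$ but in general only injective on $\pi_0$. What localization plus the Eilenberg swindle actually yields is the \emph{nonconnective} statement $\mathbb{K}(\Tate_A)\simeq \Sigma\,\mathbb{K}(\Perf_A)$. Passing back to connective objects, one finds $\pi_0 K(\Tate_A)\cong \mathbb{K}_{-1}(A)=K_{-1}(A)$ (the paper's $\Tate_A$ is idempotent complete), whereas $B\,K(\Perf_A)=|S_\bullet(\Perf_A)|$ is connected with $\pi_n\cong K_{n-1}(A)$ for $n\geq 1$. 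So the natural equivalence $K(\Tate_A)\simeq B\,K(\Perf_A)$ that you propose to establish ``for fixed $A$'' is false whenever $K_{-1}(A)\neq 0$ --- already for $H^0A=\k[x,y]/(y^2-x^3-x^2)$, the affine nodal cubic, one has $K_{-1}\cong\ZZ$. This is precisely why the theorem carries the parenthetical ``after stackification on the Nisnevich topology'': it is not a pointwise equivalence being globalized, but a statement that genuinely fails pointwise. (Note also that in your zigzag \emph{both} outer maps fail to be equivalences on $\pi_0$ for connective $K$, so even the comparison morphism of prestacks cannot be obtained by inverting them.)

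Consequently your final paragraph misidentifies what the stackification must accomplish. The issue is not that $|S_\bullet(\bPerf)|$ is a naive colimit lacking descent while $\K\bTate$ is ``closer to the sheafification''; it is that the comparison map must be shown to be an equivalence on Nisnevich stalks, i.e.\ one needs $\pi_0K(\Tate_A)\cong K_{-1}(A)=0$ when $A$ is henselian local. That is a genuinely geometric input --- Drinfeld's theorem \cite{drinfeld} that every Tate module splits Nisnevich-locally, equivalently the vanishing of negative K-theory on Nisnevich stalks --- and it is the essential ingredient in \cite{hennion-tate}, not a consequence of Nisnevich descent for nonconnective K-theory: descent concerns gluing, while what is needed here is local vanishing of the $\pi_0$-obstruction (indeed, the homotopy presheaves of a Nisnevich sheaf of spectra need not be sheaves, which is how $K_{-1}$ can be nonzero globally while dying on stalks). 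For what it is worth, the paper itself does not prove this theorem but quotes it from \cite{hennion-tate}; your localization-plus-swindle skeleton does match the strategy used there and in the proof of Theorem \ref{thm:Hc-dloop}, but without the Drinfeld-type input the proof cannot close.
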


\begin{rem}
Theorem \ref{thm:K-deloop} is a geometric analog of Theorem \ref{thm:Hc-dloop}. In particular, as Theorem \ref{thm:Hc-dloop} allows us to build central Lie algebra extensions, Theorem \ref{thm:K-deloop} gives us group central extensions. The construction goes as follows.
\end{rem}

 We have the determinantal $\GG_m$-torsor $\Det\to \K$ or, equivalently, a group morphism $\K\to B \GG_m$.
 Applying the classifying stack on both ends, we get the {\em determinantal gerbe} 
\be\label{eq:det-2}
\Det^{(2)}: \K\bTate \lra \EM(\GG_m, 2).
\ee
This gerbe gives, for any $V\in\Tate_\k$, a central extension of group prestacks 
\[
1\to \GG_m\lra \wt{\ul\Aut}(V) \lra\ul\Aut(V) \to 1. 
\]
 
Recall that using Theorem \ref{thm:Hc-dloop}, we built in Definition \ref{def:tateclassHC} a cyclic class $\tau_V \in HC^1(\End(V))$ for each $V\in\Tate_\k$. With Loday's map from Proposition \ref{prop:theta}, we get Lie algebra cohomology classes $\theta(\tau_V)\in H^2_\Lie(\End(V))$.
Those classes give central extensions of Lie algebras
\[
0\to\k \lra \wt\End(V) \to \End(V)\to 0.
\]
 
\begin{thm}\label{thm:det-tr}
Let $V$ be a strict Tate complex. The Lie algebra of $ \wt{\ul\Aut}(V)$ is identified with $\wt\End(V)$.
\end{thm}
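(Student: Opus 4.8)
The plan is to reduce the assertion to an equality of two classes in $\HH^2_\Lie(\End(V))$ and then to match them via the Chern character. By Proposition \ref{prop:GL-end}(b) we have $\Lie(\ul\Aut(V)) = \Lie(GL(V)) \simeq \End(V)$. The extension $\wt{\ul\Aut}(V)$ is by construction classified by the morphism of group stacks $\ul\Aut(V) = \Omega_V\ul\bTate \to B\GG_m$ obtained by looping the composite $\ul\bTate \to \K\bTate \buildrel \Det^{(2)}\over\lra \EM(\GG_m,2)$ at the object $V$, using $\Omega\,\EM(\GG_m,2) = \EM(\GG_m,1) = B\GG_m$. Applying the functor $\Lie$ and using $\Lie(B\GG_m) = \Lie(\GG_m)[1] = \k[1]$, this yields a map $\End(V)\to\k[1]$ in $[\dgLie]$, i.e.\ (Proposition \ref{prop:Lie-central-ext}) a class $\gamma_V\in\HH^2_\Lie(\End(V))$ which classifies $\Lie(\wt{\ul\Aut}(V))$ as a central extension of $\End(V)$. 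Hence it suffices to prove the equality $\gamma_V = \theta(\tau_V)$, the right-hand side being precisely the class defining $\wt\End(V)$.

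First I would make transparent that the two delooping results in play are two shadows of the same localization mechanism. Both Theorem \ref{thm:K-deloop} ($\K\bTate\simeq B\K$) and Theorem \ref{thm:Hc-dloop} ($HC^1(\Tate_\k)\simeq HC^0(\k)$) are produced from the localization sequence $\Perf_\k\to\Cb_\k\to\Cb_\k/\Perf_\k$ together with the vanishing of the relevant invariant on the categories $\Cb_\k,\Db_\k$, which admit infinite direct sums. The Chern character (trace) map $\mathrm{ch}$ from $K$-theory to cyclic homology is a natural transformation compatible with localization sequences and with infinite sums, so it intertwines the two deloopings. Concretely, I would check that $\mathrm{ch}$ carries the determinantal class $\K\to B\GG_m$ (the first Chern class defining $\Det$) to the generator of $HC^1(\Tate_\k)$, namely to the Tate class $\tau$ of Definition \ref{def:tateclassHC}; restricting along $\ul\bTate\to\K\bTate$ and to $V$ identifies the cyclic shadow of the determinantal gerbe with $\tau_V\in HC^1(\End(V))$.

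The final step is differentiation. The passage from the group-stack morphism $\ul\Aut(V)\to B\GG_m$ to its Lie-algebra class $\gamma_V$ is governed by the infinitesimal Chern character, which on the associative dg-algebra $\End(V)$ is exactly Loday's homomorphism $\theta$ of Proposition \ref{prop:theta} relating the Chevalley--Eilenberg homology of $\End(V)_\Lie$ to the cyclic complex of $\End(V)$. Combining this with the previous step, the map $\End(V)\to\k[1]$ extracted above is $\theta$ applied to the cyclic class $\tau_V$, so that $\gamma_V=\theta(\tau_V)$. As this is the class defining $\wt\End(V)$, we conclude $\Lie(\wt{\ul\Aut}(V))\simeq\wt\End(V)$.

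The hard part will be the second step: rigorously upgrading the naturality of $\mathrm{ch}$ to a genuine commuting comparison of the two delooping constructions, so that the image of $\Det^{(2)}$ is literally $\tau$ and not merely proportional to it. This means tracking the determinant through the Waldhausen $S_\bullet$-construction and the trace through the cyclic bar construction simultaneously and matching normalizations; here the one-dimensionality of the relevant invariant subspaces (as exploited in the proof of Theorem \ref{thm:compar-1}) pins the comparison down up to an invertible scalar, and the strictness hypothesis on $V$ ensures both that $\tau_V$ is the correctly normalized, nonvanishing class (cf.\ the Remark after Definition \ref{def:tateclassHC}) and that Proposition \ref{prop:GL-end}(b) computes the tangent Lie algebra cleanly. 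A secondary verification, which follows from the compatibility of $\Lie$ with the loop/classifying-space adjunction recorded in the Examples following Theorem \ref{thm:formalmod}, is that looping $\Det^{(2)}$ at $V$ and then differentiating agrees with first differentiating and then extracting the $\HH^2_\Lie$ class.
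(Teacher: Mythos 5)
Your opening reduction---identifying $\Lie(\ul\Aut(V))$ with $\End(V)$ via Proposition \ref{prop:GL-end}, extracting the class $\gamma_V\in\HH^2_\Lie(\End(V))$ from the looped determinantal gerbe, and reducing the theorem to the equality $\gamma_V=\theta^*(\tau_V)$---agrees with the paper's setup. The gap is in everything after that. The mechanism your proof rests on, namely a Chern character that intertwines the K-theoretic delooping (Theorem \ref{thm:K-deloop}) with the cyclic delooping (Theorem \ref{thm:Hc-dloop}), carries $\Det^{(2)}$ to the Tate class $\tau$, and is such that differentiating the group-stack morphism $\ul\Aut(V)\to B\GG_m$ is computed by Loday's homomorphism $\theta$, is precisely the strategy that the paper's Remark immediately following Theorem \ref{thm:det-tr} singles out as the ``very natural proof'' which is \emph{not} available: such a direct construction ``seems to be unknown'', the closest existing result being Hesselholt's recovery of Hochschild homology from rational K-theory of dual numbers. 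This is not a matter of ``matching normalizations'' as your last paragraph suggests: the determinantal datum is multiplicative (the character $\det\colon K_1(\k)\to\k^*$), the trace class in $HC^0(\k)$ is additive, and what is required is a functorial ``tangent space'' procedure from K-theory of dg-categories to cyclic homology compatible both with the localization-sequence deloopings and with passage to Lie algebras of group stacks via formal moduli problems. Asserting that ``the infinitesimal Chern character \dots is exactly Loday's homomorphism $\theta$'' is not a verification; it is the entire content of the theorem, and no naturality statement in the literature (or in the paper) provides it.

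Your fallback---pinning the comparison down up to an invertible scalar by one-dimensionality---also does not close the gap as stated. The invariant-subspace argument of Theorem \ref{thm:compar-1} concerns $GL_n$-invariants in the cyclic homology of $A_n^\bullet$ and has no bearing here; what is needed is the rigidity statement $\HH^2_\Lie(\End(H))\simeq\k$, which is itself nontrivial (Proposition \ref{prop:H-Lie-Tate}, proved via the Morita equivalence $R\Hom(H,-)\colon\Tate_\k\to\Perf_{\End(H)}$ and the Loday--Quillen--Tsygan theorem), and even granting it one must still fix the scalar by an anchor computation, which your proposal never supplies. The paper's actual proof avoids the Chern character entirely and runs as follows: (i) invoke the classical case $V=\k((z))$, where the equality is known; (ii) show that both systems $(\gamma_V)$ and $(\theta^*\tau_V)$ are \emph{primitive}, i.e.\ additive under direct sum decompositions of strict Tate complexes---on the K-theoretic side because the determinantal gerbe is group-like on the Waldhausen space (Baer-sum identity), on the cyclic side by functoriality of $\theta$ and the splitting $HC_1(A_1\oplus A_2)\simeq HC_1(A_1)\oplus HC_1(A_2)$; (iii) prove $\HH^2_\Lie(\End(H))\simeq\k$; and (iv) propagate the identity from $\k((z))$ to every strict $V$ using that $V$ is a direct summand of $\k((z))\otimes_\k F$ for a finite-dimensional graded $F$ (Propositions \ref{prop:pback-prim} and \ref{prop:eta-eta}). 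To repair your argument you would need either to construct the infinitesimal Chern character (a substantial new result, not a routine check), or to adopt a reduction scheme of type (i)--(iv); as written, steps two and three of your proposal would fail.
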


\begin{rem}
It is very natural to expect that the cyclic homology
of a dg-category $\Ac$ can be recovered from its K-theory by some functorial procedure 
(``taking the tangent space") so that, in particular, 
  the trace class $\tr\in HC^0(\k)$
corresponds to the determinantal character (the identification $\det: K_1(\k)\to \k^*$). Then one could argue that
the Tate class (the delooping of  $\tr$) is similarly ``tangent"  to the determinantal gerbe (the delooping of $\det$),
thus obtaining a very natural proof of Theorem \ref{thm:det-tr}.  This would also justify the name
``additive K-theory"  for cyclic homology. 

However, such a direct construction seems to be unknown. The closest statement in this direction is
the recovery (due to L. Hesselholt)  of the Hochschild homology of a (dg-)algebra $R$ in terms of the rational K-theory of the ring of
dual numbers $R[\epsilon]/\epsilon^2$, see \cite{dundas}.  

We therefore dedicate the rest of this section to  a   proof of Theorem  \ref{thm:det-tr} by a series of reductions.

\end{rem}

\paragraph{C. Primitivity of the Lie cohomology classes.} 
For $V\in\Tate_\k$ we denote by $\gamma_V\in\HH^2_\Lie(\End(V))$ the class corresponding to the Lie algebra of 
 $ \wt{\ul\Aut}(V)$. We need to prove the equality
  \be\label{eq:gamma=tau}
  \gamma_V = \theta^*(\tau_V)
  \ee
   where $\tau_V\in HC^1(\End(V))$
 is induced by the Tate class $\tau\in HC^1(\Tate_\k)$ and $\theta$ is the Loday homomorphism, see \S \ref{subsec:cyc-Lie-hom}B.

Note that the statement is known (and classical) in the case when $V=\k((z))$ is the most standard example of a Tate space. 
We will now reduce to this case by showing that the system of classes $\gamma_V$ satisfies compatibilities that hold for the system
  of $\theta^*(\tau_V)$.

  \begin{Defi}
  Let $\eta_V\in \HH^2_{\Lie}(\End(V))$, $V\in \Tate_\k$ be a system of Lie algebra cohomology classes. We say that
  $(\eta_V)$ is a  {\em primitive system} if,  for any direct sum decomposition $V\simeq V_1\oplus V_2$  in
  the abelian category of strict Tate complexes we have 
    \[
  \eta_V |_{\End(V_1) \oplus \End(V_2)} \,\,=\,\,
  p_1^*\eta_{V_1} + p_2^* \eta_{V_2}. 
  \]
  Here $p_\nu: \End(V_1) \oplus \End(V_2)\to\End(V_\nu)$ is the projection. 
  \end{Defi}
  
 We  note that direct sum decompositions with $V_2=0$  are given by isomorphisms $\phi: V\to V_1$, so a primitive system satisfies,
  in particular, the compatibility condition:  $\Ad_\phi^*(\eta_{V_1})=\eta_V$. Here
   \[
 \Ad_\phi: \End(V)\to\End(V_1), \quad u\mapsto \phi\circ u \circ\phi^{-1}. 
 \]
  
   \begin{lem}
  The classes $\gamma_V$ form a {\em primitive system}.  
  \end{lem}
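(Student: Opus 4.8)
The plan is to establish primitivity at the level of the determinantal gerbe \eqref{eq:det-2} and then differentiate. Recall that, by Proposition \ref{prop:Lie-central-ext}, $\gamma_V$ is the class of the central extension $\wt{\ul\Aut}(V)$, which is cut out from $\Det^{(2)}\colon \K\bTate\to\EM(\GG_m,2)$ as follows: the object $V$ determines a map $\ul\Aut(V)\to\K$ (its class in $K$-theory, obtained by looping $\ul\bTate\to\K\bTate\simeq B\K$ at $V$ and using $\Omega_{[V]}B\K\simeq\K$), and composing with the determinantal homomorphism $\Det\colon\K\to B\GG_m$ yields the extension. A decomposition $V\simeq V_1\oplus V_2$ in ${}^\heart\Ta_\k$ gives the block-diagonal inclusion of group stacks $\iota\colon H:=\ul\Aut(V_1)\times\ul\Aut(V_2)\to\ul\Aut(V)$, with projections $q_\nu\colon H\to\ul\Aut(V_\nu)$. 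By Proposition \ref{prop:GL-end}, $\Lie(\iota)$ is the inclusion $\End(V_1)\oplus\End(V_2)\hookrightarrow\End(V)$ and $\Lie(q_\nu)=p_\nu$. Hence it suffices to prove, at the group level, that $\iota^*\wt{\ul\Aut}(V)$ is the Baer sum of $q_1^*\wt{\ul\Aut}(V_1)$ and $q_2^*\wt{\ul\Aut}(V_2)$, and then apply the Lie functor, under which the Baer sum of $\GG_m$-central extensions corresponds to addition of classes in $\HH^2_\Lie$.

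The heart of the argument is the additivity of the $K$-theory class map on block-diagonal automorphisms: the composite $H\xrightarrow{\iota}\ul\Aut(V)\to\K$ equals the sum, for the infinite-loop structure on $\K$, of the two composites $H\xrightarrow{q_\nu}\ul\Aut(V_\nu)\to\K$. This is precisely Waldhausen additivity as encoded in the equivalence $\K\bTate\simeq B(\K)=|S_\bullet(\bPerf)|$ of Theorem \ref{thm:K-deloop}: the decomposition $V\simeq V_1\oplus V_2$ is a vertex of $S_2(\bTate)$, i.e. a (split) cofiber sequence $V_1\to V\to V_2$, and the block-diagonal automorphisms act through automorphisms of this cofiber sequence; the face maps of the $S_\bullet$-construction then identify the induced loop at $[V]$ with the sum of the loops at $[V_1]$ and $[V_2]$.

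Finally, $\Det\colon\K\to B\GG_m$ is a map of group (indeed infinite-loop) objects, so it carries the additive structure of $\K$ to the group operation on $B\GG_m$, that is, to the Baer sum of $\GG_m$-central extensions. Combining this with the previous paragraph gives that $\iota^*\wt{\ul\Aut}(V)$ is the Baer sum of $q_1^*\wt{\ul\Aut}(V_1)$ and $q_2^*\wt{\ul\Aut}(V_2)$; applying $\Lie$ and Proposition \ref{prop:Lie-central-ext} then yields $\gamma_V|_{\End(V_1)\oplus\End(V_2)}=p_1^*\gamma_{V_1}+p_2^*\gamma_{V_2}$. The main obstacle is the additivity step of the middle paragraph: one must check that the $S_\bullet$-level identification is genuinely compatible with the automorphism actions and is preserved under $\ul\bTate\to\K\bTate$, after which the remaining reductions are formal. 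As a check on the formulation, specializing to $V_2=0$ (so that $H=\ul\Aut(V_1)$ and $\iota$ is an isomorphism $\phi\colon V\to V_1$) recovers the compatibility $\Ad_\phi^*(\gamma_{V_1})=\gamma_V$ noted after the definition of a primitive system.
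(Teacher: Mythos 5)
Your proposal is correct and follows essentially the same route as the paper's proof: both derive primitivity from the fact that the determinantal gerbe is a K-theory datum, i.e.\ from Waldhausen additivity over the (split) cofiber sequence $V_1\to V\to V_2$ in the $S_\bullet$-construction, which yields the Baer-sum decomposition of the restriction of $\wt{\ul\Aut}(V)$ to $\ul\Aut(V_1)\times\ul\Aut(V_2)$, and then differentiate to pass to classes in $\HH^2_\Lie$. The coherence check you flag as the ``main obstacle'' is exactly what the paper also leaves implicit under the phrase ``satisfying coherent compatibilities.''
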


  \noindent {\sl Proof:} 
This is because the determinantal gerbe, being a K-theory datum,  is ``group-like",
 i.e., gives a local system on the Waldhausen space of $\Tate_A$ for any $A$.
 That is, for any triangle (simplest cell on the Waldhausen space)
 \[
 V_1\to V\to V_2
 \]
 in $\Tate_A$ we have an isomorphism of  $\GG_m$-gerbes over $\Spec(\k)$
 \[
 \Det^{(2)}(V_1)\otimes\Det^{(2)}(V_2) \to \Det^{(2)}(V)
 \]
 satisfying coherent compatibilties. In particular, for $V\simeq V_1 \oplus V_2$, a direct
 sum decomposition in $\Tate_\k$, we have 
 \[
  \wt{\ul\Aut}(V_1\oplus V_2)_{ {\ul\Aut}(V_1)\times {\ul\Aut}(V'_2)} \,\,
  \simeq \,\,  \wt{\ul\Aut}(V_1) \bigstar  \wt{\ul\Aut}(V_2)
 \] 
 (Baer sum). This, by differentiation (passing to the Lie algebras of  group stacks), implies that the system $(\gamma_V)$
 is primitive. \qed.
 
 \begin{lem}
 The classes $\theta^*\tau_V$ form a primitive system as well. 
 \end{lem}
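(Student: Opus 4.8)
The plan is to reduce primitivity of the system $(\theta^*\tau_V)$ to primitivity of the underlying cyclic classes $(\tau_V)$, and then to deduce the latter from the additivity of cyclic homology already exploited in the proof of Lemma \ref{lem:vanishHC}. First I would record that Loday's homomorphism is natural in the algebra: for any morphism of associative dg-algebras $f\colon A\to B$ the explicit formula of Proposition \ref{prop:theta} shows that $\theta^B\circ\CE(f)=C^\lambda(f)\circ\theta^A$, so on cohomology $f^*\circ\theta^*=\theta^*\circ f^*$. Writing $j\colon\End(V_1)\oplus\End(V_2)\to\End(V)$ for the block-diagonal inclusion (a map of associative, hence of Lie, algebras) and $p_\nu$ for the two projections, the restriction in the definition of a primitive system is $\eta_V|_{\End(V_1)\oplus\End(V_2)}=j^*\eta_V$. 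Since $\eta_V=\theta^*\tau_V$, naturality of $\theta^*$ reduces the claim to the single cyclic identity
\[
j^*\tau_V \,=\, p_1^*\tau_{V_1}+p_2^*\tau_{V_2}\qquad\text{in }HC^1\bigl(\End(V_1)\oplus\End(V_2)\bigr),
\]
the special case $V_2=0$ then giving the $\Ad_\phi$-compatibility automatically.

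To prove this identity I would use the functorial origin of $\tau_V$. By Definition \ref{def:tateclassHC}, $\tau_V$ is the pullback of the universal Tate class $\tau\in HC^1(\Tate_\k)$ along the one-object dg-functor $\iota_V\colon B\End(V)\to\Tate_\k$ sending the object to $V$. I would then consider three dg-functors out of the one-object dg-category $B\bigl(\End(V_1)\oplus\End(V_2)\bigr)$ into $\Tate_\k$: the functor $F=\iota_V\circ Bj$ sending the object to $V=V_1\oplus V_2$ with $\End(V_1)\oplus\End(V_2)$ acting block-diagonally, so that $F^*\tau=j^*\tau_V$; and the functors $F_\nu=\iota_{V_\nu}\circ Bp_\nu$ sending it to $V_\nu$ with the algebra acting through $p_\nu$, so that $F_\nu^*\tau=p_\nu^*\tau_{V_\nu}$. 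By the very definition of the block-diagonal action, $F$ is the direct sum $F_1\oplus F_2$ of dg-functors.

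Now I would invoke the additivity of cyclic homology recalled (after \cite{keller-cyex}) in the proof of Lemma \ref{lem:vanishHC}: for functors into a perfect dg-category possessing the relevant direct sums, the action of $F_1\oplus F_2$ on $HC_\bullet$ is the sum of the actions of $F_1$ and of $F_2$. The target $\Tate_\k$ is perfect and admits finite direct sums, so this applies; dualizing, $(F_1\oplus F_2)^*=F_1^*+F_2^*$ on $HC^\bullet$, which is exactly $j^*\tau_V=p_1^*\tau_{V_1}+p_2^*\tau_{V_2}$. Applying the natural transformation $\theta^*$ yields $j^*\eta_V=p_1^*\eta_{V_1}+p_2^*\eta_{V_2}$, proving that $(\theta^*\tau_V)$ is primitive.

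The one delicate point — and the step I expect to be the main obstacle — is making precise, at the dg- or $\infty$-categorical level, the equivalence of functors $F\simeq F_1\oplus F_2$ compatibly with the identifications $F^*\tau=j^*\tau_V$ and $F_\nu^*\tau=p_\nu^*\tau_{V_\nu}$, so that Keller's additivity (phrased for honest functors into a category with sums) can be applied verbatim rather than up to coherent homotopy. This is precisely where the hypothesis that $V\simeq V_1\oplus V_2$ is a decomposition in the abelian category ${}^\heart\Ta_\k$ of strict Tate complexes enters: it guarantees that $\End(V_1)$ and $\End(V_2)$ sit inside $\End(V)$ as genuine orthogonal block summands, so that the block-diagonal action is a strict, not merely homotopical, direct sum of the two actions.
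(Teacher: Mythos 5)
Your proposal is correct and follows essentially the same route as the paper: reduce primitivity, via naturality of the Loday homomorphism $\theta$, to an additivity identity in cyclic cohomology for the block-diagonal embedding $\End(V_1)\oplus\End(V_2)\to\End(V)$, and then use additivity of cyclic homology with respect to direct sums. Where the paper's own argument is terser — it invokes the splitting $HC_1(\End(V_1)\oplus\End(V_2))\simeq HC_1(\End(V_1))\oplus HC_1(\End(V_2))$ and leaves implicit the identification of the restricted Tate classes with $\tau_{V_1}$ and $\tau_{V_2}$ — you make that step explicit by writing the relevant functor into $\Tate_\k$ as $F_1\oplus F_2$ and quoting Keller's additivity already used in Lemma \ref{lem:vanishHC}, which is exactly the mechanism underlying the paper's claim.
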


 \noindent{\sl Proof:}
 This is a general property of cyclic homology. 
  Let $A=\End(V)$ and $A_i=\End(V_i)$. 
We then have the enbedding of
dg-algebras $A_1\oplus A_2\to A$. It is enough to prove that the restriction of  the Loday homomorphism
$\theta^A$ to
$\HH_2^\Lie(A_1 \oplus A_2)$ is equal to the sum of the restrictions on $\HH^2_\Lie(A_1)$ and
$\HH^2_\Lie(A_2)$.  This restriction is the left path in
  the commutative diagram
\[
\xymatrix{
\HH_2^\Lie(A_1\oplus A_2) \ar[d] \ar[r]^{\theta^{A_1 \oplus A_2}} & HC_{r-1}(A_1\oplus A_2)\ar[d]
\\
\HH_2^\Lie(A) \ar[r]^{\theta^A}& HC_{r-1}(A).
}
\]
Looking at the right path we see that $HC_{1}(A_1\oplus A_2)$ being identified with
$HC_{1}(A_1)\oplus HC_{1}(A_2)$, the composition splits into the direct sum of the
two restrictions, as claimed. \qed
 
 \paragraph{D. Comparison of Lie cohomology classes.}
 It remains now to prove the following statement.
 
 \begin{prop}\label{prop:eta-eta}
 Let $\eta$ and $\eta'$ be two primitive system of classes in $\HH^2_\Lie(\End(V))$, $V\in \Tate_\k$. 
 Suppose that $\eta_{\k((z))} = \lambda\cdot\eta'_{\k((z))}$ for some $\lambda\in\k$. Then
 $\eta_V=\lambda\cdot\eta'_V$ for any strict Tate complex $V$. 
 \end{prop}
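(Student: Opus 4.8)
The plan is to set $\xi = \eta - \lambda\eta'$ and to prove that \emph{any} primitive system $\xi$ with $\xi_{\k((z))} = 0$ vanishes on every strict Tate complex; since primitivity and the $\Ad_\phi$-compatibility are linear conditions, $\xi$ is again primitive. First I would record two stability properties that a system of classes attached to objects of $\Tate_\k$ through their endomorphism algebras automatically enjoys: $\xi_V$ depends only on the isomorphism class of $V$ in $[\Tate_\k]$ (apply the $\Ad_\phi$-compatibility to isomorphisms, in particular to quasi-isomorphisms), and $\xi$ is invariant under the shift autoequivalence, since $\End(V[m])$ is canonically identified with $\End(V)$ and all the defining constructions are shift-stable. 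With these in hand, Corollary \ref{cor:tate-triang-explicit} lets me replace $V$ by its cohomology: an exact strict Tate complex is isomorphic to $0$ in $[\Tate_\k]$, so its endomorphism dg-algebra is acyclic and $\HH^2_\Lie$ vanishes on it; hence I may assume $V = \bigoplus_i H^i[-i]$ is a graded Tate space with zero differential.

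The heart of the argument is the single-degree case, which I would settle by absorption. For one Tate space $W$, the classification of countable Tate spaces by the dimensions of their discrete and linearly compact parts gives $W \oplus \k((z)) \cong \k((z))$, because adjoining $\k((z))$ raises both dimensions to $\aleph_0$. Thus $\xi_{W \oplus \k((z))} = \xi_{\k((z))} = 0$, and primitivity applied to this decomposition gives $0 = p_1^*\xi_W + p_2^*\xi_{\k((z))} = p_1^*\xi_W$, where $p_1\colon \End(W)\oplus\End(\k((z))) \to \End(W)$ is the projection. As $p_1$ is split by the evident inclusion (a Lie-algebra section), $p_1^*$ is injective on $\HH^2_\Lie$, whence $\xi_W = 0$; shift-invariance extends this to $W[-i]$ in any one degree.

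For a general graded space $V = \bigoplus_{i=a}^b H^i[-i]$ I would absorb into $T = \bigoplus_{i=a}^b \k((z))[-i]$, so that $V \oplus T \cong T$ degreewise; the same splitting/injectivity argument then reduces the whole statement to the single claim $\xi_T = 0$. To attack $\xi_T$, I would split $T$ into its graded linearly-compact part and its graded discrete part, which lie in $\Cb_\k$, resp.\ $\Db_\k$. These categories admit infinite products, resp.\ infinite direct sums, inside $\Tate_\k$, so an Eilenberg-swindle exactly as in Lemma \ref{lem:vanishHC} forces $\xi$ to vanish on them; primitivity then kills the block-diagonal part of $\xi_T$, leaving only the off-diagonal (compact--discrete) pairings to be understood.

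The main obstacle is precisely this last point. Primitivity by itself computes only the restriction of a class to a block-diagonal subalgebra, and upgrading \emph{``the restriction vanishes''} to \emph{``the class vanishes''} is exactly where the cross terms between summands of different cohomological degree must be controlled. For summands of the same type these are governed, via $GL$-equivariant conjugation invariance, by the matrix-trace (Morita) mechanism of Proposition \ref{prop:loday-trace} and are pinned to the single-space value $0$; the genuinely delicate case is the pairing between a compact piece in one degree and a discrete piece in another. I expect that $GL$-equivariance forces each such cross term to be a universal multiple of a Tate-type residue pairing whose normalization is computed by comparison with the single space $\k((z))$, so that the hypothesis $\xi_{\k((z))} = 0$ makes all of them vanish and completes the proof.
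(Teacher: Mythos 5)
Your opening reductions and your single-degree argument are genuinely fine: the absorption isomorphism $W \oplus \k((z)) \cong \k((z))$, primitivity, and the splitting of the projection $\End(W)\oplus\End(\k((z))) \to \End(W)$ do give $\xi_W = 0$ for any Tate space $W$ in degree $0$, and the degreewise absorption $V \oplus T \cong T$ correctly reduces the whole proposition to showing $\xi_T = 0$ for $T = \bigoplus_i \k((z))[-i]$. But the proof then breaks exactly where you flag it, and the obstacle is structural, not technical: restriction to a block-diagonal subalgebra can \emph{never} detect the class you want to kill. The Tate cocycle itself is the counterexample: writing $\k((z)) = \k[[z]] \oplus z^{-1}\k[z^{-1}]$, the cocycle defining the Tate extension of $\gl(\k((z)))$ is supported on the off-diagonal blocks, hence restricts to zero on $\End(\k[[z]]) \oplus \End(z^{-1}\k[z^{-1}])$, yet is cohomologically nonzero. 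So your plan (kill $\xi$ on $T^c$ and $T^d$, then invoke primitivity for $T = T^c\oplus T^d$) can only place $\xi_T$ in the kernel of the block-diagonal restriction, which is precisely where all the interesting classes live. Upgrading this to $\xi_T=0$ requires actual knowledge of the group $\HH^2_\Lie(\End(T))$, and this is the paper's key input for which your proposal has no substitute: Proposition \ref{prop:H-Lie-Tate}, i.e.\ $\HH^2_\Lie(\End(H))\simeq\k$ for such graded Tate spaces, proved via the Morita equivalence $\Tate_\k \simeq \Perf_{\End(H)}$ of Lemma \ref{lem:HC-rho-functor} together with the Loday--Quillen--Tsygan theorem and $H\simeq H^{\oplus r}$. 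With $1$-dimensionality in hand, the paper pins classes down by restricting to summands isomorphic to shifts of $\k((z))$ (summands that mix the compact and discrete parts, unlike your $T^c\oplus T^d$ decomposition). Your closing appeal to ``$GL$-equivariance'' is not an argument: an abstract primitive system carries no equivariance or functoriality at all.

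Two auxiliary steps fail for the same reason. First, shift-invariance does not hold by fiat: a primitive system is abstract data, the definition says nothing relating $\xi_{V[m]}$ to $\xi_V$ under the identification $\End(V[m])=\End(V)$, and ``all the defining constructions are shift-stable'' is vacuous since there are no defining constructions; in the paper, shift-invariance is a \emph{consequence} of $1$-dimensionality plus injectivity of restriction to summands, not an input. Second, the Eilenberg swindle of Lemma \ref{lem:vanishHC} does not transfer: that lemma exploits additivity, under direct sums of \emph{functors}, of the induced maps on $HC$ of a fixed category, whereas a primitive system assigns classes to objects with only finite direct-sum compatibility, so there is no map ``induced by $\bigoplus_{i=0}^\infty \Id$'' to cancel; vanishing on compact or discrete objects is therefore not automatic. (A minor point in the same vein: the $\Ad_\phi$-compatibility applies only to genuine isomorphisms of strict complexes, not to quasi-isomorphisms; the reduction to cohomology should instead use that $\End(H^\bullet)\hookrightarrow\End(V^\bullet)$ is a quasi-isomorphism of dg-Lie algebras, combined with Proposition \ref{prop:pback-prim}, as in Corollary \ref{cor:tate-triang-explicit} and the paper's proof.)
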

 
 We notice first: 
\begin{prop}\label{prop:pback-prim}
Let $(\eta_V)$ be a primitive system. 
If $V\simeq V_1\oplus V_2$ as before, then the pullback of $\eta_V$ to $\End(V_1)\subset\End(V)$, is equal to $\eta_{V_1}$.   \qed
\end{prop}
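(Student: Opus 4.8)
The plan is to deduce the proposition directly from the definition of a primitive system together with the functoriality of $\HH^2_\Lie$. The starting point is the observation that the inclusion $\End(V_1)\subset\End(V)$ factors through the block-diagonal subalgebra $\End(V_1)\oplus\End(V_2)$: writing endomorphisms of $V\simeq V_1\oplus V_2$ in block form, this inclusion sends $u$ to $u\oplus 0$, hence it is the composite of the summand inclusion $i_1\colon\End(V_1)\hookrightarrow\End(V_1)\oplus\End(V_2)$, $u\mapsto(u,0)$, with the block-diagonal embedding $\End(V_1)\oplus\End(V_2)\hookrightarrow\End(V)$. Both $i_1$ and the projections $p_\nu$ are honest morphisms of dg-Lie algebras attached to the decomposition in the abelian category of strict Tate complexes.

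First I would restrict $\eta_V$ along the block-diagonal embedding; by primitivity this restriction equals $p_1^*\eta_{V_1}+p_2^*\eta_{V_2}$. The pullback of $\eta_V$ to $\End(V_1)$ is therefore $i_1^*\!\left(p_1^*\eta_{V_1}+p_2^*\eta_{V_2}\right)$. Using $i_1^*p_\nu^*=(p_\nu\circ i_1)^*$ I would then evaluate the two terms separately. Since $p_1\circ i_1=\Id_{\End(V_1)}$, the first term contributes $\eta_{V_1}$. Since $p_2\circ i_1=0$ is the zero homomorphism $\End(V_1)\to\End(V_2)$, the second term is the pullback of a degree-$2$ class along the zero map.

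The only point deserving a word of care is that this last pullback vanishes: the zero homomorphism factors through the trivial Lie algebra, whose Chevalley--Eilenberg complex is concentrated in degree $0$, so $\HH^{\geq 1}_\Lie$ of it is zero and $(p_2\circ i_1)^*\eta_{V_2}=0$. Combining the two contributions gives $i_1^*\!\left(\eta_V|_{\End(V_1)\oplus\End(V_2)}\right)=\eta_{V_1}$, which is the assertion. I do not expect a genuine obstacle here --- the statement is formal bookkeeping with the primitivity axiom --- and the only step I would state explicitly is the vanishing along the zero map, everything else being functoriality of $\HH^2_\Lie$ under the morphisms $i_1,p_1,p_2$ of the $\End$-algebras in the decomposition.
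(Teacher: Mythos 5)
Your argument is correct and is precisely the formal bookkeeping the paper has in mind: the paper states this proposition with no proof at all, treating it as immediate from the definition of a primitive system, and your factorization of $\End(V_1)\hookrightarrow\End(V)$ through the block-diagonal subalgebra, together with $p_1\circ i_1=\Id$ and the vanishing of $(p_2\circ i_1)^*\eta_{V_2}$ along the zero map (which factors through the trivial Lie algebra, whose $\HH^{\geq 1}_\Lie$ vanishes), is exactly that omitted argument. Nothing is missing.
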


Let now $V^\bullet$ be a strict Tate complex. Decomposing it
as $V^\bullet=H^\bullet \oplus E^\bullet$ as in  Corollary \ref{cor:tate-triang-explicit}(c), we have an isomorphism of associative
dg-algebras (and hence of dg-Lie algebras) $\End(H^\bullet )\to\End(V^\bullet)$.   It implies an isomorphism
$\HH^\bullet_\Lie(\End(V^\bullet))\simeq \HH^\bullet_\Lie(\End(H^\bullet ))$. Since $H^\bullet$ has no differential, 
$\End(H^\bullet)$ is a graded Lie algebra without differential. 
  
\begin{prop}\label{prop:H-Lie-Tate}
Let $H$ be a graded Tate space (situated in finitely many degrees)  which is  neither discrete nor linearly compact.
Assume the graded components of $H$ are of dimension either $0$ or $\infty$. Then we have $\HH^2_\Lie(\End(H))\simeq \k$.
\end{prop}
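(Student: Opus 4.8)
The plan is to prove the two inequalities $\dim_\k \HH^2_\Lie(\End(H)) \geq 1$ and $\dim_\k \HH^2_\Lie(\End(H)) \leq 1$ separately, with the genuinely Tate hypothesis (``neither discrete nor linearly compact'') responsible for the lower bound and the ``dimension $0$ or $\infty$'' hypothesis (which forces $H \simeq H\oplus H$, hence matrix-stability $\End(H)\simeq \Mat_r(\End(H))$) responsible for the upper bound. Before either, I would fix a \emph{polarization} of $H$: a degree-preserving decomposition $H = H^c \oplus H^d$ with $H^c$ linearly compact and $H^d$ discrete, both infinite-dimensional (possible precisely because $H$ is neither compact nor discrete), and let $P \in \End(H)$ be the idempotent projecting onto $H^c$ along $H^d$.

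For the lower bound I would exhibit an explicit nonzero class. Continuity of the operators in $\End(H)$ guarantees that for every $a$ the commutator $[P,a]$ is of ``finite type'' (it carries a lattice into a commensurable one), so the regularized trace $\tr([P,a]\,b)$ is well defined, and the formula $c(a,b) = \tr([P,a]\,b)$, read with the Koszul sign rule, defines a total degree $2$ Chevalley--Eilenberg cocycle on $\End(H)$ whose class is independent of $P$ up to a coboundary. I would then identify $[c]$ with the Tate class $\theta^*\tau_H$ obtained from Definition~\ref{def:tateclassHC} and Proposition~\ref{prop:theta}. Nonvanishing of this class follows from nonvanishing of $\tau_H$ for $H$ genuinely Tate: one restricts along an embedding of $\End(\k((z)))$, built from the polarized pair $(H^c,H^d)$, into $\End(H)$ and invokes that the statement is classical for $\k((z))$, together with the naturality of $\tau$ and the vanishing of $\tau_V$ on $\Cb_\k$ and $\Db_\k$ recorded after Definition~\ref{def:tateclassHC}.

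The upper bound is the heart of the matter, and the goal is to show that every continuous degree $2$ cocycle is cohomologous to a multiple of $c$. I would run a weight-space argument: choose a large abelian (``diagonal'') subalgebra $\fraka \subset \End(H)$ adapted to the polarization, decompose $\End(H) = \bigoplus_\alpha \End(H)_\alpha$ into $\fraka$-weight spaces, and use the cocycle identity involving $\fraka$ to push any cocycle, modulo coboundaries, into weight $0$; invariance under the many $\fraksl_2$-triples coming from commensurable lattices then pins it down to the single regularized-trace cocycle $c$. Equivalently, and more in the spirit of the earlier sections, I would prove that the Loday map $\theta^*\colon HC^1(\End(H)) \to \HH^2_\Lie(\End(H))$ is surjective and that $HC^1(\End(H))$ is one-dimensional; the latter follows from Morita invariance (Proposition~\ref{prop:loday-trace}) and the delooping Theorem~\ref{thm:Hc-dloop}, which compute the cyclic homology of the thick envelope of $H$ in $\Tate_\k$ once one checks that this envelope reproduces the single invariant line $HC_1(\Tate_\k)\simeq HC_0(\k)\simeq\k$.

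The main obstacle will be exactly this upper bound: controlling the \emph{continuous} Lie cohomology of the infinite-dimensional topological Lie algebra $\End(H)$ so that no exotic cocycles survive. Finite-dimensional vanishing theorems do not apply, and the naive Loday--Quillen--Tsygan analogue for the finite-rank ideal gives $0$, so the nontrivial class is genuinely a Tate (regularized-trace) phenomenon rather than a matrix one. Making the weight-space reduction rigorous — showing the relevant averaging converges topologically, and that surjectivity of $\theta^*$ holds here even though it fails for general algebras — is the delicate step; the cleanest route is to reduce everything, via matrix-stability and the splitting $V^\bullet = H^\bullet \oplus E^\bullet$ of Corollary~\ref{cor:tate-triang-explicit}(c), to the classical computation for $\k((z))$ and then transport the answer by primitivity.
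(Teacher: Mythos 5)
Your division into two bounds, and your reading of which hypothesis drives which, is reasonable, and your ``alternative'' route for the upper bound points in the paper's direction: the paper computes $HC_\bullet(\End(H))$ by showing that $R\Hom(H,-)$ is a quasi-equivalence $\Tate_\k \to \Perf_{\End(H)}$ --- this is exactly where ``neither discrete nor linearly compact'' enters, since it makes $H$ a generator of $\Tate_\k$ under extensions and retracts ($H$ contains shifts of both $\bigoplus_{\ZZ_+}\k$ and $\prod_{\ZZ_+}\k$ as direct summands, from which every Tate complex is built in finitely many steps) --- and then the delooping Theorem \ref{thm:Hc-dloop} gives $HC_\bullet(\End(H)) \simeq HC_\bullet(\Tate_\k) \simeq HC_{\bullet-1}(\k)$, concentrated in degrees $1,3,5,\dots$; in particular $HC_1(\End(H)) = \k$.

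The genuine gap is the step you yourself flag as delicate: getting from $HC^1$ to $\HH^2_\Lie$. Neither of your routes closes it. The weight-space/$\fraksl_2$ averaging is a plan, not an argument (and the topological convergence issues you worry about never arise in the paper, whose proof is purely categorical); and ``prove $\theta^*$ is surjective'' is asserted without any mechanism. The paper's mechanism is precisely the tool you dismiss: the dg-version of the Loday--Quillen--Tsygan theorem \cite{burguelea-cyclic}, giving $\HH^\Lie_\bullet(\gl_\infty(A)) \simeq \Sym(HC_{\bullet-1}(A))$, applied not to a finite-rank ideal but to $A=\End(H)$ itself. The hypothesis that every nonzero component of $H$ is infinite-dimensional gives $H \simeq H^{\oplus r}$, hence $A \simeq \gl_r(A)$ for all $r$; this matrix stability is what allows passing from $\gl_\infty(A)$ to $A$ and concluding $\HH^2_\Lie(\End(H)) \simeq HC^1(\End(H)) = \k$ --- an isomorphism, so both bounds come at once and no explicit polarization cocycle is needed. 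Your parenthetical that the naive Loday--Quillen--Tsygan analogue ``for the finite-rank ideal gives $0$, so the nontrivial class is genuinely a Tate (regularized-trace) phenomenon rather than a matrix one'' is exactly the misreading that blocks your proof: the class \emph{is} a matrix phenomenon for the matrix-stable algebra $\End(H)$, whose cyclic homology already records Tate-ness through the Morita equivalence above; this is Feigin--Tsygan's argument \cite{feigin-tsygan} for $\k((z))$, repeated with $\k((z))$ replaced by $H$.
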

  
\noindent{\sl Proof:} This is a modification of the result of \cite{feigin-tsygan}  which can be considered
as corresponding to $V$ being $\k((z))$ in degree $0$.  We first relate $HC^1(\End(H))$
with $HC^1(\Tate_\k)\simeq \k$. More precisely, we note:
  
\begin{lem}\label{lem:HC-rho-functor} 
Let $H$ be as above.

\vskip .2cm

(a) The functor
\[
\rho = R\Hom(H, -): \Tate_\k \lra \dgMod_{\End(H)}. 
\]
takes values in perfect dg-modules over $\End(H)$. 
 
 \vskip .2cm
 
 (b) This functor gives a quasi-equivalence between  $\Tate_\k$   and $\Perf_{\End(H)}$. 
 In particular, $HC_\bullet (\End(H))\simeq HC_\bullet (\Tate_\k)$ is spanned by generators in degrees
 $1,3,5, \dots$.
  \end{lem}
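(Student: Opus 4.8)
The plan is to prove that $H$ is a \emph{classical generator} of the triangulated category $[\Tate_\k]$ and then to invoke the standard tilting dictionary: once $H$ generates the thick subcategory $\langle H\rangle = [\Tate_\k]$, the functor $\rho = \RHom(H,-)$ is a quasi-equivalence onto $\Perf_{\End(H)}$. This is the perfect-dg-category form of the Schwede--Shipley theorem \cite{schwede-shipley}, already used above in the $S$--$\Lambda$ duality, and I would simply cite it. Both assertions reduce to this statement. For (a): since $\rho(H) = \End(H)$ is the rank-one free module and $\rho$ is exact, $\rho$ carries $\langle H\rangle$ into the thick subcategory generated by the free module, namely $\Perf_{\End(H)}$. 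For (b): fully faithfulness on $\langle H\rangle$ follows by triangulated dévissage from the tautological isomorphism on $\RHom(H,H)$, and essential surjectivity from the fact that the image is a thick subcategory containing the free module.

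The crux is therefore the generation statement. First I note that, because $H$ carries the zero differential and lies in the quasi-abelian category $\Ta_\k$, its canonical Tate decomposition $H \simeq H^d \oplus H^c$, with $H^d$ discrete and $H^c$ linearly compact, is a direct sum of objects of $\Tate_\k$; hence $H^d, H^c \in \langle H\rangle$. The hypotheses that $H$ is neither discrete nor linearly compact guarantee exactly that $H^d \neq 0 \neq H^c$. By Proposition \ref{prop:tate-exp-smallest} it then suffices to show $[D_\k] \subseteq \langle H^d\rangle$ and $[C_\k] \subseteq \langle H^c\rangle$.

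For $[D_\k]$: since $H$ is situated in finitely many degrees, $H^d \simeq \bigoplus_{j\in S} H^d_j[-j]$ is a \emph{finite} direct sum, so each $H^d_j[-j]$ is a summand of $H^d$, and by the dimension hypothesis $H^d_j \simeq \k^{(\aleph_0)}$ for $j$ in the nonempty set $S$. Shifting, $\k^{(\aleph_0)}[-i] \in \langle H^d\rangle$ for every $i$. Now any object of $[D_\k]$ is, by Corollary \ref{cor:tate-triang-explicit}(a), quasi-isomorphic to $\bigoplus_i W_i[-i]$ with $W_i \in \Vect_\k$ of countable dimension and only finitely many nonzero; since $\dim_\k W_i \le \aleph_0$ we have $\k^{(\aleph_0)} \simeq W_i \oplus \k^{(\aleph_0)}$, so $W_i[-i]$ is a direct summand of $\k^{(\aleph_0)}[-i]$ and a finite sum of these lies in $\langle H^d\rangle$. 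Thus $[D_\k]\subseteq\langle H^d\rangle$, and the dual argument (with $\prod$ in place of $\bigoplus$ and $\LC_\k$ in place of $\Vect_\k$, using that any linearly compact space is a summand of $\prod_{\aleph_0}\k$) gives $[C_\k]\subseteq\langle H^c\rangle$. Hence $\langle H\rangle = [\Tate_\k]$.

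Finally, the cyclic-homology statement follows formally: the quasi-equivalence of (b) together with Morita invariance (Proposition \ref{prop:HC-morita}(c),(d)) gives $HC_\bullet(\End(H)) \simeq HC_\bullet(\Perf_{\End(H)}) \simeq HC_\bullet(\Tate_\k)$, which by Theorem \ref{thm:Hc-dloop} equals $HC_{\bullet-1}(\k)$; as $HC_\bullet(\k)$ is concentrated in even nonnegative degrees, the result is supported in degrees $1,3,5,\dots$. I expect the only genuinely delicate point to be the clean extraction of summands in the generation argument---verifying that the infinite-dimensional discrete and compact components really do absorb arbitrary countable (co)lattices, which is where the ``dimension $0$ or $\infty$'' hypothesis is essential---while the passage from ``classical generator'' to ``$\Perf$-equivalence'' is a black box I would cite rather than reprove.
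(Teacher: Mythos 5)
Your proposal is correct and is essentially the paper's own proof: both hinge on showing that $H$ is a classical generator of $\Tate_\k$ --- the infinite-dimensional discrete and linearly compact summands of $H$ absorb, via extensions and retracts, every Tate complex --- and then deduce (a) from exactness of $\rho$ plus $\rho(H)=\End(H)$, and (b) by tilting (the paper carries out your Schwede--Shipley black box by hand: d\'evissage for full faithfulness, thick generation of $\Perf_{\End(H)}$ by the free module for essential surjectivity, with the cyclic homology statement then following from Theorem \ref{thm:Hc-dloop} exactly as you say). One harmless imprecision: since the decomposition $H\simeq H^d\oplus H^c$ is a choice, the ``dimension $0$ or $\infty$'' hypothesis on the components of $H$ does not by itself force every nonzero $H^d_j$ to be infinite-dimensional; but your argument only needs one infinite-dimensional component of $H^d$ (resp.\ $H^c$), and that exists simply because $H$ is not linearly compact (resp.\ not discrete).
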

  
  The lemma is very close to being a consequence of Th. 17 of \cite{bgw-operators} which identified
  {\em exact} categories of (higher) Tate objects with the categories of {\em projective} modules
  over appropriate endomorphism rings. In our case we deal with objects not lying in the heart of the
  t-structure and consider perfect dg-modules over dg-algebras which are, of course, derived analogs
  of projective modules. 
  \qed
 
 \noindent{\sl Proof:}  (a) Since $H$ is neither linearly
 compact nor discrete, it decomposes as $C \oplus D$, where $C \in C_\k$ and $D \in D_\k$
 have at least one graded component infinite-dimensional. That is, $D$ admits a shift of $\bigoplus_{\ZZ_+}\k$
 as a direct summand, while $C$ admits a shift of $\prod_{\ZZ_+}\k$ as a direct summand. It follows that any
 Tate complex $W$ can be obtained, up to  a quasi-isomorphism, by a finite number of extensions and retracts from $H$.
 This means that $R\Hom(H ,W)$ can be obtained by a finite number of extensions and retracts from
 $R\Hom(H, H)=\End(H)$, so it is perfect. 
 
 \vskip .2cm
 
 (b) Denote $A=\End(H)$.   We first prove that $\rho$ is fully faithful in the dg-sense,
  i.e., induces quasi-isomorphisms on Hom-complexes.
 This is certainly true for the complex $\Hom(H, H)$ which is sent by $\rho$ to $\Hom_A(A,A)=A$. Further,
 $\rho$ is exact and takes direct summands (retracts) to direct summands. So $\rho$ induces a quasi-isomorphism
 on  $\Hom(W_1, W_2)$, where $W_1$ and $W_2$ are any Tate complexes obtained from $H$
 by  a finite number of extensions and retracts. But by the above, all Tate complex are obtained in such a way.
 
 Next, we show that $\rho$ is essentially surjective. This is immediate since $\Perf_A$  is generated, under extensions
 and retracts by $A=\rho(H)$ itself.   \qed
 
 \vskip .2cm
 
 We now prove Proposition \ref{prop:H-Lie-Tate} by the same arguments as in \cite{feigin-tsygan}.
 We keep the notation $A=\End(H)$ and 
 apply the
 dg-algebra analog of the Loday-Quillen-Tsygan theorem \cite {burguelea-cyclic} which gives that
 $\HH_\bullet^\Lie(\gl_\infty(A))$  is the symmetric algebra on the graded space  $HC_{\bullet-1} (A)$.
 Next, because each component of $H$ is infinite-dimensional, $H \simeq (H)^{\oplus r}$
 and therefore $A\simeq\gl_r(A)$  for each $r\geq 1$. This allows us to pass from $\gl_\infty(A)$
 to $A$ itself and conclude that $\HH^2_\Lie(A)\simeq HC^1(A)=\k$. Proposition \ref{prop:H-Lie-Tate}
 is proved.

 \vskip .2cm
 
 We now finish the proof of Proposition \ref{prop:eta-eta}. If $\eta_{\k((z))}=\lambda\cdot\eta'_{\k((z))}$, we have
 $\eta_V = \lambda\cdot\eta'_V$ for any $V = \k((z))\otimes_\k F$, where $F$ is a finite-dimensional graded $\k$-vector space. In other words,  $V$ is a direct sum of shifts of $\k((z))$. Indeed, up to a shift
 $\k((z))$ is a direct summand of $V$ and so the statement follows from Proposition \ref{prop:pback-prim}. 
Further, if $V$ is any strict Tate complex, then there exists an $F$ as above such that $V$ is a direct summand of $\k((z))\otimes_\k F$, and so the statement again follows from Proposition \ref{prop:pback-prim}.
 Proposition \ref{prop:eta-eta} and Theorem \ref{thm:det-tr} are proved.
 
\subsection{Action  on   determinantal torsors}\label{subsec:act-det-tor}

\paragraph{A. Global sections on $D^\circ_n$.}
Let us denote by $\bPerf_{D^\circ_n}$ the derived prestack in categories
\[
\bPerf_{D^\circ_n} \colon A \mapsto \Perf_{\Spec(A[[z_1,\dots,z_n]]) - \{0\}}
\]
It is actually a stack \cite[Theorem 6.10]{HennionPortaVezzosi}.
\begin{prop}\label{prop:buildRGamma}
The global section functor $R\Gamma$ from Proposition \ref{prop:RGammak} naturally extends to a morphism of prestacks
\[
R\Gamma \colon \bPerf_{D^\circ_n} \lra \bTate.
\]
\end{prop}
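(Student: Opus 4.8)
The plan is to base-change the Jouanolou construction underlying Proposition \ref{prop:RGammak} from the ground field $\k$ to an arbitrary test cdga $A$, and to exploit the fact that the torsor involved is \emph{relatively affine}: this makes the resulting global-sections functor both land in $\Tate_A$ and commute with base change in $A$, and these two facts together assemble the desired morphism of prestacks. Throughout write $D^\circ_{n,A} := \Spec(A\mathbin{\wh\otimes}_\k\k[[z_\bullet]]) - \{0\}$, so that $\bPerf_{D^\circ_n}(A) = \Perf_{D^\circ_{n,A}}$ and $A = \k$ recovers $D_n^\circ$.

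First I would set up the relative torsor. Since $\wh J$ is affine over $\k$ and $\wh\pi \colon \wh J \to D_n^\circ$ is an affine morphism (a Zariski-locally-trivial $\AAA^{n-1}$-fibration), base change along $\Spec A \to \Spec\k$ produces, for each $A\in\cdga$, an affine morphism $\wh\pi_A \colon \wh J_A \to D^\circ_{n,A}$ which is again an $\AAA^{n-1}$-torsor, together with the $A$-linear relative de Rham complex $\Omega^\bullet_{\wh J_A/D^\circ_{n,A}}$ obtained from $\Omega^\bullet_{\wh J/D_n^\circ}$. For $E \in \Perf_{D^\circ_{n,A}}$ I then set
\[
R\Gamma_A(E) \,=\, \Gamma\bigl(\wh J_A,\ \Omega^\bullet_{\wh J_A/D^\circ_{n,A}}\otimes\wh\pi_A^*E\bigr).
\]
Repeating the proof of Proposition \ref{prop:RGammak} over $A$ — the relative Poincar\'e lemma for the $\AAA^{n-1}$-fibration $\wh\pi_A$ together with exactness of $\Gamma$ on the affine $A$-scheme $\wh J_A$ — shows that $R\Gamma_A$ computes $R\Gamma(D^\circ_{n,A}, E)$ and arises by deriving an exact strict functor $\on{Mod}_{B_A}\to\Ind(\Pro(\Perf_A))$, where $B_A$ is the coordinate ring of $\wh J_A$. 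The pole filtration of Corollary \ref{cor:filt}, base-changed to $A$, provides the explicit $\Ind\Pro$-presentation and exhibits the value as a Tate $A$-module, so that $R\Gamma_A \colon \Perf_{D^\circ_{n,A}}\to\Tate_A$, recovering Proposition \ref{prop:RGammak} at $A=\k$.

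It remains to assemble the $R\Gamma_A$ into a single morphism of prestacks. For a map $f\colon A\to A'$, derived base change along the relatively affine morphism $\wh\pi_A$ furnishes natural equivalences $R\Gamma_{A'}(f^*E)\simeq A'\otimes_A^L R\Gamma_A(E)$, compatible with the pole filtration and hence with the strict $\Ind\Pro$-structure. The cleanest way to organize these coherences is to realize the entire construction as derived pushforward along the relatively affine map $\wh\pi$ of the ``universal'' punctured disk and its Jouanolou torsor over $\bPerf_{D^\circ_n}$, for which base change is automatic; this yields directly the natural transformation $R\Gamma \colon \bPerf_{D^\circ_n}\to\bTate$. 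I expect the main obstacle to be precisely this last step: one must check that the pole filtration (equivalently, the strict $\Ind\Pro$-presentation) is preserved \emph{coherently} under base change, so that the target is genuinely $\bTate$ rather than the larger $\Ind(\Pro(\bPerf))$, and in doing so control the usual interplay between completion and tensor product hidden in $A'\mathbin{\wh\otimes}_\k\k[[z_\bullet]]$. This is the relative analog of Theorem 7.2 of \cite{drinfeld} and of Proposition \ref{prop:RGammak} itself; everything else is the $A$-linear echo of arguments already carried out over $\k$.
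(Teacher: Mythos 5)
There is a genuine gap, and it sits exactly where you predicted but did not resolve. The functor $A \mapsto A\mathbin{\wh\otimes}_\k \k[[z_\bullet]]$ does not commute with base change: for $A \to A'$ the natural map $A' \otimes^L_A \bigl(A\mathbin{\wh\otimes}_\k \k[[z_\bullet]]\bigr) \to A'\mathbin{\wh\otimes}_\k \k[[z_\bullet]]$ is not an equivalence (already $A' \otimes_\k \k[[z_\bullet]] \subsetneq A'[[z_\bullet]]$ for infinite-dimensional $A'$). Hence the family $\{D^\circ_{n,A}\}_A$, and with it your family of torsors $\{\wh J_A\}_A$, is \emph{not} a pullback family: there is no single relatively affine map over $\bPerf_{D^\circ_n}$ whose quasi-coherent pushforward computes all the $R\Gamma_A$ at once, so the ``automatic base change for affine morphisms'' you appeal to has nothing to apply to. Worse, your asserted equivalence $R\Gamma_{A'}(f^*E) \simeq A' \otimes^L_A R\Gamma_A(E)$ is false at the level of underlying $A'$-modules: take $A=\k$, $E=\Oc$, $n\geq 2$; the left-hand side has $H^0 = A'[[z_\bullet]]$ while the right-hand side has $H^0 = A'\otimes_\k \k[[z_\bullet]]$. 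The equivalence can only hold after both sides are lifted to Tate objects, where the completed tensor product appears as termwise base change of a pro-object; but constructing that lift \emph{coherently in $A$} is precisely what the proposition asserts, so invoking it is circular. Your pointwise constructions (relative Poincar\'e lemma over $\wh J_A$, pole filtration for fixed $A$) are not the issue; the entire content of the statement is the naturality in $A$, and your argument supplies no mechanism for it.

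For contrast, the paper's proof is organized around exactly this obstruction and never base-changes the Jouanolou model. It works first over the \emph{unpunctured} disk: writing $A[[z_\bullet]] \simeq \holim_p A_p$ with $A_p$ the Koszul algebra on $(z_1^p,\dots,z_n^p)$ (each perfect over $A$) and invoking formal GAGA, $\bPerf_{D_n} \simeq \holim_p \bPerf_{D_n^{(p)}}$, it produces a natural transformation $\bPerf_{D_n} \to \mathbf{Mod}^{\mathbf{IPP}}_{\Oc[[z]]}$ into modules over the topological algebra $\Oc[[z]]$ internal to $\Ind\Pro\Perf$. The paper states explicitly that the relevant right adjoint $\psi$ (the internal limit) ``is not a natural transformation as it does not commute with base change'' --- your obstruction --- but \emph{is} natural once restricted to perfect objects, i.e.\ to $\bPerf_{D_n}$; that is the key idea. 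Restriction to the punctured disk is then done algebraically, by tensoring with the fixed object $\Aen^\bullet_n \in \Ind\Pro\Perf_\k$ over $\k[[z_\bullet]]^{\mathrm{top}}$ (no relative torsor needed), and the resulting transformation lands in $\bTate$, kills complexes supported at the origin, and so descends along the pointwise quotient description $\bPerf_{D^\circ_n}(A) = \bPerf_{D_n}(A)/\bPerf_{D_n,\{0\}}(A)$. Some such combination --- a class of objects on which completion does commute with base change, plus a universal property to pass to the punctured disk --- is what your proposal is missing.
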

\begin{proof}
For any $A \in \cdga$ and $p \geq 0$, denote by $A_p$ the Koszul resolution associated to $(z_1^p,\dots, z_n^p)$ in $A[z_1,\dots,z_n]$.
The cdga $A[[z]]$ is the homotopy limit of the diagram $q \mapsto A[z]/I_q$, where $I_q$ is the ideal generated by degree $q$ monomials.
For any $p$, we have $(z_1^p,\dots,z_n^p) \subset I_p$ and $I_{np} \subset (z_1^p,\dots,z_n^p)$. We get
\[
A[[z]] \simeq \holim A_p.
\]
Let $\bPerf_{D_n}$ denote the functor $A \mapsto \Perf_{A[[z]]}$ and $\bPerf_{D_n^{(p)}}$ be the functor $A \mapsto \Perf_{A_p}$.
The canonical morphism of stacks
\[
\bPerf_{D_n} \lra \holim_p \bPerf_{D_n^{(p)}}
\]
is pointwise fully faithful. Indeed, for any cdga $A$, the functor $\alpha \colon \Perf_{A[[z]]} \to \holim_p \Perf_{A_p}$ admits a right adjoint $\beta$ computing the inverse limit. For $E \in \Perf_{A[[z]]}$, the unit map $E \to \beta \alpha(E) \simeq \holim_p (E \otimes_A A_p) \simeq E$ is an equivalence and $\alpha$ is fully faithful.

For any $A \in \cdga$ and any $p \in \NN$, the category $\Perf_{A_p}$ is canonically equivalent to the category of $A_p$-modules in $\Perf_A$ (as $A_p$ is perfect on $A$). In particular, it embeds fully faithfully into the category of $A_p$-modules in $\Ind\Pro\Perf_A$. We get
\[
\bPerf_{D_n} \subset \holim_p \mathbf{Mod}^\mathbf{IPP}_{\Oc_p},
\]
where $\mathbf{Mod}^\mathbf{IPP}_{\Oc_p}$ is the functor $A \mapsto \holim_p \on{Mod}_{A_p}(\Ind \Pro \Perf_A)$.
Denote by $\mathbf{Mod}^\mathbf{IPP}_{\Oc[[z]]}$ the functor $A \mapsto \on{Mod}_{A[[z]]^{\mathrm{top}}}(\Ind \Pro \Perf_A)$ where $A[[z]]^{\mathrm{top}}$ is $"\holim" A_p$, considered as a commutative algebra in $\Pro \Perf_A$.
The base change natural transformation
\[
\mathbf{Mod}^\mathbf{IPP}_{\Oc[[z]]} \lra \holim_p \mathbf{Mod}^\mathbf{IPP}_{\Oc_p}
\]
admits a pointwise right adjoint $\psi$. However $\psi$ is not a natural transformation as it does not commute with base change. It does once restricted to $\bPerf_{D_n}$ though and we get  a natural transformation
\[
\bPerf_{D_n} \lra \mathbf{Mod}^\mathbf{IPP}_{\Oc[[z]]}.
\]
We now consider $\Aen^\bullet_n = R\Gamma(D^\circ_n,\Oc)$ as a $\k[[z]]^{\mathrm{top}}$-module in $\Ind \Pro \Perf_\k$. Tensoring with $\Aen^\bullet_n$ defines an endotransformation of $\mathbf{Mod}^\mathbf{IPP}_{\Oc[[z]]}$.
We then deduce the proposition from the following lemma.
\begin{lem}
The composite natural transformation
\[
\eta \colon \bPerf_{D_n} \lra \mathbf{Mod}^\mathbf{IPP}_{\Oc[[z]]}  \buildrel{- \otimes \Aen^\bullet_n} \over \lra \mathbf{Mod}^\mathbf{IPP}_{\Oc[[z]]} \buildrel \on{Forget} \over \lra \Ind \Pro \bPerf
\]
has values in $\bTate$ and is null-homotopic once restricted to perfect complexes supported at $0 \in D_n$.
\end{lem}
\begin{proof}
Let $A \in \cdga$. The functor $\eta_A$ has by construction values in $\Tate_A$.
It now suffices to prove that the image $\eta_A(A)$ vanishes (where $A$ is seen as a $A[[z]]$-modules with the trivial action). Using base-change, we can assume $A = \k$.

The $\k[[z]]^{\mathrm{top}}$-complex $\Aen^\bullet_n$ is, by \v Cech descent, the homotopy limit of modules of the form $\k[[z]]^{\mathrm{top}}[z_I^{-1}]$ for a none empty $I \subset \{1,\dots,n\}$. Resolving $\k$ as a $\k[[z]]$-module using the natural Koszul complex, we get
\[
\k \otimes^L_{\k[[z]]^{\mathrm{top}}} \k[[z]]^{\mathrm{top}}[z_I^{-1}] \simeq 0
\]
for any $I \neq \emptyset$.
The functor $\eta_\k$ therefore maps $k$ to an acyclic complex.
\end{proof}

We now finish the proof of Proposition \ref{prop:buildRGamma}.
Notice that for any $A \in \cdga$, the category $\bPerf_{D^\circ_n}(A)$ is a quotient of the category $\bPerf_{D_n}(A)$ by the stable full subcategory of perfect complexes supported at $0$.
It follows from the lemma that $\eta$ factors through the morphism of prestacks 
\[
R\Gamma \colon \bPerf_{D^\circ_n} \lra \bTate
\]
which coincides with the functor from Proposition \ref{prop:RGammak} over $\k$-points. This concludes the proof of Proposition \ref{prop:buildRGamma}.
\end{proof}

\begin{rem}
Note that the above construction can be mimicked to define a global section morphism 
\[
R\Gamma \colon \bPerf_{\wh x ^\circ} \to \bTate
\]
for any $\k$-point $x$ in a variety of dimension $n$.
\end{rem}

\paragraph{B. Determinantal torsors and determinantal gerbes.}
 Let  $\phi: G\to GL_r$ be  a representation of $G$. 
Each $G$-bundle $E$ on any derived stack $Z$ induces a vector bundle
$\phi_* E\in\Perf_Z$. 

As before, $X$ is a  smooth projective variety, $\dim(X)=n$. 
We construct a $\GG_m$-torsor $\det^\phi\in\Pic(\RBun_G(X))$ as the morphism $\RBun_G(X)\to
B\GG_m$
defined as the composition
\[
\RBun_G(X)\buildrel \phi_* \over\lra \ul\bPerf_X \buildrel R\Gamma\over\lra \ul\bPerf_\k \lra \K\bPerf_\k
\buildrel\det\over\lra B\GG_m. 
\]
We also denote by $\det^\phi$ the pullback of this torsor to 
$\RBunrig_G(X,x)$. 
 
\vskip .2cm
 
The determinantal gerbe  $\Det^\phi: \RBun_G(\wh x^\circ)\to K(\GG_m, 2)$ is the composition 
\[
\RBun_G(\wh x^\circ) \buildrel \phi_* \over\lra \ul\bPerf_{\wh x^\circ} \buildrel R\Gamma\over\lra \ul\bTate \lra \K\bTate
\buildrel \Det^{(2)}\over\lra K(\GG_m, 2)
\]
where $\Det^{(2)}$ defined in \eqref{eq:det-2}.

\begin{prop}\label{prop:Det-and-det}
(a) The determinantal gerbe $\Det^\phi$ comes with canonical trivializations $\wh \tau$ and $\tau^\circ$ over $\RBun_G(\wh x)$ and $\RBun_G(X^\circ)$.
\vskip 2mm
(b) The determinantal torsor $\det^\phi$ is equivalent to $\Hom_{\Det^\phi}(\wh \tau,\tau^\circ)$.
\end{prop}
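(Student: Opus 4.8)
\emph{Approach.} Both parts rest on the delooping description of the determinantal gerbe. By Theorem \ref{thm:K-deloop} we have $\K\bTate \simeq B(\K)$, so $\Det^{(2)}$ is the two-fold delooping of the determinantal character $\Det\colon \K \to B\GG_m$, produced from the localization sequences $\Perf_\k \to \Cb_\k \to \Cb_\k/\Perf_\k$ and $\Db_\k \to \Tate_\k \to \Tate_\k/\Db_\k$ together with the equivalence $\Cb_\k/\Perf_\k \simeq \Tate_\k/\Db_\k$. The essential input is that $\K(\Cb_\k) \simeq 0 \simeq \K(\Db_\k)$, by an Eilenberg swindle using the infinite products (resp. sums) available in these categories, which is the $K$-theoretic analogue of Lemma \ref{lem:vanishHC} and is established in \cite{hennion-tate}. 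Consequently $\Det^{(2)}$ acquires a canonical trivialization on any Tate complex equipped with a $\Cb$-lattice or a $\Db$-colattice, and this is the mechanism producing $\wh\tau$ and $\tau^\circ$. Throughout, let $E$ denote the tautological $G$-bundle, so that $\phi_*E$ is the associated vector bundle and all complexes below are its derived sections over the indicated schemes.

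For part (a) I would pull back $\Det^\phi$ along the restriction $\RBun_G(\wh x) \to \RBun_G(\wh x^\circ)$. Over the full disk the complex $R\Gamma(\wh x^\circ,\phi_*E)$ carries the canonical $\Cb$-lattice $R\Gamma(\wh x,\phi_*E)\in\Cb_\k$: its cofiber is the perfect complex of Proposition \ref{prop:adelic-cx}, hence lies in $\Db_\k$. Concretely, the composite $\bPerf_{\wh x}\to\bPerf_{\wh x^\circ}\buildrel R\Gamma\over\lra\bTate$ factors through the $\Cb_\k$-valued global-section functor over $\wh x$, exactly as in the commutative diagram used to prove Theorem \ref{thm:compar-1}. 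The vanishing of $\K(\Cb_\k)$ then furnishes $\wh\tau$ as a morphism of prestacks. Symmetrically, pulling back along $\RBun_G(X^\circ)\to\RBun_G(\wh x^\circ)$ and using that $\bPerf_{X^\circ}\to\bTate$ factors through the $\Db_\k$-valued functor $R\Gamma(X^\circ,-)$ (global sections on a finite-type scheme being discrete) yields $\tau^\circ$.

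For part (b), on the common base $\RBun_G(X)$ both $\wh\tau$ and $\tau^\circ$ are defined after pullback along the two restriction maps, so $\Hom_{\Det^\phi}(\wh\tau,\tau^\circ)$ is a $\GG_m$-torsor. I would evaluate it using the adelic triangle of Proposition \ref{prop:adelic-cx},
\[
R\Gamma(\wh x,\phi_*E)\oplus R\Gamma(X^\circ,\phi_*E)\buildrel\delta\over\lra R\Gamma(\wh x^\circ,\phi_*E)\lra R\Gamma(X,\phi_*E)[1].
\]
This exhibits the Tate complex $R\Gamma(\wh x^\circ,\phi_*E)$ as glued from the $\Cb$-lattice and the $\Db$-colattice defining $\wh\tau$ and $\tau^\circ$, with defect the perfect complex $R\Gamma(X,\phi_*E)$. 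Since $\Det^{(2)}$ is group-like on cofiber sequences (it is a $K$-theory datum, and hence additive on triangles, precisely as in the primitivity arguments used for the Tate central extensions), the difference of the two lattice trivializations is the image of the boundary class $[R\Gamma(X,\phi_*E)]\in\K(\Perf_\k)$ under $\det\colon\K(\Perf_\k)\to B\GG_m$. By the definition of $\det^\phi$ this image is $\det^\phi$, which gives the claimed equivalence.

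The main obstacle will be promoting these statements from $\k$-points to homotopy-coherent morphisms of prestacks over all of $\cdga$. One must make the adelic decomposition of Proposition \ref{prop:adelic-cx} functorial in families over $\Spec A$ and compatible with the prestack-level functor $R\Gamma\colon\bPerf_{\wh x^\circ}\to\bTate$ of Proposition \ref{prop:buildRGamma} (and its extension in the following Remark), so that the lattice and colattice factorizations of part (a) and the additivity of the gerbe in part (b) hold coherently rather than merely objectwise. This will rely on the gluing and descent results of \cite{HennionPortaVezzosi} and on the identification of Proposition \ref{prop:BLaszlo}.
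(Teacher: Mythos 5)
Your proposal is correct and takes essentially the same route as the paper: both parts rest on the Eilenberg-swindle vanishing of $\K(\Cb_\k)$ and $\K(\Db_\k)$, the resulting insensitivity of $\K\bTate$ to modifications lying in $\Db$ (resp.\ $\Cb$) applied to the natural transformations $R\Gamma(\wh x,-)\to R\Gamma(\wh x^\circ,-)$ and $R\Gamma(X^\circ,-)\to R\Gamma(\wh x^\circ,-)$, and the identification of $\Hom_{\Det^\phi}(\wh\tau,\tau^\circ)$ with $E\mapsto \det R\Gamma(X,\phi_*E)$ via the family version of Proposition \ref{prop:adelic-cx} from \cite{HennionPortaVezzosi}. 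One minor slip: the cofiber of $R\Gamma(\wh x,\phi_*E)\to R\Gamma(\wh x^\circ,\phi_*E)$ is the shifted local cohomology at $x$, which is discrete but in general infinite-dimensional (it is not the perfect cone appearing in Proposition \ref{prop:adelic-cx}); what your argument actually needs, and what does hold, is only that this cofiber lies in $\Db_\k$.
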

\begin{proof}
(a) Let us first deal with $\wh \tau$.
We have a canonical natural transformation
\[
\wh \alpha \colon R\Gamma_{\wh x} = R\Gamma(\wh x,-) \lra R\Gamma_{\wh x^\circ } = R\Gamma(\wh x^\circ,-)
\]
of maps $\bPerf_{\wh x} \to \bTate$. For any $A \in \cdga$, the K-theory of $\Tate_A$ is equivalent to that of the quotient $\Tate_A/\Db_A$. In particular, the natural transformation $\wh \alpha$ induces an equivalence of morphisms between the two composites
\[
\ul\bPerf_{\wh x} \buildrel R\Gamma_{\wh x} \over \lra \ul\bTate \lra \K\bTate
\hspace{6mm} \text{and} \hspace{6mm}
\ul\bPerf_{\wh x} \buildrel R\Gamma_{\wh x^\circ} \over \lra \ul\bTate \lra \K\bTate.
\]
The LHS composite factors through $\K\Cb \colon A \mapsto \K(\Cb_A)$ which vanishes, as the categories of compact complexes admit infinite sums.
The RHS composite appears in the restriction of $\Det^\phi$ to $\RBun_G(\wh x)$ and this identification induces the trivialization $\wh \tau$.

The case of $\tau^\circ$ is done similarly, using the natural transformation
\[
\alpha^\circ \colon R\Gamma(X^\circ,-) \lra R\Gamma(\wh x^\circ,-).
\]
This concludes the proof of (a).
\vskip 2mm
(b) Consider the equivalence 
\[
 \bPerf_X \stackrel\sim\lra \bPerf_{X^\circ} \underset{\bPerf_{\wh x^\circ}}{\times} \bPerf_{\wh x}
\]
from \cite[Corollary 6.13]{HennionPortaVezzosi}.
Its inverse simply computes the fiber products of the given perfect complexes. In particular, once composed with the global section functors, we find that for any family of perfect complexes $E$ over $X$, the canonical morphism 
\[
 R\Gamma(X,E) \lra \on{hoeq}\left( R\Gamma(X^\circ,E) \oplus R\Gamma(\wh x,E) \,\makebox[0pt][l]{\raisebox{2pt}{$\overset{\wh \alpha^{\phantom{\circ}}}{\lra}$}} \raisebox{-2pt}{$\underset{\alpha^\circ}{\lra}$} \, R\Gamma(\wh x^\circ,E) \right) 
\]
is an equivalence, where $\on{hoeq}$ computes the homotopy equalizer.
By construction, the torsor $\Hom_{\Det^\phi}(\wh \tau,\tau^\circ)$ is the determinant of the above homotopy equalizer.
\end{proof}

\paragraph{C. The action of central extensions.} 

Let $X$ be a projective variety of dimension $n$, and $x \in X(\k)$ a $\k$-point.
Let $\phi \colon G \to GL_r$ be a representation.
Pulling back the Tate class from Definition \ref{def:tateclassHC} along the functor $R\Gamma \colon \Perf_{\wh x^\circ} \to \Tate_\k$, we get a class
\[
\tau_x \in HC^1(\wh x^\circ) \simeq HC^1(\Aen^\bullet_{x}).
\]
Recall the definition $\Aen^\bullet_{x} = R\Gamma(\wh x^\circ, \Oc)$.
\begin{Defi}
The class $\tau_x$ induces a central extension of $\gen^\bullet_{x} = \gen \otimes \Aen^\bullet_{x}$ that we will denote by $\wt\gen^\bullet_{x, \phi}$
\[
\k \lra \wt\gen^\bullet_{x, \phi} \lra \gen^\bullet_{x}.
\]
\end{Defi}


This extension has a geometric counterpart. Recall the determinantal gerbe $\Det^\phi \colon \RBun_G(\wh x^\circ) \to K(\GG_m,2)$.
We denote by $[\Det^\phi]$ its total space
\[
[\Det^\phi] = \RBun_G(\wh x^\circ) \times^h_{K(\GG_m,2)} \{*\}.
\]
The diagonal map and the trivial bundle define a $\k$-point $d \in [\Det^\phi](\k)$.
\begin{Defi}
Let $\wt G(\wh x^\circ)_\phi$ denote the group stack $\Omega_d [\Det^\phi]$. It comes with a natural projection $\pi \colon \wt G(\wh x^\circ)_\phi  \to G(\wh x^\circ) = \Omega_\mathrm{Triv} \RBun_G(\wh x^\circ)$.
The homotopy fiber of $\pi$ at the unit is the group scheme $\GG_m$, so that we have a central group extension
\[
\GG_m \lra \wt G(\wh x^\circ)_\phi \lra G(\wh x^\circ).
\]
\end{Defi}

\begin{rem}
Note that the extension $\wt G(\wh x^\circ)_\phi$ is classified by the group morphism $G(\wh x^\circ) \to B\GG_m$ obtained by taking the pointed loops of the map $\Det^\phi \colon \RBun_G(\wh x^\circ) \to K(\GG_m,2)$.
\end{rem}

The following is a direct consequence of the above definitions and of Proposition \ref{prop:GL-end}. and Theorem \ref{thm:det-tr}.
\begin{prop}
The Lie algebra extensions $\Lie(\wt G(\wh x^\circ)_\phi)$ and $\wt\gen^\bullet_{x, \phi}$ of $\Lie(G(\wh x^\circ)) \simeq \gen_x^\bullet$ are equivalent.
\end{prop}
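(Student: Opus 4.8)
The plan is to exhibit both central extensions as the image, under the functor $\Lie(-)$, of one and the same group-theoretic datum: the determinantal central extension $\wt{\ul\Aut}(V)$ attached to the Tate complex $V = R\Gamma(\wh x^\circ,\phi_*\mathrm{Triv}) \simeq (\Aen^\bullet_x)^{\oplus r}$, pulled back along a single morphism of group stacks $G(\wh x^\circ)\to\ul\Aut(V)$. Recall that $\Det^\phi$ is the composite
\[
\RBun_G(\wh x^\circ)\buildrel\phi_*\over\lra\ul\bPerf_{\wh x^\circ}\buildrel R\Gamma\over\lra\ul\bTate\lra\K\bTate\buildrel\Det^{(2)}\over\lra \EM(\GG_m,2),
\]
and that $\wt G(\wh x^\circ)_\phi=\Omega_d[\Det^\phi]$. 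Taking pointed loops at the diagonal $\k$-point $d$ (which lies over $\mathrm{Triv}$) identifies $\wt G(\wh x^\circ)_\phi$ with the pullback of the central extension $1\to\GG_m\to\wt{\ul\Aut}(V)\to\ul\Aut(V)\to 1$ along the loop $\Omega_{\mathrm{Triv}}(R\Gamma\circ\phi_*)\colon G(\wh x^\circ)=\Omega_{\mathrm{Triv}}\RBun_G(\wh x^\circ)\to\Omega_V\ul\bTate=\ul\Aut(V)$.

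Next I would apply $\Lie(-)$ and invoke the structural identifications already established. By the Example following the construction of $\Lie$ of a group stack, together with Proposition \ref{prop:Lie-central-ext}, passing to Lie dg-algebras turns $\wt G(\wh x^\circ)_\phi$ into the central extension of $\gen_x^\bullet=\Lie(G(\wh x^\circ))$ classified by the pullback of the class of $\Lie(\wt{\ul\Aut}(V))$. By Theorem \ref{thm:det-tr} this latter extension is $\wt\End(V)$, classified by the Tate class $\theta^*\tau_V\in\HH^2_\Lie(\End(V))$, while by Proposition \ref{prop:GL-end}(b) we have $\Lie(\ul\Aut(V))=\End(V)$. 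The derivative at the identity of $\Omega_{\mathrm{Triv}}(R\Gamma\circ\phi_*)$ is the morphism of dg-Lie algebras $\gen_x^\bullet\to\End(V)$ obtained as the composite of $\phi\otimes\mathrm{id}\colon\gen\otimes\Aen^\bullet_x\to\gl_r(\Aen^\bullet_x)$ with the regular representation $l_r\colon\gl_r(\Aen^\bullet_x)\to\End\bigl((\Aen^\bullet_x)^{\oplus r}\bigr)$.

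Finally I would match this with the definition of $\wt\gen^\bullet_{x,\phi}$. By construction $\tau_x$ is the pullback of $\tau$ along $R\Gamma$, so the class defining $\wt\gen^\bullet_{x,\phi}$ is exactly the image of $\theta^*\tau_V$ under the same composite $l_r\circ(\phi\otimes\mathrm{id})$; the local Riemann--Roch computation (Corollary \ref{cor:localRR}, together with the Example following Theorem \ref{thm:gamma-P}) records that this image equals $\lambda\cdot\gamma_{P_\phi}$, with the \emph{same} constant $\lambda$ on both sides, since both are built from the single Tate class $\tau$. Hence the two classes in $\HH^2_\Lie(\gen_x^\bullet)$ agree, and by Proposition \ref{prop:Lie-central-ext} the corresponding central extensions are equivalent.

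The hard part will be the naturality bookkeeping in the derived, stacky setting: one must check that $\Lie(-)$ commutes with the formation of pointed loops and with pullbacks of central extensions, and that the base point $d$ is carried to the identity of $\ul\Aut(V)$ under looping, so that the derivative is computed at the correct point. This rests on $R\Gamma$ being a genuine morphism of prestacks compatible with the group structures, which is precisely Proposition \ref{prop:buildRGamma}, and which guarantees that the factorization of $\Det^\phi$ through $\ul\Aut(V)$ persists after looping. Once these compatibilities are in place, the identification of the induced Lie algebra map with $l_r\circ(\phi\otimes\mathrm{id})$ and the appeal to Theorem \ref{thm:det-tr} are formal.
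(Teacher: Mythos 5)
Your proposal is correct and follows essentially the same route as the paper, which dispatches this statement as a direct consequence of the definitions together with Proposition \ref{prop:GL-end} (and, implicitly, Theorem \ref{thm:det-tr}); your write-up simply makes the looping and pullback bookkeeping explicit. The only superfluous step is the appeal to Corollary \ref{cor:localRR}: since both classes are by construction the pullback of the single Tate class $\tau$ along $l_r\circ(\phi\otimes\mathrm{id})$, they agree on the nose, and no Riemann--Roch identification (or constant $\lambda$) is needed.
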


We denote by $[\det^\phi]$ the total space of the determinantal torsor $\det^\phi$ on $\RBunrig(X,x)$.

\begin{thm}\label{thm:action-det}
(a) The group $\wt G(\wh x^\circ)_\phi$ acts on $[\det^\phi]$ in a way compatible with the projections $\wt G(\wh x^\circ)_\phi \to G(\wh x^\circ)$
and $[\det^\phi] \to \RBunrig_G(X,x)$, and with the action from Proposition \ref{prop-groupaction}.

\vskip 2mm

(b) The dg-Lie algebra $\wt \gen_{x,\phi}^\bullet$ acts infinitesimally on $[\det^\phi]$ in a way compatible with the infinitesimal action of $\gen_x^\bullet$ on $\RBunrig(X,x)$.
\end{thm}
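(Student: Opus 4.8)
The plan is to deduce the whole theorem from Proposition \ref{prop:actionofloops}, applied not to $g$ itself but to a lift of $g$ into the total space of the determinantal gerbe. Throughout, write $p \colon [\Det^\phi] \to \RBun_G(\wh x^\circ)$ for the structural projection; it is a $B\GG_m$-gerbe. First I would repackage the two pieces of data produced by Proposition \ref{prop:Det-and-det}(a) as lifts. The trivialization $\tau^\circ$ of $\Det^\phi$ over $\RBun_G(X^\circ)$ is the same datum as a nullhomotopy of $\Det^\phi \circ g$, hence as a lift
\[
\wt g \colon \RBun_G(X^\circ) \lra [\Det^\phi], \qquad p \circ \wt g \simeq g.
\]
The trivialization $\wh\tau$ over $\RBun_G(\wh x)$, restricted along the canonical factorization of $\mathrm{Triv}$ through $\RBun_G(\wh x)$, can be identified with the point $d \in [\Det^\phi](\k)$, so that $p(d) \simeq \mathrm{Triv}$.

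Next — the geometric heart of the argument — I would identify the derived preimage $R\wt g^{-1}(d)$ with the total space $[\det^\phi]$. Applying $p$ gives a map
\[
R\wt g^{-1}(d) = \RBun_G(X^\circ)\times^h_{[\Det^\phi]}\{d\} \lra \RBun_G(X^\circ)\times^h_{\RBun_G(\wh x^\circ)}\{\mathrm{Triv}\} \simeq \RBunrig_G(X,x),
\]
the last equivalence being Proposition \ref{prop:BLaszlo}. Since $p$ is a $B\GG_m$-gerbe, the fibre of this map over a point of $\RBunrig_G(X,x)$ — a bundle $E$ on $X^\circ$ with a path $g(E)\simeq \mathrm{Triv}$ — is the space of homotopies in the $B\GG_m$-fibre of $p$ between $\wt g(E)$ (transported along that path) and $d$, which is a $\GG_m$-torsor, namely $\Hom_{\Det^\phi}(\wh\tau,\tau^\circ)$. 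By Proposition \ref{prop:Det-and-det}(b) this torsor is $\det^\phi$, whence $R\wt g^{-1}(d)\simeq [\det^\phi]$ over $\RBunrig_G(X,x)$.

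With this identification, part (a) is immediate: Proposition \ref{prop:actionofloops}, applied to $\wt g$ and $d$, produces an action of $\Omega_d[\Det^\phi] = \wt G(\wh x^\circ)_\phi$ on $R\wt g^{-1}(d) = [\det^\phi]$. Compatibility with the projections $\wt G(\wh x^\circ)_\phi \to G(\wh x^\circ)$ and $[\det^\phi]\to\RBunrig_G(X,x)$, and with the action of Theorem \ref{prop-groupaction}, follows from the naturality of the loop-action construction with respect to the map $p$ of fibration data, which carries $\wt g$ to $g$ and $d$ to $\mathrm{Triv}$. For part (b) I would differentiate: the infinitesimal action is the Kodaira-Spencer morphism of $\wt g$ at $d$,
\[
\kappa \colon \Lie\bigl(\Omega_d[\Det^\phi]\bigr) = \TT_{[\Det^\phi],d}[-1] \lra R\Gamma\bigl(R\wt g^{-1}(d),\TT\bigr),
\]
which, under the identification $\Lie(\wt G(\wh x^\circ)_\phi)\simeq \wt\gen^\bullet_{x,\phi}$ (the Proposition preceding the theorem) and $R\wt g^{-1}(d)\simeq[\det^\phi]$, becomes the desired morphism $\wt\gen^\bullet_{x,\phi}\to R\Gamma([\det^\phi],\TT)$. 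Its compatibility with $\beta$ of Theorem \ref{prop-groupaction}(b) is the functoriality of Kodaira-Spencer maps under $p$, transparent from the description of $\kappa$ via the Jacobi/normal triangle in Proposition \ref{prop:KS-class}.

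The main obstacle I anticipate is the middle step: making $R\wt g^{-1}(d)\simeq[\det^\phi]$ precise in the derived setting rather than at the level of isomorphism classes. One must verify that the ``difference of trivializations'' $\Hom_{\Det^\phi}(\wh\tau,\tau^\circ)$ computing the fibre of $p$ over $\RBunrig_G(X,x)$ agrees, as a $\GG_m$-torsor with all its coherence data, with the torsor of Proposition \ref{prop:Det-and-det}(b), and that this agreement is equivariant for the loop-group actions. Everything else is formal manipulation of loop stacks and of the Kodaira-Spencer construction.
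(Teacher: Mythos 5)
Your proposal is correct and follows essentially the same route as the paper's own proof: your identification $R\wt g^{-1}(d)\simeq[\det^\phi]$ is exactly the paper's homotopy cartesian square built from $\tau^\circ$ and Proposition \ref{prop:Det-and-det}, the action is then obtained from Proposition \ref{prop:actionofloops} applied to the lift at the point $d$ (with $d$ factoring through $\wh\tau$), and part (b) is the differentiation of (a) via the Kodaira--Spencer morphism. The only difference is presentational: you spell out the fiberwise torsor identification and the Lie-level compatibilities in more detail than the paper, which records them as the cartesian square plus the remark that the diagram is compatible with the various projections.
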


\begin{proof}
(a) By construction the group $\wt G(\wh x^\circ)_\phi$ is the pointed loop group of the point $d$ in $[\Det^\phi]$.
Moreover, the trivialization $\tau^\circ$ and Proposition \ref{prop:Det-and-det} give a homotopy cartesian square
\[
\xymatrix{
[\det^\phi] \ar[r] \ar[d] & \RBun_G(X^\circ) \ar[d]^{\tau^\circ} \\
\{d\} \ar[r] & [\Det^\phi].
}
\]
Furthermore, the inclusion $\{d\} \to [\Det^\phi]$ factors as
\[
\{d\} = \Spec \k = \{\mathrm{Triv}\} \lra \RBun_G(\wh x) \buildrel \wh{\tau} \over \lra [\Det^\phi].
\]
In particular, Proposition \ref{prop:actionofloops} defines the announced action of $\wt G(\wh x^\circ)_\phi$ on $[\det^\phi]$.
The above diagram being compatible with the various projections, we see that the action is indeed compatible with the one from Proposition \ref{prop-groupaction}.
\vskip 2mm
(b) It follows from (a), from Corollary \ref {cor:localRR} identifying the extension
and from the Kodaira-Spencer morphism.
\end{proof}

\appendix

  \numberwithin{equation}{section}

\section{Model categories of dg-algebras and dg-categories}

\paragraph{(A.A) Conventions on complexes. }  We recall that $\k$ is a field of characteristic $0$. 
We follow the usual sign conventions on
    (differential) graded $\k$-vector spaces, their tensor products, Koszul sign rule
    for symmetry and so on. The degree of the differential is always assumed to
    be $+1$. 
    The degree of a homogeneous element $v$ will be
    denoted $|v|$. Note, in particular,  the convention
    \be
    (f\otimes g) (v\otimes w) \,=\, (-1)^{|g|\cdot |v|} f(v) \otimes g(w)
    \ee
    for the action of the tensor product of  two operators $f: V\to V'$ and $g: W\to W'$.

    \vskip .2cm
    
    The {\em shift of grading} of a graded vector space $V^\bullet$ is defined by
    $V^\bullet[n] =  \k[n] \otimes V^\bullet $, where $\k[n]$ is the field $\k$ put in degree $(-n)$. 
    So the basis of $\k[n]$ is formed by the vector $1[n]$. 
    For $v\in V^\bullet$ we denote $v[n]=(1[n]) \otimes v\in V^\bullet[n]$. This gives the
    {\em suspension morphism} (an ``isomorphism" of degree $n$)
    \[
    s^n: V\lra V[n],\quad v\mapsto v[n]. 
    \]
    With respect to tensor products,  we have the {\em decalage isomorphism} (of degree $0$)
  \[
\dec: V_1^{\bullet}[1]\otimes V_2^{\bullet}[1]\otimes \dots \otimes V_n^{\bullet}[1]\to 
(V_1^{\bullet}\otimes V_2^{\bullet}\otimes \dots \otimes V_n^{\bullet})[n]
\]
given by:
\[
\dec(s(v_1)\otimes \dots \otimes s(v_n))= (-1)^{\sum^n_{i=1}(n-i)|v_i|}s^n(v_1\otimes \dots \otimes v_n)
\]  
This isomorphism induces an isomorphism of graded vector spaces 
\be\label{eq:dec-n}
\dec_n:  S^n\bigl(V^{\bullet}[1])\bigr)\lra \bigl(\Lambda^n V^{\bullet}\bigr)[n]. 
\ee 

\paragraph{(A.B) Model structures and categories of dg-algebras.  }

  We will freely use the concept of model categories, see, e.g., \cite{lurie-htt}
  for background. For a model category $\Mc$ we denote by $[\Mc] = \Mc[W^{-1}]$
  the corresponding homotopy category obtained by inverting weak equivalences. 
  
      \vskip .2cm
  
  We denote by $\dgVect$ the category of differential graded vector spaces (i.e.,
    cochain complexes) over $\k$ with no assumptions on grading. This is a symmetric
    monoidal category.  
    
    Let $\Pc$ be a $\k$-linear operad. 
   By a dg-algebra of type $\Pc$  we will
    mean a $\Pc$-algebra in $\dgVect$. They form a category denoted $\on{dgAlg}^\Pc$.
    Thus, for $\Pc$ being one of the three
    operads $\Ac s$, $\Cc om$, $\Lc ie$, describing associative, commutative
    and Lie algebras, we will speak about {\em associative
dg-algebras}, resp. {\em commutative  dg-algebras}, resp. 
{\em dg-Lie algebras} (over $\k$). 
The categories formed by such algebras will be denoted by 
  $\dgAlg$, $\Cdga$ and $\dgLie$  respectively. They have products, given by direct sums
  over $\k$,  and coproducts given by free products of algebras, denoted $A*B$.
  For commutative dg-algebras, $A*B=A\otimes_\k B$. 
  
  \vskip .2cm

The category  $\on{dgAlg}^\Pc$  carries a  natural model structure
\cite{hinich-homhom}, in which:
\begin{itemize}
\item Weak equivalences are quasi-isomorphisms.

\item Fibrations are surjective morphisms of dg-algebras. 

\end{itemize}
Cofibrations are uniquely determined by the   axioms of model categories.  In particular, 
 any dg-algebra $A$  of type $\Pc$ and any graded vector space $V$
 we can form $F_\Pc (V)$, the free algebra of  type $\Pc$ generated by $V$, and
  the embedding $A\to A *  F_\Pc(V)$ is a cofibration. 

\vskip .2cm

As usual, the model structure allows us to form homotopy limits and colimits in the categories 
$\on{dgAlg}^\Pc$.
  They will be denoted by $\holim$ and $\hocolim$.  
In particular, if $\Fc$ is a sheaf of  algebras  of type $\Pc$ on a
topological space $S$, then
\[
R\Gamma(S, \Fc) \,\,=\,\,\holim_{U\subset S\atop
\text{open}} \,\, \Fc(U)
\]
is canonically defined as an object of  the homotopy category $[\on{dgAlg}^\Pc]$.
An explicit way of calculating the homotopy limit of  a diagram of algebras represented
by  a cosimplicial algebra (this includes  $R\Gamma(S, \Fc)$) is provided by   the 
Thom-Sullivan  construction, see  \cite {hinich-schechtman}. 

\vskip .2cm

The above includes (for $\Pc$ being the trivial operad) the category $\dgVect$ itself.
In particular, let us note the following fact about homotopy limits in $\dgVect$
indexed by $\ZZ_+ = \{0,1,2,\cdots\}$.

\begin{prop}\label{prop:dgvect-holim}
 (a) Let $(E_i^\bullet)$ be an  inductive system over $\dgVect$, indexed by $\ZZ_+$.
 Then the natural morphism $\hocolim\,  E_i^\bullet\to \varinjlim
E_i^\bullet$
is a quasi-isomorphism. 

\vskip .2cm
 
(b) Let $(E_i^\bullet)$ be a projective system over $\dgVect$, indexed by $\ZZ_+$.
 Then the natural morphism $\varprojlim E_i^\bullet\to \holim\,  E_i^\bullet$
is a quasi-isomorphism in each of the following two cases:
\begin{itemize}
\item[(b1)] Each $E_i^\bullet$ is a perfect complex.

\item[(b2)] The morphisms in the projective system $(E_i^\bullet)$ are termwise
surjective. 
\end{itemize}

\end{prop}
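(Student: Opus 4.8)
The plan is to prove both parts by writing the relevant homotopy (co)limit over $\ZZ_+$ explicitly as the (co)fiber of a single \emph{shift} map, and then comparing it with the underived (co)limit, all computations taking place in $\dgVect$ over the field $\k$.

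For part (a), I would model $\hocolim_i E_i^\bullet$ by the mapping telescope, identifying it with $\on{cofib}\bigl(1-s:\bigoplus_i E_i^\bullet \to \bigoplus_i E_i^\bullet\bigr)$, where $s$ is assembled from the structure maps $E_i^\bullet \to E_{i+1}^\bullet$. A direct inspection shows that $1-s$ is unipotent lower triangular with respect to the index $i$ (identity on the diagonal, $-f_{i,i+1}$ on the subdiagonal), hence degreewise injective, so its cofiber is quasi-isomorphic to its ordinary cokernel; and that cokernel is exactly $\varinjlim_i E_i^\bullet$, since modding out by $1-s$ imposes precisely the relations $x_{i+1}=f_{i,i+1}(x_i)$. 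This yields the asserted quasi-isomorphism directly, the underlying reason being the exactness of filtered colimits of $\k$-vector spaces.

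For part (b), dually, I would model $\holim_i E_i^\bullet$ as $\on{fib}\bigl(1-s:\prod_i E_i^\bullet \to \prod_i E_i^\bullet\bigr)$, with $s$ now assembled from the transition maps $f_{i+1,i}\colon E_{i+1}^\bullet \to E_i^\bullet$; this is the Milnor microscope model, and it produces the standard short exact sequences relating $H^\bullet(\holim)$ to $\varprojlim H^\bullet(E_i^\bullet)$ and to $\varprojlim^1 H^\bullet(E_i^\bullet)$. In case (b2), the hypothesis that the transition maps are termwise surjective lets me solve the recursion $x_i = y_i + f_{i+1,i}(x_{i+1})$ step by step (choosing $x_0$ freely and lifting along each surjection), which shows $1-s$ is itself degreewise surjective; its fiber then coincides with its kernel $\ker(1-s)=\varprojlim_i E_i^\bullet$, giving $\holim_i E_i^\bullet \simeq \varprojlim_i E_i^\bullet$. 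In case (b1), I would note that for perfect $E_i^\bullet$ the cohomology towers $\bigl(H^n(E_i^\bullet)\bigr)_i$ consist of finite-dimensional spaces, so the descending chains of images stabilize, the towers are Mittag--Leffler, hence $\varprojlim^1 H^n(E_i^\bullet)=0$ and the Milnor sequence collapses to $H^n(\holim_i E_i^\bullet)\cong \varprojlim_i H^n(E_i^\bullet)$.

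The one point needing genuine care, which I expect to be the main obstacle, is showing in case (b1) that the \emph{underived} limit $\varprojlim_i E_i^\bullet$ already carries this same cohomology, compatibly with the natural comparison map, rather than merely abstractly. The clean route is to reduce (b1) to (b2): after replacing the tower by strictly perfect representatives (bounded complexes that are degreewise finite-dimensional), all the auxiliary towers $\bigl(E_i^n\bigr)$, $\bigl(Z^n(E_i^\bullet)\bigr)$ and $\bigl(B^n(E_i^\bullet)\bigr)$ are Mittag--Leffler, so $\varprojlim$ is exact on the defining short exact sequences of each complex and all $\varprojlim^1$ terms vanish; equivalently, one replaces the tower by a quasi-isomorphic one with surjective transition maps and invokes (b2), finite-dimensionality guaranteeing that the underived limit is unchanged. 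Combining this with the Milnor computation shows that the natural map $\varprojlim_i E_i^\bullet \to \holim_i E_i^\bullet$ induces an isomorphism on every $H^n$, i.e. is a quasi-isomorphism.
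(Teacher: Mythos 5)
Your proofs of (a) and (b2) are correct, and they are the explicit form of the paper's own argument: the paper deduces (a) from the exactness of $\varinjlim$ on $\ZZ_+$-indexed systems of $\k$-vector spaces ($\hocolim$ being the left derived functor of $\varinjlim$), and (b) from the spectral sequence $R^q\varprojlim_i(E_i^p)\Rightarrow H^{p+q}(\holim_i E_i^\bullet)$ together with the facts that $R^q\varprojlim$ vanishes for $q\geq 2$ on countable towers and that $R^1\varprojlim$ vanishes on towers of surjections. Your telescope and microscope complexes with the map $1-s$ are precisely the standard models computing these derived functors, so for (a) and (b2) the content is the same.

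The gap is in (b1), at exactly the point you flagged, and your proposed repair does not close it. Replacing the tower $(E_i^\bullet)$ by a termwise quasi-isomorphic tower (of strictly perfect complexes, or of complexes with surjective transition maps) leaves $\holim$ unchanged but can change the underived limit $\varprojlim$; the non-invariance of $\varprojlim$ under quasi-isomorphism is the whole reason the proposition needs hypotheses, so this reduction is circular, and the phrase ``finite-dimensionality guaranteeing that the underived limit is unchanged'' has no content unless the \emph{original} terms are already finite-dimensional. In fact, if ``perfect'' is taken in the quasi-isomorphism-invariant sense (bounded complex with finite-dimensional cohomology), statement (b1) is false. Counterexample: let $B=\bigoplus_{j\geq 0}\k$, let $A_i=\bigoplus_{j\geq i}\k\subset B$, and let $E_i^\bullet$ be the acyclic complex $A_i\hookrightarrow B\twoheadrightarrow B/A_i$ placed in degrees $0,1,2$, with transition maps the inclusion $A_{i+1}\subset A_i$, the identity of $B$, and the projection $B/A_{i+1}\to B/A_i$. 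Each $E_i^\bullet$ is acyclic, hence perfect in the invariant sense, so $\holim_i E_i^\bullet\simeq 0$; but
\[
\varprojlim_i E_i^\bullet \,=\, \Bigl[\, 0\lra \bigoplus\nolimits_j\k \lra \prod\nolimits_j \k \,\Bigr],
\qquad
H^2\Bigl(\varprojlim_i E_i^\bullet\Bigr)\,=\, \Bigl(\prod\nolimits_j\k\Bigr) \,/\, \Bigl(\bigoplus\nolimits_j\k\Bigr) \,\neq\, 0,
\]
so the comparison map is not a quasi-isomorphism. Hence (b1) must be read --- as the paper's proof implicitly does when it invokes the vanishing of $R^1\varprojlim$ on towers of finite-dimensional spaces --- with each $E_i^\bullet$ a bounded complex of \emph{finite-dimensional} vector spaces. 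Under that reading no replacement is needed: for each fixed $p$ the tower $(E_i^p)_i$ consists of finite-dimensional spaces, hence is Mittag--Leffler, hence $R^1\varprojlim_i E_i^p=0$; equivalently, your map $1-s\colon \prod_i E_i^\bullet\to\prod_i E_i^\bullet$ is already degreewise surjective, so its fiber equals its kernel $\varprojlim_i E_i^\bullet$, and you conclude exactly as in your proof of (b2), applied directly to the given tower; your Milnor-sequence computation of $H^n(\holim)$ then becomes unnecessary.
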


\noindent {\sl Proof:} (a) $\hocolim$ is the left derived functor
of $\varinjlim$. Therefore  part (a) follows from the fact that the functor $\varinjlim^{\Vect_\k}$
is exact on the category of inductive systems of vector spaces indexed by $\ZZ_+$. 

\vskip .2cm

(b) $\holim$ is the right derived functor of $\varprojlim$. Therefofe we have 
the spectral sequence
\[
(R^q\varprojlim\nolimits_i ) (E_i^q)\,\,\Rightarrow \,\, H^{p+q} (\holim_i E_i^\bullet).
\]
As well known, the functors $R^q\varprojlim_i$ for countable filtering diagrams can be
nonzero only for $q=0,1$. Further,   $R^1\varprojlim_i$ vanishes for 
diagrams of finite-dimensional
spaces as well as for any diagrams formed by surjective maps. \qed

\paragraph{(A.C) Model structure on category of dg-categories.} 
We denote by $\dgCat$ the category of $\k$-linear dg-categories. 
For a dg-category $\Ac$ we denote by $[\Ac]$ the corresponding $H^0$-category:
it has the same objects as $\Ac$, while $\Hom_{[\Ac]}(x,y) = H^0 \Hom^\bullet_\Ac(x,y)$. 
 This notation, identical with the notation for the homotopy category of a model category,
 does not cause confusion: when both meanings are possible, the result is the same.

\vskip .2cm

We equip $\dgCat$ with the Morita model structure of Tabuada \cite{tabuada-invariantsdg}.
 Weak equivalences  in this structure are Morita equivalences. 
Fibrant objects   are {\em perfect dg-categories}, i.e.,
dg-categories quasi-equivalent to $\Perf_\Bc$ where $\Bc$ is some small dg-category.
We recall two additional characterizations of perfect dg-categories.

 First,
$\Ac$ is perfect, if and only if the Yoneda embedding $\Ac\to\Perf_\Ac$ is a quasi-equivalence.

Second, $\Ac$ is perfect,  if and only if $\Ac$ is pre-triangulated and $[\Ac]$
(which is then triangulated) is 
 closed
under direct summands.

\vskip 1cm

G. F.:  Max-Planck-Institut f\"ur Mathematik, Vivatsgasse 7, 53111 Bonn, Germany.\\
Email: G. F.: {\tt gf77@mpim-bonn.mpg.de}

\vskip 2mm

B. H.: Laboratoire de Math\'ematiques d'Orsay, Univ. Paris-Sud, CNRS, Universit\'e Paris-Saclay, 91405 Orsay, France.\\Email: {\tt benjamin.hennion@u-psud.fr}

\vskip .2cm

M.K.: Kavli IPMU, 5-1-5 Kashiwanoha, Kashiwa, Chiba, 277-8583 Japan. Email: 
{\tt mikhail.kapranov@ipmu.jp}

\ed